\documentclass[oneside]{amsart}
\usepackage{amssymb}
\usepackage{amsmath}
\usepackage{amsfonts}
\usepackage{geometry}
\usepackage{soul}
\usepackage{bbm}
\usepackage{comment}
\usepackage[colorlinks=true]{hyperref}
\usepackage{tikz}
\usepackage{mathrsfs}
\usepackage{multirow}
\usetikzlibrary{matrix,arrows,decorations.pathmorphing}
\usepackage{verbatim }
\usepackage{mathtools}
\usepackage{enumitem}
\usepackage[author={}]{pdfcomment}
\usepackage{todonotes}
\usepackage[normalem]{ulem}
\usepackage[T1]{fontenc}

\newcounter{cprop}[section]

\newtheorem{theorem}[cprop]{Theorem}
\theoremstyle{plain}

\newtheorem{lemma}[cprop]{Lemma}

\newtheorem{assumption}[cprop]{Assumption}

\numberwithin{equation}{section}
\usepackage{amsaddr}
\theoremstyle{definition}
\newtheorem{definition}[cprop]{Definition}
\newtheorem{example}[cprop]{Example}

\newtheorem{remark}[cprop]{Remark}

\def\cB{\mathcal{B}}

\def\cL{\mathcal{L}}

\def\R{\mathbb{R}}

\def\txtd{{\textnormal{d}}}
\makeatletter
\def\namedlabel#1#2{\begingroup
  \def\@currentlabel{#2}%
  \label{#1}\endgroup
}
\makeatother
\makeatletter
\def\@tocline#1#2#3#4#5#6#7{\relax
  \ifnum #1>\c@tocdepth
  \else
  \par \addpenalty\@secpenalty\addvspace{#2}%
  \begingroup \hyphenpenalty\@M
  \@ifempty{#4}{%
    \@tempdima\csname r@tocindent\number#1\endcsname\relax
  }{%
    \@tempdima#4\relax
  }%
  \parindent\z@ \leftskip#3\relax \advance\leftskip\@tempdima\relax
  \rightskip\@pnumwidth plus4em \parfillskip-\@pnumwidth
  #5\leavevmode\hskip-\@tempdima
  \ifcase #1
  \or\or \hskip 1em \or \hskip 2em \else \hskip 3em \fi%
  #6\nobreak\relax
  \hfill\hbox to\@pnumwidth{\@tocpagenum{#7}}\par
  \nobreak
  \endgroup
  \fi}
\makeatother

\def\txtd{{\textnormal{d}}}

\newcommand{\vertiii}[1]{{\left\vert\kern-0.25ex\left\vert\kern-0.25ex\left\vert #1
    \right\vert\kern-0.25ex\right\vert\kern-0.25ex\right\vert}}
\allowdisplaybreaks
\title{A semigroup approach for rough partial differential equations with jumps}

\author[A. Blessing Neam\c{t}u, H. Gao, and H. Li]{%
  Alexandra Blessing Neam\c{t}u$^{a}$,
  Hongjun Gao$^{b}$, and Haoyuan Li$^{b,*}$%
}

\address{%
  $^{a}$ Department of Mathematics and Statistics,
  University of Konstanz,\\
  Universit\"atsstra\ss{}e~10, 78464 Konstanz, Germany
}

\address{%
  $^{b}$ School of Mathematics, Southeast University,\\
  Nanjing 211189, PR China
}

\email{alexandra.blessing@uni-konstanz.de}
\email{hjgao@seu.edu.cn}
\email{230228445@seu.edu.cn}

\thanks{$^{*}$Corresponding author: Haoyuan Li.}
\date{}

\begin{document}

\maketitle

\begin{abstract}
 We establish an infinite-dimensional sewing lemma for discontinuous paths on a scale of Banach spaces. This allows us to construct Young and rough convolutions with jumps. Based on this we analyze the local and global well-posedness of semilinear parabolic rough partial differential equations (PDEs) with jumps in mixed space-time regularity spaces. 
\end{abstract}

{\bf Keywords}: rough partial differential equations, c\`adl\`ag $p$-variation rough paths. \\
{\bf Mathematics Subject Classification (2020)}: 60L20, 60H05, 60H10.

\section{Introduction}

The main goal of this work is to investigate the well-posedness of non-autonomous parabolic rough partial differential equations of the form
	\begin{align} \label{main}
		\begin{cases}\txtd u_t = A(t)u_t~\txtd t + f(u_t)~\txtd \mathbf{X}_t, \\
			u_0\in \cB_\alpha,
		\end{cases}
	\end{align}
	on a family of Banach spaces $(\cB_\alpha)_{\alpha\in\R}$, where $\mathbf{X}$ is a rough path with finite $p$-variation for $p\in[2,3)$. The Young case, i.e. $p\in[1,2)$ is also included.~We consider the mild formulation
    \begin{align}\label{mild}
        u_t = S_{t,0} u_0 + \int_0^t S_{t,r} f(u_r) \txtd \mathbf{X}_r,
    \end{align}
where $S$ is the evolution family generated by $A(\cdot)$; see Section \ref{sec:preliminaries} for the functional analytic setting. For simplicity, we omit the drift term in \eqref{main}. This can easily be incorporated under a standard local Lipschitz continuity assumption.~For viscosity solutions for rough PDEs with jumps we refer to the recent work~\cite{BF},  which relies on a rough forward backward approach. \\

The theory of rough paths with jumps provides a pathwise framework for differential equations driven by discontinuous signals~\cite{FrizShekhar2017,FZ18}. Further developments include signature-based approximation of functionals of  c\`adl\`ag rough paths and L\'evy-type signature models~\cite{C1}, as well as functional It\^o formulas and Taylor expansions for non-anticipative functionals of  c\`adl\`ag rough paths \cite{C2}.~These developments concern signatures and functional calculus for discontinuous drivers.\\ 

On the other hand, rough PDEs with jumps~\cite{BF} arise naturally in systems that are perturbed by temporally 
irregular and discontinuous forcing terms. Several examples include 
fluid equations subject to intermittent L\'evy forcing, such as the 
two-dimensional stochastic Navier-Stokes equations studied in 
\cite{BrzezniakHausenblasZhu2013,DongXie2011}, as well as transport and fluid 
equations driven by signals that are irregular in time and are naturally 
treated using unbounded rough drivers 
\cite{BailleulGubinelli2017,HocquetHofmanova2018}. Reaction-diffusion and 
population models provide another important class of examples:~heavy-tailed 
jumps may induce transitions 
between metastable states that are different from transitions 
caused by Gaussian noise 
\cite{DebusscheHoegeleImkeller2011,DebusscheHoegeleImkeller2013}. Finally, 
rough equations with jumps arise naturally in multiscale systems with 
intermittent or heavy-tailed fast dynamics which have been explored in
\cite{Chevyrev2018,ChevyrevFriz2019,FrizShekhar2017,FZ18,LayritzPavlyukevichRammigKuehn2025}.

While mild solutions for jump-driven SPDEs \cite{BrzezniakHausenblasZhu2013,Hausenblas2005,MarinelliPrevotRockner2010,PeszatZabczyk2007} and RPDEs \cite{DeyaGubinelliTindel2012, GHN21, GerasimovicsHairer2019, GubinelliTindel2010, HBN19,HocquetNeamtu2024} driven by H\"older-continuous paths are both well developed, their unification requires the development of new techniques, in particular of an infinite-dimensional sewing lemma for $p$-variation paths which allows one to define the stochastic convolution \eqref{mild}. In particular, for continuous paths ${\bf X}$, their \(p\)-variation control becomes uniformly small on sufficiently short intervals. This local smallness is one of the main ingredients in the fixed-point argument used to prove local well-posedness of \eqref{main}. However, in the presence of jumps, this mechanism breaks down. A control associated with the \(p\)-variation of \({\bf X} \) may have atoms and therefore does not have to become small on intervals containing a jump. Consequently, estimates that are closed by choosing a sufficiently short time interval for H\"older-continuous paths, as investigated in~\cite{GHN21}, cannot be applied uniformly across jump times.~In particular, one has to distinguish intervals with small continuous rough variation from intervals containing a large jump. Since a finite \(p\)-variation path has at most finitely many jumps larger than any prescribed threshold, the large jumps can be isolated, whereas the accumulation of small jumps can be handled with rough-path estimates.~In this way, the theory of rough PDEs with jumps lies at the intersection of rough evolution equations driven by continuous paths and probabilistic SPDEs with L\'evy noise. \\

In this work, we consider the mild formulation which exploits the smoothing properties of the parabolic evolution family, while importing from c\`adl\`ag rough-path theory a pathwise  treatment of discontinuous rough drivers.~In the random-field setting \cite{Balan14,Balan15} developed solution theories for
SPDEs driven by $\alpha$-stable and spatially colored L\'evy noise, whereas in the infinite-dimensional evolution-equation
setting~\cite{KosmalaRiedle22} established existence and uniqueness of
mild solutions driven by cylindrical stable noise, under coefficient
assumptions involving fractional domains of the generator, similar to our setting.~Moreover,
\cite{Hausenblas2005} establishes existence, uniqueness and regularity
for parabolic SPDEs driven by Poisson random measures, while
\cite{BrzezniakHausenblas09} developed
maximal regularity estimates for L\'evy-driven stochastic
convolutions. Further maximal inequalities and exponential estimates
were obtained in~\cite{ZhuBrzezniakLiu19}, with applications to the existence and
uniqueness of mild solutions of stochastic quasi-geostrophic
equations. Complementing these semigroup-based approaches,~ \cite{BechtelGermVeraar26} developed an
extended critical variational framework for SPDEs with L\'evy noise.~In particular, our results are applicable to finite-dimensional L\'evy processes. As semimartingales,
they admit It\^o rough-path enhancements~\cite{FZ18} with almost surely finite
$p$-variation on compact time intervals for $p>2$. Choosing $p\in(2,3)$ therefore
allows an application of our deterministic rough PDEs results,
provided that the coefficients satisfy suitable spatial
regularity assumptions.~Identifying the resulting solutions with stochastic mild
solutions requires a corresponding identification of the rough
convolution integral with the It\^o stochastic convolution, which we postpone to a future work.~A related approach to discontinuous forcing is provided by Marcus
canonical equations and their rough-path formulation
\cite{ChevyrevFriz2019}.~In our setting, the rough convolutions 
 are defined through forward compensated Riemann sums.
An identification with a Marcus formulation would require an
additional treatment of the dynamics across jumps and is not a topic of this work. 
\\

Our results build on the discontinuous rough-path framework \cite{FZ18} and the theory of non-autonomous rough PDEs developed by \cite{GHN21}. Due to the reasons mentioned above, combining these frameworks requires additional tools. The finite-dimensional theory \cite{FZ18} does not address the interaction between discontinuous rough integration and parabolic evolution operator acting on a monotone scale of interpolation spaces.~Conversely, the continuous rough-PDE theory~\cite{GHN21} exploits temporal regularity to control convolution errors and obtain spatial smoothing.~The smoothing effect of the parabolic evolution family cannot compensate a jump time, while the associated variation controls of the rough input are in general not small on short intervals containing the jump. We therefore develop a multiplicative sewing argument with nonregular controls in an infinite-dimensional setting and establish convolution estimates compatible with both the spatial scale and the jump structure.~These estimates provide the basis for the subsequent well-posedness analysis.\\



    One of the main contributions of this work is to introduce an infinite-dimensional sewing lemma handling discontinuous paths. At an informal level, this reads as follows, see Theorem \ref{thm:mul01} for more details.
    \begin{theorem}{\em(Informal statement of the sewing lemma)}
        Let $(\mathcal{M}, |\cdot|, d)$ be a submultiplicative monoid and let $\mu$ be a two-time parameter $\mathcal{M}$-valued function. Assume that there exist $N\geq 1$, controls $\omega_{1,j}, \omega_{2,j}$ and exponents $\alpha_{1,j},\alpha_{2,j}>0$, $\alpha_{1,j}+\alpha_{2,j}>1$, for all $j=1,\dots,N$  such that 
  \[  d(\mu_{t,s}, \mu_{t,u}\circ \mu_{u,s})\lesssim  \sum_{j=1}^{N} \omega_{1,j}^{\alpha_{1,j}}(t,u) \omega_{2,j}^{\alpha_{2,j}}(u,s).\]
	Then there exists a unique multiplicative map $\varphi$ such that for every $0\leq s \leq t \leq T$, we have  
\begin{align*}
&\varphi_{t,s}:=\lim\limits_{|\mathcal{P}|\to 0}\mu_{t,s}^{\mathcal{P}},
\end{align*}
in an appropriate sense, where $\mu^{\mathcal{P}}_{t,s} = \prod\limits_{i=0}^{k-1} \mu_{t_{i+1},t_i}$ for a partition $\mathcal{P}$ of the corresponding interval. Moreover,
	\begin{align*}
d(\varphi_{t,s}, \mu_{t,s})\lesssim  \sum_{j=1}^{N} \omega_{1,j}^{\alpha_{1,j}}(t,s+) \omega_{2,j}^{\alpha_{2,j}}(t-,s).
\end{align*}

    \end{theorem}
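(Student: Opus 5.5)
The natural approach is the classical Lyons--Feyel--de La Pradelle sewing argument by successive point removal. It must be adapted in two ways: to a submultiplicative monoid, where we compose instead of add, and to controls that may have atoms, which is the source of the one-sided limits $\omega(t,s+)$, $\omega(t-,s)$ in the final estimate.

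Throughout, I assume the monoid structure gives a Lipschitz-type bound
\[
d(a\circ b\circ c,\ a\circ b'\circ c)\le |a|\,|c|\,d(b,b').
\]
I also assume an a priori bound $|\mu_{t,s}|\le C$ on $[0,T]$, which I would establish first by a crude induction or take as part of the precise hypotheses of Theorem \ref{thm:mul01}. Under these, removing a point $t_i$ from a partition $\mathcal{P}$ changes $\mu^{\mathcal{P}}$ by at most
\[
C^2\sum_j \omega_{1,j}^{\alpha_{1,j}}(t_{i+1},t_i)\,\omega_{2,j}^{\alpha_{2,j}}(t_i,t_{i-1}).
\]

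\textbf{Step 1 (choice of the point to remove).} For continuous superadditive controls one picks $t_i$ with $\omega(t_{i+1},t_{i-1})\le \frac{2}{k-1}\omega(t,s)$. Here $\omega_{1,j}$ and $\omega_{2,j}$ differ, so I would use the product control $\bar\omega_j(a,b,c)=\omega_{1,j}(c,b)^{\theta_j}\omega_{2,j}(b,a)^{1-\theta_j}$ with $\theta_j=\alpha_{1,j}/(\alpha_{1,j}+\alpha_{2,j})$. Via Hölder this is superadditive over consecutive triples. Then
\[
\sum_j \omega_{1,j}^{\alpha_{1,j}}\omega_{2,j}^{\alpha_{2,j}} = \sum_j \bar\omega_j^{\gamma_j}, \qquad \gamma_j=\alpha_{1,j}+\alpha_{2,j}>1.
\]
Summing over $i$ and pigeonholing gives a point whose removal costs at most $\sum_j (2/(k-1))^{\gamma_j}\,\omega_{1,j}(t,s)^{\alpha_{1,j}}\omega_{2,j}(t,s)^{\alpha_{2,j}}$. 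With $N$ terms, the pigeonhole uses the sum $\sum_j\bar\omega_j$ normalized appropriately, or one applies a maximal-inequality variant; the constant picks up a factor depending on $N$.

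\textbf{Step 2 (sharper one-sided estimate).} The inner points $t_i$ lie in $(s,t)$. So in the estimate for the intervals $[t_i,t_{i-1}]$ and $[t_{i+1},t_i]$, the first-factor intervals never contain $s$ and the second-factor intervals never contain $t$. Keeping track of this through the superadditivity argument, one may replace $\omega_{1,j}(t,s)$ by $\omega_{1,j}(t,s+)$ and $\omega_{2,j}(t,s)$ by $\omega_{2,j}(t-,s)$, up to a limit. Iterating the removal until the trivial partition is reached and summing $\sum_k k^{-\gamma_j}<\infty$ gives the uniform maximal bound
\[
d(\mu^{\mathcal{P}}_{t,s},\mu_{t,s})\lesssim \sum_j \omega_{1,j}^{\alpha_{1,j}}(t,s+)\,\omega_{2,j}^{\alpha_{2,j}}(t-,s),
\]
with $|\mu^{\mathcal{P}}|$ uniformly bounded, which also closes the induction from the preamble.

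\textbf{Step 3 (convergence).} This is the main obstacle. With atoms, $\omega(t_{i+1},t_i)$ need not vanish as the mesh tends to $0$, so Cauchy-ness fails in the naive mesh sense. I expect the "appropriate sense" to be the limit along refinement of partitions (net convergence), as in the cadlag sewing lemma of \cite{FZ18}. To compare $\mu^{\mathcal{P}}$ with a refinement $\mathcal{P}'$, apply Step 2 on each cell $[t_i,t_{i+1}]$ and telescope using submultiplicativity:
\[
d(\mu^{\mathcal{P}'},\mu^{\mathcal{P}})\lesssim \sum_i\sum_j \omega_{1,j}^{\alpha_{1,j}}(t_{i+1},t_i+)\,\omega_{2,j}^{\alpha_{2,j}}(t_{i+1}-,t_i).
\]
Fix $\varepsilon$ and include all finitely many times with atoms above $\varepsilon$ in $\mathcal{P}$. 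Then on each cell at least one of the one-sided controls is $\lesssim\varepsilon$ plus the continuous part. Since $\gamma_j>1$, bounding one factor by $\max_i$ and the remaining sum by superadditivity makes the total $\lesssim \varepsilon^{\delta}$. This gives Cauchy-ness along refinements, and completeness of $(\mathcal{M},d)$ yields $\varphi$.

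\textbf{Step 4 (multiplicativity and uniqueness).} Multiplicativity $\varphi_{t,s}=\varphi_{t,u}\circ\varphi_{u,s}$ follows by restricting to partitions containing $u$. The estimate passes to the limit from Step 2. For uniqueness, take two multiplicative maps satisfying the bound and compare them along a fine partition by telescoping. The resulting sum vanishes by the same atom-isolation argument as in Step 3.
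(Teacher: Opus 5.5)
Your argument is sound once the growth hypothesis is fixed (see below), but it is organised differently from the paper's.

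The paper builds the smallness in first. For fixed $\varepsilon$ it uses Lemma \ref{partition} together with slightly lowered exponents $\alpha'_{i,j}$ to get a partition on whose cells the defect is at most $C\varepsilon\,\omega_3^\eta$, where the $N$ terms are merged into one control $\omega_3$ via Lemma \ref{product-control}. It then runs the single-control point-removal of \cite{GHN21} inside each cell to get Cauchy-ness. Finally it proves \eqref{est001} by a second run with the control $\omega_4$, built from the one-sided limits $\omega_{1,j}(\cdot,\cdot+)$ and $\omega_{2,j}(\cdot-,\cdot)$.

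You instead prove the one-sided maximal inequality once, uniformly over partitions. You H\"older-sum the three-point quantities and pigeonhole the normalised sum over $j$. The one-sided limits come for free because only interior points are ever removed: $\sum_{i=1}^{k-1}\omega_{1,j}(t_{i+1},t_i)\le\omega_{1,j}(t,t_1)\le\omega_{1,j}(t,s+)$, and likewise $\sum_{i=1}^{k-1}\omega_{2,j}(t_i,t_{i-1})\le\omega_{2,j}(t-,s)$. That single inequality gives \eqref{est001} directly. Applied cell by cell, it also gives the Cauchy property, with the partition lemma entering only at the end to make $\sum_i\sum_j\omega_{1,j}^{\alpha_{1,j}}(t_{i+1},t_i+)\,\omega_{2,j}^{\alpha_{2,j}}(t_{i+1}-,t_i)$ small. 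Your route is more economical: there is no exponent loss in the maximal inequality and no need to know that products of one-sided limits are controls. The paper's route has the advantage of reusing the continuous-case lemma essentially verbatim.

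Two points need correcting.

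First, the growth bound. Your removal estimate uses $|a|\,|c|$ where $a,c$ are \emph{partial partition products}. So you need a bound on $|\mu^{\mathcal Q}_{v,u}|$ that is uniform over all partitions of subintervals, which is exactly the paper's hypothesis $\mu\in\mathrm{BG}_2$. A bound on $|\mu_{t,s}|$ does not give this, and your proposed induction cannot be closed in this generality. In Definition \ref{monoid01}, $|\cdot|$ is neither submultiplicative nor controlled by $d$, so smallness of $d(\mu^{\mathcal P},\mu)$ says nothing about $|\mu^{\mathcal P}|$. The same bound is what places the Cauchy net inside a complete set $B_C$; only the sublevel sets are assumed complete, not $(\mathcal M,d)$. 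You must also impose it on the competitor in the uniqueness step. The informal statement hides this hypothesis, so simply assume it.

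Second, the partition choice in Step~3. Including the large-atom times is not by itself enough to make one of the two one-sided controls small on every cell. A cell may carry a left atom of $\omega_{1,j}$ at its right end and a right atom of $\omega_{2,j}$ at its left end. Also, "continuous part" has no canonical meaning for a merely superadditive $\omega$. What you need is precisely Lemma \ref{partition}: $\omega_{1,j}(t_{i+1},t_i+)\wedge\omega_{2,j}(t_{i+1}-,t_i)<\varepsilon$ on every cell. Its proof separates such endpoints and takes the mesh small. With that lemma, your H\"older argument does yield the claimed bound $\lesssim\varepsilon^{\delta}$.
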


    Based on this we can construct rough convolutions for discontinuous processes of $p$-variation for $p\in[2,3)$ and investigate the local and global well-posedness of \eqref{main}.~The treatment of jumps in sewing arguments has been developed in
both deterministic~\cite{FZ18} and stochastic settings~\cite{AllanPieper26}.~In particular~\cite{AllanPieper26} developed a stochastic sewing
lemma accommodating multiple discontinuous controls, which is
used to construct rough stochastic integrals and solve rough
stochastic differential equations with jumps.~The additional difficulty in the present work arises from the
parabolic evolution operators. The convolution sums involve an
evolution family generated by unbounded operators, and their
defects must be estimated across different levels of a spatial
Banach scale. Besides the usual controlled-path terms, these
defects contain propagator increments of the form
$(S_{u,s}-\mathrm{id})\xi_{t,u}$, for an approximation term $\xi$, whose estimates require a trade-off
between temporal and spatial regularity. At jump times,
one can not rely on the
smoothing estimates of the parabolic evolution family, as provided in~\eqref{smooth}. We address these issues through a deterministic
multiplicative sewing lemma with nonregular controls.\\

    {\em Structure of the paper.}  This manuscript is structured as follows. In Section \ref{sec:preliminaries} we describe the functional analytic background together with important properties of $p$-variation paths and controls.  Section \ref{sec:Sewing} establishes the multiplicative and affine sewing lemmas for discontinuous paths and constructs the Young integral. Section \ref{sec:wellpose} is devoted to the existence and uniqueness of local mild solutions to Young PDEs, as stated in Theorem \ref{local-2p}.~To this aim, we introduce a family of mixed space-variation regularity spaces encoding different space and time regularity required in the pathwise well-posedness argument. The local solution is extended to a global one in Theorem \ref{Glo-young}, under a spatial regularity assumption of the initial datum. Stability results are specified in Lemma \ref{cadlagGLOBAL}. Since we cannot rely on the smoothing effect of the parabolic evolution operators, we impose an additional spatial regularity assumption on the diffusion coefficient $f$. For rough PDEs with H\"older continuous drivers, an improvement of the spatial regularity of the diffusion coefficient was assumed in \cite{HBN19, MJ, NeamtuSeitz23}.   Section \ref{sec:sec:prough} treats the rough regime, i.e. paths of finite  $p$-variation  with $p\in[2,3)$. The local well-posedness of \eqref{main} is established in Theorem \ref{local-rough-jump}, whereas the existence of a global solution is postponed to a forthcoming work.  
    It is known that global well-posedness of rough PDEs based on the mild formulation is a challenging task \cite{HesseNeamtu2022}. 
    Due to the fact that we cannot rely on the smoothing property of parabolic evolution operators, we assume that the diffusion coefficient $f$ gains spatial regularity. 
       Finally, Section \ref{sec:example} contains two examples of diffusion coefficients which satisfy the assumption of Theorem \ref{Glo-young} and \ref{local-rough-jump}. We believe that our results can also cover SPDEs with boundary jump noise~\cite{HartmannPavlyukevich19} based on the techniques developed in~\cite{NeamtuSeitz23} in the continuous case.~Moreover, our global well-posedness result in the Young regime provides a
basis for constructing random dynamical systems provided that $A$ is time-independent, following the approaches in~\cite{BailleulRiedelScheutzow17, CG26,CG25,HesseNeamtu2022,KB23,YLZ23}.\\


\section{Preliminaries}\label{sec:preliminaries}
Throughout the paper, we fix a finite time horizon   $T>0$. We write $a\lesssim b$ if there exists a constant $C>0$, depending only
on fixed parameters, such that $a\leq Cb$.  The value of $C$ may change from line to line.
If $\mathcal E_1$
 and $\mathcal E_2$ are Banach spaces, we denote by $\cL(\mathcal{E}_1, \mathcal{E}_2)$ the space of continuous linear mapping from $\mathcal{E}_1$ to $\mathcal{E}_2$,  equipped with the operator norm $\|\cdot \|_{\cL(\mathcal{E}_1, 
 \mathcal{E}_2)}$. We write $\mathcal{L}(\mathcal{E}_1)=\mathcal{L}(\mathcal{E}_1, \mathcal{E}_1)$. \(\mathcal L_s(\mathcal{E}_1)\) denotes \(\mathcal L(\mathcal{E}_1)\) endowed
with the strong operator topology. 
\textcolor{black}{
 We write $C^k = C^k(\mathcal{E}_1, \mathcal{L}(\mathbb{R}^d, \mathcal{E}_2))$ for the space of $k$-times continuously Fr{\'e}chet differentiable  maps $f \colon \mathcal{E}_1 \to \mathcal{L}(\mathbb{R}^d, \mathcal{E}_2)$ such that $D^i f$ sends bounded subsets of $\mathcal E_1$ to bounded sets of $\mathcal E_2$, for all $i=0,\dots, k$. For $R>0$, 
 \begin{align*}
\|f\|_{C^k}:=
\sum_{i=0}^{k}
\sup_{|u|_{\mathcal E_1}\leq R}
\|D^if(u)\|_
{\mathcal L^i(\mathcal E_1;\mathcal L(\mathbb R^d,\mathcal E_2))}.
 \end{align*} 
Here $D^0 f=f$,  $\mathcal L^i(\mathcal E_1;F)$ denotes the space of bounded $i$-linear maps from $\mathcal E_1^i$ to $F$. 
We write $C_b^k(\mathcal E_1, \mathcal L(\mathbb R^d, \mathcal E_2))$ for the subspace of $C^k$, where $f$ and its derivatives up to order $k$ are uniformly bounded.   We equip this space with the norm 
	\begin{equation*}
		\|f\|_{C^k_b} := \sum_{i=0}^k\|D^if\|_\infty,
	\end{equation*}
	where $\|\hspace{0.5pt}\cdot\hspace{0.5pt}\|_{\infty}$ denotes the supremum norm on the corresponding spaces of operators. }
For $n\geq 2,$ define the simplex $\Delta_n=\{T\geq t_n\geq t_{n-1}\geq \dots \geq t_1\geq 0\}.$ Define $C_n ([0,T]; \mathcal{E}_1)$ as the space of continuous functions from $\Delta_n$ to $\mathcal{E}_1$. 
\subsection{Functional analytic tools}
We begin by introducing the notion of a monotone family of interpolation spaces, which provides  the spatial regularity associated with \eqref{main}.
\begin{definition}
A family of separable Banach spaces $(\cB_\alpha, |\cdot|_\alpha)_{\alpha\in \mathbb{R}}$ is called a monotone family of interpolation spaces if for every $\alpha<\beta$, $\cB_\beta$ is a continuously embedded, dense subspace of $\cB_\alpha$, and if the following interpolation inequality holds: for $\alpha\leq \beta\leq \gamma$ and $x\in \cB_\alpha \cap \cB_\gamma$:
$$
|x|_\beta^{\gamma-\alpha}\lesssim |x|_\alpha^{\gamma-\beta}|x|_\gamma^{\beta-\alpha}.
$$
\end{definition}
Next, we state the assumptions on the family $(A(t))_{t\in [0,T]}$. Fix a number $\vartheta>0$, $\lambda \in \R$ and define a sector $\Sigma_{\vartheta,\lambda}$ of the complex plane as follows:
	\begin{equation}\label{nota:sigma}
		\Sigma_{\vartheta,\lambda}:=\{\zeta \in \mathbb C,\, |\arg(\zeta -\lambda) | < \pi/2+\vartheta\}.
	\end{equation}
We use the convention $\arg 0=0$ throughout this paper, so that, in particular, $[\lambda, \infty) \subset \Sigma_{\vartheta,\lambda}$.
We further assume that  $(\mathcal E_1,|\cdot|_1)$  and  ($\mathcal E_0,|\cdot|_0)$ are Banach spaces satisfying
$\mathcal E_1\hookrightarrow \mathcal E_0$, where the embedding is continuous and dense.
\begin{assumption}\label{ass}
		Let $(A(t))_{t\in[0,T]}$ be a family of closed, densely defined linear operators on $\mathcal E_0$ with domains containing $\mathcal E_1$. Assume that there exist constants $\Lambda,M>0$ (depending only on $T$), control $\omega$, $\varrho \in (0,1]$ such that the following conditions hold.
		\begin{enumerate}[label=(A\arabic*)]
			\item\label{A1} For each $t\in[0,T]$, the resolvent set $\rho (A(t))$ contains $\Sigma_{\vartheta,\lambda}$, and for $i =0,1,$
			\begin{equation}\label{resolvent}
				\|(\zeta  - A(t))^{-1}\|_{\cL(\mathcal E_i)}\leq \Lambda\big(1+|\zeta |\big)^{-1},\quad \forall\, \zeta \in\Sigma_{\vartheta,\lambda}.
			\end{equation}
			
			\item\label{A2}
			For any $t\in[0,T],$ we have
			\[
			\|(\zeta -A(t))^{-1}\|_{\cL(\mathcal E_0,\mathcal E_1)}\leq M,\quad \forall\, \zeta \in\Sigma_{\vartheta,\lambda}.
			\]
			
			\item\label{A3} For all $(t,s) \in \Delta_2$:
			\begin{align*}
				\|A(t)  - A(s)\|_{\cL(\mathcal E_1,\mathcal E_0)} \leq \omega^\varrho(t,s).
			\end{align*}
			
			\item\label{A4} The above control satisfies the following integrability property: for all $(t,s)\in\Delta _2$
			\begin{align*}
				\int_s^t \frac{\omega^\varrho(r,s) dr}{r-s} < \infty.
			\end{align*}
		\end{enumerate}
        Note that, among the above assumptions, we require that  $\omega^\varrho$ is a continuous map. It satisfies  $\omega^\varrho(s,s)=0$, for all $s\in [0, T]$ and  $\omega^\varrho(t,r)+\omega^\varrho(r,s)\leq \omega^\varrho(t,s)$, for all $0\leq s\leq r\leq t\leq T$. 
	\end{assumption}
     	\begin{definition}
		\label{propagator}
		We say that $S\colon \Delta _2\to \cL(\mathcal E_0)$ is a propagator associated to the family $(A(t))_{t\in[0,T]}$ of unbounded, closed operators with dense domains containing $\mathcal E_1$ if the following properties hold.
		\begin{enumerate}[label=(P\arabic*)]
			\item\label{P1} $S\in C_2([0,T];\cL_s(\mathcal E_0))$ and there exist constants $\lambda ,\Lambda>0$ such that for every $(t,s) \in \Delta_2$
			\begin{equation}
				\label{growth_S}
				\|S_{t,s}\|_{\cL(\mathcal E_0)}, \|S_{t,s}\|_{\cL(\mathcal E_1)} \leq \Lambda e^{\lambda(t-s)}\;.
			\end{equation}
			
			\item\label{P2} $S_{t,t}=\textup{id}$ and $S_{t,s}=S_{t,u }S_{u ,s}$ for $(t,u ,s)\in\Delta _3$. 
			
			\item\label{P3} There exists $C_T>0$ such that for every $(t,s)\in \Delta_2,$ $s\neq t$:
			\[
			\|S_{t,s}-\textup{id}\|_{\cL(\mathcal E_1,\mathcal E_0)} \leq C_T |t-s|\,.
			\]
			\item\label{P4} For all $s,t \in [0,T]$ and $x \in \mathcal E_1$ we have:
			$$
			\frac{d}{dt}S_{t,s}x = A(t)S_{t,s}x \quad  \text{and}  \quad \frac{d}{ds}S_{t,s}x=-S_{t,s} A(s)x\,,$$
			where the differentiation is taking place in the Banach space $\mathcal E_0$.
			\item\label{P5} For every $(t,s)\in \Delta_2,$ $s\neq t$ we have that $S_{t,s} \mathcal E_0 \subset \mathcal E_1$ and for some constant $N_T>0$
			\begin{equation}
				\label{smoothing_S}
				\|A(t) S_{t,s}\|_{\cL(\mathcal E_0)} \lesssim \|S_{t,s}\|_{\cL(\mathcal E_0,\mathcal E_1)} \leq  N_{T}|t-s|^{-1}.
			\end{equation} 
		\end{enumerate}
	\end{definition}
	
	\begin{definition}\label{fullrange}
		Let $(\cB_\alpha )_{\alpha \in \mathbb R}$ be a monotone family of interpolation spaces, and fix an interval $I\subset\mathbb R.$
		We say that $S$ is a propagator on the full range $(\cB_{\alpha })_{\alpha \in I}$ if for every $\alpha \in I,$ $S$ restricted to $\cB_\alpha $ is itself a propagator, in the sense of Definition \ref{propagator} for the pair $(\mathcal E_0,\mathcal E_1)=(\cB_\alpha ,\cB_{\alpha +1})$.
	\end{definition}
\begin{remark}\label{dir}
In this paper, we fix a family of unbounded operators $(A(t))_{t\in [0,T]}$ that acts on the monotone family $(\cB_\alpha )_{\alpha \in \mathbb R}$, such that they generate the propagator $(S_{t,s})_{(t,s)\in \Delta_2}$  on the full range in the sense of  Definition \ref{fullrange}. We have the following two properties which will be used frequently, 

(1) Continuity. $S\in C_2([0,T]; \cL_s (\cB_\alpha))$. For $x\in \cB_\alpha,$ $\lim\limits_{u\uparrow s}S_{t,u}x=S_{t,s}x$, $\lim\limits_{u\downarrow t}S_{u,s}x=S_{t,s}x$. Especially, we denote $S_{t,t-}x:=\lim\limits_{s\uparrow t}S_{t,s}x=x, S_{t+,t}x:=\lim\limits_{s\downarrow t}S_{s,t}x=x$. 
 
(2) Smoothing inequalities. $S_{t,s}$ maps $\cB_\alpha$ to $\cB_\beta$, for all admissible
indices $\alpha,\beta$,  $\beta\geq \alpha$,  $\sigma \in [0,1]$, 
and $0\leq s<t\leq T$, one has
\begin{align}\label{smooth}
|(S_{t,s}-\textup{id})x|_\alpha\lesssim |t-s|^{\sigma}|x|_{\alpha+\sigma}, \quad |S_{t,s}x|_{\beta}\lesssim |t-s|^{-(\beta-\alpha)}|x|_\alpha.
\end{align}
\end{remark}
\subsection{\texorpdfstring{$P$}{P}-variation spaces and controls}
For $n\in \mathbb{N}$, we define a partition of $[0,T]$ by $
\mathcal{P}
:=
\{0=t_{0}<t_{1}<\cdots<t_{n}=T\}$,  and define the mesh size  by $|\mathcal{P}|:=\max\limits_{i}|t_{i+1}-t_{i}|$.   We write $(u,v)\in \mathcal{P}$ if $u$ and $v$ are consecutive points of
the partition $\mathcal P$.
A path is a function from $[0,T]$ (or any other interval clear from the context) into a normed space  $(\mathcal{E},~|\cdot|)$. 
Such a path is called  regulated if both left and right limits exist for all points and we write $X \in V^\infty( [0,T];\mathcal{E})$ accordingly,  endowed with the semi  norm 
$\|X\|_{V^{\infty}([0,T];\mathcal{E})}:=\sup\limits_{t\in [0,T]}|X_t-X_0|$. 
Denote
increment operator $\delta$ for $f: [0,T]\rightarrow \mathcal{E}$, $g: \Delta_2\rightarrow \mathcal{E}$ as 
\begin{align*}
 \delta f_{t,s}:=f_{t}-f_{s}, \quad \text{for} \quad (t,s)\in \Delta_{2}.
\end{align*}
\begin{align*}
\delta g_{t,u,s}:=g_{t,s}-g_{t,u}-g_{u,s}, \quad \text{for}\quad (t,u,s)\in \Delta_{3}.
\end{align*}
Beside, we denote the mild increment operator by $\delta^{S}$, $\delta^S f_{t,s}:= f_t -S_{t,s}f_s.$

For $p\in (0,\infty)$, $f: [0,T]\rightarrow \mathcal{E}$, we say $f\in V^{p}([0,T], \mathcal{E})$, which has finite $p$-variation, if
\begin{align*}
\|f\|_{V^{p}([0,T]; \mathcal{E})}:=(\sup\limits_{\mathcal{P}}\sum\limits_{[s,t]\in \mathcal{P}}|\delta f_{t,s}|^{p})^{\frac{1}{p}}<\infty.
\end{align*}
One readily checks that $V^1 \subset V^p \subset  V^q \subset V^\infty$ for $1\leq p\leq q<\infty$.
Set
$$
 Osc(X;[0,T]) := \sup\limits_{0 \leq s \leq t \leq T}  |X_{t}-X_{s}|.
$$
We further introduce the following notations: 
$\Delta_t^- X:=\delta X_{t,t-}:=\lim\limits_{s\uparrow t} \delta X_{t,s}$, $\Delta_t^+ X:=\delta X_{t+,t}:=\lim\limits_{s\downarrow t}\delta X_{s,t}$. 
\textcolor{black}{
We also define $p$-variation for a two-time parameter function $f: \Delta_2 \to \mathcal E$
\begin{align*}
\|f\|_{V^p([0,T];\mathcal E)}:=(\sup\limits_{\mathcal{P}}\sum\limits_{[s,t]\in \mathcal{P}}|\delta f_{t,s}|^{p})^{\frac{1}{p}}<\infty.
\end{align*}}
\begin{definition}(\textbf{Control})
A  control is a function $\omega = \omega(t, s)$ from $\Delta_{2}:=\{ (t,s):T\geq t \geq s \geq 0\}$ into $[0,\infty)$,
which is null on the diagonal and super-additive in the sense $\omega(t,u)+\omega(u,s) \leq \omega(t,s)$.
\end{definition}

\begin{remark}\label{reg-control}
A control $\omega$ is called regular if it is continuous at the diagonal, i.e. 
$ \lim\limits_{\substack{(t,s)\in\Delta_2\\ |t-s|\to0}}
    \omega(t,s)=0.$ A simple example of regular control is given by $(s,t)\to |t-s|^\theta$, for $\theta\geq 1$, see \cite[Section 1.2]{FV10} for more details.
\end{remark}
\begin{remark}
(1) Let $X\in V^p([0,T];\mathcal E)$. Then
$\omega_{X,p}(t,s)
    :=
    \|X\|_{V^p([s,t];\mathcal E)}^p,
 0\le s\le t\le T$ defines a control.
 
 
 (2) If $X$ is continuous, right-continuous or left-continuous, then control $\omega_{X,p}$ shares similar continuities.  For example, if $X$ is right-continuous at $s$, then $\omega_{X,p}(s+,s):=\lim\limits_{t\downarrow s}\omega_{X,p}(t,s)=0$. 
\begin{proof}
(1)    For the convenience of the reader we indicate a short proof. We observe that $\omega_{X,p}(s,s)=0$ at first. To prove super-additivity, let $0\le s\le u\le t\le T$ and let $\mathcal P_1$ and $\mathcal P_2$ be arbitrary finite partitions of
$[s,u]$ and $[u,t]$, respectively.  Then $\mathcal{P}:=\mathcal{P}_{1}\cup \mathcal{P}_{2}$ is a finite partition of $[s,t]$.
Hence, by the definition of
$\omega_{X,p}(t,s)$,
 $$\sum_{[a,b]\in \mathcal{P}_{1}}|\delta X_{b,a}|^{p}+\sum_{[a,b]\in \mathcal{P}_{2}}|\delta X_{b,a}|^{p}\leq {\omega}_{X,p}(t,s).$$
Taking the supremum over all $\mathcal{P}_{1}$ and $\mathcal{P}_{2}$ yields  the super-additivity of $\omega_{X,p}$, i.e.~$\omega_{X,p}(t,u)+\omega_{X,p}(u,s)\leq \omega_{X,p}(t,s)$.

(2) The proof is similar to that of \cite[Lemma 7.1]{FZ18} and is omitted.
\end{proof} \end{remark}
 
\begin{lemma}\label{product-control}
Let $\omega_1$ and $\omega_2$ be controls on $[0,T]$, and let
$a,b>0$ satisfy $a+b\geq1$. Then
\[
\omega(t,s)
:=
\omega_1^a(t,s)\omega_2^b(t,s),
\qquad (t,s)\in\Delta_2,
\]
is also a control.
\end{lemma}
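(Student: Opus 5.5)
Two properties have to be checked for $\omega=\omega_1^a\omega_2^b$: that it is nonnegative and vanishes on the diagonal, and that it is super-additive. Nonnegativity is clear. Vanishing on the diagonal follows from $\omega_1(s,s)=\omega_2(s,s)=0$ together with $a,b>0$. So the whole proof is super-additivity, i.e. showing for $0\le s\le u\le t\le T$ that
\[
\omega_1^a(t,u)\omega_2^b(t,u)+\omega_1^a(u,s)\omega_2^b(u,s)\le \omega_1^a(t,s)\omega_2^b(t,s).
\]

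The plan is to reduce to the critical case $a+b=1$ and then apply Hölder's inequality for two-term sums.

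\textbf{Step 1: the case $a+b=1$.} Write $x_1=\omega_1(t,u)$, $x_2=\omega_1(u,s)$, $y_1=\omega_2(t,u)$, $y_2=\omega_2(u,s)$. Hölder's inequality with exponents $1/a$ and $1/b$ gives
\[
x_1^ay_1^b+x_2^ay_2^b\le (x_1+x_2)^a(y_1+y_2)^b.
\]
Super-additivity of $\omega_1$ and $\omega_2$ bounds the right-hand side by $\omega_1^a(t,s)\omega_2^b(t,s)$, since $r\mapsto r^a$ and $r\mapsto r^b$ are nondecreasing.

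\textbf{Step 2: the case $a+b=\theta>1$.} Set $a'=a/\theta$ and $b'=b/\theta$, so that $a'+b'=1$. Then $\omega=\tilde\omega^\theta$ with $\tilde\omega:=\omega_1^{a'}\omega_2^{b'}$, and Step 1 shows $\tilde\omega$ is a control. It remains to show that a power $\theta\ge1$ of a control is again a control. This uses the elementary inequality $\alpha^\theta+\beta^\theta\le(\alpha+\beta)^\theta$ for $\alpha,\beta\ge0$, which holds by convexity. Equivalently, divide by $(\alpha+\beta)^\theta$ and use $r^\theta\le r$ for $r\in[0,1]$. Combining this with super-additivity of $\tilde\omega$ and monotonicity of $r\mapsto r^\theta$ gives super-additivity of $\omega$.

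There is no serious obstacle here. The only points needing care are the correct application of Hölder's inequality in the critical case, and the degenerate situations where some $x_i$ or $y_i$ vanish; these cause no trouble since $a,b>0$ and all quantities are nonnegative.
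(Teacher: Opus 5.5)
Your proof is correct and follows essentially the same route as the paper: normalize the exponents to $a/(a+b)$ and $b/(a+b)$, apply H\"older's inequality to the two-term sum, and then use $x^r+y^r\le (x+y)^r$ for $r=a+b\ge 1$. The only difference is organizational. You first show that the normalized product $\omega_1^{a/\theta}\omega_2^{b/\theta}$ is a control and then raise it to the power $\theta$, whereas the paper chains these inequalities in a single display.
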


\begin{proof}
Since $\omega_1$ and $\omega_2$ vanish on the diagonal, we have
\[
\omega(s,s)
=
\omega_1^a(s,s)\omega_2^b(s,s)
=0.
\]

It remains to prove the super-additivity. Fix
$0\leq s\leq u\leq t\leq T$. 
By the super-additivity of $\omega_1$ and $\omega_2$,
\[
\omega_1(t,s)\geq \omega_1(t,u)+\omega_1(u,s),
\qquad
\omega_2(t,s)\geq \omega_2(t,u)+\omega_2(u,s).
\]
Therefore,
\[
\omega(t,s)
\geq
( \omega_1(t,u)+\omega_1(u,s))^a(\omega_2(t,u)+\omega_2(u,s))^b.
\]
Let $r:=a+b\geq 1$.  By H\"older's inequality with conjugate
exponents $r/a$ and $r/b$, we have
\begin{align*}
&\omega_1 ^{a/r}(t,u)\omega_2 ^{b/r}(t,u)
+
\omega_1 ^{a/r}(u,s)\omega_2 ^{b/r}(u,s)
\\
 \leq &~~
\bigl(\omega_1(t,u)+\omega_1(u,s)\bigr)^{a/r}
\bigl(\omega_2(t,u)+\omega_2(u,s)\bigr)^{b/r}.
\end{align*}
Since $r\geq 1$, 
we obtain
\begin{align*}
&(\omega_1^{a/r}(t,u)\omega_2^{b/r}(t,u))^r+ (\omega_1^{a/r}(u,s)\omega_2^{b/r}(u,s))^r \\
\leq & (\omega_1^{a/r}(t,u)\omega_2^{b/r}(t,u)+\omega_1^{a/r}(u,s)\omega_2^{b/r}(u,s))^r\\
\leq & \bigl(\omega_1(t,u)+\omega_1(u,s)\bigr)^{a}
\bigl(\omega_2(t,u)+\omega_2(u,s)\bigr)^{b}.
\end{align*}
This implies
\begin{align*}
\omega(t,u)+\omega(u,s)
&=\omega_1 ^a (t,u) \omega_2  ^b(t,u)+\omega_1 ^a(u,s)\omega_2 ^b (u,s)\\
&\leq
\bigl(\omega_1(t,u)+\omega_1(u,s)\bigr)^{a}
\bigl(\omega_2(t,u)+\omega_2(u,s)\bigr)^{b}\\
&\leq
\omega_1^a(t,s)\omega_2^b(t,s)\\
&=
\omega(t,s).
\end{align*}
Thus $\omega$ is null on the diagonal and super-additive, and hence
it is a control.
\end{proof}
Note that  every control is non-decreasing in
its first argument and non-increasing in its second argument. In particular,
 for any $(t,s)\in \Delta_2$,  the limits 
\begin{equation*}
\omega(t-,s):=
\begin{cases}
\lim\limits_{r\uparrow t}\omega(r,s), &\text{if}\quad s<t,\\
0&\text{if} \quad s=t,
\end{cases}
\\ \quad 
\omega(t, s+):= 
\begin{cases}
\lim \limits_{r\downarrow s} \omega (t,r), &\text{if}\quad  s<t,\\
0& \text{if}\quad s=t,
\end{cases}
\end{equation*}
are well-defined controls. Indeed,   for $0<s<u<r<t$, the super-additivity of $\omega$ gives $\omega(r,u)+\omega(u,s)\leq \omega(r,s)$. Since $\omega(u-,s)<\omega(u,s)$, we have $\omega(r,u)+\omega(u-,s)\leq \omega(r,s)$. Letting $r\uparrow t$ yields  $\omega(t-,u)+\omega(u-,s)\leq \omega(t-,s)$. The proof of $\omega(t, s+)$ is similar to that of $\omega(t-,s).$
\begin{definition}(\textbf{Mild Control})
	We call a function $\sigma:\Delta_2\to \R_+$, where  $\Delta_{2}:=\{ (t,s):T\geq t \geq s \geq 0\}$, a mild control, if it is increasing in $t,$ decreasing in $s,$ null on the diagonal, and for any $\delta>0,$ there exist only finite $u\in[0,T]$ such that
	$$
	\sigma(u+,u-):= \lim_{t\downarrow u , s \uparrow u} \sigma(t,s) > \delta.
	$$
	We call $\sigma$ a mild control for some path $x$ if $d(x_t, x_s) \leq \sigma(t,s)$ for any $s,t\in [0,T].$ 
\end{definition}
\begin{remark}\label{lim-control}
 Every control is a mild control; see
\cite[Lemma 1.4]{FZ18}. 
 \end{remark}

\begin{lemma}\cite[Lemma 1.5]{FZ18}
Suppose $\sigma(t,s)$ is a mild control. Then for any $\varepsilon>0$, there exists a partition $\mathcal{P}$, such that for any $(s,t)\in \mathcal{P}$, one has 
\begin{align*}
\sigma(t-, s+)<\varepsilon.
\end{align*}
In particular, if $\sigma(t,s)$ is left-continuous in $t$, then $\sigma(t,s+)<\varepsilon$. 
\end{lemma}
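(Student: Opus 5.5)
We argue by a compactness (covering) construction on $[0,T]$, using only the definition of a mild control: $\sigma$ is increasing in $t$, decreasing in $s$, null on the diagonal, and has only finitely many points $u$ with $\sigma(u+,u-)>\delta$ for each $\delta>0$. Fix $\varepsilon>0$ and let $J:=\{u_1<\dots<u_m\}$ be the finite set of points with $\sigma(u+,u-)\geq\varepsilon/2$; this set is finite because it is contained in $\{u:\sigma(u+,u-)>\varepsilon/4\}$. These points must be partition points, since no interval of the form $(s,t)$ with $s<u<t$ can satisfy $\sigma(t-,s+)<\varepsilon$ when $\sigma(u+,u-)$ is large. Placing the large jumps at partition points is harmless, because the quantity $\sigma(t-,s+)$ excludes both endpoints.

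\emph{Step 1 (local neighbourhoods).} For each $u\in[0,T]\setminus J$ we have $\sigma(u+,u-)<\varepsilon/2$. By monotonicity, $\sigma(u+,u-)=\inf_{s<u<t}\sigma(t,s)$, so there exist $s_u<u<t_u$ (truncated to $[0,T]$ at the endpoints, where the one-sided limits are used) with $\sigma(t_u,s_u)<\varepsilon$. For each $u\in J$ we instead use the one-sided limits. Because $\sigma(u,s)$ is decreasing in $s$ and bounded below by $\sigma(u,u)=0$, the limit $\lim_{s\uparrow u}\sigma(u-,s)$ exists; we want to show it vanishes. Take $s<s'<u$ and apply monotonicity to get $\sigma(u-,s')\leq \sigma(r,s')$ for any $r\geq u$. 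Super-additivity is not assumed, so we must use only monotonicity together with the nullity on the diagonal.

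\emph{Main obstacle.} Showing that $\sigma(u-,s)\to0$ as $s\uparrow u$ is the delicate step, since a general mild control need not have this property. I would instead choose, for $u\in J$, intervals $[s_u,u]$ and $[u,t_u]$ with $\sigma(u-,s_u+)<\varepsilon$ and $\sigma(t_u-,u+)<\varepsilon$. These are obtained from the fact that the inner points $v\in(s_u,u)$ are non-atomic and are handled by the covering below. So I would not need one-sided smallness at $u$ itself, only that partition intervals are open at their endpoints.

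\emph{Step 2 (Lebesgue-number / covering argument).} The open intervals $(s_u,t_u)$ for $u\notin J$, together with the points of $J$, cover $[0,T]$. The plan is to apply compactness to each closed interval $[u_k+\eta,u_{k+1}-\eta]$, extract a finite subcover, and pick partition points inside the overlaps. Each resulting interval $[a,b]$ then lies inside some $[s_u,t_u]$, so monotonicity gives $\sigma(b-,a+)\leq\sigma(t_u,s_u)<\varepsilon$. The remaining pieces near $u_k$ are the intervals $[u_k,u_k+\eta]$. For these I would use a chain argument: repeatedly cover $(u_k,u_k+\eta]$ from the right by neighbourhoods, and use the finiteness of large atoms to terminate.

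\emph{Final claim.} If $\sigma$ is left-continuous in $t$, then $\sigma(t,s+)=\sigma(t-,s+)<\varepsilon$, which gives the last assertion directly.
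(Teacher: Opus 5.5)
You correctly flag the crucial point as your ``main obstacle,'' but your bypass of it does not work, and it cannot be made to work. Every $u\in J$ must be a partition point, so you need $s_u<u<t_u$ with $\sigma(u-,s_u+)<\varepsilon$ and $\sigma(t_u-,u+)<\varepsilon$. Nothing in the definition of a mild control provides these.

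Neither workaround supplies them. Non-atomicity of the points $v\in(s_u,u)$ only gives neighbourhoods whose admissible length may shrink to $0$ as $v\uparrow u$. No finite subfamily of them reaches $u$, so the chain argument in Step~2 need not terminate. Finiteness of the set of large atoms controls only $\sigma(v+,v-)$, not this accumulation at $u$.

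In fact the statement as quoted is false for general mild controls. Fix $c\in(0,T)$ and put
\[
\sigma(t,s):=\frac{t\wedge c-s}{c-s}\quad\text{if } s<c,\qquad \sigma(t,s):=0\quad\text{if } s\geq c .
\]
This function has the following properties:
\begin{itemize}
\item it is non-decreasing and continuous in $t$, non-increasing in $s$, and null on the diagonal;
\item $\sigma(u+,u-)=0$ for $u\neq c$, while $\sigma(c+,c-)=1$, so it is a mild control, and even left-continuous in $t$.
\end{itemize}
Yet $\sigma(c-,s+)=1$ for every $s<c$ (let $t\uparrow c$). Hence for $\varepsilon\leq 1$ no partition works. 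If $c$ is a partition point, the interval ending at $c$ violates the bound; otherwise the interval containing $c$ in its interior does. The ``in particular'' clause fails as well.

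What is missing is an additional hypothesis: $\lim_{s\uparrow u}\sigma(u-,s)=0$ and $\lim_{t\downarrow u}\sigma(t,u+)=0$ for every $u$.
\begin{itemize}
\item It holds for genuine (super-additive) controls. Letting $r\uparrow u$ in $\omega(r,s)+\omega(s,s_0)\leq\omega(r,s_0)$ gives $\omega(u-,s)\leq\omega(u-,s_0)-\omega(s,s_0)\to0$ as $s\uparrow u$, and symmetrically on the right.
\item It also holds for $\sigma(t,s)=\mathrm{Osc}(x;[s,t])$ with $x$ regulated.
\end{itemize}
This covers the uses in the paper, where the lemma is applied to controls such as $\omega_{X,p}$.

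Under this hypothesis your covering argument closes cleanly:
\begin{enumerate}
\item Choose $s_u<u<t_u$ with $\sigma(t_u,s_u)<\varepsilon$ for $u\notin J$ (your Step~1). For $u\in J$, use the hypothesis to get $\sigma(u-,s_u)<\varepsilon$ and $\sigma(t_u,u+)<\varepsilon$.
\item Take a Lebesgue number $\lambda$ of the open cover $\{(s_u,t_u)\}$ of $[0,T]$.
\item Take any partition of mesh $<\lambda$ that contains $J$.
\end{enumerate}
Each partition interval $[a,b]$ then lies in some $(s_u,t_u)$. If $u\in J$, it lies on one side of $u$. In every case monotonicity gives $\sigma(b-,a+)<\varepsilon$. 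Your final remark on left-continuity is fine.
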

\begin{lemma}\cite[Lemma 1.6]{FZ18}\label{small-OSC}
Let  $x:[0,T]\to (\mathcal{E},|\cdot|)$ be a regulated path. Then, for every
$\varepsilon>0$, there exists a finite partition $\mathcal{P}$, such that for any interval $(s,t)\in \mathcal{P}$, 
$Osc(x, (s,t))=\sup\limits_{u,v\in (s,t)} |x_u-x_v|<\varepsilon$. 
If $x$ is right-continuous, then for every $\varepsilon>0$, there exists $\delta>0$ such that $r\in [s,s+\delta]$, 
\[
Osc(x; [s,s+\delta])=\sup \limits_{u,v\in [s, s+\delta]}|x_u-x_v|<\varepsilon. 
\]
\end{lemma}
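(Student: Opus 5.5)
To prove Lemma~\ref{small-OSC}, I would reduce it to the partition lemma for mild controls (\cite[Lemma 1.5]{FZ18}) by building a mild control out of the oscillation of $x$. Concretely, set
\[
\sigma(t,s):=Osc(x;[s,t])=\sup_{u,v\in[s,t]}|x_u-x_v|,\qquad (t,s)\in\Delta_2 .
\]
The argument then has three steps: check that $\sigma$ is a mild control, apply the partition lemma, and treat the right-continuous case by a separate direct argument.

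\emph{Step 1: $\sigma$ is a mild control.} The function $\sigma$ is null on the diagonal, increasing in $t$ and decreasing in $s$. Finiteness holds because a regulated path on a compact interval is bounded. Indeed, otherwise there would be points $u_n\to u$ with $|x_{u_n}|\to\infty$, and passing to a monotone subsequence contradicts the existence of one-sided limits at $u$.

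The main point is the atom condition. I claim that
\[
\sigma(u+,u-)=\max\{|x_{u+}-x_{u-}|,\,|x_u-x_{u-}|,\,|x_{u+}-x_u|\}.
\]
This holds because on $[s,t]\ni u$ with $s\uparrow u$ and $t\downarrow u$, all values of $x$ lie near the three points $x_{u-}$, $x_u$, $x_{u+}$, thanks to the one-sided limits.

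It follows that $\sigma(u+,u-)>\delta$ forces a jump of size larger than $\delta$ at $u$. A regulated path has only finitely many such $u$. To see this, suppose infinitely many existed; they would accumulate monotonically at some point $u^*$, say $u_n\uparrow u^*$. Near each $u_n$ one can pick points $a_n<u_n<b_n$ with $|x_{b_n}-x_{a_n}|>\delta/2$, or a similar pair involving $x_{u_n}$ itself. These pairs all converge to $u^*$ from the left, which contradicts the existence of $x_{u^*-}$.

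\emph{Step 2: apply the partition lemma.} By \cite[Lemma 1.5]{FZ18}, there is a partition $\mathcal P$ with $\sigma(t-,s+)<\varepsilon$ for every $(s,t)\in\mathcal P$. For $u,v\in(s,t)$, choose $s<s'<u,v<t'<t$. Then
\[
|x_u-x_v|\le\sigma(t',s')\le\sigma(t-,s+)<\varepsilon .
\]
Taking the supremum over $u,v\in(s,t)$ gives $Osc(x,(s,t))<\varepsilon$.

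\emph{Step 3: the right-continuous case.} Here I argue directly. Right-continuity at $s$ gives $\delta>0$ with $|x_r-x_s|<\varepsilon/2$ for all $r\in[s,s+\delta]$. The triangle inequality then yields $|x_u-x_v|<\varepsilon$ for $u,v\in[s,s+\delta]$, so the oscillation is at most $\varepsilon$. To get the strict inequality, I run the same argument with $\varepsilon/3$ in place of $\varepsilon/2$.

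The main obstacle is Step 1, namely the finiteness of large atoms, which requires the careful accumulation argument using one-sided limits. Alternatively, one could skip the mild-control reduction and run a direct compactness argument. For each $u$, the one-sided limits give an open neighbourhood $(u-\eta_u,u+\eta_u)$ on which both $(u-\eta_u,u)$ and $(u,u+\eta_u)$ have oscillation below $\varepsilon$. Extracting a finite subcover of $[0,T]$ and taking all the centres and endpoints as partition points gives the same conclusion.
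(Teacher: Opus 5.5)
The paper does not prove this lemma; it is imported from \cite[Lemma 1.6]{FZ18}, so there is no internal proof to compare with. Your Step 3 is fine. Note that $\delta$ must be allowed to depend on $s$: for $x=\mathbf{1}_{[T/2,T]}$ no uniform $\delta$ exists. This pointwise reading is also how the paper uses the statement, e.g.\ at $s=0$ in Lemma~\ref{local-2p}.

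On your main route, two comments. First, by monotonicity $\sigma(t-,s+)=\sup_{s<s'<t'<t}\sigma(t',s')=Osc(x;(s,t))$, so Step 2 is not really a reduction. The partition lemma \cite[Lemma 1.5]{FZ18}, applied to your $\sigma$, is literally the claim, and all the content is delegated to that lemma.

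Second, that lemma as formulated in the paper does not follow from the mild-control axioms alone. On $[0,2]$ set $\sigma(t,s)=1$ if $s\le 1<t$, or if $1<s<t$ and $t-1\ge 2(s-1)$, and $\sigma(t,s)=0$ otherwise. This $\sigma$ is increasing in $t$, decreasing in $s$ and null on the diagonal. Its only nonzero atom is $\sigma(1+,1-)=1$. Yet in any partition of $[0,2]$, the interval $[s,t]$ with $s\le 1<t$ has $\sigma(t-,s+)=1$.

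What the axioms miss is that $\sigma(t-,u+)\to 0$ as $t\downarrow u$ (and its left-hand analogue) at the large atoms $u$. For your $\sigma$ this does hold, because $\sigma(t-,u+)=Osc(x;(u,t))\to 0$ by existence of $x_{u+}$. But it has to be checked. Your Step 1, the finiteness of large atoms, which you flag as the main obstacle, does not provide it.

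That one-sided property is exactly the ingredient of the compactness argument you sketch at the end, and that argument is complete as it stands:
\begin{itemize}
\item Choose $\eta_u$ with $|x_r-x_{u+}|<\varepsilon/3$ on $(u,u+\eta_u)$ and $|x_r-x_{u-}|<\varepsilon/3$ on $(u-\eta_u,u)$.
\item Take a finite subcover, and let $\mathcal P$ consist of $0$, $T$, the centres, and the endpoints lying in $[0,T]$.
\item For consecutive points $a<b$, the midpoint lies in some $(u_i-\eta_i,u_i+\eta_i)$, while $(a,b)$ contains no partition point. Hence $(a,b)\subset(u_i,u_i+\eta_i)$ or $(a,b)\subset(u_i-\eta_i,u_i)$.
\end{itemize}
This argument needs neither the finiteness of large jumps (that is a by-product) nor the partition lemma. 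I would make it the proof. Keep the mild-control detour only if you add the one-sided check above.
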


\begin{lemma}\label{partition}\cite[Lemma 2.1]{FZ18}
	Suppose $\omega_i(t,s),i=1,2$ are controls. Then for any $\varepsilon>0,$ there exists a partition $\mathcal{P}$ such that for any $(s,t)\in \mathcal{P},$
	$$
	\omega_1(t, s+)  \wedge 	\omega_2(t-, s) < \varepsilon.
	$$
 In particular, for any $s\leq \tau \leq t$ one has
	$$\omega_1(t, \tau)\omega_2(\tau,s)<(\omega_1(t, s)\vee \omega_2(t, s))\varepsilon < (\omega_1(T, 0)\vee \omega_2(T, 0))\varepsilon .$$
\end{lemma}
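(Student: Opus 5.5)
The plan is to prove the first claim by a greedy construction of the partition, using the monotonicity of controls and the fact, noted after Lemma \ref{product-control}, that $\omega(t-,s)$ and $\omega(t,s+)$ are well-defined controls. Fix $\varepsilon>0$. By Remark \ref{lim-control} each $\omega_i$ is a mild control, so only finitely many points $u$ carry an atom with $\omega_i(u+,u-)\ge\varepsilon$. The partition is then built left to right. Given $t_k$, define
\[
t_{k+1}:=\sup\{t\in(t_k,T]:\ \omega_1(t,t_k+)<\varepsilon \text{ or } \omega_2(t-,t_k)<\varepsilon\}.
\]
Since $\omega_1(t,t_k+)\to\omega_1(t_k+,t_k+)=0$ as $t\downarrow t_k$ (right-limit of a control in the second argument, evaluated near the diagonal), this set is nonempty and $t_{k+1}>t_k$.

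At $t=t_{k+1}$ the quantity $\omega_2(t_{k+1}-,t_k)$ is a left limit. Consequently, if the $\omega_2$ condition holds on $(t_k,t_{k+1})$, it also holds at $t_{k+1}$ with non-strict inequality $\le\varepsilon$. To obtain a strict inequality, run the construction with $\varepsilon/2$ in place of $\varepsilon$. If instead the $\omega_1$ condition holds arbitrarily close to $t_{k+1}$ from the left, then monotonicity in $t$ gives $\omega_1(t,t_k+)<\varepsilon$ for all $t<t_{k+1}$. In that case I would refine by inserting $t_{k+1}$ and using the $\omega_2$ left limit at the jump point instead. The choice of one-sided limits in the statement is designed exactly so that jumps at the right endpoint are invisible to $\omega_2(t-,s)$ and jumps at the left endpoint are invisible to $\omega_1(t,s+)$.

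The main obstacle is showing that the construction terminates after finitely many steps. Suppose $t_k\uparrow t^*\le T$. Super-additivity of $\omega_1(\cdot,\cdot+)$ gives $\sum_k \omega_1(t_{k+2},t_k+)\le\omega_1(T,0)<\infty$. On the other hand, maximality of $t_{k+1}$ means that just beyond $t_{k+1}$ both $\omega_1(\cdot,t_k+)\ge\varepsilon$ and $\omega_2(\cdot-,t_k)\ge\varepsilon$ hold. This yields $\omega_1(t_{k+2},t_k+)\ge\varepsilon$ for infinitely many $k$, contradicting the finite sum. The delicate point is arranging the indices so that each "failure" consumes a disjoint piece of $\omega_1$-mass (or $\omega_2$-mass). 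If this becomes messy, my fallback is to cite \cite[Lemma 2.1]{FZ18} directly, since the statement is quoted from there.

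For the second claim, take $(s,t)\in\mathcal P$ and $s\le\tau\le t$. If $s<\tau<t$, then $\omega_1(t,\tau)\le\omega_1(t,s+)$ and $\omega_2(\tau,s)\le\omega_2(t-,s)$. Hence
\[
\omega_1(t,\tau)\,\omega_2(\tau,s)\le \omega_1(t,s+)\,\omega_2(t-,s)\le \big(\omega_1(t,s+)\wedge\omega_2(t-,s)\big)\big(\omega_1(t,s)\vee\omega_2(t,s)\big),
\]
which is strictly less than $\varepsilon\,(\omega_1(t,s)\vee\omega_2(t,s))$. If $\tau=s$ or $\tau=t$, one factor vanishes, because $\omega_2(s,s)=0$ and $\omega_1(t,t)=0$. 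The final bound follows from monotonicity, $\omega_i(t,s)\le\omega_i(T,0)$, with strictness understood when the maximum is positive and the degenerate zero case treated trivially.
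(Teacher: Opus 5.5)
The paper gives no proof of this lemma; it only cites \cite[Lemma 2.1]{FZ18}. Given the lemma quoted just before it, the short route is to apply \cite[Lemma 1.5]{FZ18} to the mild control $\omega_1$. This yields $\mathcal P'$ with $\omega_1(t-,s+)<\varepsilon$ for every $(s,t)\in\mathcal P'$. You then insert one point $t'\in(s,t)$ close to $t$ with $\omega_2(t-,t')<\varepsilon$, and on $[s,t']$ one has $\omega_1(t',s+)\le\omega_1(t-,s+)<\varepsilon$.

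Your greedy exhaustion is a genuinely different, self-contained route. However, as written it does not handle the decisive case. When $t_{k+1}$ comes from the $\omega_1$-condition, both quantities can fail on $[t_k,t_{k+1}]$. For instance, take $\omega_1(t,s)=\mathbf{1}_{\{s<1/2\le t\}}$, $\omega_2(t,s)=\mathbf{1}_{\{s<1/4\le t\}}$, $\varepsilon=1/2$ and $t_0=0$. Then $t_1=1/2$ and $\omega_1(1/2,0+)=\omega_2(1/2-,0)=1$.

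``Inserting $t_{k+1}$'' changes nothing, because it is already a partition point. What you need is a point $t'\in(t_k,t_{k+1})$ close to $t_{k+1}$, together with the following fact. Letting $r\uparrow t$ in $\omega_2(r,t')+\omega_2(t,r)\le\omega_2(t,t')$ gives
\[
\omega_2(t-,t')\le \omega_2(t,t')-\lim_{r\uparrow t}\omega_2(t,r)\longrightarrow 0\qquad (t'\uparrow t).
\]
The maps $t\mapsto\omega_1(t,t_k+)$ and $t\mapsto\omega_2(t-,t_k)$ are nondecreasing, so your set is an interval with left endpoint $t_k$. Hence $t'$ still satisfies one of your two conditions, and both $[t_k,t']$ and $[t',t_{k+1}]$ are admissible. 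Doing this insertion at every step handles both of your cases at once, so the $\varepsilon/2$ device becomes unnecessary.

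With this repair the argument goes through, and termination is easier than you feared. For every $k$ with $t_{k+1}<T$, maximality gives $\omega_1(t_{k+2},t_k+)\ge\varepsilon$. Since $\omega_1(t_{k+2},t_k+)\le\omega_1(t_{k+2},t_k)$, super-additivity of $\omega_1$ over the non-overlapping intervals $[t_{2j},t_{2j+2}]$ bounds the number of steps by roughly $2\omega_1(T,0)/\varepsilon+2$. Your sum over all $k$ double counts overlapping intervals.

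The nonemptiness claim needs its one-line justification: $\omega_1(t,t_k+)\le\omega_1(t,t_k)-\lim_{r\downarrow t_k}\omega_1(r,t_k)\to0$ as $t\downarrow t_k$. The remark about finitely many atoms is never used. Your proof of the second claim is correct, including the caveat that strict inequality fails when $\omega_1(t,s)\vee\omega_2(t,s)=0$.

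Compared with the Lemma 1.5 route, yours avoids the mild-control lemma and gives an explicit bound on the number of partition points. The cited route is shorter once that lemma is available.
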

\section{Sewing lemma with jumps}\label{sec:Sewing}
This section extends the multiplicative and affine sewing lemmas to a discontinuous setting in infinite dimensional spaces. These results 
provide tools for defining integrals and constructing solutions to PDEs dirven by jump processes with $p$-variation. In contrast to the continuous case,
small mesh size alone does not guarantee that the local
multiplicative defect is small, since an arbitrarily short interval
may still cross a jump. Here, we  address this  difficulty by applying Lemma \ref{partition} to  choose suitable partitions for the nonregular controls.
\subsection{Multiplicative sewing lemma with jumps}

Denote a monoid by $(\mathcal{M}, \circ)$. For simplicity, we only use the notation $\mathcal{M}$. 
For a function $\mu \colon \Delta_2 \to \mathcal{M}$ and an arbitrary partition $\mathcal{P}= \{s =t_0 < t_1 < \dots < t_k = t\}$ of $[s,t]$ set: 
\begin{align}\label{promon}
	\mu^{\mathcal{P}}_{t,s} = \prod_{i=0}^{k-1} \mu_{t_{i+1},t_i}.
\end{align}
Here, we use the increment notation $\mu_{t,s}=\mu_{t}-\mu_{s}$.
\begin{definition}[\textbf{Submultiplicative monoid}]\label{monoid01} Let $(\mathcal{M}, \circ)$ be a monoid.
	We call a triple $(\mathcal{M},
    |\cdot|, d)$ a submultiplicative monoid if $d$ is a metric on $\mathcal{M}$ and a function $|\cdot| : \mathcal{M} \to \mathbb{R}^+$  satisfies  
	\begin{equation}
		\label{subddd}
		d(ac,bc) \leq d(a,b) |c|,\quad\text{and}\quad d(ca,cb) \leq  |c| d(a,b),
	\end{equation}
    for all $a,b,c \in \mathcal{M}$.
	Moreover, we assume that for all $C \in \mathbb{R}^+$ the sets $B_C := \{a \in \mathcal{M}:\, |a| \leq C \}$ are complete with respect to the metric $d$.
\end{definition}
\begin{remark}\label{mulconti}
		Let $\mathcal{M}$ be a submultiplicative monoid.
		As a consequence of \eqref{subddd}, the multiplication is continuous with respect to $d$ on the sets $B_C$.
		Indeed if $|b_n| \leq C$ and $d(a_n,a) \to 0$ and $d(b_n,b) \to 0$ as $n \to \infty$ then:
		\begin{align*}
			d(a_nb_n, ab) \leq d(a_n,a)|b_n| + |a| d(b_n,b) \leq C d(a_n,a) + |a| d(b_n,b) \to 0,
		\end{align*}
		as $n \to \infty$. The same is true if instead of $|b_n| \leq C$ we have $|a_n| \leq C$.
\end{remark}
Motivated by \cite[Definition~1.1]{FZ18}, we introduce two notions of
convergence for the partition products defined in \eqref{promon},
extending mesh and refinement convergence from Riemann--Stieltjes
sums to monoid-valued products. 
\begin{definition}(\textbf{Mesh and refinement convergence})\label{MSRS}
	For a partition $\mathcal{P}$ of $[0,T]$ and any $[s,t]\in \mathcal{P},$  we let $\mu_{t,s}  $ take values on a submultiplicative monoid $\mathcal{M}.$
	We say that the product $\mu^{\mathcal{P}}_{t,s}$ converges to $\mathcal{U}$ as follows
	\begin{itemize}
		\item Mesh sense (MS): if for any $\varepsilon>0,$ there exists $\delta>0,$ such that any $ |\mathcal{P}|<\delta, $ one has $$d(\mu_{t,s}^{\mathcal{P}}, ~ \mathcal{U})<\varepsilon.$$
		
		\item Refinement sense (RS): if for any $\varepsilon>0,$ there exists $\mathcal{P}^\varepsilon$ such that for any refinement $ \mathcal{P} \supset \mathcal{P}^\varepsilon, $ one has $$
	d(\mu^{\mathcal{P}}_{t,s}, ~ \mathcal{U})<\varepsilon.$$
	\end{itemize}	
\end{definition}
For example, suppose that  $\mu_{t,s}^{\mathcal{P}}=(S_{t,s}, \sum\limits_{(u,v)\in \mathcal{P}} S_{t,u}\xi_{v,u})$ and denote the limit by $\mathcal{U}=(S_{t,s}, \mathcal{K})$. For the second component, the two notions of convergence  above  can be otherwise known as Mesh Riemann-Stieltjes (MRS) convergence and Refinement Riemann-Stieltjes (RRS) convergence, respectively. Please see Remark \ref{MRSRRS} for more details. 
	\begin{definition}\label{def:monoid}
	Let $\mathcal{M}$ be a submultiplicative monoid.
	We say that a function $\mu :\Delta_2\to \mathcal{M}$ is 
	\begin{enumerate}[label=(\roman*)]
		\item \emph{multiplicative} if for every $(t,u ,s)\in\Delta_3$:
		\begin{equation*}
			\mu _{t,s} = \mu _{t,u }\mu _{u, s}.
		\end{equation*}
		\item  \emph{almost-multiplicative}  if there exists  controls $\omega_{1,j}(t,s), \omega_{2,j}(t,s)$  and positive constants $\alpha_{1,j}, \alpha_{2,j}$, satisfying $\alpha_{1,j}+\alpha_{2,j}>1$ for all $j=1,...,N,$ such that for each $(t,u ,s)\in\Delta _3:$
\begin{align}	\label{almostmul01}
	d(\mu_{t,s}, \mu_{t,u}\circ \mu_{u,s})\lesssim  \sum_{j=1}^{N} \omega_{1,j}^{\alpha_{1,j}}(t,u) \omega_{2,j}^{\alpha_{2,j}}(u,s).
\end{align}
\end{enumerate}
Let $\sigma: \Delta_{2}\rightarrow \mathbb{R}^{+}$ be a bounded function, inscreasing in $t$ and decreasing in $s$. We say that 
	\begin{enumerate}[label=(\roman*)]
		\setcounter{enumi}{2}
		\item  \label{submult:2}
        $\mu$ has growth bound with $\sigma$ if
		\begin{equation}\label{growth01}
			|\mu^{\mathcal{P}}_{t,s}| \lesssim  \sigma(t,s),
		\end{equation}
		for every $(t,s) \in \Delta_2,$ independently of the choice of partition $\mathcal{P}$ of $[s,t]$. 
	\end{enumerate}
	We denote by $\mathrm{BG}_2([0,T], \mathcal{M})$ the set of all functions $\mu: \Delta_2\to \mathcal{M}$ adimtting a growth bound $\sigma$ as above.
\end{definition}
\begin{remark}
In \cite[Theorem 3.4]{GHN21}, a multiplicative sewing lemma was proved under the assumption $d(\mu_{t,s}, \mu_{t,u}\circ \mu_{u,s})\leq \omega^{z}(t,s)$, $z>1$, where $\omega$ is a regular control. However, this cannot be applied to the jump case with nonregular controls. Near a jump time, this error
does not vanish as the interval length tends to zero, so the standard partition-refinement argument fails.
	\end{remark}
	\begin{theorem}[\textbf{Multiplicative sewing Lemma with Jumps}] \label{thm:mul01}
	Let $(\mathcal{M}, |\cdot|, d)$ be a submultiplicative monoid and let  $\mu\in BG_{2}([0,T], \mathcal{M})$. Assume that there exist controls $\omega_{1,j}, \omega_{2,j}$ and exponents $\alpha_{1,j},\alpha_{2,j}>0$, $\alpha_{1,j}+\alpha_{2,j}>1$, for all $j=1,...,N$  such that 
  \[  d(\mu_{t,s}, \mu_{t,u}\circ \mu_{u,s})\lesssim  \sum_{j=1}^{N} \omega_{1,j}^{\alpha_{1,j}}(t,u) \omega_{2,j}^{\alpha_{2,j}}(u,s).\]
	Then there exists a unique multiplicative map $\varphi \in \mathrm{BG}_2([0, T], \mathcal{M})$ such that for every $(t,s)\in \Delta_{2}$, 
\begin{align}
&\varphi_{t,s}:=RS-\lim\limits_{|\mathcal{P}|\to 0}\mu_{t,s}^{\mathcal{P}}, \label{jixian01}\\
& |\varphi_{t,s}|\lesssim \sigma(t,s). \nonumber
\end{align}
Moreover, 
	\begin{align}\label{est001}
d(\varphi_{t,s}, \mu_{t,s})\lesssim  \sum_{j=1}^{N} \omega_{1,j}^{\alpha_{1,j}}(t,s+) \omega_{2,j}^{\alpha_{2,j}}(t-,s),
\end{align}
 
Furthermore, if $\omega_{1,j}$ are right-continuous in $s$ or $\omega_{2,j}$ are left-continuous in $t$, then the convergence holds in the mesh sense.
\end{theorem}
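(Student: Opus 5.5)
The plan is to adapt the Friz--Zhang approach for c\`adl\`ag Young sewing \cite{FZ18} to the monoid setting of \cite[Theorem 3.4]{GHN21}. The central device is a point-removal argument. For a partition $\mathcal P=\{s=t_0<\dots<t_k=t\}$ of $[s,t]$ we remove a suitable interior point $t_i$ and compare $\mu^{\mathcal P}_{t,s}$ with $\mu^{\mathcal P\setminus\{t_i\}}_{t,s}$. By \eqref{subddd} and the growth bound \eqref{growth01}, this comparison gives
\[
d\big(\mu^{\mathcal P}_{t,s},\mu^{\mathcal P\setminus\{t_i\}}_{t,s}\big)\lesssim \sigma(t,s)^2\sum_{j=1}^N \omega_{1,j}^{\alpha_{1,j}}(t_{i+1},t_i)\,\omega_{2,j}^{\alpha_{2,j}}(t_i,t_{i-1}).
\]
The outer factors are partition products of $\mu$ on $[t_{i+1},t]$ and $[s,t_{i-1}]$, so they are bounded by $\sigma$.

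To choose the point, set $r_j:=\alpha_{1,j}+\alpha_{2,j}>1$ and consider the controls $\tilde\omega_j(b,a):=\omega_{1,j}^{\alpha_{1,j}/r_j}(b,a+)\,\omega_{2,j}^{\alpha_{2,j}/r_j}(b-,a)$. These are controls by Lemma \ref{product-control} and the remark on $\omega(t-,s)$, $\omega(t,s+)$. For the pair $[t_{i-1},t_{i+1}]$ the removal error is bounded by $\sum_j\tilde\omega_j(t_{i+1},t_{i-1})^{r_j}$. Indeed, $\omega_{1,j}(t_{i+1},t_i)\le\omega_{1,j}(t_{i+1},t_{i-1}+)$ because $t_i>t_{i-1}$, and $\omega_{2,j}(t_i,t_{i-1})\le\omega_{2,j}(t_{i+1}-,t_{i-1})$. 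Summing the pairs with $i$ odd or $i$ even and using superadditivity, some $i$ satisfies $\sum_j\tilde\omega_j(t_{i+1},t_{i-1})\le \frac{2}{k-1}\sum_j\tilde\omega_j(t,s)$. Iterating down to the trivial partition gives the maximal inequality
\[
d(\mu^{\mathcal P}_{t,s},\mu_{t,s})\lesssim \sigma(t,s)^2\,\zeta(r)\sum_{j}\tilde\omega_j(t,s)^{r_j},
\]
with $r=\min_j r_j$. This is uniform in $\mathcal P$ and is exactly \eqref{est001} once $\varphi$ exists.

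For existence in the refinement sense, fix $\varepsilon>0$ and use Lemma \ref{partition}, applied finitely many times to the pairs $(\omega_{1,j},\omega_{2,j})$, to get a partition $\mathcal P^\varepsilon$ of $[s,t]$. On each of its intervals $[a,b]$ we have $\omega_{1,j}(b,a+)\wedge\omega_{2,j}(b-,a)<\varepsilon$ for all $j$, hence $\tilde\omega_j(b,a)^{r_j}\lesssim\varepsilon^{\min(\alpha_{1,j},\alpha_{2,j})}\,\tilde\omega_j(b,a)^{\theta}$ for some $\theta$. The main obstacle is that this $\theta$ may be $<1$ when a jump sits in one factor. I will handle it by keeping the power split: superadditivity is applied only to the factor that is not small, using $\sum_{[a,b]}\omega(b,a)\le\omega(t,s)$.

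Now take two refinements $\mathcal P,\mathcal P'\supset\mathcal P^\varepsilon$ and compare both with their common refinement. Within each interval $[a,b]$ of $\mathcal P^\varepsilon$, the maximal inequality bounds the distance between the local products. A telescoping over the intervals, with \eqref{subddd} and the growth bound, gives $d(\mu^{\mathcal P}_{t,s},\mu^{\mathcal P'}_{t,s})\lesssim\sigma^3\varepsilon^{c}$ for some $c>0$. This Cauchy property in $B_C$, with $C\sim\sigma(t,s)$, together with completeness of $B_C$ yields $\varphi_{t,s}$ and $|\varphi_{t,s}|\lesssim\sigma(t,s)$.

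Multiplicativity follows by choosing the refining partitions to contain $u$ and using continuity of multiplication on $B_C$ (Remark \ref{mulconti}). For uniqueness, let $\psi$ be another multiplicative map with $d(\psi,\mu)\lesssim\sum_j\tilde\omega_j^{r_j}$. Then $d(\varphi_{t,s},\psi_{t,s})\le\sum_{[a,b]\in\mathcal P^\varepsilon}\sigma^2\, d(\varphi_{b,a},\psi_{b,a})\lesssim\varepsilon^c$ by the same estimate.

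For the mesh statement, one-sided continuity (say $\omega_{1,j}(b,a+)=\omega_{1,j}(b,a)$, with right-continuity in $s$) lets us replace Lemma \ref{partition} by a uniform statement. Lemma \ref{small-OSC} gives a $\delta$ such that every interval of length $<\delta$ satisfies the smallness condition outside finitely many large atoms, and near the atoms the intervals lie on the regular side. So any partition with $|\mathcal P|<\delta$ behaves like a refinement of $\mathcal P^\varepsilon$ up to an error $\lesssim\varepsilon^c$.
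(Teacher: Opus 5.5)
\paragraph{The gap: the refinement (Cauchy) step.} The problem sits exactly where you place the ``main obstacle''.

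\emph{What fails.} Take an interval $[a,b]$ of $\mathcal P^\varepsilon$ with, say, $\omega_{1,j}(b,a+)<\varepsilon$. You pull out the \emph{full} power of the small factor, which leaves $\varepsilon^{\alpha_{1,j}}\,\omega_{2,j}(b-,a)^{\alpha_{2,j}}$, and you propose to sum this remaining factor by superadditivity. That does not work when $\alpha_{2,j}<1$, for example $\alpha_{1,j}=\alpha_{2,j}=0.6$. In that case $\sum_{[a,b]\in\mathcal P^\varepsilon}\omega_{2,j}(b-,a)^{\alpha_{2,j}}$ is only bounded by $n^{1-\alpha_{2,j}}\,\omega_{2,j}(t,s)^{\alpha_{2,j}}$, where $n$ is the number of intervals of $\mathcal P^\varepsilon$. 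Lemma \ref{partition} is purely existential, and nothing in your argument controls $n$ as $\varepsilon\to0$. Your uniqueness step relies on the same summation and has the same defect.

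\emph{The missing idea.} This is what the paper does: split off only a \emph{fraction} of each exponent. Choose $\alpha'_{i,j}<\alpha_{i,j}$ and $1<\eta<\min_j(\alpha'_{1,j}+\alpha'_{2,j})$, and factor each defect term as $\omega_{1,j}^{\alpha_{1,j}-\alpha'_{1,j}}\omega_{2,j}^{\alpha_{2,j}-\alpha'_{2,j}}\cdot\omega_{1,j}^{\alpha'_{1,j}}\omega_{2,j}^{\alpha'_{2,j}}$.
\begin{itemize}
\item On every interval of $\mathcal P^\varepsilon$ the first factor is $\lesssim\varepsilon^{c}$: one control is $<\varepsilon$, and the other is bounded by its value on $[0,T]$.
\item The second factor is at most $\omega_3^\eta$ for the control $\omega_3=\sum_j\omega_{1,j}^{\alpha'_{1,j}/\eta}\omega_{2,j}^{\alpha'_{2,j}/\eta}$. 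Since $\eta>1$, superadditivity gives $\sum\omega_3^\eta\le\omega_3(t,s)^\eta$.
\end{itemize}
In your own notation this is simply $\tilde\omega_j^{\,r_j}=\tilde\omega_j^{\,r_j-\eta}\,\tilde\omega_j^{\,\eta}$ with $1<\eta<\min_j r_j$. The leftover power never drops below $1$, so the obstacle you describe does not arise.

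\paragraph{Two smaller points.}

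\emph{Cross terms in the maximal inequality.} You select the point to remove using the summed control $\sum_i\tilde\omega_i$, but then raise to different powers $r_j$. That yields only $\sum_j\big(\sum_i\tilde\omega_i(t,s)\big)^{r_j}$. Its cross terms $\tilde\omega_i^{\,r_j}$ with $r_j<r_i$ are not dominated by the right-hand side of \eqref{est001}. Use a common exponent $\theta\in(1,\min_j r_j]$ instead, as the paper does. The defect is then at most $\omega_4^\theta$, where $\omega_4=\sum_j\omega_{1,j}^{\alpha_{1,j}/\theta}(\cdot,\cdot+)\,\omega_{2,j}^{\alpha_{2,j}/\theta}(\cdot-,\cdot)$ is a control. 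At the end, $\omega_4(t,s)^\theta\le N^{\theta-1}\sum_j\omega_{1,j}^{\alpha_{1,j}}(t,s+)\,\omega_{2,j}^{\alpha_{2,j}}(t-,s)$.

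\emph{The mesh statement.} Your argument here is only a sketch, and its ingredients do not fit. Lemma \ref{small-OSC} concerns paths, not controls. Moreover, an interval of $\mathcal P$ with a common atom of $\omega_{1,j}$ and $\omega_{2,j}$ in its interior satisfies no smallness condition, however short it is. No atom analysis is needed:
\begin{enumerate}
\item Take the partition $\mathcal Q^\rho$ supplied by refinement convergence.
\item If the mesh of $\mathcal P$ is below the smallest gap of $\mathcal Q^\rho$, each interval $[p_l,p_{l+1}]$ of $\mathcal P$ contains at most one point $\tau$ of $\mathcal Q^\rho$.
\item Passing from $\mathcal P$ to $\mathcal P\vee\mathcal Q^\rho$ therefore costs one defect $\sum_j\omega_{1,j}^{\alpha_{1,j}}(p_{l+1},\tau)\,\omega_{2,j}^{\alpha_{2,j}}(\tau,p_l)$ per interval.
\item Right-continuity of $\omega_{1,j}$ in its second variable, at the finitely many points $\tau$, makes $\omega_{1,j}(p_{l+1},\tau)$ small once the mesh is small.
\item Splitting off a power $\kappa$ and summing the control $\sum_j\omega_{1,j}^{\alpha_{1,j}-\kappa}\omega_{2,j}^{\alpha_{2,j}}$ closes the estimate.
\end{enumerate}
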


\begin{proof}
Fix $(t,s)\in \Delta_2$ with $s<t$ and fix
 constants $\alpha_{1,j}'$, $\alpha_{2,j}'$,  $\eta$ such that
	$
	1<\eta<\min\limits_j\{\alpha'_{1,j}+\alpha'_{2,j}\}$, $ 0<\alpha_{1,j}'<\alpha_{1,j}$, $0<\alpha_{2,j}'<\alpha_{2,j}.
	$
	For any $\varepsilon>0,$ according to Lemma \ref{partition}, we can choose a partition $\mathcal{P}_{j}$ of $[s,t]$ such that for any $(u,v)\in \mathcal{P}_j$, $m\in (u,v)$, 
	$$
	\omega_{1,j}^{\alpha_{1,j}-\alpha_{1,j}'}(v,m) \omega_{2,j}^{\alpha_{2,j}-\alpha_{2,j}'}(m,u) < C \varepsilon,
	$$
	with $C$ depending on $\max\limits_{j=1,...,N}\left(\omega_{1,j}(t, s)\vee \omega_{2,j}(t, s)\right)$ and $\min\limits_{  j=1,...,N;i=1,2 }(\alpha_{i,j}-\alpha_{i,j}').$
	Then let 
    \begin{align}\label{Pnb}
    \mathcal{P}:= \vee_{j=1}^N \mathcal{P}_j,
    \end{align}
    and one has for any $(u,v) \in \mathcal{P},$
	\begin{equation} \label{o3o3}
    \begin{split}
    d(\mu_{v,u}, \mu_{v,m}\circ \mu_{m,u})&\leq  \sum_{j=1}^{N} \omega_{1,j}^{\alpha_{1,j}}(v,m) \omega_{2,j}^{\alpha_{2,j}}(m,u)  \\
		&\leq \varepsilon C \sum_{j=1}^{N} \omega_{1,j}^{\alpha_{1,j}'}(v,m) \omega_{2,j}^{\alpha_{2,j}'}(m,u)    \\
        &\leq  \varepsilon C \sum_{j=1}^{N} \big(\omega_{1,j}^{\frac{\alpha_{1,j}'}{\eta}}(v,u) \omega_{2,j}^{\frac{\alpha_{2,j}'}{\eta}}(v,u) \big)^\eta \\
		& \leq  \varepsilon C \omega_3^\eta(v,u),  
        \end{split}
	\end{equation}
	where  
    \begin{align}\label{o3}
    \omega_3 (v,u):=\sum_{j=1}^{N} \omega_{1,j}^{\frac{\alpha_{1,j}'}{\eta}}(v,u) \omega_{2,j}^{\frac{\alpha_{2,j}'}{\eta}}(v,u).
    \end{align}
    Since
$
    \frac{\alpha'_{1,j}+\alpha'_{2,j}}{\eta}>1,
$
    Lemma \ref{product-control} implies that $\omega_3$ is a control.
 For any other refinement $\mathcal{P}'$ of $\mathcal{P},$ and any $(u,v)\in \mathcal{P},$ without loss of generality, we assume that $\hat{\mathcal{P}}:=\mathcal{P}'|_{[u,v]}=\{u=t_{0}<t_{1}<\dots<t_{r}=v\}$ has $r$ intervals with $r\geq 2$.  
There exist $l\in \{1, \dots, r-1\}$,  such that 
\begin{align}\label{rr01}
\omega_3(t_{l+1}, t_{l-1})\leq \frac{2}{r-1}\omega_3(v, u).
\end{align}
We prove the inequality  \eqref{rr01} by contradiction.
Suppose 
\begin{align}\label{maodun}
\omega_3(t_{l+1}, t_{l-1})> \frac{2}{r-1}\omega_3(v, u), 
\end{align}
for every $l\in \{1, \dots, r-1\}.$
Summing over all $l=1,\dots, r-1$, we have 
$$
\sum_{l=1}^{r-1}\omega_{3}(t_{l+1},t_{l-1})>(r-1)\frac{2}{r-1}\omega_{3}(v,u)=2\omega_{3}(v,u).
$$
Consider the intervals $[t_{l-1}, t_{l+1}]$ with odd $l$ are pairwise disjoint up to the endpoints, and the same holds for even $l$. Since their union is contained in $[u,v]$, super-additivity gives
\begin{align}\label{odd}
\sum_{1\leq l \leq r-1}\omega_{3}(t_{l+1},t_{l-1})\leq \omega_{3}(v,u), \quad l~ \text{is odd}.
\end{align}
Similarly, for even $l$, we have 
\begin{align}\label{eve}
\sum_{1\leq l \leq r-1}\omega_{3}(t_{l+1},t_{l-1})\leq \omega_{3}(v,u).
\end{align}
Combining with \eqref{odd} and \eqref{eve}, we have 
\begin{align*}
    \sum_{l=1}^{r-1}\omega_{3}(t_{l+1},t_{l-1})\leq 2\omega_{3}(v,u),
\end{align*}
which contradicts \eqref{maodun}.
Since $\omega_3 (v,u)$ is also a control by Lemma \ref{product-control}, we may apply the argument used in the proof of \cite[Theorem 3.4]{GHN21} with $\omega_3$.
Denote by $\tilde{\mathcal{P}}$ the partition of $[u,v]$ obtained by removing one point $t_{l}$ from $\hat{\mathcal{P}}$.
One obtains that
\begin{align*}
	d(\mu^{\tilde{\mathcal{P}}}_{v,u}, ~\mu^{\hat{\mathcal{P}}}_{v,u}) &\leq   |\prod_{i = l+1}^{n-1} \mu_{t_{i+1},t_i}\;|\, d\big(\mu_{t_{l+1},t_{l-1}}, \mu_{t_{l+1},t_{l}}\circ\mu_{t_{l},t_{l-1}}\big)\, |\prod_{i = 0}^{l-2} \mu_{t_{i+1},t_i}| \nonumber\\
	& \leq \varepsilon C \sigma(v, t_{l+1}) \omega_{3}^{\eta}(t_{l+1},t_{l-1}) \sigma(t_{l-1},u) \\
	&\leq \varepsilon C\sigma^2(v,u) (\frac{2}{r-1})^{\eta} \omega_{3}^{\eta}(v,u).
	\end{align*}
Iterating this procedure on $\hat{\mathcal{P}}$, we have the following inequality
\begin{align}\label{maxin01}
d(\mu_{v,u}, \mu^{\hat{\mathcal{P}}}_{v,u})&\lesssim 
\varepsilon C \sigma^{2}(v,u) \sum_{r=2}^{\infty}(\frac{2}{r-1})^{\eta}\omega_{3}^{\eta}(v,u) \nonumber\\
&\lesssim  \varepsilon  2^{\eta} \zeta(\eta)C \sigma^{2}(v,u)\omega_{3}^{\eta}(v,u),
\end{align}
where $\zeta(\eta)=\sum\limits_{k=1}^{\infty}k^{-\eta}$ is the Riemann zeta function.
In the following, we show that the existence of the limit \eqref{jixian01}. Without loss of generality, we consider any other refinement $\mathcal{P}^{\prime}$ of $\mathcal{P}$. If $\mathcal{P}=\{s=t_{0}<t_{1}<\dots< t_{k}=t\}$,
we define
		$$\mathscr{M}^l_{t,s} = \prod_{i = l}^{k-1} \mu^{\mathcal{P}^{\prime}\cap [t_i, t_{i+1}]}_{t_{i+1},t_i} \prod_{i = 0}^{l-1} 
		\mu_{t_{i+1},t_i}.$$
		Note that $\mathscr{M}^{k}_{t,s} = \mu^{\mathcal{P}}_{t,s}$ and $\mathscr{M}^0_{t,s} = \mu^{\mathcal{P}^{\prime}}_{t,s}$.
Using the submultiplicativity and super-additivity of $\omega_{3}$ and  (\ref{maxin01}), we have 
\begin{equation}\label{cauchy01}
	\begin{split}
			d(\mu^{\mathcal{P}}_{t,s}, \mu^{\mathcal{P}^{\prime}}_{t,s}) &\leq \sum_{i = 0}^{k-1} d(\mathscr{M}^i_{t,s}, \mathscr{M}^{i+1}_{t,s})\\
            &\leq \sigma^2(t,s) \sum_{i = 0}^{k-1} d(\mu_{t_{i+1},t_i}, \mu^{\mathcal{P}^{\prime} \cap [t_i, t_{i+1}]}_{t_{i+1},t_i}) \\ 
			& \leq \varepsilon C \sigma^4(t,s) 2^{\eta}\zeta(\eta) \sum_{i = 0}^{k-1} \omega_{3}^{\eta}(t_{i+1},t_i)\\
             &\leq C'
       \varepsilon,
		\end{split}
        \end{equation}
    where $C'$ is a constant depending on $\sigma, s, t, \eta$. 
Since $\eta>1$, $\sigma(t,s)$ is bounded and $\omega_{3}$ is a control, we obtain the RS convergence. 


We consider two refinements $\mathcal{R}, \mathcal{R}'$ of $\mathcal{P}$, where $\mathcal{P}$ 
 is constructed as shown in \eqref{Pnb}. 
From \eqref{cauchy01} and triangle inequality, 
\begin{align}\label{kexi}
 d(\mu^\mathcal{R}_{t,s},\mu^{\mathcal{R}'}_{t,s}) \leq d(\mu^{\mathcal{R}}_{t,s},\mu^{\mathcal{P}}_{t,s}) + d(\mu^{\mathcal{P}}_{t,s},\mu^{\mathcal{R}'}_{t,s}) \leq 2C'\varepsilon,
\end{align}
which implies the sequence $(\mu^{\mathcal{P}}_{t,s})$ ordered by refinement is Cauchy. Since $|\mu_{t,s}^{\mathcal{P}}|\lesssim  \sigma(t,s)$ and the completeness of the sets $\{a\in \mathcal{M}:|a|\leq C\}$, we derive that $\mu_{t,s}^{\mathcal{P}}$ converges to a unique limit $$
\varphi_{t,s}:=RS-\lim\limits_{|\mathcal{P}|\to 0}\mu_{t,s}^{\mathcal{P}}, 
$$ satisfying $|\varphi_{t,s}|\lesssim \sigma(t,s)$. 
    For any fixed $(t,s)\in \Delta_2$,  \eqref{kexi} also allows us to derive that the limit $\varphi_{t,s}$ is  independent of  the choice of the refinement.

Next,  we show that $\varphi$ is multiplicative.  Let $\mathcal{P}_1$, $\mathcal{P}_2$ be the partitions of $[u,t], [s,u]$. Since
\[ \mu^{\mathcal{P}_1\vee \mathcal{P}_2}_{t,s}=\mu^{\mathcal{P}_1}_{t,u}\mu^{\mathcal{P}_2}_{u,s} \]
passing to the refinment limit and using the continuity of the multiplication on bounded sets (Remark 3.2) yields
\[ \varphi_{t,s}=\varphi_{t,u}\varphi_{u,s}.\]

Now, to prove \eqref{est001}, suppose $\mathcal{P}_{n}$ is a sequence of partitions on $[s,t]$ such that the limit above exists. Choose any constant $1<\theta<\min\limits_{j=1, \dots, N}\{\alpha_{1,j}+\alpha_{2,j}\}$. For any $(u,v)\in \mathcal{P}_{n}$, $m\in (u,v)$, one has
\begin{align*}
d(\mu_{v,u}, \mu_{v,m}\circ \mu_{m,u})\leq \big(\sum_{j=1}^{N} \omega_{1,j}^{\frac{\alpha_{1,j}}{\theta}}(v,m) \omega_{2,j}^{\frac{\alpha_{2,j}}{\theta}}(m,u) \big)^{\theta},
 \end{align*}
 Hence, we may use the computations in \eqref{maxin01} to 
\begin{align}\label{o4}
\omega_{4}(v,u):=\sum_{j=1}^{N}\omega_{1,j}^{\frac{\alpha_{1,j}}{\theta}}(v,u+)\omega_{2,j}^{\frac{\alpha_{2,j}}{\theta}}(v-,u).
\end{align}
Then for every $(t,s)\in \Delta_{2}$,  we have
\begin{equation}\label{limsew01}
\begin{split}
d(\mu^{\mathcal{P}_{n}}_{t,s}, \mu_{t,s})
&\lesssim \omega_{4}^{\theta}(t,s)\\
&\lesssim \big(\sum_{j=1}^{N}\omega_{1,j}^{\frac{\alpha_{1,j}}{\theta}}(t,s+)\omega_{2,j}^{\frac{\alpha_{2,j}}{\theta}}(t-,s)\big)^{\theta}\\
&\lesssim \sum_{j=1}^{N} \omega_{1,j}^{\alpha_{1,j}}(t,s+) \omega_{2,j}^{\alpha_{2,j}}(t-,s), 
\end{split}
\end{equation}
where  in the last inequality, we  apply
$ (\sum\limits_{i=1}^N a_i)^\theta \leq N^{\theta-1}  (\sum\limits_{i=1}^N a_i^\theta)
$, for $\theta>1$, $a_i\geq 0$. 
The inequality (\ref{est001}) follows by taking the limit.

It remains to show the uniqueness of the multiplicative map $\varphi: \Delta_2 \to \mathcal{M}$.  Suppose that 
$\tilde\varphi \in BG_2([0,T], \mathcal{M})$ is another multiplicative map,  satisfying $|\tilde \varphi_{t,s}|\lesssim  \tilde \sigma(t,s)$ and
\begin{align*}
d(\tilde\varphi_{t,s}, \mu_{t,s})\lesssim  \sum_{j=1}^{N} \omega_{1,j}^{\alpha_{1,j}}(t,s+) \omega_{2,j}^{\alpha_{2,j}}(t-,s).
\end{align*}
For any $\varepsilon>0$, the exists partition $\mathcal{P}:=\{s=t_0<t_1<\cdots<t_n=t\}$, such that for any $(t_i, t_{i+1})\in \mathcal{P}$, $i=0,\cdots, n-1$, we have 
\begin{align}\label{o3o3o3}
d(\tilde\varphi_{t_{i+1},t_{i}}, \mu_{t_{i+1}, t_i})
\lesssim \varepsilon  \omega_3^\eta (t_{i+1},t_i)
\end{align}
from the similar deduction in \eqref{o3o3}. 
Define $\mathscr{D}_{t,s}^l=\varphi_{t,t_l}\tilde\varphi_{t_l,s}$, for $1\leq l \leq n-1$. Denote $\mathscr{D}^0_{t,s}=\varphi_{t,s}$ and $\mathscr{D}_{t,s}^n=\tilde\varphi_{t,s}$ and denote $\Gamma(t,s)=1+\sigma(t,s)+\tilde \sigma(t,s)$. By the same argument as in the proof of \eqref{cauchy01}, together with \eqref{o3o3o3}, we obtain
\begin{align*}
d(\varphi_{t,s}, \tilde\varphi_{t,s})&\leq \sum\limits_{l=0}^{n-1}d(\mathscr{D}^l_{t,s}, \mathscr{D}^{l+1}_{t,s})\\
&\leq  \sum\limits_{l=0}^{n-1}|\varphi_{t, t_{l+1}}|d(\varphi_{t_{l+1}, t_l}, \tilde \varphi_{t_{l+1}, t_l})|\tilde \varphi_{t_{l}, s}|\\
&\lesssim  \varepsilon  \Gamma(t,s)^2 \sum_{l=0}^{n-1} \omega_3^\eta (t_{l+1}, t_l)\\
&\lesssim \varepsilon \Gamma(t,s)^2 \omega_3^\eta (t,s).
\end{align*}
Since $\varepsilon$ is arbitrary,  we have $\varphi= \tilde \varphi$.  The uniqueness of the map is proved.


In the following, we give the proof of the mesh sense convergence.~Assume that for every
$j=1,\ldots,N$, the map
$
    u\to \omega_{1,j}(r,u)
$
is right-continuous for every fixed $r\in[0,T]$.  Fix some $\kappa>0$ such that $\kappa<\min\limits_{j=1,\dots,N.}\{(\alpha_{2,j}
+\alpha_{1,j}-1)\wedge \alpha_{1,j}\}$. 
Define
\begin{align}\label{Omega-kappa}
    \Omega_\kappa(v,u)
    :=
    \sum_{j=1}^N
    \omega_{1,j}(v,u)^{\alpha_{1,j}-\kappa}
    \omega_{2,j}(v,u)^{\alpha_{2,j}}.
\end{align}
Since~$
    (\alpha_{1,j}-\kappa)+\alpha_{2,j}>1,$
Lemma~\ref{product-control} implies that
$\Omega_\kappa$ is a control.
Fix $(t,s)\in\Delta_2$ with $s<t$, and let $\varepsilon>0$. 
Choose $\rho>0$,  such that 
$$
\rho
    \left[
        1+
        C2^\eta \zeta(\eta)\sigma^4(t,s)\Omega_\kappa(t,s)
    \right]
    <\varepsilon.
$$
According to the definition of RS convergence,  there exists a partition 
$$
    \mathcal Q^\rho
    =
    \{s=\tau_0<\tau_1<\cdots<\tau_m=t\}
$$
such that for any refinement $\mathcal{R}$ of $\mathcal{Q}^{\rho}$, 
\begin{align}\label{rs-rho}
d(\mu^{\mathcal{R}}_{t,s}, \varphi_{t,s})<\rho.
\end{align}
We first recall that for
every $u<T$ and every $j$,
$$
    \lim_{v\downarrow u}\omega_{1,j}(v,u)=0.
$$
Hence,  for every $i=0,\ldots,m-1$ and every
$j=1,\ldots,N$, there exists a constant
$$
    \delta_{i,j}\in(0,\tau_{i+1}-\tau_i]
$$
such that $\tau_i<v<\tau_i+\delta_{i,j}$ and 
\begin{align}\label{small-right-control}
    \omega_{1,j}(v,\tau_i)
    <
    \rho^{1/\kappa}.
\end{align}
Set
$$
    \operatorname{gap}(\mathcal Q^\rho)
    :=
    \min_{0\leq i<m}
    (\tau_{i+1}-\tau_i)
$$
and
\begin{align}\label{mesh-threshold}
    \delta_\rho
    :=
        \operatorname{gap}(\mathcal Q^\rho)  \wedge
    \min_{\substack{0\le i<m\\1\le j\le N}} \delta_{i,j}.
\end{align}
Since $\mathcal Q^\rho$ is finite and strictly increasing, all the
numbers in the minimum are positive. Hence $\delta_\rho>0$.
Let
$
    \mathcal P
    =
    \{s=p_0<p_1<\cdots<p_L=t\}
$
be an arbitrary partition satisfying
$    |\mathcal P|<\delta_\rho,
$
and set
$
    \tilde{\mathcal P}
    :=
    \mathcal P\vee\mathcal Q^\rho.
$
We claim that
each interval $(p_l,p_{l+1})$ contains at most one point of
$\mathcal Q^\rho$. Indeed, if
$\tau_i<\tau_j$ both belonged to $(p_l,p_{l+1})$, then
\[
    \operatorname{gap}(\mathcal Q^\rho)
    \leq
    \tau_j-\tau_i
    <
    p_{l+1}-p_l
    \leq
    |\mathcal P|
    <
    \operatorname{gap}(\mathcal Q^\rho),
\]
which yields a contradiction. 
If $(p_l,p_{l+1})$ contains no point of
$\mathcal Q^\rho$, then
$
    \mu_{p_{l+1},p_l}^{
        \tilde{\mathcal P}\cap [p_l,p_{l+1}]}
    =
    \mu_{p_{l+1},p_l}.
$
If it contains exactly one point $\tau$, then
$
    p_{l+1}-\tau
    <
    |\mathcal P|
    <
    \delta_\rho
    \leq
    \delta_{i,j},
$
which allows us to use  \eqref{small-right-control} to obtain
\begin{align}\label{kaka}
    \omega_{1,j}(p_{l+1},\tau)
    <
    \rho^{1/\kappa}
\end{align}
for every $j$. Thus, from \eqref{Omega-kappa}, 
 \eqref{kaka} and the monotonicity of the control, we derive 
 \begin{equation}\label{ms-local}
     \begin{split}
d\bigl(
    \mu_{p_{l+1},p_l},
    \mu_{p_{l+1},\tau}
    \circ
    \mu_{\tau,p_l}
\bigr)
&\leq
\sum_{j=1}^N
\omega_{1,j}(p_{l+1},\tau)^{\alpha_{1,j}}
\omega_{2,j}(\tau,p_l)^{\alpha_{2,j}}
\\
&\leq
\rho
\sum_{j=1}^N
\omega_{1,j}(p_{l+1},p_l)^{
    \alpha_{1,j}-\kappa}
\omega_{2,j}(p_{l+1},p_l)^{\alpha_{2,j}}
\\
&=
\rho\,
\Omega_\kappa(p_{l+1},p_l).
\end{split}
\end{equation}
Similarly, define
\begin{align*}
\mathscr{U}_{t,s}^{m}=\prod_{l=m}^{L-1} \mu^{\tilde{\mathcal{P}}\cap [p_l, p_{l+1}]}_{p_{l+1},p_l} \prod_{l= 0}^{m-1} 
		\mu_{p_{l+1},p_l}.
\end{align*}
Note that $\mathscr{U}_{t,s}^{L}=\mu_{t,s}^{\mathcal{P}}$ and $\mathscr{U}_{t,s}^{0}=\mu_{t,s}^{\tilde{\mathcal{P}}}$.
From \eqref{ms-local}, we obtain
\begin{equation}\label{ms-glo}
\begin{split}
d\bigl(
    \mu_{t,s}^{\mathcal P},
    \mu_{t,s}^{\tilde{\mathcal P}}
\bigr)&\leq \sum_{l=0}^{L-1}d(\mathscr{U}_{t,s}^{l},\mathscr{U}_{t,s}^{l+1})\\
&\leq
\sigma^2(t,s)
\sum_{l=0}^{L-1}
d\left(
    \mu_{p_{l+1},p_l},
    \mu_{p_{l+1},p_l}^{
        \tilde{\mathcal P}\cap[p_l,p_{l+1}]}
\right)
\\
&\leq
\rho  C2^\eta \zeta(\eta)\sigma^2(t,s)
\sum_{l=0}^{L-1}
\Omega_\kappa(p_{l+1},p_l)
\\
&\leq
\rho C2^\eta \zeta(\eta)\sigma^4(t,s)
\Omega_\kappa(t,s),
\end{split}
\end{equation}
where the last inequality follows from the super-additivity of
$\Omega_\kappa$.
Since
$
    \tilde{\mathcal P}
    \supset
    \mathcal Q^\rho,
$
\eqref{rs-rho} gives
\[
    d\bigl(
        \mu_{t,s}^{\tilde{\mathcal P}},
        \varphi_{t,s}
    \bigr)
    <\rho.
\]
Combining this with \eqref{ms-glo}, we obtain
\[
\begin{aligned}
    d\bigl(
        \mu_{t,s}^{\mathcal P},
        \varphi_{t,s}
    \bigr)
    &\leq
    d\bigl(
        \mu_{t,s}^{\mathcal P},
        \mu_{t,s}^{\tilde{\mathcal P}}
    \bigr)
    +
    d\bigl(
        \mu_{t,s}^{\tilde{\mathcal P}},
        \varphi_{t,s}
    \bigr)\\
    &<
    \rho
    \left[
        1+
        C2^\eta \zeta(\eta)\sigma^4 (t,s)\Omega_\kappa(t,s)
    \right]\\
    &<
    \varepsilon.
\end{aligned}
\]
Therefore, for every $\varepsilon>0$, there exists $\delta_\rho>0$ such that $|\mathcal P|<\delta_\rho$, one has 
$$
    d\bigl(
        \mu_{t,s}^{\mathcal P},
        \varphi_{t,s}
    \bigr)
    <\varepsilon.
$$
This proves the MS convergence.  A similar argument can be used when $\omega_2$ is left-continuous in $t$.

\end{proof}
\subsection{Affine sewing lemma with jumps}\label{AFF}
In what follows, we study the integrals 
$$
    \int_0^t S_{t,r}Y_r\,\textup{d} X_r,
$$
where the driving path $X$ has finite $p$-variation. We begin with the Young case $p\in[1,2)$, while the rough case  $p\in[2,3)$
 will be presented in Theorem \ref{affrough01}. To construct the integral, we represent the associated Riemann sums as products in a suitable monoid, which allows us to apply the multiplicative sewing lemma directly.

Let $\alpha \in \mathbb{R}$ and define $\mathcal{M}(\alpha):=\mathcal{L}(\mathcal{B}_{\alpha})\ltimes \mathcal{B}_{\alpha}$.  For $\mu_{i}=(S_{i}, x_{i})\in \mathcal{M(\alpha)}$.  The multiplication of two elements $\mu_{1}, \mu_{2}$ can be defined as $\mu_{1}\circ \mu_{2}=(S_{1}S_{2}, x_{1}+S_{1}x_{2})$. Moreover, $\mathcal{M}(\alpha)$ is a near ring, which forms a monoid and the multiplication is right-distributive, i.e.~$(\mu_{1}+\mu_{2})\circ \mu_{3}=\mu_{1}\circ \mu_{3}+\mu_{2}\circ \mu_{3}$. Denote the distance between $\mu_{1}$ and $\mu_{2}$ by
	$
	d(\mu_{1}, \mu_{2}):=\|S_{1}-S_{2}\|_{\mathcal{L}(\mathcal{B}_{\alpha})}+|x_{1}-x_{2}|_{\alpha}.
	$
\begin{remark}[\textbf{MRS v.s. RRS}]\label{MRSRRS}
For a partition $\mathcal P$ of $[s,t]$, let $\mu_{t,s}:=(S_{t,s}, S_{t,s}\xi_{t,s})$ 
$\mu_{t,s}^{\mathcal{P}}:=(S_{t,s}, \sum\limits_{(u,v)\in \mathcal{P}} S_{t,u}\xi_{v,u})$, $\mathcal{U}:=(S_{t,s}, \mathcal{K})$. 
 Using  $d$  defined above, we say that the Riemann sum $\sum\limits_{(u,v)\in \mathcal{P}} S_{t,u}\xi_{v,u}$ converges to $\mathcal{K}$ in the following sense if $\mu_{t,s}^{\mathcal P}$ converges to $\mathcal U$ in the corresponding sense in  Definition \ref{MSRS}. 
\begin{itemize}
    \item Mesh Riemann-Stieltjes (MRS) sense:  if for any $\varepsilon>0$, there exists $\delta>0$, such that any $|\mathcal{P}|<\delta$, one has 
    $$
     |\sum_{(u,v)\in \mathcal{P}} S_{t,u}\xi_{v,u}-\mathcal{K}|_{\alpha}<\varepsilon.
    $$
    \item Refinement Rienmann-Stieltjes (RRS) sense: if for any $\varepsilon>0$, there exists $\mathcal{P}_\varepsilon$ such that for any refinement $\mathcal{P}\supset \mathcal{P}_\varepsilon$, 
    $$
     |\sum_{(u,v)\in \mathcal{P}} S_{t,u}\xi_{v,u}-\mathcal{K}|_{\alpha}<\varepsilon.
    $$
\end{itemize}
\end{remark}
\begin{theorem}[\textbf{Affine Sewing Lemma with Jumps and Young}]\label{affyoung01}
Consider $(S_{t,s})_{(t, s)\in \Delta_2}$ as a propagator  in the sense of
Definition~\ref{fullrange}. Fix $\gamma \in \mathbb{R}$, $\theta\in (0,1]$. Let $\xi:\Delta_{2}\to \mathcal{B}_{\gamma}$, $\delta \xi:\Delta_{3}\to \mathcal{B}_{\gamma-\theta}$ and assume that there exist controls $\omega_1, \omega_2, \omega_3$ and positive constants $\alpha_1, \alpha_2, \alpha_3$ with $\min\{\alpha_{1}+\theta, \alpha_{2}+\alpha_{3}\}>1$ such that for every $(t,u,s)\in\Delta_3$
\begin{align}
|\xi_{t,s}|_{\gamma}&\leq \omega_{1}^{\alpha_{1}}(t,s), \label{xi001}\\
|\delta \xi_{t,u,s}|_{\gamma-\theta}&\leq \omega_{2}^{\alpha_{2}}(t,u)\omega_{3}^{\alpha_{3}}(u,s).\label{xi002}
\end{align} 
 Then for every $t\in [0,T]$ the limit 
	\begin{equation} \label{app01}
		\mathcal{I}_t(\xi) :=RRS- \lim_{|\mathcal{P}|\to 0} \sum_{[u,v]\in \mathcal{P}}S_{t,u}\xi_{v,u}
	\end{equation}
	exists uniquely in $\mathcal{B}_{\gamma-\theta}$, over arbitrary partitions $\mathcal{P}$ of $[0,t]$.
    
   Moreover, for every $0\leq s<t\leq T$, 
	\begin{equation}\label{sew01}
		| \delta^{S}\mathcal{I}_{t,s}(\xi)-S_{t,s}\xi_{t,s}|_{\gamma-\theta}
		\leq C [\omega^{\alpha_{1}}_{1}(t, s+)|t-s|^{\theta}+\omega_{2}^{\alpha_{2}}(t,s+)\omega_{3}^{\alpha_{3}}(t-,s)],
	\end{equation}
    where $C$ depends on $S$ and on $\min\{\alpha_{1}+\theta, \alpha_{2}+\alpha_{3}\}$. 
    
If $\omega_{2}$ are right-continuous in their second variable or $\omega_{3}$ are left-continuous in its first variable, then the convergence in \eqref{app01} holds in the MRS sense. 
\end{theorem}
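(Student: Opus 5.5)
We reduce the statement to Theorem \ref{thm:mul01} applied in the monoid $\mathcal{M}(\gamma-\theta)=\mathcal{L}(\cB_{\gamma-\theta})\ltimes\cB_{\gamma-\theta}$ with the metric $d$ introduced above, and with $|(S,x)|:=\|S\|_{\mathcal{L}(\cB_{\gamma-\theta})}+|x|_{\gamma-\theta}$ (or $\max\{1,\|S\|\}(1+|x|)$ if needed for \eqref{subddd}). First we check that this triple is a submultiplicative monoid. Completeness of the balls $B_C$ is clear since both components are Banach spaces. For \eqref{subddd}, compute $(S_1,x_1)\circ(S,x)-(S_2,x_2)\circ(S,x)=((S_1-S_2)S,\,x_1-x_2+(S_1-S_2)x)$. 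This gives $d(ac,bc)\le d(a,b)(1+\|S\|+|x|)$, so the function $|\cdot|$ should be taken as $1+\|S\|+|x|$. The left multiplication is handled similarly.

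We then set $\mu_{t,s}:=(S_{t,s},S_{t,s}\xi_{t,s})$, so that by \ref{P2} the partition product is $\mu^{\mathcal{P}}_{t,s}=(S_{t,s},\sum_{[u,v]\in\mathcal P}S_{t,u}\xi_{v,u})$. This matches Remark \ref{MRSRRS} up to the convention $S_{t,u}$ versus $S_{t,v}$ in front of $\xi$. I would adopt whichever convention is consistent with \eqref{app01} and define $\mu_{t,s}=(S_{t,s},S_{t,s}\xi_{t,s})$, so that the product yields $S_{t,u}\xi_{v,u}$.

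The growth bound: $\|S_{t,s}\|\le\Lambda e^{\lambda T}$ by \ref{P1}. For the sum we write, for each partition,
\[\sum S_{t,u}\xi_{v,u}=S_{t,s}\xi_{t,s}+\sum(\text{sewing defects}),\]
which seems circular. Instead, we take the cruder bound $|\sum_{[u,v]}S_{t,u}\xi_{v,u}|_{\gamma-\theta}\lesssim\sum\omega_1^{\alpha_1}(v,u)$. This is not bounded uniformly if $\alpha_1<1$, so here is \textbf{the main obstacle}: $\mathrm{BG}_2$ requires a partition-independent bound. I would obtain it by a preliminary removal-of-points argument as in \eqref{rr01}--\eqref{maxin01}, but applied directly to the additive sums. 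Removing a point $t_l$ changes the sum by
\[S_{t,t_{l-1}}\bigl(\delta\xi_{t_{l+1},t_l,t_{l-1}}+(S_{t_{l-1},t_l}\text{-type})\bigr)=S_{t,t_{l-1}}\delta\xi_{t_{l+1},t_l,t_{l-1}}+S_{t,t_{l}}(S_{t_l,t_{l-1}}-\mathrm{id})\xi_{t_{l+1},t_l}.\]
This is estimated in $\cB_{\gamma-\theta}$ by $\omega_2^{\alpha_2}\omega_3^{\alpha_3}+|t_l-t_{l-1}|^\theta\omega_1^{\alpha_1}(t_{l+1},t_l)$, using \eqref{smooth} with $\sigma=\theta$ and \eqref{xi001}, \eqref{xi002}. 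Since $|t-s|^\theta$ is a regular control and $\alpha_1+\theta>1$, the right side is of the form required in \eqref{almostmul01} with $N=2$: $\omega_{1,1}=\omega_1,\ \omega_{2,1}(u,s)=|u-s|$ with exponents $\alpha_1,\theta$, and $\omega_{1,2}=\omega_2,\ \omega_{2,2}=\omega_3$. Lemma \ref{product-control} and the maximal-inequality argument then give a partition-independent bound by $C(\omega_1^{\alpha_1}(t,s)+\omega_1^{\alpha_1}(t,s)|t-s|^\theta+\omega_2^{\alpha_2}\omega_3^{\alpha_3}(t,s))$, which serves as $\sigma$.

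The same computation, written at the level of the monoid, shows that the defect $d(\mu_{t,s},\mu_{t,u}\circ\mu_{u,s})$ is bounded by $\|S\|$ times
\[|\delta\xi_{t,u,s}|_{\gamma-\theta}+|(S_{t,u}-\mathrm{id})\xi_{u,s}|_{\gamma-\theta}\]
(the operator component is exact by \ref{P2}). This in turn is bounded by $\omega_2^{\alpha_2}(t,u)\omega_3^{\alpha_3}(u,s)+|t-u|^\theta\omega_1^{\alpha_1}(u,s)$. Up to the order of the arguments, we choose the decomposition so that the jump-type control carries the $s+$ limit as in \eqref{sew01}; if necessary we rewrite the defect via $S_{t,s}(\cdots)$ to put $\omega_1$ on the left interval.

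Theorem \ref{thm:mul01} then yields a multiplicative limit $\varphi_{t,s}=(S_{t,s},\mathcal{J}_{t,s})$ in the RS sense, i.e. RRS convergence of the sums. We set $\mathcal{I}_t:=\mathcal{J}_{t,0}$. Multiplicativity gives $\mathcal J_{t,0}=\mathcal J_{t,s}+S_{t,s}\mathcal J_{s,0}$, i.e. $\delta^S\mathcal I_{t,s}=\mathcal J_{t,s}$, and \eqref{est001} becomes exactly \eqref{sew01}, noting that $|t-s|^\theta$ is continuous. Uniqueness follows from the uniqueness in Theorem \ref{thm:mul01}. The MRS statement follows from its last clause, since the time control $|t-s|$ is continuous and the continuity assumption on $\omega_2$ or $\omega_3$ covers the remaining term.
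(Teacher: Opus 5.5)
Your proposal is the paper's proof. It uses the same monoid $\mathcal{M}(\gamma-\theta)$ and the same $\mu_{t,s}=(S_{t,s},S_{t,s}\xi_{t,s})$; your $|(S,x)|=1+\|S\|+|x|$ is equivalent to the paper's $1+d(\mu,(\textup{id},0))$. It then follows the same steps:
\begin{itemize}
\item the point-removal argument applied directly to the additive sums, giving a partition-independent growth bound;
\item an application of Theorem~\ref{thm:mul01};
\item multiplicativity to identify $\delta^S\mathcal{I}$;
\item continuity of $|t-s|$ for the MRS claim.
\end{itemize}

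One slip needs fixing. For this $\mu$ the defect is
\[
S_{t,s}\delta\xi_{t,u,s}+S_{t,u}(S_{u,s}-\textup{id})\xi_{t,u},
\]
which is exactly what your own point-removal computation produced. It is not an expression involving $(S_{t,u}-\textup{id})\xi_{u,s}$. That term belongs to the right-endpoint convention $\mu_{t,s}=(S_{t,s},\xi_{t,s})$, whose partition sums are $\sum S_{t,v}\xi_{v,u}$. As written, your defect inequality is false: take $\delta\xi=0$ and $\xi_{u,s}=0\neq\xi_{t,u}$.

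The correct bound is $\omega_2^{\alpha_2}(t,u)\omega_3^{\alpha_3}(u,s)+\omega_1^{\alpha_1}(t,u)|u-s|^{\theta}$. It already places $\omega_1$ on the left interval, so \eqref{est001} gives the term $\omega_1^{\alpha_1}(t,s+)|t-s|^{\theta}$ in \eqref{sew01} directly. The hedged ``rewriting'' you mention is therefore unnecessary.
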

\begin{proof}
Denote the monoid $\mathcal{M}=\mathcal{M}(\gamma-\theta)$. 
	
	\textbf{Step 1. Submultiplicative monoid structure on $\mathcal{M}$.}
	
	Given $\mu_{i}=(S_{i}, x_{i})$, $i=1,2.$
	Denote the distance between $\mu_{1}$ and $\mu_{2}$ by
	$$
	d(\mu_{1}, \mu_{2}):=\|S_{1}-S_{2}\|_{\mathcal{L}(\mathcal{B}_{\gamma-\theta)}}+|x_{1}-x_{2}|_{\gamma-\theta}  .
	$$
	 Define $$
	 |\mu|:=1+d(\mu,(\textup{id},0)). 
	 $$
We have 
 $d(\mu_1\circ\nu, \mu_2\circ \nu) \leq d(\mu_1,\mu_2) 
|\nu|$ and the inequality $d(\nu\circ\mu_1, \nu\circ \mu_2) \leq  |\nu| d(\mu_1,\mu_2)$ from  the right-distributivity of $\circ$. Since $|\cdot|$ is continuous with respect to $d$,
the completeness assumption from Definition \ref{monoid01} is satisfied and one concludes that 
$(\mathcal{M},|\cdot |,d)$ is a submultiplicative monoid.

\textbf{ Step 2. Almost-multiplicativity of $\mu$.} 

Define $\mu :\Delta_2 \to \mathcal{M}$ as
$
\mu _{t,s}:=(S_{t,s}, S_{t,s}\xi _{t,s})\,.
$
For every $(t,u,s)\in\Delta_3,$
observe that 
\begin{equation*}
	\mu _{t,s}-\mu _{t,u }\circ\mu _{u ,s}
	\equiv
	(0,S_{t,s}\delta \xi _{t,u ,s}+ S_{t,u }(S_{u ,s}-\textup{id})\xi _{t,u}).
\end{equation*}
From \eqref{xi001} and  \eqref{xi002}, we have
\begin{align*}
d(\mu _{t,s},\mu _{t,u }\circ\mu _{u ,s})
	&\lesssim  |\delta \xi_{t,u,s} |_{\gamma -\theta} + |(S_{u ,s}-\textup{id})\xi _{t,u }|_{\gamma -\theta}\\
    &\lesssim \omega_{2}^{\alpha_{2}}(t,u)\omega_{3}^{\alpha_{3}}(u,s)+\omega_{1}^{\alpha_{1}}(t,u)|u-s|^{\theta}\\
&:=\sum_{j=1}^{2}\omega_{1,j}^{\alpha_{1,j}}(t,u) \omega_{2,j}^{\alpha_{2,j}}(u,s),
\end{align*}
    which implies  $\mu $ is almost-multiplicative \eqref{almostmul01}.

\textbf{Step 3. Growth of $\mu$.}  

Let $\mathcal{P}:=\{s=t_0<t_1<\dots<t_n=t\}$ be a finite partition of $[s,t]$. Set 
\begin{align*}
\Xi^{\mathcal{P}}_{t,s}:=\sum\limits_{i=0}^{n-1}S_{t,t_i}\xi_{t_{i+1}, t_i}. 
\end{align*} 
Denote $\mu_{t,s}^{\mathcal{P}}:=(S_{t,s}, \Xi_{t,s}^{\mathcal{P}})$. 
Suppose that $a<b<c$ are three consecutive points of $\mathcal{P}$ and let $\mathcal{P}'$ be obtained by removing $b$. Then
\begin{align*}
\Xi_{t,s}^{\mathcal{P}}-\Xi_{t,s}^{\mathcal{P}'}=-S_{t,b}[S_{b,a}\delta \xi_{c,b,a}+(S_{b,a}-\textup{id})\xi_{c,b}],
\end{align*}
which yields
\begin{align}\label{Xilocal}
|\Xi_{t,s}^{\mathcal{P}}-\Xi_{t,s}^{\mathcal{P}'}|_{\gamma-\theta}\lesssim \omega_2(c,b)^{\alpha_2}\omega_3(b,a)^{\alpha_3}+\omega_1(c,b)^{\alpha_1}|b-a|^{\theta}.
\end{align}
Now, we choose $1<\beta<\min \{\alpha_2+\alpha_3, \alpha_1+\theta\}$ and define 
\begin{align*}
\Omega(v,u):=\omega_2(v,u+)^{\alpha_2/\beta}
\omega_3(v-,u)^{\alpha_3/\beta}
+\omega_1(v,u+)^{\alpha_1/\beta}|v-u|^{\theta/\beta}.
\end{align*}
From Lemma \ref{product-control} and Remark \ref{lim-control}, $\Omega$ is a control. 
Plugging this into \eqref{Xilocal}, we get 
\begin{align*}
|\Xi_{t,s}^{\mathcal{P}}-\Xi_{t,s}^{\mathcal{P}'}|_{\gamma-\theta}\lesssim \Omega(c,a)^\beta.
\end{align*}
There exists two intervals $[a,b], [b,c]$, such that 
\begin{align*}
\Omega(c,a)\leq \frac{2}{n-1}\Omega(t,s). 
\end{align*}
Applying the similar iteration argument as shown in  \eqref{maxin01}, we have 
\begin{align*}
|\Xi_{t,s}^{\mathcal{P}}-S_{t,s}\xi_{t,s}|_{\gamma-\theta}&\lesssim  \sum_{n=2}^{\infty}(\frac{2}{n-1})^\beta \Omega(t,s)^\beta\\
&\lesssim 2^\beta \zeta(\beta) \Omega(t,s)^\beta\\
&\lesssim \zeta(\beta) [\omega_2(t,s+)^{\alpha_2}\omega_3(t-,s)^{\alpha_3}
+\omega_1(t,s+)^{\alpha_1}|t-s|^\theta].
\end{align*}
According to the definition of  $|\cdot|$, we have
\begin{align*}
|\mu^{\mathcal{P}}_{t,s}&|=1+d(\mu^{\mathcal{P}}_{t,s}, (\textup{id},0))\\
&\leq 1+d(\mu^{\mathcal{P}}_{t,s}, \mu_{t,s})+d(\mu_{t,s},(\textup{id},0))
\end{align*}
From  (\ref{xi001}) and the continuous embedding $\cB_\gamma \hookrightarrow \cB_{\gamma-\theta}$,  we get
\begin{align*}
d(\mu_{t,s},(\textup{id},0))&=\|S_{t,s}-\textup{id}\|_{\mathcal{L}(\mathcal{B}_{\gamma-\theta})}+|S_{t,s}\xi_{t,s}|_{\gamma-\theta}\\
&\lesssim 1+\omega_{1}^{\alpha_{1}}(t,s).
\end{align*}
Thus we obtain
\begin{align*}
|\mu_{t,s}^{\mathcal{P}}|
&\lesssim 1+\omega_{1}^{\alpha_{1}}(t,s)+\omega_2(t,s+)^{\alpha_2}\omega_3(t-,s)^{\alpha_3}\\
&~~+\omega_1(t,s+)^{\alpha_1}|t-s|^\theta\\
&:=\sigma(t,s). 
\end{align*}
Since controls and their one-sided limits are increasing in $t$ and decreasing in $s$, the same holds for $\sigma$. Moreover, $\sigma$ is bounded on $\Delta_2$. Therefore, $\mu$ has growth bound with $\sigma$.

\textbf{Step 4. Proof of the estimate \eqref{sew01}.} 

By Step 1-3, all assumptions in Theorem  \ref{thm:mul01} are satisfied. Hence  there exists a unique  limit denoted by  $$I_{t,s}:=RRS-\lim\limits_{|\mathcal{P}|\to 0}\sum\limits_{[u,v]\in \mathcal{P}}S_{t,u}\xi_{v,u}.$$ 
And we denote $\varphi_{t,s}:=(S_{t,s}, I_{t,s})$. 
 Using \eqref{est001}, we have 
\begin{align*}
|I_{t,s}-S_{t,s}\xi_{t,s}|_{\gamma-\theta}&=d(\varphi_{t,s}, \mu_{t,s})\\
&\lesssim
\omega_1(t,s+)^{\alpha_1}|t-s|^\theta
+
\omega_2(t,s+)^{\alpha_2}
\omega_3(t-,s)^{\alpha_3}.
\end{align*}
Let $\mathcal{I}_t(\xi)=I_{t,0}$. From the multiplicativity of $\varphi$, we have $I_{t,0}=I_{t,s}+S_{t,s}I_{s,0}$. Therefore $I_{t,s}=\mathcal{I}_t(\xi)-S_{t,s}\mathcal{I}_s(\xi)=\delta^S \mathcal{I}_{t,s}(\xi).$ Finally, we obtain \eqref{sew01}. 

Furthermore, the first part in \eqref{sew01} contains $|t-s|$, which is already a regular control in view of Remark \ref{reg-control}. To obtain convergence in the $MRS$ sense, it suffices that $\omega_2$ be right-continuous in their second variable or that $\omega_3$ be left-continuous in their first variable.

  
\end{proof}
We introduce the following mixed 
$p$-variation space, which will be used to define the Young integral and the solution spaces for our well-posedness analysis. 

For $0<\alpha_1<\alpha_2\in \mathbb{R}$, $1\leq p_1< p_2\leq \infty$, we  
denote 
$$V^{p_{1},p_{2}}_{\alpha_{1}, \alpha_{2}}([0,T]):=V^{p_{1}}([0,T]; \mathcal{L}(\mathbb{R}^d, \mathcal{B}_{\alpha_{1}}))\cap V^{p_{2}}([0,T];  \mathcal{L}(\mathbb{R}^d, \mathcal{B}_{\alpha_{2}}))$$ with the norm 
\begin{equation*}
\|Y\|_{V^{p_{1},p_{2}}_{\alpha_{1}, \alpha_{2}}([0,T])}=
\begin{cases}
\|Y\|_{V^{p_{1}}([0,T];\mathcal{L}(\mathbb{R}^d, \mathcal{B}_{\alpha_{1}}))}+\|Y\|_{V^{p_{2}}([0,T]; \mathcal{L}(\mathbb{R}^d, \mathcal{B}_{\alpha_{2}}))}+\|Y_0\|_{\cL(\mathbb{R}^d, \cB_{\alpha_3})}, ~~&\text{if}~p_2<\infty;\\
\|Y\|_{V^{p_{1}}([0,T];\mathcal{L}(\mathbb{R}^d, \mathcal{B}_{\alpha_{1}}))}+\|Y\|_{V^{p_{2}}([0,T]; \mathcal{L}(\mathbb{R}^d, \mathcal{B}_{\alpha_{2}}))}, \quad &\text{if}~p_2=\infty,
\end{cases}
\end{equation*}
where $\alpha_3\geq \alpha_2$.
Since $\mathbb{R}^d$ is finite-dimensional, we identify
$\mathcal{L}(\mathbb{R}^d,\mathcal{B}_\alpha)$ with
$\mathcal{B}_\alpha^d$. For notational simplicity, we omit the superscript
$d$ and simply write $\mathcal{B}_\alpha$ throughout the rest of the manuscript.

 \begin{lemma}[\textbf{Integration in the Young case}]\label{YOU01} 
	Let $p_{1}, q\in [1,2)$,  $p_{2}\in (p_1, \infty]$, $\theta\in(0,1]$,   $\alpha\in \mathbb{R}$. Let $X\in V^{q}([0,T];  \mathbb{R}^d)$, $Y\in V^{p_{1},p_{2}}_{\alpha,\alpha+\theta}([0,T])$ with $\min\{\frac{1}{p_{1}}+\frac{1}{q},  \theta+\frac{1}{q}\}>1$.
	Then the limit
	\begin{align}
    \label{youint0}
\int_{0}^{t}S_{t,s}Y_{s}\textup{d}X_{s}=RRS-	\lim\limits_{|\mathcal{P}|\rightarrow 0}\sum_{(u,v)\in \mathcal{P}}S_{t,u} Y_{u}\delta X_{v,u}
	\end{align}
	exists in $\mathcal{B}_{\alpha}$. 
    Moreover, 
    we have the local estimate for all  $(s,t)\in \Delta_{2}:$
	\begin{align}\label{young-bound}
|\int_{s}^{t}S_{t,u}Y_{u}\textup{d}X_{u}-S_{t,s}Y_{s}\delta X_{t,s}|_{\alpha}
    \leq C \sum\limits_{j=1}^{2} \omega_{1,j}^{\alpha_{1,j}}(t,s+)\omega_{2,j}^{\alpha_{2,j}}(t-,s),
	\end{align}
    where $C$ depends on $S$, $\min\{\frac{1}{q}+\theta, \frac{1}{q}+\frac{1}{p_{1}}\}$, 
    \begin{align*}
&\omega_{1,1}(t,s+)=\|X\|^{q}_{V^{q}((s,t]; \mathbb{R}^d)}, \quad
\omega_{2,1}(t-,s)=\|Y\|^{p_{1}}_{V^{p_{1}}([s,t);\cL(\mathbb{R}^d,\mathcal{B}_{\alpha}))},\\
&\omega_{1,2}(t,s+)=[|Y_{0}|_{\alpha+\theta}+\|Y\|_{V^{p_{2}}([0,T];\cL(\mathbb{R}^d,\mathcal{B}_{\alpha+\theta}))}]^{q}\|X\|^{q}_{V^{q}((s,t];\mathbb{R}^d)}
,\quad \omega_{2,2}(t,s)=|t-s|,\\
&\alpha_{1,1}=\alpha_{1,2}=\frac{1}{q},~~\alpha_{2,1}=\frac{1}{p_{1}},~~\alpha_{2,2}=\theta.
    \end{align*}
    	Furthermore, the convergence in \eqref{youint0} holds in the MRS
	sense provided that  the control $\omega_{1,1}$ is
    right-continuous in $s$ or the control $\omega_{2,1}$ is
    left-continuous in $t$.
	\end{lemma}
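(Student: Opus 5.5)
We apply Theorem \ref{affyoung01} with $\gamma=\alpha+\theta$ replaced appropriately, or more directly, re-run its proof (Steps 1--4) with the germ $\xi_{t,s}:=Y_s\delta X_{t,s}$ and the monoid $\mathcal M(\alpha)$. The sewing takes place in $\cB_\alpha$, and the time regularity of the propagator is paid for by the spatial regularity of $Y$ in $\cB_{\alpha+\theta}$. The germ is $\mu_{t,s}:=(S_{t,s},S_{t,s}Y_s\delta X_{t,s})$. As in Step 2, the defect is
\begin{align*}
\mu_{t,s}-\mu_{t,u}\circ\mu_{u,s}=\bigl(0,\,S_{t,s}\delta\xi_{t,u,s}+S_{t,u}(S_{u,s}-\textup{id})\xi_{t,u}\bigr),
\end{align*}
with $\delta\xi_{t,u,s}=Y_s\delta X_{t,s}-Y_u\delta X_{t,u}-Y_s\delta X_{u,s}=-\delta Y_{u,s}\delta X_{t,u}$.

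The two terms are estimated separately. By \ref{P1} in $\cB_\alpha$,
\begin{align*}
|S_{t,s}\delta Y_{u,s}\delta X_{t,u}|_\alpha\lesssim \|X\|_{V^q([u,t])}\|Y\|_{V^{p_1}([s,u];\cB_\alpha)}\le \omega_{1,1}^{1/q}(t,u)\,\omega_{2,1}^{1/p_1}(u,s).
\end{align*}
For the second term, the smoothing estimate \eqref{smooth} with $\sigma=\theta$ gives
\begin{align*}
|S_{t,u}(S_{u,s}-\textup{id})Y_u\delta X_{t,u}|_\alpha\lesssim |u-s|^\theta\,|Y_u|_{\alpha+\theta}\,|\delta X_{t,u}|.
\end{align*}
We then bound $|Y_u|_{\alpha+\theta}\le |Y_0|_{\alpha+\theta}+\|Y\|_{V^{p_2}([0,T];\cB_{\alpha+\theta})}$; this is where the norm of $V^{p_1,p_2}_{\alpha,\alpha+\theta}$ including $Y_0$ is used, and where $p_2=\infty$ is allowed. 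The result is $\omega_{1,2}^{1/q}(t,u)\,\omega_{2,2}^{\theta}(u,s)$ with the controls listed in the statement. The exponent conditions $\frac1q+\frac1{p_1}>1$ and $\frac1q+\theta>1$ are exactly the hypotheses, so the almost-multiplicativity \eqref{almostmul01} holds with $N=2$.

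For the growth bound we copy Step 3 of Theorem \ref{affyoung01}. Removing a point $b$ between consecutive points $a<b<c$ changes the Riemann sum by at most the local defect bounded above. We choose $1<\beta<\min\{\frac1q+\frac1{p_1},\frac1q+\theta\}$ and the control
\begin{align*}
\Omega(v,u)=\omega_{1,1}(v,u+)^{\frac1{q\beta}}\omega_{2,1}(v-,u)^{\frac1{p_1\beta}}+\omega_{1,2}(v,u+)^{\frac1{q\beta}}|v-u|^{\theta/\beta},
\end{align*}
which is a control by Lemma \ref{product-control}. The point-removal argument with the zeta sum then gives a partition-independent bound on $|\Xi^{\mathcal P}_{t,s}-S_{t,s}Y_s\delta X_{t,s}|_\alpha$. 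Together with $|S_{t,s}Y_s\delta X_{t,s}|_\alpha\lesssim \|Y\|_\infty\|X\|_{V^q}$, this yields a bounded, monotone $\sigma$. Theorem \ref{thm:mul01} then gives the RRS limit, and estimate \eqref{est001} gives \eqref{young-bound}. The identity $\delta^S$ of the integral with $I_{t,s}$, as in Step 4, lets us write the left side of \eqref{young-bound} as $\int_s^t S_{t,u}Y_u\,\txtd X_u$.

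The one delicate point is the one-sided limits in \eqref{young-bound}. When we pass from $\omega(t,u)$, $\omega(u,s)$ with $s<u<t$ to $\omega(t,s+)$ and $\omega(t-,s)$, we have to check that the restricted variations $\|X\|_{V^q((s,t])}$ and $\|Y\|_{V^{p_1}([s,t))}$ really dominate $\omega_{1,1}(t,u)$ and $\omega_{2,1}(u,s)$ for interior $u$. This holds by monotonicity, since $[u,t]\subset(s,t]$ and $[s,u]\subset[s,t)$.

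The MRS statement follows from the last part of Theorem \ref{thm:mul01}. If $\omega_{1,1}$ is right-continuous in $s$ (for instance when $X$ is càdlàg, by the Remark on $\omega_{X,p}$), or $\omega_{2,1}$ is left-continuous in $t$, then mesh convergence holds. The second summand causes no problem here because $\omega_{2,2}=|t-s|$ is regular, exactly as in the final paragraph of the proof of Theorem \ref{affyoung01}.
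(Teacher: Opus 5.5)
Your proposal is correct and is essentially the paper's proof. The paper checks $|\xi_{t,s}|_{\alpha+\theta}\le[|Y_0|_{\alpha+\theta}+\|Y\|_{V^{p_2}([0,T];\cB_{\alpha+\theta})}]\|X\|_{V^q([s,t])}$ and $|\delta\xi_{t,u,s}|_\alpha\le\|Y\|_{V^{p_1}([s,u];\cB_\alpha)}\|X\|_{V^q([u,t])}$ for $\xi_{t,s}=Y_s\delta X_{t,s}$, then invokes Theorem \ref{affyoung01} with $\gamma=\alpha+\theta$, $\alpha_1=\alpha_2=\frac1q$, $\alpha_3=\frac1{p_1}$; you instead inline Steps 1--4 of that theorem (defect, point-removal growth bound, one-sided limits, and the MRS remark via the regularity of $|t-s|$) for this germ, which is the same argument.
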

\begin{proof}
	
	For $(t,s)\in \Delta_{2}$, $(t,u,s)\in \Delta_{3}$, we have 
	$$\xi_{t,s}=Y_{s}\delta X_{t,s}$$ and
\begin{align*}
	\delta \xi_{t,u,s}&=\xi_{t,s}-\xi_{t,u}-\xi_{u,s}\\
	&=Y_{s}(\delta X_{t,u}+\delta X_{u,s})-Y_{u}\delta X_{t,u}-Y_{s}\delta X_{u,s}\\
	&=-\delta Y_{u,s} \delta X_{t,u}.
\end{align*}
We verify that
\begin{align*}
|\xi_{t,s}|_{\alpha+\theta}&\leq |Y_{s}|_{\alpha+\theta}|X_{t}-X_{s}|\\
&\leq [|Y_{0}|_{\alpha+\theta}+\|Y\|_{V^{p_{2}}([0,T]; \cL(\mathbb{R}^d, \mathcal{B}_{\alpha+\theta}))}] \|X\|_{V^{q}([s,t]; \mathbb{R}^d)}
\end{align*}
as well as
\begin{align*}
	|\delta \xi_{t,u, s}|_{\alpha}&\leq |Y_{u}-Y_{s}|_{\alpha}|X_{t}-X_{u}|
	\\
	&\leq \|Y\|_{V^{p_1}([s,u];\cL(\mathbb{R}^d,\mathcal{B}_{\alpha}))}\|X\|_{V^{q}([u,t]; \mathbb{R}^d)}.
\end{align*}
Hence we have 
\begin{equation}\label{young-N2}
\begin{split}
	|\delta \xi_{t,u,s}+(S_{u ,s}-\textup{id})\xi _{t,u }|_{\alpha}
   & \leq \|Y\|_{V^{p_1}([s,u], \cL(\mathbb{R}^d, \mathcal{B}_{\alpha}))}\|X\|_{V^{q}([u,t]; \mathbb{R}^d)}\\
    &~~+[|Y_{0}|_{\alpha+\theta}+\|Y\|_{V^{p_{2}}([0,T];\cL(\mathbb{R}^d,\mathcal{B}_{\alpha+\theta}))}]\|X\|_{V^q([u,t]; \mathbb{R}^d)}|u-s|^{\theta}. 
    \end{split}
\end{equation}
Therefore, Theorem \ref{affyoung01} with $\gamma=\alpha+\theta$, $ \alpha_{1}=\alpha_{2}=\frac{1}{q}$ and $\alpha_3=\frac{1}{p_1}$ proves the statement. 
\textcolor{black}{ Equivalently, estimate \(\eqref{young-N2}\) can be written as
\begin{align*}
|\delta \xi_{t,u,s}  + (S_{u ,s}-\textup{id})\xi _{t,u }|_{\alpha}\leq C \sum\limits_{j=1}^{2} \omega_{1,j}^{\alpha_{1,j}}(t,u)\omega_{2,j}^{\alpha_{2,j}}(u,s),
\end{align*}
as stated in Theorem \ref{thm:mul01} for $N=2$. }

\begin{remark}\label{exp-young}
The estimate \eqref{young-bound} is obtained in $\mathcal B_\alpha$ and, in general, cannot be improved to a space of higher regularity. In the continuous case, the smoothing property of the propagator allows one to trade temporal regularity for spatial regularity. This argument is not available in the presence of jumps, since the control increments are not small on short intervals. 
\textcolor{black}{Thus, to obtain an estimate of the convolution in $\mathcal B_\alpha$, we assume that $Y\in V^{p_1, p_2}_{\alpha, \alpha+\theta}$, which satisfy the assumptions of Lemma \ref{YOU01}. Likewise, applying Theorem  \ref{affyoung01} with $\gamma=\alpha, \alpha_{1}=\alpha_{2}=\frac{1}{q}$ and $\alpha_3=\frac{1}{p_1}$, equivalently, with the spatial index $\alpha$ replaced by $\alpha-\theta$ in Lemma \ref{YOU01}, 
gives an estimate for the convolution in $\mathcal B_{\alpha-\theta}$ for  
 $Y\in V^{p_1,p_2}_{\alpha-\theta, \alpha}$. This is consistent with the continuous Young case. }
Finally, in the continuous case, it is known that the convolution improves the spatial regularity by a parameter which is strictly less than the H\"older exponent of the driver, see \cite[Corollary 4.6]{GHN21}. In the jump case, this is not possible, since we cannot rely on the smoothing effect of the propagator and use H\"older estimates to improve the spatial regularity. 
\end{remark}
	\end{proof}
In order to characterize the behavior of the integral when the upper or lower limit of integration approaches a jump point from the left or right, we introduce 
the following lemma, which is an infinite-dimensional version of \cite[Example 1.10]{FZ18}.
\begin{lemma}\label{conjump}
Let $p_1\in  [1,2)$, $p_2\in (p_1, \infty]$, $\alpha\in \mathbb{R}$, $\theta\in(0,1]$.
Suppose $Y\in V^{p_1,p_2}_{\alpha, \alpha+\theta}([0,T])$ and suppose  $X_t$ on $[0,T]$ is constant except at $t=\tau>0$. We have 
\begin{align*}
\int_0^T S_{T,r}Y_r\textup{d}X_r=S_{T,\tau}Y_{\tau-}\Delta_{\tau}^- X+S_{T,\tau}Y_\tau \Delta_\tau^+ X.
\end{align*}
\begin{proof}
It is clear that $X\in V^{1}([0,T];\mathbb{R}^d)$. 
 According to Lemma \ref{YOU01}, the integral is well-defined, 
\begin{align*}
\int_0^T S_{T,r}Y_r \textup{d}X_r= RRS-\lim\limits_{|\mathcal{P}|\to 0}\sum_{(u,v)\in \mathcal{P}}S_{T,u} Y_u \delta X_{v,u}. 
\end{align*}
For any $\varepsilon>0$, we choose a partition  
 $\mathcal{P}=\{0=t_0<t_1<\dots<t_i<t_{i+1}=\tau<t_{i+2}\dots<t_n=T\}$ such that 
\begin{align*}
\sum_{(u,v)\in \mathcal{P}}S_{T,u} Y_u \delta X_{v,u} =S_{T, t_i}Y_{t_i}\delta X_{\tau, t_i}+S_{T, \tau}Y_\tau \delta X_{t_{i+2},\tau}.
\end{align*}
We note
\begin{align*}
&S_{T, t_i}Y_{t_i}\delta X_{\tau, t_i}+S_{T, \tau}Y_\tau \delta X_{t_{i+2},\tau}-S_{T,\tau}Y_{\tau-}\Delta_{\tau}^- X-S_{T,\tau}Y_\tau \Delta_\tau^+ X\\
=&~~S_{T, t_i}Y_{t_i}\Delta_{\tau}^- X-S_{T,\tau}Y_{\tau-}\Delta_{\tau}^- X\\
=&~~S_{T, t_i}(Y_{t_i}-Y_{\tau^{-}})\Delta_{\tau}^{-}X+ (S_{T, t_i}-S_{T, \tau})Y_{\tau-}\Delta_{\tau}^- X.
\end{align*}
Since $Y_{t_i}\to Y_{\tau-}$ in $\cB_\alpha$ as $t_i\uparrow\tau$
and  the continuity of  $S$, the partition $\mathcal{P}$ can be chosen such that 
\begin{align*}
|\sum_{(u,v)\in \mathcal{P}}S_{T,u} Y_u \delta X_{v,u} -S_{T,\tau}Y_{\tau-}\Delta_{\tau}^- X-S_{T,\tau}Y_\tau \Delta_\tau^+ X|_{\alpha}<\varepsilon.
\end{align*}
For every refinement $\tilde{\mathcal{P}}$ of the partition $\mathcal{P}$,  the same estimate holds: 
\begin{align*}
|\sum_{(u,v)\in \tilde{\mathcal{P}}}S_{T,u} Y_u \delta X_{v,u} -S_{T,\tau}Y_{\tau-}\Delta_{\tau}^- X-S_{T,\tau}Y_\tau \Delta_\tau^+ X|_{\alpha}<\varepsilon.
\end{align*}
We have completed the proof.
\end{proof}
\end{lemma}
 The existence of the left and right limits with respect to the upper limit of integration ensures that the solution in Section \ref{sec:wellpose} is well defined in the mixed $p$-variation space $V^{p,p_1}_{\alpha, \alpha+\theta}$, $p<p_1\leq \infty$. Since we need to establish the existence of these left and right limits not only in $\cB_{\alpha}$, but also in $\cB_{\alpha+\theta}$, we  only provide the proof in $\cB_{\alpha+\theta}$. 
\begin{lemma} \label{llrr}
Let $p_1, q\in  [1,2)$, $p_2\in (p_1, \infty]$, $\alpha\in \mathbb{R}$, $\theta\in(0,1]$. Suppose that $Y_0\in \cB_{\alpha+2\theta}$ and $Y\in V^{p_1,p_2}_{\alpha+\theta,\alpha+2\theta}([0,T])$, $X\in V^q([0,T];\mathbb{R}^d)$ with $\min\{\frac{1}{p_{1}}+\frac{1}{q}, \theta+\frac{1}{q}\}>1$. Then for $0\leq s<t\leq T$, 

(1) the right-hand limit $\lim\limits_{u\downarrow s}\int_u^t S_{t,r}Y_r\textup{d}X_r:=\int_{s+}^{t}S_{t,r}Y_{r}\textup{d}X_{r}$ exists in $\cB_{\alpha+\theta}$. Moreover,
\begin{align*}
\int_{s+}^{t}S_{t,r}Y_{r}\textup{d}X_{r}=\int_s^tS_{t,r}Y_{r}\textup{d}X_{r}-S_{t,s}Y_s\Delta_s^+ X.
\end{align*}

(2) The left-hand limit $\lim \limits_{u\uparrow s}\int_u^t S_{t,r}Y_r\textup{d}X_r:=\int_{s-}^t S_{t,r}Y_r\textup{d}X_r$ exists in $\cB_{\alpha+\theta}$. Moreover,
\begin{align*}
\int_{s-}^t S_{t,r}Y_r\textup{d}X_r=\int_{s}^t S_{t,r}Y_r\textup{d}X_r+S_{t,s}Y_{s-}\Delta_{s}^-X.
\end{align*}

(3) The right-hand limit $\lim\limits_{u\downarrow t}\int_s^u S_{u,r}Y_r\textup{d}X_r:=\int_{s}^{t+}S_{t,r}Y_{r}\textup{d}X_{r}$ exists in $\cB_{\alpha+\theta}$. Moreover,
\begin{align*}
\int_s^{t+}S_{t,r} Y_r\textup{d}X_r=\int_s^{t}S_{t,r} Y_r\textup{d}X_r+Y_{t}\Delta_t^
+ X.
\end{align*}

(4) Furthermore, suppose $Y\in V^{p_1,p_2}_{\alpha+\theta+\varepsilon,\alpha+2\theta+\varepsilon}([0,T])$,  $\varepsilon>0$. The left-hand limit $\int_{s}^{t-}S_{t,r}Y_{r}\textup{d}X_{r}:=\lim\limits_{u\uparrow t} \int_s^u S_{u,r}Y_r\textup{d}X_r$ exists in $\cB_{\alpha+\theta}$. Moreover
\begin{align*}
\int_{s}^{t-}S_{t,r}Y_{r}\textup{d}X_{r}=  \int_s^tS_{t,r}Y_{r}\textup{d}X_{r}-Y_{t-}\Delta_t^- X.
\end{align*}
\end{lemma}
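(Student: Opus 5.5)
The four claims all follow from one mechanism: additivity of the convolution together with the local bound \eqref{young-bound}. The latter is applied with the spatial index $\alpha+\theta$, which is admissible because $Y\in V^{p_1,p_2}_{\alpha+\theta,\alpha+2\theta}$ and $Y_0\in\cB_{\alpha+2\theta}$. Write $I_{t,s}:=\int_s^tS_{t,r}Y_r\,\txtd X_r\in\cB_{\alpha+\theta}$, and recall from the proof of Theorem \ref{affyoung01} that multiplicativity of $\varphi=(S,I)$ gives
\[
I_{t,s}=I_{t,u}+S_{t,u}I_{u,s}\qquad\text{for }(t,u,s)\in\Delta_3 .
\]

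\emph{Parts (1) and (2).} For $s<u<t$, additivity gives $I_{t,s}-I_{t,u}=S_{t,u}I_{u,s}$. By \eqref{young-bound},
\[
I_{u,s}=Y_s\delta X_{u,s}+S_{u,s}\bigl(Y_s\delta X_{u,s}\bigr)-Y_s\delta X_{u,s}+R_{u,s},
\]
where $|R_{u,s}|_{\alpha+\theta}\lesssim \omega_{1,1}^{1/q}(u,s+)\,\omega_{2,1}^{1/p_1}(u-,s)+\omega_{1,2}^{1/q}(u,s+)|u-s|^\theta$. Since $\omega_{1,j}(u,s+)$ is the $q$-variation of $X$ on $(s,u]$, it tends to $0$ as $u\downarrow s$, so $R_{u,s}\to0$. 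Next, $(S_{u,s}-\textup{id})Y_s\to0$ in $\cB_{\alpha+\theta}$ by strong continuity (Remark \ref{dir}), and $\delta X_{u,s}\to\Delta^+_sX$. The remaining factor $S_{t,u}$ is uniformly bounded and strongly continuous in $u$, so $S_{t,u}I_{u,s}\to S_{t,s}Y_s\Delta_s^+X$; this uses the standard argument that strong continuity plus a uniform bound lets one pass to the limit in a product with a convergent sequence. This proves (1).

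For (2), take $u<s$ and write $I_{t,u}=I_{t,s}+S_{t,s}I_{s,u}$. As $u\uparrow s$, expand $I_{s,u}=S_{s,u}Y_u\delta X_{s,u}+R$. Here the factor $\omega_{2,1}(s-,u)$ is the $p_1$-variation of $Y$ on $[u,s)$, which tends to $0$, and $|s-u|^\theta\to0$, so $R\to0$. Moreover $Y_u\to Y_{s-}$ in $\cB_{\alpha+\theta}$ and $\delta X_{s,u}\to\Delta_s^-X$, which gives $I_{s,u}\to Y_{s-}\Delta_s^-X$.

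\emph{Parts (3) and (4).} Here the outer propagator moves, and this is the main obstacle. For $u>t$ we have $I_{u,s}=I_{u,t}+S_{u,t}I_{t,s}$. The term $S_{u,t}I_{t,s}\to I_{t,s}$ by strong continuity, and $I_{u,t}\to Y_t\Delta_t^+X$ exactly as in (1), with $S_{u,t}\to\textup{id}$ strongly. This gives (3).

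For (4), take $u<t$. Then $I_{t,s}=I_{t,u}+S_{t,u}I_{u,s}$, and as in (2), $I_{t,u}\to Y_{t-}\Delta_t^-X$. We must show $S_{t,u}I_{u,s}-I_{u,s}\to0$, but $I_{u,s}$ itself varies with $u$, so strong continuity alone is insufficient. Instead I would use the extra regularity. Running the same estimates with $Y\in V^{p_1,p_2}_{\alpha+\theta+\varepsilon,\alpha+2\theta+\varepsilon}$ shows that $I_{u,s}$ is bounded in $\cB_{\alpha+\theta+\varepsilon}$ uniformly in $u$: by \eqref{young-bound} at index $\alpha+\theta+\varepsilon$, $|I_{u,s}|$ is controlled by $|S_{u,s}Y_s\delta X_{u,s}|$ plus controls bounded by their values on $[0,T]$. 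The smoothing inequality \eqref{smooth} with $\sigma=\varepsilon\wedge1$ then yields
\[
|(S_{t,u}-\textup{id})I_{u,s}|_{\alpha+\theta}\lesssim |t-u|^{\varepsilon}\sup_u|I_{u,s}|_{\alpha+\theta+\varepsilon}\to0 .
\]
Rearranging, $I_{u,s}\to I_{t,s}-Y_{t-}\Delta_t^-X$, which is (4).
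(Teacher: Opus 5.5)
Your proof is correct. For (3) and (4) it follows the paper's skeleton: you split off $(S_{u,t}-\textup{id})I_{t,s}$, resp.\ $(\textup{id}-S_{t,u})I_{u,s}$, and handle the latter with \eqref{smooth} and the extra $\varepsilon$ of spatial regularity. Where you differ is in how the jump is isolated.

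The paper introduces auxiliary paths $\tilde X^1,\dots,\tilde X^4$ that delete the jump at the relevant endpoint. It proves the comparison \eqref{tilde-} and quotes an external lemma for the left-sided analogue. It then computes the convolution against the single-jump path $X-\tilde X^i$ exactly via Lemma~\ref{conjump}. Finally it applies \eqref{young-bound} only to $\tilde X^i$, whose variation near the endpoint vanishes by one-sided continuity.

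You bypass all of this by using the fact that \eqref{young-bound} is already stated with one-sided controls. The remainder carries a factor $\|X\|_{V^q((s,u];\mathbb{R}^d)}$, resp.\ $\|Y\|_{V^{p_1}([u,s);\cB_{\alpha+\theta})}$ or $|s-u|^\theta$, and this factor vanishes as $u\downarrow s$, resp.\ $u\uparrow s$. So the entire jump is carried by the germ $S_{u,s}Y_s\delta X_{u,s}$, resp.\ $S_{s,u}Y_u\delta X_{s,u}$, whose limit is explicit.

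Your route is shorter and needs neither Lemma~\ref{conjump} nor any variation estimate for modified paths. The price is that it depends on the sharp $(t,s+)$/$(t-,s)$ form of the sewing bound from Theorem~\ref{thm:mul01}. With two-sided controls $\omega(u,s)$ the remainder need not vanish across a jump at the endpoint. The paper's jump-removal device still works in that setting, and the paper reuses it later (Remark~\ref{localpp}, Theorem~\ref{local-rough-jump}, Lemma~\ref{rough-regular}, where Chen's relation has to be tracked for $\tilde{\mathbb X}$).

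One fact you use without comment is that $\omega_{X,q}(u,s+)=\|X\|^q_{V^q((s,u];\mathbb{R}^d)}\to0$ as $u\downarrow s$, together with its left analogue for $Y$. This follows in one line from superadditivity: $\omega(u,s+)\le\omega(T,s+)-\omega(T,u)$.
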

\begin{proof}
(1) In order to prove the existence of the right-hand limit  $\lim\limits_{u\downarrow s}\int_u^t S_{t,r}Y_r\textup{d}X_r$, we first remove the jump and therefore introduce an auxiliary process:
$$
\tilde X^1_m:=\left\{
\begin{array}{ll}
X_{s+}, & m=s, \\
	X_m,& m\neq s.
\end{array}
\right.
$$
$\tilde X^1$ is right-continuous at $s$. We claim that $\|\tilde X^1\|_{V^{q}([s,t];\mathbb{R}^d)}\leq \|X\|_{V^{q}((s,t];\mathbb{R}^d)}$. In fact, taking a partition of $(s,t]$, denoted by $\mathcal{P}:=\{s=t_0<t_1<t_2<\dots<t_n=t$\},  we have 
\begin{align*}
\sum_{i=0}^{n-1}|\tilde X^1_{t_{i+1}}-\tilde X^1_{t_i}|^{q}
&= |\tilde X^1_{t_1}-\tilde X^1_s|^{q}+\sum_{i=1}^{n-1}|\tilde X^1_{t_{i+1}}-\tilde X^1_{t_i}|^{q} \\
&=|X_{t_1}-X_{s+}|^{q}+\sum_{i=1}^{n-1}|X_{t_{i+1}}-X_{t_i}|^{q}\\
&=\lim\limits_{u\downarrow s}|X_{t_1}-X_u|^{q}+\sum_{i=1}^{n-1}|X_{t_{i+1}}-X_{t_i}|^{q}.
\end{align*}
Since $u\in (s,t_1)$,  the partition $\{u<t_1<t_2<\dots<t_n=t\}\subset \mathcal{P}$,  we have 
\begin{align*}
|X_{t_1}-X_u|^{q}+\sum_{i=1}^{n-1}|X_{t_{i+1}}-X_{t_i}|^{q}\leq \|X\|^{q}_{V^{q}((s,t];\mathbb{R}^d)}.
\end{align*}
Letting $u\downarrow s$ and then  taking the  supremum over $\mathcal{P}$ of $[s,t]$ gives
\begin{align}\label{tilde-}
\|\tilde X^1\|_{V^{q}([s,t];\mathbb{R}^d)}\leq \|X\|_{V^{q}((s,t];\mathbb{R}^d)}.
\end{align}
 According to  Lemma \ref{YOU01} and Lemma \ref{conjump}, for $h>0$, we derive 
\begin{align*}
&|\int_{s+h}^t S_{t,r}Y_r \textup{d}X_r-\int_{s}^t S_{t,r}Y_r \textup{d}X_r+S_{t,s}Y_s \Delta_s^+ X|_{\alpha+\theta}\\
\lesssim &~|-\int_{s}^{s+h}S_{s+h,r}Y_r \textup{d}X_r+S_{s+h,s}Y_s \Delta_s^
+X|_{\alpha+\theta}
\\
=&~|-\int_{s}^{s+h}S_{s+h,r}Y_r \textup{d}(X-\tilde X^1)_r-\int_{s}^{s+h}S_{s+h,r}Y_r \textup{d}\tilde X^1_r+S_{s+h,s}Y_s \Delta_s^+ X|_{\alpha+\theta}\\
=&~|-S_{s+h,s}Y_s \Delta_s^+ X-\int_{s}^{s+h}S_{s+h,r}Y_r \textup{d}\tilde X^1_r+S_{s+h,s}Y_s \Delta_s^+ X|_{\alpha+\theta}\\
\lesssim &~|\int_{s}^{s+h}S_{s+h,r}Y_r \textup{d}\tilde X^1_r- S_{s+h,s}Y_s\delta \tilde{X}^1_{s+h,s}|_{\alpha+\theta}+|S_{s+h,s}Y_s\delta \tilde{X}^1_{s+h,s}|_{\alpha+\theta}\\
\lesssim &~ \|Y\|_{V^{p_1}([s,s+h];\cB_{\alpha+\theta})}\|\tilde X^1\|_{V^{q}([s,s+h];\mathbb{R}^d)}+
(|Y_0|_{\alpha+2\theta}+\|Y\|_{V^{p_2}([0,s];\cB_{\alpha+2\theta})}) \|\tilde X^1\|_{V^{q}([s,s+h];\mathbb{R}^d)} h^\theta\\
&~~+(|Y_0|_{\alpha+2\theta}+\|Y\|_{V^{p_1}([0,s];\cB_{\alpha+\theta})})\|\tilde X^1\|_{V^{q}([s,s+h];\mathbb{R}^d)}\\
\lesssim &~  (1+h^\theta)(|Y_0|_{\alpha+2\theta}+\|Y\|_{V^{p_1,p_2}_{\alpha+\theta, \alpha+2\theta}([0,T])})\|\tilde X^1\|_{V^{q}([s,s+h];\mathbb{R}^d)}.
\end{align*}
Then from Remark \ref{small-OSC}, for $\varepsilon_0>0$, there exists $\delta>0$, such that $h<\delta$, 
\begin{align*}
|\int_{s+h}^t S_{t,r}Y_r \textup{d}X_r-\int_{s}^t S_{t,r}Y_r \textup{d}X_r+S_{t,s}Y_s \Delta_s^+ X|_{\alpha+\theta}
<\varepsilon_0.
\end{align*}
(2)
Introduce an  auxiliary process
$$
\tilde X^2_m:=\left\{
\begin{array}{ll}
X_{s-}, & m=s, \\
	X_m,& m\neq s.
\end{array}
\right.
$$
It is  clear that $\tilde X^2$ is left-continuous at $s$ and that for $u<s$,  $$
\|\tilde X^2\|_{V^q([u,s];\mathbb{R}^d)}\leq \|X\|_{V^q ([u,s);\mathbb{R}^d)}+|\Delta_s^- X|,
$$
see \cite[Lemma 2.6]{ALP21} for a detailed proof. 

According to  Lemma \ref{YOU01} and Lemma \ref{conjump}, for $h>0$, we derive
\begin{align*}
&|\int_{s-h}^t S_{t,r} Y_r \textup{d}X_r-\int_s^t S_{t,r} Y_r\textup{d}X_r-S_{t,s}Y_{s-}\Delta_s^- X|_{\alpha+\theta}\\
=&|\int_{s-h}^s S_{t,r}Y_r\textup{d}X_r-S_{t,s}Y_{s-}\Delta_s^- X|_{\alpha+\theta}\\
\leq &|S_{t,s}\int_{s-h}^s S_{s,r}Y_r\textup{d}(X-\tilde X^2)_r-S_{t,s}Y_{s-}\Delta_s^- X|_{\alpha+\theta}+ |S_{t,s}\int_{s-h}^ s S_{s,r}Y_r\textup{d} \tilde X^2_r|_{\alpha+\theta}\\
\lesssim &|\int_{s-h}^s S_{s,r}Y_r\textup{d}\tilde X^2_r-S_{s,s-h}Y_{s-h}\delta \tilde X^2_{s,s-h}|_{\alpha+\theta}+|S_{s,s-h}Y_{s-h}\delta \tilde X^2_{s,s-h}|_{\alpha+\theta}\\
\lesssim&  \|\tilde X^2\|_{V^{q}([s-h,s];\mathbb{R}^d)}\|Y\|_{V^{p_1}([s-h,s];\cB_{\alpha+\theta})}+[|Y_0|_{\alpha+2\theta}+\|Y\|_{V^{p_2}([0,s-h];\cB_{\alpha+2\theta})}]\|\tilde X^2\|_{V^{q}([s-h,s];\mathbb{R}^d)}h^\theta\\
~~+&(|Y_0|_{\alpha+2\theta}+\|Y\|_{V^{p_1}([0,s-h];\cB_{\alpha+\theta})})\|\tilde X^2\|_{V^{q}([s-h,s];\mathbb{R}^d)}\\
\lesssim & (1+h^\theta)(|Y_0|_{\alpha+2\theta}+\|Y\|_{V^{p_1,p_2}_{\alpha+\theta, \alpha+2\theta}([0,T])})\|\tilde X^2\|_{V^{q}([s-h,s];\mathbb{R}^d)}.
\end{align*}
Then for $\varepsilon_0>0$, there exists $\delta>0$, such that $h<\delta$, 
\begin{align*}
|\int_{s-h}^t S_{t,r} Y_r \textup{d}X_r-\int_s^t S_{t,r} Y_r\textup{d}X_r-S_{t,s}Y_{s-}\Delta_s^- X|_{\alpha+\theta}<\varepsilon_0.
\end{align*}
(3)
For $h>0$, 
\begin{align*}
&|\int_s^{t+h}S_{t+h,r}Y_r\textup{d}X_r-\int_{s}^{t}S_{t,r}Y_r\textup{d}X_r-Y_t\Delta_t^+ X|_{\alpha+\theta}\\
\leq& |(S_{t+h,t}-\textup{id})\int_s^t S_{t,r}Y_r \textup{d}X_r|_{\alpha+\theta}+|\int_t^{t+h}S_{t+h,r}Y_r\textup{d}X_r-Y_t\Delta_t^+ X |_{\alpha+\theta}\\
:=&I_1+I_2.
\end{align*}
From the continuity of the propagator in Remark \ref{dir}, we claim that $I_1\to 0$, as $h\to 0$.  
Similar to the proof in case (1), we introduce an auxiliary process, which is right-continuous at $t$:
$$
\tilde X^3_m:=\left\{
\begin{array}{ll}
X_{t+}, & m=t, \\
	X_m,& m\neq  t.
\end{array}
\right.
$$
to deal with $I_2$, which allows us to obtain the required result in (3).

(4) 
For $h>0$,
\begin{align*}
&|\int_s^{t-h} S_{t-h,r}Y_r \textup{d}X_r-\int_s^t S_{t,r} Y_r \textup{d}X_r+Y_{t-} \Delta_t^- X|_{\alpha+\theta}\\
\leq &|(\textup{id}-S_{t,t-h})\int_s^{t-h} S_{t-h,r}Y_r\textup{d}X_r|_{\alpha+\theta}
+
|\int_{t-h}^t S_{t,r}Y_r \textup{d}\tilde X^4_r|_{\alpha+\theta}\\
~~+&|\int_{t-h}^t S_{t,r}Y_r\textup{d}(X-\tilde X^4)_r-Y_{t-}\Delta_t^- X|_{\alpha+\theta}\\
:=& I_3+I_4+I_5.
\end{align*}
The estimate of 
$I_3$ is treated in a higher-regularity space under the influence of  the smoothing inequality in \eqref{smooth}. Therefore, in this case, we need to raise the regularity of $Y$ accordingly so that Lemma \ref{YOU01} can be applied.
In terms of $I_4$ and $I_5$, we can use the similar arguments in case (2) by introducing an auxiliary process, which is left-continuous at $t$: 
$$
\tilde X^4_m:=\left\{
\begin{array}{ll}
X_{t-}, & m=t, \\
	X_m,& m\neq t.
\end{array}
\right.
$$
\end{proof}
\section{Well-posedness}\label{sec:wellpose}
In this section, we study well-posedness of \eqref{main} driven by $p$-variation processes, where $p\in [1,2).$ We start with the local well-posedness in the space $V^{p,p_1}_{\alpha,\alpha+\theta}$ for finite $p_1$. This will be further extended to a global well-posedness in the larger space $V^{p,\infty}_{\alpha, \alpha+\theta}$.  The nonlinearity 
$f$ in Lemma \ref{fff1} and Lemma \ref{fff2} is chosen differently when facing different solution spaces, yet both choices require assumptions that enhance regularity. There are two reasons for this: one is to ensure the well-posedness of the solution in $p$-variation space; the other is to ensure that the estimates in Lemma \ref{YOU01} are applicable.
\subsection{Local well-posedness}
    \begin{lemma}\label{fff1}
Suppose $f(\cdot)\in C_b^2(\cB_{\alpha+i\theta}, \cL(\mathbb{R}^d,\mathcal{B}_{\alpha+(i+2)\theta+\varepsilon}))$, for some $\alpha, \theta \in \mathbb{R}$, $\varepsilon>0$ and let $x, y\in V^{p}([0,T];\cB_{\alpha+i\theta})$,  with $\|x\|_{V^{p}([0,T];\cB_{\alpha+i\theta})}$, $\|y\|_{V^{p}([0,T];\cB_{\alpha+i\theta})}<K$, $i=0,1$. 
Then we have
\begin{align*}
\|f(x)\|_{V^{p}([0,T];\cL(\mathbb{R}^d,\mathcal{B}_{\alpha+(i+2)\theta+\varepsilon}))}&\leq \|f\|_{C_b^2}\|x\|_{V^{p}([0,T];\mathcal{B}_{\alpha+i\theta})}, \\
\|f(x)-f(y)\|_{V^{p}([0,T];\cL(\mathbb{R}^d,\mathcal{B}_{\alpha+(i+2)\theta+\varepsilon}))}&\leq C_{p,K} \|f\|_{C_b^2} (|x_{0}-y_{0}|_{\alpha+i\theta}+\|x-y\|_{V^{p}([0,T];\mathcal{B}_{\alpha+i\theta})}).
\end{align*}
\end{lemma}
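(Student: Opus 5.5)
The plan is to work increment by increment along an arbitrary partition $\mathcal P$ of $[0,T]$ and then take the supremum, so the $p$-variation bounds follow from pointwise two-point estimates on $\delta f(x)_{t,s}$ and $\delta(f(x)-f(y))_{t,s}$. Throughout I write $E:=\cB_{\alpha+i\theta}$ and $F:=\cL(\mathbb{R}^d,\cB_{\alpha+(i+2)\theta+\varepsilon})$, so that $f\in C_b^2(E,F)$; the particular indices play no role beyond fixing these two spaces.

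\textbf{First estimate.} For $(t,s)\in\Delta_2$ the mean value inequality in the Banach space $F$ gives
\[
|f(x_t)-f(x_s)|_F\le \sup_{z\in E}\|Df(z)\|_{\cL(E,F)}\,|x_t-x_s|_E\le \|f\|_{C_b^2}|\delta x_{t,s}|_E .
\]
Raising this to the power $p$, summing over $[s,t]\in\mathcal P$ and taking the supremum over $\mathcal P$ yields the claim. Only $C^1_b$ is needed here.

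\textbf{Second estimate.} I would use the standard integral representation
\[
f(x_t)-f(y_t)=\int_0^1 Df\bigl(y_t+\lambda(x_t-y_t)\bigr)\,\txtd\lambda\,(x_t-y_t)=:G_t\,z_t ,\qquad z:=x-y .
\]
Then
\[
\delta\bigl(f(x)-f(y)\bigr)_{t,s}=G_t\,\delta z_{t,s}+(G_t-G_s)\,z_s .
\]
For the first term, $\|G_t\|\le\|f\|_{C_b^2}$, which gives the contribution $\|f\|_{C_b^2}|\delta z_{t,s}|_E$. For the second term, the Lipschitz continuity of $Df$ (from $D^2f$ bounded) gives
\[
\|G_t-G_s\|\le \|f\|_{C_b^2}\bigl(|\delta x_{t,s}|_E+|\delta y_{t,s}|_E\bigr),
\]
and $|z_s|_E\le |z_0|_E+\|z\|_{V^p([0,T];E)}$.

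Summing $p$-th powers with $(a+b+c)^p\le 3^{p-1}(a^p+b^p+c^p)$ gives
\[
\|f(x)-f(y)\|_{V^p([0,T];F)}\lesssim_p \|f\|_{C_b^2}\Bigl[\|z\|_{V^p}+\bigl(|z_0|_E+\|z\|_{V^p}\bigr)\bigl(\|x\|_{V^p}+\|y\|_{V^p}\bigr)\Bigr],
\]
and the bounds $\|x\|_{V^p},\|y\|_{V^p}<K$ give the constant $C_{p,K}$.

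\textbf{Main obstacle.} The only delicate point is that the second estimate must control the full $V^p$-seminorm of the product $G\,z$, and not merely its supremum. The decomposition $G_t\,\delta z_{t,s}+\delta G_{t,s}\,z_s$ is what handles this. It requires the uniform bound on $D^2f$, which is exactly why $C_b^2$ rather than $C_b^1$ is assumed, and it requires the term $|x_0-y_0|_E$ in order to bound $\sup_s|z_s|_E$.
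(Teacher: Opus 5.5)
Your proposal is correct and follows essentially the same route as the paper: a mean value bound for the first estimate, and for the second the Taylor representation $f(x_t)-f(y_t)=G_t z_t$ split as $G_t\,\delta z_{t,s}+(G_t-G_s)z_s$, with $\|G_t-G_s\|$ controlled by $D^2f$ and the remaining factor by $\sup_s|z_s|$. Your bound $|z_s|_E\le |z_0|_E+\|z\|_{V^p([0,T];E)}$ is the correct form of the step that the paper writes via the $V^\infty$-seminorm $\sup_t|z_t-z_0|$, which alone would not control $|z_s|$.
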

\begin{proof}
Denote $\|f\|_{C_b^2}:=\max\limits_{i=0,1}\|f\|_{C^2(\cB_{\alpha+i\theta},\mathcal{L}(\mathbb{R}^d, \cB_{\alpha+(i+2)\theta+\varepsilon}))}$.
We have that 
\begin{align*}
\|f(x)\|_{V^{p}([0,T]; \cL(\mathbb{R}^d,\mathcal{B}_{\alpha+(i+2)\theta+\varepsilon}))}&=
(\sup\limits_{\mathcal{P}}\sum_{[s,t]\in \mathcal{P}}|f(x_{t})-f(x_{s})|^{p}_{\cL(\mathbb{R}^d,\cB_{\alpha+(i+2)\theta+\varepsilon})})^{\frac{1}{p}}\\
&\leq 
(\sup\limits_{\mathcal{P}}\sum_{[s,t]\in \mathcal{P}}\|f\|^p_{C_b^2}|x_{t}-x_{s}|^{p}_{\alpha+i\theta})^{\frac{1}{p}}\\
&\leq 
\|f\|_{C_b^2}\|x\|_{V^{p}([0,T];\mathcal{B}_{\alpha+i\theta})}\\
&\leq K\|f\|_{C_b^2}.
\end{align*}
Using Taylor's formula, 
we derive 
\begin{align*}
&\|f(x)-f(y)\|_{V^{p}([0,T];\cL(\mathbb{R}^d,\mathcal{B}_{\alpha+(i+2)\theta+\varepsilon}))}\\
&=(\sup\limits_{\mathcal{P}}\sum_{[s,t]\in \mathcal{P}}|f(x)_{t,s}-f(y)_{t,s}|^p_{\cL(\mathbb{R}^d,\cB_{\alpha+(i+2)\theta+\varepsilon})})^{\frac{1}{p}}\\
&\leq (\sup\limits_{\mathcal{P}}\sum_{[s,t]\in \mathcal{P}}  |\int_{0}^{1}Df(y_{t}
+r(x_{t}-y_{t}))\textup{d}r (x_{t}-y_{t})
-\int_{0}^{1}Df(y_{t}
+r(x_{t}-y_{t}))\textup{d}r (x_{s}-y_{s})\\
&~~+\int_{0}^{1}Df(y_{t}
+r(x_{t}-y_{t}))\textup{d}r (x_{s}-y_{s})-\int_{0}^{1}Df(y_{s}
+r(x_{s}-y_{s}))\textup{d}r (x_{s}-y_{s})
|^{p}_{\cL(\mathbb{R}^d,\cB_{\alpha+i\theta})})^{\frac{1}{p}}\\
&\leq C_{p}\|f\|_{C_b^2}  (\sup\limits_{\mathcal{P}}\sum_{[s,t]\in \mathcal{P}}|(x-y)_{t,s}|_{\alpha+i\theta}^{p}+(|x_{t,s}|^{p}_{\alpha+i\theta}+|y_{t,s}|^{p}_{\alpha+i\theta})\|x-y\|^{p}_{V^{\infty}([0,T];\mathcal{B}_{\alpha+i\theta})})^{\frac{1}{p}}\\
&\leq C_{p,K}\|f\|_{C_b^2}(\|x-y\|_{V^{p}([0,T];\mathcal{B}_{\alpha+i\theta})}+|x_{0}-y_{0}|_{\alpha+i\theta}),
\end{align*}
where in the last line we used that 
\[ \|x-y\|_{V^{\infty}([0,T];\mathcal{B}_{\alpha+i\theta})} \leq (\|x-y\|_{V^{p}([0,T];\mathcal{B}_{\alpha+i\theta})}+|x_{0}-y_{0}|_{\alpha+i\theta}). \]
\end{proof}
\begin{lemma}\label{local-2p}
Let $\alpha\in \mathbb{R}$, $\varepsilon>0$, $p\in [1,2)$, 
$\theta\in [\frac{1}{p}, 1]$, $p_1\in (p, 2)$. 
Suppose $X\in V^{p}([0,T]; \mathbb{R}^d)$,  is  right-continuous at $t=0$, $X_{0}=0$ and  $f(\cdot)\in C_b^{2}(\cB_{\alpha+i\theta},\mathcal{L}(\mathbb{R}^d, \mathcal{B}_{\alpha+(i+2)\theta+\varepsilon}))$, $i=0,1$, $Y_{0}\in \mathcal{B}_{\alpha+2\theta}$. Then there exists a time horizon $\rho\in (0,T]$ and a unique local solution 
$Y\in V^{p,p_{1}}_{\alpha,\alpha+\theta}([0,\rho])$,   satisfying 
\begin{align*}
Y_{t}=S_{t,0}Y_{0}+\int_{0}^{t}S_{t,r}f(Y_{r})\textup{d}X_{r}, \quad t\in [0,\rho].
\end{align*}
\end{lemma}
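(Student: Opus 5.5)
We will obtain the local solution as the fixed point of the mild solution map
\[
\mathcal{M}_\rho(Y)_t:=S_{t,0}Y_0+\int_0^t S_{t,r}f(Y_r)\,\textup{d}X_r,\qquad t\in[0,\rho],
\]
and show that it is a contraction on a closed ball of $V^{p,p_1}_{\alpha,\alpha+\theta}([0,\rho])$ with $\rho$ small.

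\textbf{Well-definedness.} For $Y$ in the space, Lemma \ref{fff1} with $i=0,1$ places $f(Y)$ in $V^{p}([0,\rho];\cB_{\alpha+2\theta+\varepsilon})\cap V^{p}([0,\rho];\cB_{\alpha+3\theta+\varepsilon})$. This is more than the hypothesis $f(Y)\in V^{p,p_2}_{\alpha,\alpha+\theta}$ needed in Lemma \ref{YOU01}, because $V^p\subset V^{p_1}$ and the spaces embed downwards. We apply Lemma \ref{YOU01} with $q=p$, $p_1$ replaced by $p$, and spatial level $\alpha$ and also $\alpha+\theta$. The exponent condition $\min\{2/p,\theta+1/p\}>1$ holds because $p<2$ and $\theta\ge 1/p$. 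Hence the convolution exists in $\cB_\alpha$ and in $\cB_{\alpha+\theta}$.

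\textbf{Variation estimates.} To control $p$- and $p_1$-variation we write
\[
\delta\mathcal{M}_{t,s}=(S_{t,s}-\textup{id})\mathcal{M}_s+\delta^S\mathcal{I}_{t,s}.
\]
By \eqref{sew01}, the term $\delta^S\mathcal{I}_{t,s}$ equals $S_{t,s}f(Y_s)\delta X_{t,s}$ plus a remainder. The remainder is bounded by $\omega_X^{1/p}(t,s)$ times $\bigl(\omega_{f(Y)}^{1/p}+|t-s|^{\theta}\bigr)$.

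The term $(S_{t,s}-\textup{id})\mathcal{M}_s$ is handled with \eqref{smooth}: it is at most $|t-s|^{\sigma}|\mathcal{M}_s|_{\alpha+\theta+\sigma}$. This is why we need the extra regularity $Y_0\in\cB_{\alpha+2\theta}$, and why $f$ maps into $\cB_{\alpha+3\theta+\varepsilon}$: the convolution must also be bounded one level higher, in $\cB_{\alpha+2\theta}$. We get that bound from Lemma \ref{YOU01} at level $\alpha+2\theta$. Since $|t-s|$ is a regular control, summing $|t-s|^{\sigma p}$ or $|t-s|^{\sigma p_1}$ over a partition gives a factor $\rho^{\cdot}$ once $\sigma p\ge 1$. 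Choosing $\sigma=\theta\ge 1/p$ suffices.

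Collecting terms, every contribution is bounded by $C(1+R)$ multiplied by either a positive power of $\rho$ or a factor $\|X\|_{V^p([0,\rho])}$.

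\textbf{Main obstacle: smallness of the driver.} The $\omega_X$ terms need $\|X\|_{V^p([0,\rho])}$ to be small, which in general fails because of jumps. Here the assumption that $X$ is right-continuous at $0$ is exactly what is needed. By Remark on controls (2), $\omega_{X,p}(\rho,0)\to0$ as $\rho\downarrow0$; Lemma \ref{small-OSC} can be used similarly.

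We also have to handle the $p_1$-variation of the leading term $S_{t,s}f(Y_s)\delta X_{t,s}$ in $\cB_{\alpha+\theta}$. It is bounded by $\sup|f|_{\alpha+\theta}\,\omega_X^{1/p}$, and $p<p_1$, so its $p_1$-variation is controlled by $\|X\|_{V^p([0,\rho])}$, which is small. We also track $|Y_0|$ in the norm. With the radius $R$ chosen larger than $2C|Y_0|_{\alpha+2\theta}$ and $\rho$ small, the map $\mathcal{M}_\rho$ sends the ball into itself.

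\textbf{Contraction and conclusion.} The contraction estimate is identical, using the Lipschitz bound of Lemma \ref{fff1}, applied to the linear integrand $f(Y)-f(\tilde Y)$. We also include the initial-value term $|x_0-y_0|$, which vanishes since both paths start at $Y_0$.

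Banach's fixed point theorem then gives existence and uniqueness in the ball. Uniqueness in the whole space follows by the standard argument of shrinking $\rho$ so that any two solutions lie in a common ball.

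Finally, the existence of left and right limits of the solution, needed so that it is regulated, follows from Lemma \ref{llrr}.
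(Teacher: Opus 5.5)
Your proposal is correct and follows the paper's skeleton. Both use the same mild map and the splitting $\delta\mathcal{M}_{t,s}=(S_{t,s}-\textup{id})\mathcal{M}_s+\delta^S\mathcal{I}_{t,s}$, which is the paper's $I_1,\dots,I_4$. Both apply Lemma \ref{YOU01} at the levels $\alpha$, $\alpha+\theta$ and $\alpha+2\theta$, and both trade $|t-s|^{\theta}$ against one level of spatial regularity; that is where $Y_0\in\cB_{\alpha+2\theta}$ and the gain $2\theta+\varepsilon$ of $f$ enter. Both finish with Banach's fixed point theorem.

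The real difference is the working space and where smallness comes from. The paper runs the fixed point in the larger space $V^{p',p_1'}_{\alpha,\alpha+\theta}([0,\rho])$ with $p<p'<p_1<p_1'<2$. It gets smallness from $\|X\|^{\tilde p}_{V^{\tilde p}([0,\rho];\mathbb{R}^d)}\le\|X\|^{p}_{V^{p}([0,\rho];\mathbb{R}^d)}Osc(X;[0,\rho])^{\tilde p-p}$ together with $Osc(X;[0,\rho])\to0$. It then re-runs the a priori bounds to place the fixed point in $V^{p,p_1}_{\alpha,\alpha+\theta}$.

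You stay in the target space and use that right-continuity at $0$ forces $\omega_{X,p}(\rho,0)\to0$. The paper states this in its remark on $\omega_{X,p}$ but omits the proof; it does hold. Set $c:=\lim_{\rho\downarrow0}\omega_{X,p}(\rho,0)$ and take any partition $0=t_0<t_1<\dots<t_n=\rho$. Superadditivity gives $\omega_{X,p}(\rho,t_1)\le\omega_{X,p}(\rho,0)-c$. Hence $\omega_{X,p}(\rho,0)\le\sup_{0<r\le\rho}|X_r-X_0|^p+\omega_{X,p}(\rho,0)-c$, i.e.\ $c\le\sup_{0<r\le\rho}|X_r-X_0|^p\to0$.

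Your route is shorter and gives the fixed point, and uniqueness in the ball, directly in the stated space. The paper's route needs only the more elementary vanishing of the oscillation, at the cost of auxiliary exponents and an a posteriori upgrade.

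A few small corrections:
\begin{itemize}
\item $Y$ has only $p_1$-variation in $\cB_{\alpha+\theta}$, so Lemma \ref{fff1} with $i=1$ (run with exponent $p_1$) gives $f(Y)\in V^{p_1}([0,\rho];\cB_{\alpha+3\theta+\varepsilon})$, not $V^{p}$. This still suffices, because Lemma \ref{YOU01} at levels $\alpha+\theta$ and $\alpha+2\theta$ only needs the exponent pair $(p,p_1)$.
\item Shrinking $\rho$ only shows that a competing solution agrees with yours on an interval that depends on its norm. Uniqueness on all of $[0,\rho]$ needs a continuation step. That step works: in the difference, the propagated initial term and the leading term $(f(Y_\tau)-f(\tilde Y_\tau))\delta X_{t,\tau}$ both vanish, so you only need $\|X\|_{V^p((\tau,t];\mathbb{R}^d)}\to0$ as $t\downarrow\tau$, which holds for any control.
\item The appeal to Lemma \ref{llrr} for regularity is unnecessary, since paths of finite $p$-variation are automatically regulated.
\end{itemize}
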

\begin{proof}
Define a mapping $\mathcal{J}$ by
$$
(\mathcal{J}Y)
_{t}:=S_{t,0}Y_{0}+\int_{0}^{t}S_{t,r}f(Y_{r})\textup{d}X_{r},\quad t\in [0,\rho].$$
Rather than solving the equation directly in the target space $V^{p,p_1}_{\alpha, \alpha+\theta}([0,\rho])$, we work in a larger space and prove that $\mathcal{J}$ maps a suitable ball into itself and is contractive on that ball.
Fix $p'$ and $p_{1}'$, satisfying $p<p'<p_1<p_1'< 2$. Let $0<\rho<1$ and
denote 
$$
\mathscr{B}_{\rho}
:=\{Y\in V^{p',p'_{1}}_{\alpha,\alpha+\theta}([0,\rho]) ~~| ~~\|Y\|_{V^{p', p'_{1}}_{\alpha, \alpha+\theta}([0,\rho])}\leq 1+|Y_{0}|_{\alpha+2\theta}\}.$$

In what follows, we prove that $\mathcal{J}$ is a contractive self-mapping when $\rho$ is chosen sufficiently small. For $0\leq s<t\leq \rho$,
denote
\begin{align*}
Z_{t,s}:=(\mathcal{J}Y)_{t}-(\mathcal{J}Y)_{s}
:=I_{1}+I_{2}+I_{3}+I_{4},
\end{align*}
where 
\begin{align*}
I_{1}&=S_{t,0}Y_{0}-S_{s,0}Y_{0},\\
I_{2}&=\int_{s}^{t}S_{t,r}f(Y_{r})\textup{d}X_{r}-S_{t,s}f(Y_{s})\delta X_{t,s},\\
I_{3}&=S_{t,s}f(Y_{s})\delta X_{t,s},\\
I_{4}&=(S_{t,s}-\textup{id})\int_{0}^{s}S_{s,r}f(Y_{r})\textup{d}X_{r}.
\end{align*}
Firstly, we prove that  $Z\in V^{p'}([0,\rho]; \mathcal{B}_{\alpha})$. To this aim, from  \eqref{smooth}, we have
\begin{align*}
|I_{1}|_{\alpha}&=|(S_{t,0}-S_{s,0})Y_{0}|_{\alpha}\\
&=|(S_{t,s}-\textup{id})S_{s,0}Y_{0}|_{\alpha}\\
&\lesssim  |S_{s,0}Y_{0}|_{\alpha+\theta}|t-s|^{\theta}\\
&\lesssim |t-s|^{\theta}|Y_{0}|_{\alpha+2\theta}.
\end{align*}
In view of Lemma \ref{YOU01} and Lemma \ref{fff1}, we have 
\begin{align*}
|I_{2}|_{\alpha}&\lesssim \|X\|_{V^{p'}((s,t];\mathbb{R}^d)}
\|f(Y)\|_{V^{p'}([s,t);\cL(\mathbb{R}^d,\cB_{\alpha}))}\\
&~~+(\|f(Y_{0})\|_{\cL(\mathbb{R}^d, \cB_{\alpha+\theta})}+\|f(Y)\|_{V^{p_{1}'}([0,s];\cL(\mathbb{R}^d,\cB_{\alpha+\theta}))}) \|X\|_{V^{p'}((s,t];\mathbb{R}^d)}|t-s|^{\theta}\\
&\lesssim  \|f\|_{C_b^2}\|X\|_{V^{p'}((s,t];\mathbb{R}^d)}\|Y\|_{V^{p'}([s,t);\cB_{\alpha})}\\
&~~+(\|f\|_{C_b^2}+\|f\|_{C_b^2}\|Y\|_{V^{p_{1}'}([0,s];\cB_{\alpha+\theta})})\|X\|_{V^{p'}((s,t];\mathbb{R}^d)}|t-s|^{\theta}\\
&\lesssim  \|f\|_{C_b^2}\|X\|_{V^{p'}((s,t];\mathbb{R}^d)}[(1+\|Y\|_{V^{p_{1}'}([0,s];\cB_{\alpha+\theta})})|t-s|^{\theta}+\|Y\|_{V^{p'}([s,t);\cB_{\alpha})}].
\end{align*}
\begin{align*}
|I_{3}|_{\alpha}\lesssim \|f\|_{C_b^2}\|X\|_{V^{p'}([s,t];\mathbb{R}^d)}.
\end{align*}
Similar to the proof of $I_{1}$, we have 
\begin{align*}
|I_{4}|_{\alpha}&\lesssim [|\int_{0}^{s}S_{s,r}f(Y_{r})\textup{d}X_{r}-S_{s,0}f(Y_{0})X_{s}|_{\alpha+\theta}+|S_{s,0}f(Y_{0})X_{s}|_{\alpha+\theta}]|t-s|^{\theta}, 
\end{align*}
where 
\begin{align*}
&|\int_{0}^{s}S_{s,r}f(Y_{r})\textup{d}X_{r}-S_{s,0}f(Y_{0})X_{s}|_{\alpha+\theta}\\
\lesssim &~~ \|X\|_{V^{p'}([0,s];\mathbb{R}^d)}\|f(Y)\|_{V^{p'}([0,s);\cL(\mathbb{R}^d, \cB_{\alpha+\theta}))}\\
~~+ &~~[\|f(Y_{0})\|_{\cL(\mathbb{R}^d,\cB_{\alpha+2\theta})}+\|f(Y)\|_{V^{p'_{1}}([0,s];\cL(\mathbb{R}^d,\cB_{\alpha+2\theta}))}]\|X\|_{V^{p'}([0,s];\mathbb{R}^d)}s^{\theta}\\
\lesssim &~~ \|f\|_{C_b^2}\|X\|_{V^{p'}([0,s];\mathbb{R}^d)}\|Y\|_{V^{p'}([0,s);\cB_{\alpha})}\\
~~+&~~(\|f\|_{C_b^2}+\|f\|_{C_b^2}\|Y\|_{V^{p_{1}'}([0,s];\cB_{\alpha+\theta})})\|X\|_{V^{p'}([0,s];\mathbb{R}^d)}s^{\theta}\\
\lesssim & ~~\|f\|_{C_b^2}\|X\|_{V^{p'}([0,s];\mathbb{R}^d)}[(1+\|Y\|_{V^{p_{1}'}([0,s];\cB_{\alpha+\theta})})s^{\theta}+\|Y\|_{V^{p'}([0,s);\cB_{\alpha})}]
\end{align*}
and 
\begin{align*}
|S_{s,0}f(Y_{0})X_{s}|_{\alpha+\theta}&\lesssim \|X\|_{V^{p'}([0,s];\mathbb{R}^d)}\|f\|_{C_b^2}.
\end{align*}
Thus,
\begin{equation}
\begin{split}\label{Z1}
&\|Z\|^{p'}_{V^{p'}([0,\rho];\cB_{\alpha})}
=\sup\limits_{\mathcal{P}}\sum_{[s,t]\in \mathcal{P}}|Z_{t,s}|_{\alpha}^{p'}\\
&\lesssim \sup\limits_{\mathcal{P}}\sum_{[s,t]\in \mathcal{P}}\big[|t-s|^{\theta p'}|Y_{0}|^{p'}_{\alpha+2\theta}
+\|f\|^{p'}_{C_b^2}\|X\|^{p'}_{V^{p'}((s,t];\mathbb{R}^d)}[(1+\|Y\|_{V^{p_{1}'}([0,s];\cB_{\alpha+\theta})})|t-s|^{\theta}\\
&~~+\|Y\|_{V^{p'}([s,t);\cB_{\alpha})}]^{p'}
+\|f\|_{C_b^2}^{p'}\|X\|^{p'}_{V^{p'}([s,t];\mathbb{R}^d)}\\
&~~+|t-s|^{\theta p'}\|f\|^{p'}_{C_b^2}\|X\|^{p'}_{V^{p'}([0,s];\mathbb{R}^d)}[1+\|Y\|_{V^{p'}([0,s);\cB_{\alpha})}
+(1+\|Y\|_{V^{p_{1}'}([0,s];\cB_{\alpha+\theta})})s^{\theta}]^{p'}\\
&\lesssim (\rho^{\theta p'}+\|X\|^{p'}_{V^{p'}([0,\rho];\mathbb{R}^d)})(1+|Y_{0}|_{\alpha+2\theta})^{p'},
\end{split}
\end{equation}
Next, we prove that $Z\in V^{p'_{1}}([0,\rho]; \mathcal{B}_{\alpha+\theta})$.
Applying \eqref{smooth}, we have 
\begin{align*}
|I_{1}|_{\alpha+\theta}&=|(S_{t,0}-S_{s,0})Y_{0}|_{\alpha+\theta}\\
&=|(S_{t,s}-\textup{id})S_{s,0}Y_{0}|_{\alpha+\theta}\\
&\lesssim   |S_{s,0}Y_{0}|_{\alpha+2\theta}|t-s|^{\theta}\\
&\lesssim |t-s|^{\theta}|Y_{0}|_{\alpha+2\theta}.
\end{align*}
From Lemma \ref{YOU01} and Lemma \ref{fff1}, we get
\begin{align*}
|I_{2}|_{\alpha+\theta}&\lesssim  \|X\|_{V^{p_{1}'}((s,t];\mathbb{R}^d)}
\|f(Y)\|_{V^{p'}([s,t);\cL(\mathbb{R}^d,\cB_{\alpha+\theta}))}\\
&~~+(\|f(Y_{0})\|_{\cL(\mathbb{R}^d, \cB_{\alpha+2\theta})}+\|f(Y)\|_{V^{p_{1}'}([0,s];\cL(\mathbb{R}^d,\cB_{\alpha+2\theta}))}) \|X\|_{V^{p_{1}'}((s,t];\mathbb{R}^d)}|t-s|^{\theta}\\
&\lesssim \|f\|_{C_b^2}\|X\|_{V^{p_{1}'}((s,t];\mathbb{R}^d)}\|Y\|_{V^{p'}([s,t);\cB_{\alpha})}\\
&~~+(\|f\|_{C_b^2}+\|f\|_{C_b^2}\|Y\|_{V^{p_{1}'}([0,s];\cB_{\alpha+\theta})})\|X\|_{V^{p_{1}'}((s,t];\mathbb{R}^d)}|t-s|^{\theta}\\
&\lesssim \|f\|_{C_b^2}\|X\|_{V^{p_{1}'}((s,t];\mathbb{R}^d)}[(1+\|Y\|_{V^{p_1'}([0,s];\cB_{\alpha+\theta})})|t-s|^{\theta}+\|Y\|_{V^{p'}([s,t);\cB_{\alpha})}].
\end{align*}
\begin{align*}
|I_{3}|_{\alpha+\theta}\lesssim \|f\|_{C_b^2}\|X\|_{V^{p_{1}'}([s,t];\mathbb{R}^d)}.
\end{align*}
\begin{align*}
|I_{4}|_{\alpha+\theta}&\lesssim (|\int_{0}^{s}S_{s,r}f(Y_{r})\textup{d}X_{r}-S_{s,0}f(Y_{0})X_{s}|_{\alpha+2\theta}+|S_{s,0}f(Y_{0})X_{s}|_{\alpha+2\theta})|t-s|^{\theta}, 
\end{align*}
where 
\begin{align*}
&|\int_{0}^{s}S_{s,r}f(Y_{r})\textup{d}X_{r}-S_{s,0}f(Y_{0})X_{s}|_{\alpha+2\theta}\\
\lesssim & ~~\|X\|_{V^{p_{1}'}([0,s];\mathbb{R}^d)}\|f(Y)\|_{V^{p'}([0,s);\cL(\mathbb{R}^d, \cB_{\alpha+2\theta}))}\\
~~+&~~[\|f(Y_{0})\|_{\cL(\mathbb{R}^d,\cB_{\alpha+3\theta})}+\|f(Y)\|_{V^{p_{1}'}([0,s];\cL(\mathbb{R}^d,\cB_{\alpha+3\theta}))}]\|X\|_{V^{p'_{1}}([0,s];\mathbb{R}^d)}s^{\theta}\\
\lesssim & ~~\|f\|_{C_b^2}\|X\|_{V^{p_{1}'}([0,s];\mathbb{R}^d)}\|Y\|_{V^{p'}([0,s);\cB_{\alpha})}\\
~~+&~~(\|f\|_{C_b^2}+\|f\|_{C_b^2}\|Y\|_{V^{p_{1}'}([0,s];\cB_{\alpha+\theta})})\|X\|_{V^{p_{1}'}([0,s];\mathbb{R}^d)}s^{\theta}\\
\lesssim &~~ \|f\|_{C_b^2}\|X\|_{V^{p_{1}'}([0,s];\mathbb{R}^d)}[(1+\|Y\|_{V^{p_{1}'}([0,s];\cB_{\alpha+\theta})})s^{\theta}+\|Y\|_{V^{p'}([0,s);\cB_{\alpha})}].
\end{align*}
and 
\begin{align*}
|S_{s,0}f(Y_{0})X_{s}|_{\alpha+2\theta}&\lesssim \|X\|_{V^{p_{1}'}([0,s];\mathbb{R}^d)}\|f\|_{C_b^2}.
\end{align*}
Similar to the proof in \eqref{Z1}, we get
\begin{equation}\label{Z2}
\begin{split}
&\|Z\|^{p_{1}'}_{V^{p'_{1}}([0,\rho];\cB_{\alpha+\theta})}
=\sup\limits_{\mathcal{P}}\sum_{(s,t)\in \mathcal{P}}|Z_{t,s}|_{\alpha+\theta}^{p_{1}'}\\
&\lesssim \sup\limits_{\mathcal{P}}\sum_{[s,t]\in \mathcal{P}}\big[|t-s|^{\theta p_{1}'}|Y_{0}|^{p_{1}'}_{\alpha+2\theta}
+\|f\|^{p_{1}'}_{C_b^2}\|X\|^{p_{1}'}_{V^{p_{1}'}((s,t];\mathbb{R}^d)}[(1+\|Y\|_{V^{p_{1}'}([0,s];\cB_{\alpha+\theta})})|t-s|^{\theta}\\
&~~
+\|Y\|_{V^{p'}([s,t);\cB_{\alpha})}]^{p_{1}'}
+\|f\|_{C_b^2}^{p_{1}'}\|X\|^{p_{1}'}_{V^{p_{1}'}([s,t];\mathbb{R}^d)}\\
&~~+|t-s|^{\theta p'_{1}}\big(\|f\|^{p_{1}'}_{C_b^2}\|X\|^{p_{1}'}_{V^{p_{1}'}([0,s];\mathbb{R}^d)}[(1+\|Y\|_{V^{p_{1}'}([0,s];\cB_{\alpha+\theta})})s^{\theta}
+\|Y\|_{V^{p'}([0,s);\cB_{\alpha})}+1]^{p_{1}'}
\big]\\
&\lesssim  (\rho^{\theta p'_{1}}+\|X\|^{p'_{1}}_{V^{p'_{1}}([0,\rho];\mathbb{R}^d)})(1+|Y_{0}|_{\alpha+2\theta})^{p'_{1}}.
\end{split}
\end{equation}
Note that for any $\tilde p>p$, $$\|X\|^{\tilde p}_{V^{\tilde p}([0,\rho]; \mathbb{R}^d)}\leq 
\|X\|^{p}_{V^{p}([0,\rho];\mathbb{R}^d)}Osc(X; [0,\rho])^{\tilde p-p}.$$
Due to the right continuity of $X$ at $t=0$, we can choose $\rho$ sufficiently small, such that 
\begin{align}\label{rhoC}
C\big(\rho^{\theta}+\|X\|^{\frac{p}{p'}}_{V^{p}([0,\rho];\mathbb{R}^d)}Osc(X;[0,\rho])^{\frac{p'-p}{p'}}+\|X\|^{\frac{p}{p'_{1}}}_{V^{p}([0,\rho];\mathbb{R}^d)}Osc(X;[0,\rho])^{\frac{p'_{1}-p}{p_{1}'}}\big)\leq 1, 
\end{align}
where $C$ depends on $S, f, p', p_1', \theta$.
Combining with \eqref{Z1}, \eqref{Z2} and \eqref{rhoC}, we derive 
\begin{align*}
\|Z\|_{V^{p',p_{1}'}_{\alpha,\alpha+\theta}([0,\rho])}\leq 1+|Y_0|_{\alpha+2\theta}.
\end{align*}

It remains to show the contraction property of the mapping $\mathcal{J}$. For any $Y_{1}$, $Y_{2}\in \mathscr{B}_{\rho}$, $Y^{1}_{0}=Y_{0}^{2}$, we have
\begin{align*}
|Z^{1}_{t,s}-Z_{t,s}^{2}|_{\alpha}&=|\mathcal{J}(Y^{1})_{t,s}-\mathcal{J}(Y^{2})_{t,s}|_{\alpha}\\
&=|\int_{s}^{t}S_{t,r}f(Y^{1}_{r})-f(Y^{2}_{r})\textup{d}X_{r}+(S_{t,s}-\textup{id})\int_{0}^{s}S_{s,r}f(Y^{1}_{r})-f(Y^{2}_{r})\textup{d}X_{r}|_{\alpha}\\
&\leq |\int_{s}^{t}S_{t,r}(f(Y_{r}^{1})-f(Y_{r}^{2}))\textup{d}X_{r}-S_{t,s}(f(Y^{1}_{s})-f(Y_{s}^{2}))\delta X_{t,s}|_{\alpha}\\
&~~+|S_{t,s}(f(Y^{1}_{s})-f(Y^{2}_{s}))\delta X_{t,s}|_{\alpha}+|(S_{t,s}-\textup{id})\int_{0}^{s}S_{s,r}(f(Y^{1}_{r})-f(Y^{2}_{r}))\textup{d}X_{r}|_{\alpha}\\
&:=I_{5}+I_{6}+I_{7}.
\end{align*}
Applying \eqref{smooth}, Lemma \ref{YOU01}, and Lemma \ref{fff1} gives the following  estimates for 
$I_5, I_6, I_7$ respectively.  
\begin{align*}
I_{5}&\lesssim \|X\|_{V^{p'}((s,t];\mathbb{R}^d)}\|f(Y^{1})-f(Y^{2})\|_{V^{p'}([s,t);\cL(\mathbb{R}^d,\mathcal{B}_{\alpha}))}\\
&~~+
\|f(Y^{1})-f(Y^{2})\|_{V^{p_{1}'}([0,s];\cL(\mathbb{R}^d,\mathcal{B}_{\alpha+\theta}))}
\|X\|_{V^{p'}((s,t];\mathbb{R}^d)}|t-s|^{\theta}\\
&\lesssim \|X\|_{V^{p'}((s,t];\mathbb{R}^d)} \|f\|_{C_b^2}\|Y^{1}-Y^{2}\|_{V^{p'}([s,t);\mathcal{B}_{\alpha})}\\
&~~+\|f\|_{C_b^2}\|Y^{1}-Y^{2}\|_{V^{p_{1}'}([0,s];\mathcal{B}_{\alpha+\theta})}\|X\|_{V^{p'}((s,t];\mathbb{R}^d)}|t-s|^{\theta}\\
&\lesssim \|X\|_{V^{p'}((s,t];\mathbb{R}^d)} \|f\|_{C_b^2}\big(
\|Y^{1}-Y^{2}\|_{V^{p'}([s,t);\mathcal{B}_{\alpha})}+\|Y^{1}-Y^{2}\|_{V^{p_{1}'}([0,s];\mathcal{B}_{\alpha+\theta})}|t-s|^{\theta}
\big),
\end{align*}
\begin{align*}
I_{6}&\lesssim \|f\|_{C_b^2}\|Y_{s}^{1}-Y_{s}^{2}\|_{\alpha}\|X\|_{V^{p'}((s,t];\mathbb{R}^d)}\\
&\lesssim \|f\|_{C_b^2}\|Y^{1}-Y^{2}\|_{V^{p'}([0,s];\mathcal{B}_{\alpha})}\|X\|_{V^{p'}((s,t];\mathbb{R}^d)},
\end{align*}
\begin{align*}
I_{7}&\lesssim |\int_{0}^{s}S_{s,r}(f(Y^{1}_{r})-f(Y^{2}_{r}))\textup{d}X_{r}|_{\alpha+\theta}|t-s|^{\theta}\\
&\lesssim [\|X\|_{V^{p'}([0,s];\mathbb{R}^d)}\|f(Y^{1})-f(Y^{2}))\|_{V^{p'}([0,s);\cL(\mathbb{R}^d,\mathcal{B}_{\alpha+\theta}))}\\
&~~+\|f(Y^{1})-f(Y^{2})\|_{V^{p_{1}'}([0,s];\cL(\mathbb{R}^d,\mathcal{B}_{\alpha+2\theta}))}\|X\|_{V^{p'}([0,s];\mathbb{R}^d)}s^{\theta}]|t-s|^{\theta}\\
&\lesssim \big[\|X\|_{V^{p'}([0,s];\mathbb{R}^d)}\|f\|_{C_b^2
}\big(\|Y^{1}-Y^{2}\|_{V^{p'}([0,s);\mathcal{B}_{\alpha})}+\|Y^{1}-Y^{2}\|_{V^{p_{1}'}([0,s];\mathcal{B}_{\alpha+\theta})}s^{\theta}\big)\big]|t-s|^{\theta}.
\end{align*}
Thus, we derive 
\begin{equation}\label{local-self}
\begin{split}
&\sup\limits_{\mathcal{P}}\sum_{[s,t]\in \mathcal{P}}|Z^{1}_{t,s}-Z^{2}_{t,s}|^{p'}_{\alpha}  \\
&\lesssim \sup\limits_{\mathcal{P}}\sum_{[s,t]\in \mathcal{P}} \{\|X\|^{p'}_{V^{p'}((s,t];\mathbb{R}^d)} \|f\|^{p'}_{C_b^2}(\|Y^{1}-Y^{2}\|_{V^{p'}([s,t);\mathcal{B}_{\alpha})}
+\|Y^{1}-Y^{2}\|_{V^{p_{1}'}([0,s];\mathcal{B}_{\alpha+\theta})}|t-s|^{\theta})^{p'} \\
&~~+\|X\|^{p'}_{V^{p'}((s,t];\mathbb{R}^d)} \|f\|^{p'}_{C_b^2}\|Y^{1}-Y^{2}\|^{p'}_{V^{p_{1}'}([0,s];\mathcal{B}_{\alpha+\theta})}
+\|X\|^{p'}_{V^{p'}([0,s];\mathbb{R}^d)} \|f\|^{p'}_{C^2}|t-s|^{\theta p'} \\
&~~(\|Y^{1}-Y^{2}\|_{V^{p'}([0,s);\mathcal{B}_{\alpha})}+\|Y^{1}-Y^{2}\|_{V^{p_{1}'}([0,s];\mathcal{B}_{\alpha+\theta})}s^{\theta})^{p'}
\} \\
&\lesssim \|f\|_{C_b^2}^{p'}(\|X\|^{p'}_{V^{p'}([0,\rho];\mathbb{R}^d)} +\|X\|^{p'}_{V^{p'}([0,T];\mathbb{R}^d)}\rho^{\theta p'}))\|Y^{1}-Y^{2}\|_{V_{\alpha,\alpha+\theta}^{p',p_{1}'}([0,\rho])}^{p'}  \\
&\lesssim (\|X\|^{p'}_{V^{p'}([0,\rho];\mathbb{R}^d)} +\rho^{\theta p'}))\|Y^{1}-Y^{2}\|_{V_{\alpha,\alpha+\theta}^{p',p_{1}'}([0,\rho])}^{p'} \\
&\lesssim (\|X\|^{p}_{V^{p}([0,\rho];\mathbb{R}^d)}Osc(X;[0,\rho])^{p'-p} +\rho^{\theta p'}))\|Y^{1}-Y^{2}\|_{V_{\alpha,\alpha+\theta}^{p',p_{1}'}([0,\rho])}^{p_{1}'}.
\end{split}
\end{equation}
Similarly, we have 
\begin{align*}
\sup\limits_{\mathcal{P}}\sum_{[s,t]\in \mathcal{P}}|Z^{1}_{t,s}-Z^{2}_{t,s}|^{p_{1}'}_{\alpha+\theta}
\lesssim (\|X\|^{p}_{V^{p}([0,\rho];\mathbb{R}^d)}Osc(X;[0,\rho])^{p_{1}'-p}  +\rho^{\theta p_{1}'}))\|Y^{1}-Y^{2}\|_{V_{\alpha,\alpha+\theta}^{p',p_{1}'}([0,\rho])}^{p_{1}'}.
\end{align*}
Due to the right continuity of $X$ at $t=0$, one can choose $\rho$ sufficiently small, such that 
\begin{align*}
C(\|X\|^{p/p'}_{V^{p}([0,\rho];\mathbb{R}^d)}Osc(X;[0,\rho])^{\frac{p'-p}{p'}}+\|X\|^{p/p_{1}'}_{V^{p}([0,\rho];\mathbb{R}^d)}Osc(X;[0,\rho])^{\frac{p_{1}'-p}{p_{1}'}}+\rho^{\theta})<1.
\end{align*}
Thus we derive the contraction property of $\mathcal{J}$, i.e.
\begin{align*}
\|Z^{1}-Z^{2}\|_{V^{p', p_{1}'}_{\alpha, \alpha+\theta}([0,\rho])}<\|Y^{1}-Y^{2}\|_{V^{p', p_{1}'}_{\alpha, \alpha+\theta}([0,\rho])}.
\end{align*}
By applying Banach's fixed point theorem, we conclude that there exists a unique local solution $Y\in V^{p',p_{1}'}_{\alpha,\alpha+\theta}([0,\rho])$. 
By similar computation as \eqref{Z1} and \eqref{Z2},  we have 
\begin{align*}
\|Y\|_{V^{p,p_{1}}_{\alpha,\alpha+\theta}([0,\rho])}\lesssim (1+\|X\|_{V^{p}([0,\rho];\mathbb{R}^d)})(1+|Y_{0}|_{\alpha+2\theta}),
\end{align*}   
which yields the existence and uniqueness of the solution in $V^{p,p_1}_{\alpha,\alpha+\theta}([0,\rho])$. 
\end{proof}
\begin{remark}\label{localpp}
Let $X\in V^p ([0,T];\mathbb{R}^d)$, $p\in [1,2)$. For $t\in[0,t_0]$, consider
\begin{align}\label{eqYY}
Y_t= S_{t,0} Y_0+ \int_0^t S_{t,r} f(Y_r)\textup{d}X_r, 
\end{align}
with initial value $Y_0\in \cB_{\alpha+2\theta}$. Suppose that  $Y$
is a 
solution of \eqref{eqYY}.  By the denifition of the integral with jumps at the initial point (see Lemma \ref{llrr} (1)), we have 
\begin{align*}
\lim\limits_{t\downarrow 0} \int_0^t S_{t,r} f(Y_r)\textup{d} X_r= S_{t,0} f(Y_0)\Delta_0^+ X. 
\end{align*}
Define 
$Y_{0+}= Y_0+ f(Y_0) \Delta_0^+ X$, 
$$
\tilde Y_t:=\left\{
\begin{array}{ll}
Y_{0+}, & t=0 \\
Y_t,&  t>0, 
\end{array}
\right.
$$
and 
$$
\tilde X_t:=\left\{
\begin{array}{ll}
X_{0+}, & t=0 \\
X_t,&  t>0.
\end{array}
\right.
$$
It is clear that $\tilde X$
 is right-continuous at 0. It holds for every $t\in (0,t_0]$, 
 $$
 \int_0^t S_{t,r} f(Y_r)\textup{d}X_r= S_{t,0} f(Y_0) \Delta_0^+ X+ \int_0^t S_{t,r} f(\tilde Y_r)\textup{d}\tilde X_r.
 $$
 Then,
 \begin{align*}
 \tilde Y_t=Y_t&=S_{t,0}Y_0+ \int_0^t S_{t,r} f(Y_r)\textup{d}X_r\\
 &= S_{t,0}Y_0+  S_{t,0} f(Y_0) \Delta_0^+ X+ \int_0^t S_{t,r} f(\tilde Y_r)\textup{d}\tilde X_r\\
 &= S_{t,0} Y_{0+}+\int_0^t S_{t,r} f(\tilde Y_r)\textup{d}\tilde X_r. 
 \end{align*}
 The identity also holds for $t=0$. Hence $\tilde Y$ solves 
\begin{align}\label{eqtY}
\tilde Y_t =S_{t,0} Y_{0+}+ \int_0^t S_{t,r} f(\tilde Y_r)\textup{d}\tilde X_r, \quad t\in [0,t_0]. 
\end{align}
Conversly, suppose that $\tilde Y$  solves \eqref{eqtY}, where $
Y_{0+}= Y_0+ f(Y_0)\Delta_0^+ X.$ 
Define  
$$
Y_t:=\left\{
\begin{array}{ll}
Y_{0}, & t=0 \\
\tilde Y_t,&  t>0, 
\end{array}
\right.
$$
Then 
\begin{align*}
S_{t,0} Y_0+\int_0^t S_{t,r} f(Y_r) \textup{d}X_r &= S_{t,0} Y_0+  S_{t,0} f(Y_0) \Delta_0^+ X+ \int_0^t S_{t,r} f(\tilde Y_r)\textup{d}\tilde X_r\\
&= S_{t,0} Y_{0+} + \int_0^t S_{t,r} f(\tilde Y_r)\textup{d}\tilde X_r\\
&= \tilde Y_t =Y_t. 
\end{align*}
Therefore $Y$
 solves 
 \eqref{eqYY}. 
The preceding local result requires initial data in
$\mathcal B_{\alpha+2\theta}$. At later times, however, the solution
is only known to belong to $\mathcal B_{\alpha+\theta}$. Thus, the
local argument cannot in general be restarted to obtain a global
solution in $V_{\alpha,\alpha+\theta}^{p,p_1}([0,T])$.
 \end{remark}
\subsection{Global well-posedness}
In this subsection, we  prove the existence and uniqueness of a global solution in space $V^{p,\infty}_{\alpha,\alpha+\theta}$.
Note that 
for every finite $p_1\geq1$, one has $V^{p,p_1}_{\alpha, \alpha+\theta}([0,T]) \hookrightarrow V^{p,\infty}_{\alpha,\alpha+\theta}([0,T])$. In contrast to the last subsection,  working in this larger space not only lowers the initial regularity requirement from 
\(\mathcal B_{\alpha+2\theta}\) to
\(\mathcal B_{\alpha+\theta}\), 
but also reduces the regularity gain
required of \(f\) from \(2\theta+\varepsilon\) to
\(\theta+\varepsilon\). Consequently, 
 the corresponding local arguments can be iterated to obtain a global result. 

\begin{lemma}\label{fff2}
Suppose $f(\cdot)\in C_b^2(\cB_{\alpha+i\theta}, \cL(\mathbb{R}^d,\mathcal{B}_{\alpha+(i+1)\theta+\varepsilon}))$, for some $\alpha,\theta  \in \mathbb{R}$, $\varepsilon>0$ and let $x, y\in V^{p,\infty}_{\alpha+i\theta, \alpha+i\theta}([0,T])$, $i=0,1$, with $\|x\|_{V^{p,\infty}_{\alpha+i\theta, \alpha+i\theta}([0,T])}$, $\|y\|_{V^{p,\infty}_{\alpha+i\theta, \alpha+i\theta}([0,T])}<K$. Then one has
\begin{align*}
\|f(x)\|_{V^{p}([0,T];\cL(\mathbb{R}^d,\mathcal{B}_{\alpha+(i+1)\theta+\varepsilon}))}&\leq \|f\|_{C_b^2}\|x\|_{V^{p}([0,T];\mathcal{B}_{\alpha+i\theta})}, \\
\|f(x)\|_{V^{\infty}([0,T];\cL(\mathbb{R}^d,\mathcal{B}_{\alpha+(i+1)\theta+\varepsilon}))}&\leq \|f\|_{C_b^2}  \|x\|_{V^{\infty}([0,T]; \mathcal{B}_{\alpha+i\theta})},\\
\|f(x)-f(y)\|_{V^{p}([0,T];\cL(\mathbb{R}^d,\mathcal{B}_{\alpha+(i+1)\theta+\varepsilon}))}&\leq C_{p,K} \|f\|_{C_b^2} (|x_{0}-y_{0}|_{\alpha+i\theta}+\|x-y\|_{V^{p}([0,T];\mathcal{B}_{\alpha+i\theta})}),\\
\|f(x)-f(y)\|_{V^{\infty}([0,T];\cL(\mathbb{R}^d,\mathcal{B}_{\alpha+(i+1)\theta+\varepsilon}))}&\leq C_{K}\|f\|_{C_b^2}(|x_{0}-y_{0}|_{\alpha+i\theta}+\|x-y\|_{V^{\infty}([0,T];\mathcal{B}_{\alpha+i\theta})}).
\end{align*}
\end{lemma}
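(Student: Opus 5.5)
The proof mirrors that of Lemma \ref{fff1}; only the target space changes, and the $V^\infty$ bounds are obtained by the same pointwise estimates without summing over a partition. Throughout I set $\|f\|_{C_b^2}:=\max_{i=0,1}\|f\|_{C_b^2(\cB_{\alpha+i\theta},\cL(\mathbb{R}^d,\cB_{\alpha+(i+1)\theta+\varepsilon}))}$ and fix $i\in\{0,1\}$.

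\textbf{Step 1: the two bounds on $f(x)$.} By the mean value inequality in Banach spaces, for all $t,s$,
\[
|f(x_t)-f(x_s)|_{\cL(\mathbb{R}^d,\cB_{\alpha+(i+1)\theta+\varepsilon})}\leq \|Df\|_\infty |x_t-x_s|_{\alpha+i\theta}.
\]
Raising this to the power $p$, summing over $[s,t]\in\mathcal P$ and taking the supremum over partitions gives the $V^p$ bound. Taking $s=0$ and the supremum over $t$ gives the $V^\infty$ bound, since $\|\cdot\|_{V^\infty}$ is the seminorm $\sup_t|x_t-x_0|$.

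\textbf{Step 2: the $V^p$ bound on the difference.} Set $w:=x-y$ and $G_t:=\int_0^1Df(y_t+r w_t)\,\textup{d}r$, so that $f(x_t)-f(y_t)=G_t w_t$. Then
\[
(f(x)-f(y))_{t,s}=G_t\,w_{t,s}+(G_t-G_s)\,w_s .
\]
The first term is bounded by $\|Df\|_\infty|w_{t,s}|_{\alpha+i\theta}$. For the second term, $D^2f$ is bounded, so $G_t-G_s$ is bounded by $\|D^2f\|_\infty(|x_{t,s}|_{\alpha+i\theta}+|y_{t,s}|_{\alpha+i\theta})$. Moreover $|w_s|_{\alpha+i\theta}\le |w_0|_{\alpha+i\theta}+\|w\|_{V^\infty}\le|w_0|_{\alpha+i\theta}+\|w\|_{V^p}$. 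Summing $p$-th powers and using $\|x\|_{V^p},\|y\|_{V^p}<K$ yields
\[
C_{p,K}\|f\|_{C_b^2}\bigl(|x_0-y_0|_{\alpha+i\theta}+\|x-y\|_{V^p}\bigr),
\]
exactly as in the proof of Lemma \ref{fff1}.

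\textbf{Step 3: the $V^\infty$ bound on the difference.} Use the same decomposition with $s=0$ and take $\sup_t$. The term $G_t w_{t,0}$ is at most $\|Df\|_\infty\|w\|_{V^\infty}$. The term $(G_t-G_0)w_0$ is at most
\[
\|D^2f\|_\infty\bigl(\|x\|_{V^\infty}+\|y\|_{V^\infty}\bigr)|w_0|_{\alpha+i\theta}\le 2K\|D^2f\|_\infty|w_0|_{\alpha+i\theta}.
\]
This yields the constant $C_K$, which does not depend on $p$.

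\textbf{Main obstacle.} The one delicate point is the hypothesis. Because of the gain $\theta+\varepsilon$, $D^2f$ must be controlled along the segment between $x_t$ and $y_t$ for each $i$. The $C_b^2$ assumption, with its global sup norms, provides exactly this. Without it one would have to restrict to a ball $\{|u|_{\alpha+i\theta}\le R\}$ determined by $K$ and $|x_0|,|y_0|$, and the constants would then depend on $R$ as well.
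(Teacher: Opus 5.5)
Your proposal is correct and follows essentially the same route as the paper: the $V^p$ bounds come from the Taylor splitting $(f(x)-f(y))_{t,s}=G_t w_{t,s}+(G_t-G_s)w_s$, exactly as in Lemma \ref{fff1}, and the $V^\infty$ bounds come from the same pointwise estimates taken with $s=0$. Your closing remark is harmless but unnecessary: the regularity gain $\theta+\varepsilon$ plays no role in these estimates beyond fixing the norms in which $Df$ and $D^2f$ are globally bounded.
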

\begin{proof}
Denote $\|f\|_{C_b^2}:=\max\limits_{i=0,1}\|f\|_{C^2(\cB_{\alpha+i\theta},\mathcal{L}(\mathbb{R}^d, \cB_{\alpha+(i+1)\theta+\varepsilon}))}$.
For $p\in [1,\infty), $  the estimates in $V^p$ are similar to that in Lemma \ref{fff1}. 
When $p=\infty$, we have 
\begin{align*}
\|f(x)\|_{V^{\infty}([0,T];\cL(\mathbb{R}^d,\mathcal{B}_{\alpha+(i+1)\theta+\varepsilon}))}&=\sup\limits_{0\leq t\leq T}\|f(x_{t})-f(x_{0})\|_{\cL(\mathbb{R}^d,\mathcal{B}_{\alpha+(i+1)\theta+\varepsilon})}\\
&\leq \|f\|_{C_b^2} \sup\limits_{0\leq t\leq T}|x_{t}-x_{0}|_{\alpha+i\theta}\\
&\leq  \|f\|_{C_b^2}  \|x\|_{V^{\infty}([0,T]; \mathcal{B}_{\alpha+i\theta})}\\
&\leq K\|f\|_{C_b^2}.
\end{align*}
Using Taylor's formula, we derive
\begin{align*}
&\|f(x)-f(y)\|_{V^{\infty}([0,T];\cL(\mathbb{R}^d,\mathcal{B}_{\alpha+(i+1)\theta+\varepsilon}))}\\
&=\sup\limits_{0\leq t\leq T}|f(x_{t})-f(y_{t})-f(x_{0})+f(y_{0})|_{\alpha+(i+1)\theta+\varepsilon}\\
&\leq  \sup\limits_{0\leq t\leq T} |\int_{0}^{1}Df(y_{t}+r(x_{t}-y_{t}))\textup{d}r(x_{t}-y_{t})-\int_{0}^{1}Df(y_{t}+r(x_{t}-y_{t}))\textup{d}r(x_{0}-y_{0})|_{\alpha+i\theta}\\
&~~+|\int_{0}^{1}Df(y_{t}+r(x_{t}-y_{t}))\textup{d}r(x_{0}-y_{0})-\int_{0}^{1}Df(y_{0}+r(x_{0}-y_{0}))\textup{d}r(x_{0}-y_{0})|_{\alpha+i\theta}\\
&\leq \|f\|_{C_b^2}\big[\|x-y\|_{V^{\infty}([0,T];\mathcal{B}_{\alpha+i\theta})}+|x_{0}-y_{0}|_{\alpha+i\theta}\\
&~~\times(\|y\|_{V^{\infty}([0,T];\mathcal{B}_{\alpha+i\theta})}+\|x\|_{V^{\infty}([0,T];\mathcal{B}_{\alpha+i\theta})})\big]\\
&\leq C_{K}\|f\|_{C_b^2}(\|x-y\|_{V^{\infty}([0,T];\mathcal{B}_{\alpha+i\theta})}+|x_{0}-y_{0}|_{\alpha+i\theta}).
\end{align*}
\end{proof}
\begin{lemma}[Local well-posedness of \eqref{main}]\label{locyoung}
Let $\alpha\in \mathbb{R}$, $\varepsilon>0$, $p\in [1,2)$, $\theta\in [\frac{1}{p}, 1]$. 
Suppose $X\in V^{p}([0,T]; \mathbb{R}^d)$,  is  right-continuous at $t=0$, $X_{0}=0$ and $f(\cdot)\in C_b^2 (\cB_{\alpha+i\theta}, \cL(\mathbb{R}^d,\mathcal{B}_{\alpha+(i+1)\theta+\varepsilon}))$, $i=0,1,$ $Y_0\in \mathcal{B}_{\alpha+\theta}$. Then there exists a time horizon $\rho\in (0,T)$ and a unique local solution 
$Y\in V^{p, \infty}_{\alpha, \alpha+\theta}([0,\rho])$ satisfying 
\begin{align}\label{eqyoung}
Y_{t}=S_{t,0}Y_{0}+\int_{0}^{t}S_{t,r}f(Y_{r})\textup{d}X_{r}, \quad t\in [0,\rho].
\end{align}
\end{lemma}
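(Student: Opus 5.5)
We mimic the proof of Lemma \ref{local-2p}, replacing the $V^{p_1'}(\cB_{\alpha+\theta})$ component by the sup-type seminorm $V^\infty(\cB_{\alpha+\theta})$. Define the same map
$$(\mathcal J Y)_t:=S_{t,0}Y_0+\int_0^t S_{t,r}f(Y_r)\,\textup{d}X_r,\qquad t\in[0,\rho],$$
and fix $p<p'<2$. We work in the ball
$$\mathscr B_\rho:=\big\{Y\in V^{p',\infty}_{\alpha,\alpha+\theta}([0,\rho]):\ \|Y\|_{V^{p',\infty}_{\alpha,\alpha+\theta}([0,\rho])}\le 1+|Y_0|_{\alpha+\theta}\big\}.$$
The integral is well-defined by Lemma \ref{YOU01}, applied with $(p_1,p_2)=(p',\infty)$ to $f(Y)$. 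We need $f(Y)\in V^{p',\infty}_{\alpha,\alpha+\theta}$ with $f(Y_0)\in\cB_{\alpha+\theta}$, and also (for the $\cB_{\alpha+\theta}$ estimate) $f(Y)\in V^{p',\infty}_{\alpha+\theta,\alpha+2\theta}$ up to $\varepsilon$. Lemma \ref{fff2} gives exactly this. With $i=0$ it maps $V^{p'}(\cB_\alpha)$ into $V^{p'}(\cB_{\alpha+\theta+\varepsilon})\subset V^{p'}(\cB_{\alpha+\theta})$. With $i=1$ it maps $V^\infty(\cB_{\alpha+\theta})$ into $V^\infty(\cB_{\alpha+2\theta+\varepsilon})$.

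\textbf{Self-map.} We split $Z_{t,s}=(\mathcal JY)_t-(\mathcal JY)_s=I_1+I_2+I_3+I_4$ exactly as before.

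For the $V^{p'}(\cB_\alpha)$ bound, use \eqref{smooth} with only $Y_0\in\cB_{\alpha+\theta}$:
$$|I_1|_\alpha\lesssim |t-s|^\theta|Y_0|_{\alpha+\theta}.$$
The term $I_2$ is handled by \eqref{young-bound} at level $\alpha$. The term $I_3$ is bounded by $\|f\|_{C^2_b}\|X\|_{V^{p'}([s,t])}$. For $I_4$ we use $|t-s|^\theta$ times the $\cB_{\alpha+\theta}$-norm of $\int_0^sS_{s,r}f(Y_r)\,\textup{d}X_r$.

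For the $V^\infty(\cB_{\alpha+\theta})$ bound it suffices to estimate $|(\mathcal JY)_t-Y_0|_{\alpha+\theta}$. Write
$$(\mathcal JY)_t-Y_0=(S_{t,0}-\textup{id})Y_0+\int_0^tS_{t,r}f(Y_r)\,\textup{d}X_r.$$
The integral is bounded in $\cB_{\alpha+\theta}$ by Lemma \ref{YOU01} at level $\alpha+\theta$. This costs $f(Y)\in V^{p'}(\cB_{\alpha+\theta})$, which is controlled by $\|Y\|_{V^{p'}(\cB_\alpha)}$ through $i=0$ in Lemma \ref{fff2}. It also costs $f(Y_0)+\|f(Y)\|_{V^\infty(\cB_{\alpha+2\theta})}$, controlled by $\|Y\|_{V^\infty(\cB_{\alpha+\theta})}$ through $i=1$. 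All resulting terms carry a factor $\|X\|_{V^{p'}([0,\rho])}$ or $\rho^\theta$. The factor $\|X\|_{V^{p'}}$ is made small via
$$\|X\|^{p'}_{V^{p'}}\le \|X\|^p_{V^p}\,Osc(X;[0,\rho])^{p'-p}$$
together with right-continuity at $0$, as in \eqref{rhoC}.

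\textbf{Main obstacle.} The delicate term is $(S_{t,0}-\textup{id})Y_0$ in $\cB_{\alpha+\theta}$. Since $Y_0$ lies only in $\cB_{\alpha+\theta}$, this term is merely bounded by $C|Y_0|_{\alpha+\theta}$ and is not small. This is why the ball radius must be built around $|Y_0|_{\alpha+\theta}$. We therefore take the radius to be $(\Lambda e^{\lambda T}+1)(1+|Y_0|_{\alpha+\theta})$, absorbing the constant from \eqref{growth_S}. The convolution contributions are small by the choice of $\rho$, so they fit in the extra "$+1$". Right-continuity of $\mathcal JY$ in the sup sense is not needed: $V^\infty$ only requires boundedness and regulated paths, which Lemma \ref{llrr} supplies in $\cB_{\alpha+\theta}$ via items (1)–(3).

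\textbf{Contraction and upgrade.} Take $Y^1,Y^2\in\mathscr B_\rho$ with equal initial value. The $I_1$ terms cancel, so every term in $\mathcal JY^1-\mathcal JY^2$ carries a small factor. The Lipschitz bounds of Lemma \ref{fff2} (both the $V^{p'}$ and the $V^\infty$ estimates, with $x_0=y_0$) give
$$\|\mathcal JY^1-\mathcal JY^2\|_{V^{p',\infty}}\le C\big(\rho^\theta+\|X\|^{p/p'}_{V^p}Osc^{(p'-p)/p'}\big)\|Y^1-Y^2\|_{V^{p',\infty}},$$
which is less than $1$ for small $\rho$. Banach's fixed point theorem yields a unique fixed point. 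Redoing the $V^{p'}$ estimate with $p$ in place of $p'$, using Lemma \ref{YOU01} with $q=p$, then shows $Y\in V^{p,\infty}_{\alpha,\alpha+\theta}([0,\rho])$. Uniqueness in this smaller space follows since it embeds in $V^{p',\infty}$.
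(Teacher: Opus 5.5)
Your proposal is correct and is essentially the paper's proof. Both run a contraction in the enlarged space $V^{p',\infty}_{\alpha,\alpha+\theta}([0,\rho])$ on a ball whose radius is a constant multiple of $1+|Y_0|_{\alpha+\theta}$, chosen to absorb the non-small term $(S_{t,0}-\textup{id})Y_0$ in $\cB_{\alpha+\theta}$ (the paper takes $(C_S+2)(1+|Y_0|_{\alpha+\theta})$); both use Lemma~\ref{YOU01} at levels $\alpha$ and $\alpha+\theta$ together with Lemma~\ref{fff2}, and then upgrade from $V^{p'}$ to $V^p$ a posteriori.

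Two small corrections. First, the left and right limits in $t$ of the convolution in $\cB_{\alpha+\theta}$ come from items (3)--(4) of Lemma~\ref{llrr}, not (1)--(3); item (4) is where the $\varepsilon$-gain of $f$ enters. Second, Banach's theorem gives uniqueness only inside $\mathscr B_\rho$, so the embedding $V^{p,\infty}\subset V^{p',\infty}$ alone does not give uniqueness among all solutions. You would need an a priori bound showing that an arbitrary solution lies in the ball, at least on a short interval; the paper passes over this point too.
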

\begin{proof}
 Recall that by \eqref{smooth}, $|S_{t,s}x|_{\alpha}\leq C_S |x|_\alpha.$
Define a mapping $\mathcal{J}$ by 
$$
(\mathcal{J}Y)
_{t}:=S_{t,0}Y_{0}+\int_{0}^{t}S_{t,r}f(Y_{r})\textup{d}X_{r},\quad t\in [0,\rho].$$
The fixed-point argument is not conducted in $V^{p,\infty}_{\alpha,\alpha+\theta}([0,\rho])$ but in a larger space.
To this aim, fix $p<p'<2$, let $0<\rho<1$, and
denote 
$$
\mathscr{B}_{\rho}(Y_0)
:=\{Y\in V^{p', \infty}_{\alpha, \alpha+\theta}([0,\rho]) ~~| ~~\|Y\|_{V^{p', \infty}_{\alpha, \alpha+\theta}([0,\rho])}\leq (C_S+2)(1+|Y_{0}|_{\alpha+\theta})\}.$$ 
In the following, we prove that $\mathcal{J}$ is a contractive self-mapping on $\mathscr{B}_\rho$ choosing $\rho$ sufficiently small.
For $0<s<t<\rho$,
denote $
Z_{t,s}:=(\mathcal{J}Y)_{t}-(\mathcal{J}Y)_{s}.$
The proof of $Z\in V^{p'}([0,\rho];\cB_\alpha)$ follows
from the similar arguments as those used in the proof of
Lemma~\ref{local-2p}, so we omit the proof
and claim that 
\begin{align*}
\|Z\|^{p'}_{V^{p'}([0,\rho];\mathcal{B}_{\alpha})}
\leq C ( \rho^{\theta p'}+\|X\|^{p'}_{V^{p'}([0,\rho];\mathbb{R}^d)})(|Y_{0}|_{\alpha+\theta}+1)^{p'},
\end{align*}
where $C$ depends on $S, p', \theta, f$.
Note that $\|X\|^{p'}_{V^{p'}([0,\rho]; \mathbb{R}^d)}\leq 
\|X\|^{p}_{V^{p}([0,\rho];\mathbb{R}^d)}Osc(X, [0,\rho])^{p'-p}$. 
Due to the right continuity of $X$ at $t=0$, we can choose $\rho$ sufficiently small, such that 
\begin{align*}
\|Z\|_{V^{p'}([0,\rho];\mathcal{B}_{\alpha})}\leq \frac{1}{2}(1+|Y_{0}|_{\alpha+\theta}).
\end{align*}
Next, we prove that $Z\in V^{\infty} ([0,\rho];\mathcal{B}_{\alpha+\theta})$.
It holds 
\begin{align*}
\sup\limits_{t\in [0,\rho]}|Z_{t}-Z_{0}|_{\alpha+\theta}&=\sup\limits_{t\in [0,\rho]}|S_{t,0}Y_{0}-Y_{0}+\int_{0}^{t}S_{t,r}f(Y_{r})\textup{d}X_{r}|_{\alpha+\theta}\\
&\leq (C_S+1)|Y_{0}|_{\alpha+\theta}+\sup\limits_{t\in[0,\rho]}\{|\int_{0}^{t}S_{t,r}f(Y_{r})\textup{d}X_{r}-S_{t,0}f(Y_{0})X_{t}|_{\alpha+\theta}
+|S_{t,0}f(Y_{0})X_{t}|_{\alpha+\theta}\}\\
&\leq (C_S+1)|Y_{0}|_{\alpha+\theta}+C\sup\limits_{t\in[0,\rho]}
\big(\|X\|_{V^{p'}([0,t];\mathbb{R}^d)}\|Y\|_{V^{p'}([0,t);\cL(\mathbb{R}^d,\mathcal{B}_{\alpha}))}\\
&~~+ (1+\|Y\|_{V^{\infty}
([0,t];\cL(\mathbb{R}^d, \mathcal{B}_{\alpha+\theta})})\|X\|_{V^{p'}([0,t];\mathbb{R}^d)}t^{\theta}\big)
+ C\sup\limits_{t\in [0,\rho]} \|X\|_{V^{p'}([0,t];\mathbb{R}^d)}\\
&\leq (C_S+1) |Y_{0}|_{\alpha+\theta}+C\|X\|_{V^{p'}([0,\rho];\mathbb{R}^d)}(|Y_{0}|_{\alpha+\theta}+1).
\end{align*}
By choosing $\rho$ sufficiently small, we have
\begin{align*}
\|Z\|_{V^{\infty}([0,\rho];\mathcal{B}_{\alpha+\theta})}\leq (C_S+1)|Y_{0}|_{\alpha+\theta}+\frac{1}{2}(1+|Y_{0}|_{\alpha+\theta}).
\end{align*}
Then,  we can prove that $\mathcal{J}$ is a self-mapping by which 
\begin{align*} \|Z\|_{V^{p'}([0,\rho];\mathcal{B}_{\alpha})}+ 
\|Z\|_{V^{\infty}([0,\rho];\mathcal{B}_{\alpha+\theta})}\leq (C_S+2)(1+|Y_{0}|_{\alpha+\theta}).
\end{align*} 

It remains to show that $\mathcal{J}$ is a contraction mapping. For any $Y^{1}, Y^{2}\in \mathscr{B}_{\rho}$, $Y_{0}^{1}=Y_{0}^{2}$, the estimate bewteen $Z^1:=\mathcal{J}(Y^1)$ and $Z^2:=\mathcal{J}(Y^2)$ in $\cB_\alpha$ is similar to \eqref{local-self} in Lemma \ref{local-2p}. We claim that 
\begin{align*}
\sup\limits_{\mathcal{P}}\sum_{[s,t]\in \mathcal{P}}|Z^{1}_{t,s}-Z^{2}_{t,s}|^{p'}_{\alpha}
\lesssim
(\|X\|^{p}_{V^{p}([0,\rho];\mathbb{R}^d)}Osc(X;[0,\rho])^{p'-p} +\rho^{\theta p'})\|Y^{1}-Y^{2}\|_{V_{\alpha,\alpha+\theta}^{p',\infty}([0,\rho])}^{p'}. 
\end{align*}
Besides, we have 
\begin{align*}
\sup\limits_{t\in[0,\rho]}|Z^{1}_{t,0}-Z^{2}_{t,0}|_{\alpha+\theta}&=\sup\limits_{t\in[0,\rho]}|\int_{0}^{t}S_{t,r}(f(Y^{1}_{r})-f(Y^{2}_{r}))\textup{d}X_{r}|_{\alpha+\theta}\\
&\lesssim  \sup\limits_{t\in[0,\rho]}\big(\|X\|_{V^{p'}([0,t];\mathbb{R}^d)}\|f(Y^{1})-f(Y^{2})\|_{V^{p'}([0,t);\cL(\mathbb{R}^d,\mathcal{B}_{\alpha+\theta}))}\\
&~~+\|f(Y^{1})-f(Y^{2})\|_{V^{\infty}([0,t];\cL(\mathbb{R}^d,\mathcal{B}_{\alpha+2\theta}))}\|X\|_{V^{p'}([0,t];\mathbb{R}^d)}t^{\theta}\big)\\
&\lesssim  \|f\|_{C_b^2}\|X\|^{\frac{p}{p'}}_{V^{p}([0,\rho];\mathbb{R}^d)}Osc(X;[0,\rho])^{\frac{p'-p}{p'}}\big[\|Y^{1}-Y^{2}\|_{V^{p'}([0,\rho);
\mathcal{B}_{\alpha})}\\
&~~+\|Y^{1}-Y^{2}\|_{V^{\infty}([0,\rho];\mathcal{B}_{\alpha+\theta})}\rho^{\theta}\big]\\
&\lesssim \|f\|_{C_b^2}\|X\|^{\frac{p}{p'}}_{V^{p}([0,\rho];\mathbb{R}^d)}Osc(X;[0,\rho])^{\frac{p'-p}{p'}}\|Y^{1}-Y^{2}\|_{V^{p',\infty}_{\alpha,\alpha+\theta}([0,\rho])}.
\end{align*}
Due to the right continuity of $X$ at $t=0$, we can choose sufficiently small $\rho$ such that 
\begin{align*}
 C(\|X\|^{\frac{p}{p'}}_{V^{p}([0,\rho];\mathbb{R}^d)}Osc(X,[0,\rho])^{\frac{p'-p}{p'}} +\rho^{\theta })
<1.
\end{align*}
Hence we have 
\begin{align*}
\|Z^{1}-Z^{2}\|_{V^{p',\infty}_{\alpha,\alpha+\theta}([0,\rho])}<\|Y^{1}-Y^{2}\|_{V^{p',\infty}_{\alpha,\alpha+\theta}([0,\rho])}.
\end{align*}
Therefore, there exists a unique solution $Y\in V^{p',\infty}_{\alpha,\alpha+\theta}([0,\rho])$ by applying  Banach's fixed point theorem. The required result can be obtained by the following  bound  
\begin{align}\label{loc-bound}
\|Y\|_{V^{p,\infty}_{\alpha,\alpha+\theta}([0,\rho])}&\lesssim (1+\|X\|_{V^{p}([0,\rho];\mathbb{R}^d)})(1+|Y_{0}|_{\alpha+\theta}).
\end{align}
\end{proof}
\begin{lemma}[Local stability of the solution of \eqref{main}]\label{loc-sta}
Suppose  $X, \tilde X\in V^{p}([0,T]; \mathbb{R}^d)$,  $p\in [1,2)$, are right-continuous at $t=0$, $X_0=\tilde X_0=0$. Let $Y, \tilde{Y}$ solve the equations \eqref{eqyoung} driven by $X$, $\tilde{X}$, respectively.  Then, for $\max\{|Y_{0}|_{\alpha+\theta}, |\tilde{Y}_{0}|_{\alpha+\theta}\}<K$, there exists $t\in(0,t_1)$, such that $\max\{\|X\|_{V^{p}([0,t];\mathbb{R}^d)}, \|\tilde{X}\|_{V^{p}([0,t];\mathbb{R}^d)}\}<\delta_{S, 
\theta, p, K, f}$ and 
\begin{align*}
\|Y-\tilde{Y}\|_{V^{p,\infty}_{\alpha,\alpha+\theta}([0,t])}\leq C(\|X-\tilde{X}\|_{V^{p}([0,t];\mathbb{R}^d)}+|Y_{0}-\tilde{Y}_{0}|_{\alpha+\theta}),
\end{align*}
where  $C$ depends on $S, \theta, p, K, f.$
\end{lemma}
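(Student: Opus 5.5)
We set $W:=Y-\tilde Y$ and compare the two mild formulations on a short interval $[0,t]$. Since both $X$ and $\tilde X$ are right-continuous at $0$ with $X_0=\tilde X_0=0$, we first choose $t$ so small that $\max\{\|X\|_{V^{p}([0,t];\mathbb{R}^d)},\|\tilde X\|_{V^{p}([0,t];\mathbb{R}^d)}\}<\delta$, with $\delta$ to be fixed, and also so that $t$ lies below the local existence horizons of Lemma \ref{locyoung} for both equations. By \eqref{loc-bound}, $Y$ and $\tilde Y$ are then bounded in $V^{p,\infty}_{\alpha,\alpha+\theta}([0,t])$ by a constant $K'$ depending only on $K$ and $\delta$. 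As in Lemma \ref{locyoung}, we work with an auxiliary exponent $p<p'<2$ and use $\|X\|^{p'}_{V^{p'}}\le \|X\|^p_{V^p}Osc(X)^{p'-p}$; the final bound in $V^{p,\infty}$ is recovered at the end, as was done for \eqref{loc-bound}.

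Decompose
\[
W_t=S_{t,0}(Y_0-\tilde Y_0)+\int_0^t S_{t,r}\bigl(f(Y_r)-f(\tilde Y_r)\bigr)\textup{d}X_r+\int_0^t S_{t,r}f(\tilde Y_r)\,\textup{d}(X-\tilde X)_r .
\]
For the increments $\delta W_{u,s}$ we use the four-term splitting $I_1,\dots,I_4$ from Lemma \ref{local-2p}, applied to each of the three pieces.

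The first piece contributes $\lesssim(t^{\theta}+1)|Y_0-\tilde Y_0|_{\alpha+\theta}$ in both the $V^{p'}(\cB_\alpha)$ and the $V^\infty(\cB_{\alpha+\theta})$ seminorms, by \eqref{smooth}.

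The second piece is estimated exactly as $Z^1-Z^2$ in the contraction step of Lemma \ref{locyoung}, using Lemma \ref{YOU01} and Lemma \ref{fff2}. It is bounded by
\[
C\bigl(\delta^{p/p'}Osc^{(p'-p)/p'}+t^\theta\bigr)\bigl(\|W\|_{V^{p',\infty}_{\alpha,\alpha+\theta}}+|Y_0-\tilde Y_0|_{\alpha+\theta}\bigr).
\]
The initial-difference term appears here because Lemma \ref{fff2} involves $|x_0-y_0|$, which is now nonzero.

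The third piece is a Young convolution driven by $X-\tilde X$ with integrand $f(\tilde Y)$. Lemma \ref{YOU01} and the bounds of Lemma \ref{fff2} on $f(\tilde Y)$ give $\le C(K')\|X-\tilde X\|_{V^{p'}([0,t])}$, and then $\|X-\tilde X\|_{V^{p'}}\le\|X-\tilde X\|_{V^{p}}$.

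Collecting the three contributions gives
\[
\|W\|\le C_1|Y_0-\tilde Y_0|_{\alpha+\theta}+C_2\|X-\tilde X\|_{V^p}+\kappa\|W\| ,
\]
with $\kappa<1/2$ once $\delta$ and $t$ are small. We then absorb the $\kappa\|W\|$ term into the left-hand side. Finally, we redo the estimates with $p$ in place of $p'$ using the now-known bound, to pass from $V^{p',\infty}$ to $V^{p,\infty}$.

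The main obstacle is the third piece. There the smallness must not come from $X-\tilde X$, but the constant must stay uniform in $K$. This requires that the $V^{p'}(\cB_{\alpha+\theta})$ and $V^{\infty}(\cB_{\alpha+2\theta})$ norms of $f(\tilde Y)$ appearing in Lemma \ref{YOU01} be controlled through Lemma \ref{fff2} by $K'$; the extra regularity $\theta+\varepsilon$ of $f$ is exactly what makes this work. A related subtlety is that $\delta$ must depend only on $K,S,\theta,p,f$. To arrange this, we choose $\delta$ first so that $C\delta^{p/p'}(\cdot)<1/4$, and only then use right-continuity at $0$ to pick $t$.
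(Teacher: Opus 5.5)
Your proposal is correct and follows essentially the paper's proof: you use the same splitting of $\delta(Y-\tilde Y)_{v,u}$ into the initial-datum term, the $(S_{v,u}-\textup{id})\int_0^u$ and $\int_u^v$ pieces for $f(Y)-f(\tilde Y)$ against $X$ and for $f(\tilde Y)$ against $X-\tilde X$, estimated via \eqref{smooth}, Lemma \ref{YOU01} and Lemma \ref{fff2}, and then absorb using the smallness of $\|X\|_{V^{p}([0,t];\mathbb{R}^d)}$. The only difference is your detour through an auxiliary exponent $p'$ and the oscillation interpolation, which is unnecessary: the paper works directly in $V^{p,\infty}_{\alpha,\alpha+\theta}$, since Lemma \ref{YOU01} applies with $p_1=q=p$ (because $2/p>1$ and $\theta+1/p\geq 2/p>1$), and right-continuity at $0$ already makes $\|X\|_{V^{p}([0,t];\mathbb{R}^d)}$ small.
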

\begin{proof}
Recall that $Y\in \mathscr{B}_{t}$ and $\tilde{Y}\in \tilde{\mathscr{B}}_{t}$ satisfying 
\begin{align*}
Y_{t}&=S_{t,0}Y_{0}+\int_{0}^{t}S_{t,r}f(Y_{r})\textup{d}X_{r},\\
\tilde{Y}
_{t}&=S_{t,0}\tilde{Y}_{0}+\int_{0}^{t}S_{t,r}f(\tilde{Y}_{r})\textup{d}\tilde{X}_{r},
\end{align*}
respectively, where
\begin{align*}
\mathscr{B}_{t}
&:=\{Y\in V^{p, \infty}_{\alpha, \alpha+\theta}([0,t]) ~~| ~~\|Y\|_{V^{p, \infty}_{\alpha, \alpha+\theta}([0,t])}\leq C(1+|Y_{0}|_{\alpha+\theta})\},\\
\tilde{\mathscr{B}}_{t}
&:=\{\tilde{Y}\in V^{p, \infty}_{\alpha, \alpha+\theta}([0,t]) ~~| ~~\|\tilde{Y}\|_{V^{p, \infty}_{\alpha, \alpha+\theta}([0,t])}\leq C(1+|\tilde{Y}_{0}|_{\alpha+\theta})\}.
\end{align*}
Denote $\Delta Y=Y-\tilde{Y}$, $\Delta X=X-\tilde{X}$, $\Delta Y_{0}=Y_{0}-\tilde{Y}_{0}$. 
Let $\mathcal{P}$ be a partition of $[0,t]$, for any $(u,v)\in \mathcal{P}$, 
\begin{align*}
\delta(\Delta Y)_{v,u}&=(S_{v,u}-\textup{id})S_{u,0}\Delta Y_0 \\
&\quad+
(S_{v,u}-\textup{id})
\int_0^u
S_{u,r}\bigl(f(Y_r)-f(\tilde Y_r)\bigr)\,\mathrm d X_r \\
&\quad+
(S_{v,u}-\textup{id})
\int_0^u
S_{u,r}f(\tilde Y_r)\,\mathrm d (\Delta X)_r \\
&\quad+
\int_u^v
S_{v,r}\bigl(f(Y_r)-f(\tilde Y_r)\bigr)\,\mathrm d X_r \\
&\quad+
\int_u^v
S_{v,r}f(\tilde Y_r)\,\mathrm d (\Delta X)_r\\
&\quad :=I_{1}+I_{2}+I_{3}+I_{4}+I_{5}.
\end{align*}
According to \eqref{smooth}, Lemma \ref{YOU01} and Lemma \ref{fff2},
we may estimate the previous terms as follows:
\begin{align*}
|I_{1}|_{\alpha}&=|(S_{v,u}-\textup{id})S_{u,0}\Delta Y_{0}|_{\alpha}\\
&\leq C_S|\Delta Y_{0}|_{\alpha+\theta}|v-u|^{\theta},
\end{align*}
\begin{align*}
|I_{2}|_{\alpha}&=|(S_{v,u}-\textup{id})
\int_0^u
S_{u,r}\bigl(f(Y_r)-f(\tilde Y_r)\bigr)\,\mathrm d X_r |_{\alpha}\\
&\leq C_S|\int_0^u
S_{u,r}\bigl(f(Y_r)-f(\tilde Y_r)\bigr)\,\mathrm d X_r|_{\alpha+\theta}|v-u|^{\theta},
\end{align*}
where 
\begin{align*}
&|\int_0^u
S_{u,r}\bigl(f(Y_r)-f(\tilde Y_r)\bigr)\,\mathrm d X_r|_{\alpha+\theta}\\
&\leq |\int_0^u
S_{u,r}\bigl(f(Y_r)-f(\tilde Y_r)\bigr)\,\mathrm d X_r-S_{u,0}\big(f(Y_0)-f(\tilde{Y}_0)\big)X_{u}|_{\alpha+\theta}+|S_{u,0}\big(f(Y_0)-f(\tilde{Y}_0)\big)X_{u}|_{\alpha+\theta}\\
&\leq C_{S, \theta, p}[\|X\|_{V^{p}([0,u];\mathbb R^d)}\|f(Y)-f(\tilde Y)\|_{V^{p}([0,u];\cB_{\alpha+\theta})}+(|f(Y_0)-f(\tilde Y_0)|_{\alpha+\theta}\\
&~~+\|f(Y)-f(\tilde Y)\|_{V^{\infty}([0,u];\cB_{\alpha+2\theta})})\|X\|_{V^{p}([0,u];\mathbb R^d)}u^{\theta}]+C_S|[f(Y_0)-f(\tilde Y_0)]X_u|_{\alpha+\theta}\\
&\leq C_{S, \theta, p, K}[\|X\|_{V^{p}([0,u];\mathbb{R}^d)}\|f\|_{C_b^2}(\|\Delta Y\|_{V^{p}([0,u];\cB_{\alpha})}+|\Delta Y_0|_{\alpha+\theta})\\
&~~+\|f\|_{C_b^2}(|\Delta Y_{0}|_{\alpha+\theta}+\|\Delta Y\|_{V^{\infty}([0,u];\cB_{\alpha+\theta})})\|X\|_{V^{p}([0,u];\mathbb{R}^d)}u^{\theta}\\
&~~+\|f\|_{C_b^2}\|X\|_{V^{p}([0,u];\mathbb{R}^d)}|\Delta Y_{0}|_{\alpha+\theta}]\\
&\leq C_{S, \theta, p, K}\|f\|_{C_b^2}\|X\|_{V^{p}([0,u];\mathbb{R}^d)}(|\Delta Y_{0}|_{\alpha+\theta}+\|\Delta Y\|_{V^{p,\infty}_{\alpha,\alpha+\theta}([0,u])}). 
\end{align*}
Similarly, we have 
\begin{align*}
|I_{3}|_{\alpha}&=|(S_{v,u}-\textup{id})
\int_0^u
S_{u,r}f(\tilde Y_r)\,\mathrm d (\Delta X)_r|_{\alpha}\\
&\leq C_S |\int_0^u
S_{u,r}f(\tilde Y_r)\,\mathrm d (\Delta X)_r|_{\alpha+\theta}|v-u|^{\theta}\\
&\leq C_S|v-u|^{\theta}[|\int_0^u
S_{u,r}f(\tilde Y_r)\,\mathrm d (\Delta X)_r-S_{u,0}f(\tilde{Y}_0 )(\Delta X)_u|_{\alpha+\theta}+|S_{u,0}f(\tilde{Y}_0 )(\Delta X)_u|_{\alpha+\theta}]\\
&\leq C_{S, \theta, p}|v-u|^{\theta}[
\|\Delta X\|_{V^{p}([0,u];\mathbb{R}^d)}\|f(\tilde{Y})\|_{V^{p}([0,u];\cL(\mathbb{R}^d,\cB_{\alpha+\theta}))}\\
&~~+(\|f(\tilde{Y}_0)\|_{\cL(\mathbb{R}^d,\cB_{\alpha+2\theta})}+\|f(\tilde{Y})\|_{V^{\infty}([0,u];\cL(\mathbb{R}^d,\cB_{\alpha+2\theta}))})\|\Delta X\|_{V^{p}([0,u];\mathbb{R}^d)}u^{\theta}\\
&~~+\|f(\tilde{Y}_{0})\|_{\cL(\mathbb{R}^d,\cB_{\alpha+\theta})}\|\Delta X\|_{V^{p}([0,u];\mathbb{R}^d)}
]\\
&\leq C_{S, \theta, p} |v-u|^{\theta}[\|f\|_{C_b^2}\|\Delta X\|_{V^{p}([0,u];\mathbb{R}^d)}\|\tilde Y\|_{V_{\alpha, \alpha+\theta}^{p,\infty}([0,u])}\\
&~~+\|f\|_{C_b^2}\|\Delta X\|_{V^{p}([0,u];\mathbb{R}^d)}(|\tilde Y_0|_{\alpha+\theta}+\|\tilde Y\|_{V_{\alpha, \alpha+\theta}^{p,\infty}([0,u])})u^\theta\\
&~~+
\|f\|_{C_b^2}\|\Delta X\|_{V^{p}([0,u];\mathbb{R}^d)}]\\
&\leq C_{S, \theta, p, K} |v-u|^{\theta}\|f\|_{C_b^2}\|\Delta X\|_{V^{p}([0,u];\mathbb{R}^d)}.
\end{align*}
\begin{align*}
|I_{4}|_{\alpha}&=|\int_u^v
S_{v,r}\bigl(f(Y_r)-f(\tilde Y_r)\bigr)\,\mathrm d X_r|_{\alpha}\\
&\leq |\int_u^v
S_{v,r}\bigl(f(Y_r)-f(\tilde Y_r)\bigr)\,\mathrm d X_r-S_{v,u}\big(f(Y_{u})-f(\tilde Y_u)\big)\delta X_{v,u}|_{\alpha}+|S_{v,u}\big(f(Y_{u})-f(\tilde Y_u)\big)\delta X_{v,u}|_{\alpha}\\
&\leq C_{S, \theta, p}\big[
\|X\|_{V^{p}([u,v];\mathbb{R}^d)}\|f(Y)-f(\tilde Y)\|_{V^{p}([u,v];\cL(\mathbb{R}^d,\cB_{\alpha})}\\
&~~+\big(\|f(Y_0)-f(\tilde Y_0)\|_{\cL(\mathbb{R}^d,\cB_{\alpha+\theta})}+\|f(Y)-f(\tilde Y)\|_{V^\infty([0,u];\cL(\mathbb{R}^d,\cB_{\alpha+\theta}))}\big)\|X\|_{V^{p}([u,v];\mathbb{R}^d)}|v-u|^\theta\\
&~~+\|f(Y_u)-f(\tilde Y_u)\|_{\cL(\mathbb{R}^d,\cB_{\alpha})}\|X\|_{V^{p}([u,v];\mathbb{R}^d)}
\big]\\
&\leq C_{S, \theta, p, K}  \|f\|_{C_b^2}\|X\|_{V^{p}([u,v];\mathbb{R}^d)}(\|\Delta Y\|_{V^p([u,v];\cB_{\alpha})}+|\Delta Y_0|_{\alpha+\theta}+\|\Delta Y\|_{V^{p,\infty}_{\alpha,\alpha+\theta}([0,u])}).
\end{align*}
\begin{align*}
|I_{5}|_{\alpha}&=|\int_u^v
S_{v,r}f(\tilde Y_r)\,\mathrm d (\Delta X)_r|_{\alpha}\\
&\leq |\int_u^v
S_{v,r}f(\tilde Y_r)\,\mathrm d (\Delta X)_r-S_{v,u}f(\tilde Y_u)\delta(\Delta X)_{v,u}|_{\alpha}+|S_{v,u}f(\tilde Y_u)\delta(\Delta X)_{v,u}|_{\alpha}\\
&\leq C_{S, \theta, p}\big[
\|\Delta X\|_{V^p([u,v];\mathbb{R}^d)}\|f(\tilde Y)\|_{V^p([u,v];\cL(\mathbb{R}^d,\cB_\alpha))}
\\
&~~+(\|f(\tilde Y_0)\|_{\cL(\mathbb{R}^d,\cB_{\alpha+\theta})}+\|f(\tilde Y)\|_{V^\infty([0,u];\cL(\mathbb{R}^d,\cB_{\alpha+\theta})})\|\Delta X\|_{V^p([u,v];\mathbb{R}^d)}|u-v|^\theta
\\
&~~+\|f(\tilde Y_u)\|_{\cL(\mathbb{R}^d,\cB_\alpha)}\|\Delta X\|_{V^p([u,v];\mathbb{R}^d)}\big]\\
&\leq C_{S, \theta, p}\big[\|f\|_{C^2}
\|\Delta X\|_{V^p([u,v];\mathbb{R}^d)}\|\tilde Y\|_{V^p([u,v];\cB_\alpha)}\\
&~~+\|f\|_{C_b^2}(1+\|\tilde Y\|_{V^\infty([0,u];\cB_{\alpha+\theta})})\|\Delta X\|_{V^p([u,v];\mathbb{R}^d)}|u-v|^\theta)\\
&~~+\|f\|_{C_b^2}\|\Delta X\|_{V^p([u,v];\mathbb{R}^d)}
\big]\\
&\leq C_{S, \theta, p, K}\|f\|_{C_b^2}\|\Delta X\|_{V^p([u,v];\mathbb{R}^d)}. 
\end{align*}
Combining  the above estimates together gives 
\begin{align*}
&\|\Delta Y\|_{V^p([0,t];\cB_{\alpha})}=(\sup\limits_{\mathcal{P}}\sum_{[u,v]\in\mathcal{P}}|\delta(\Delta Y)_{v,u}|^{p}_{\alpha})^{\frac 1p}\\
&\leq  C_{S, \theta, p, K} \big[\sup\limits_{\mathcal{P}}\sum_{[u,v]\in\mathcal{P}} \{ |\Delta Y_0|_{\alpha+\theta}^p|v-u|^{\theta p}\\
&~~+|v-u|^{\theta p}\|f\|_{C_b^2}^p\|X\|^p_{V^p([0,u];\mathbb{R}^d)}(|\Delta Y_0|^p_{\alpha+\theta}+\|\Delta Y\|_{V^{p,\infty}_{\alpha,\alpha+\theta}([0,u])}^p)\\
&~~+|v-u|^{\theta p}\|f\|^p_{C_b^2}\|\Delta X\|^p_{V^p([0,u];\mathbb R^d)}\\
&~~+\|f\|^p_{C_b^2}\|X\|^p_{V^{p}([u,v];\mathbb R^d)}(\|\Delta Y\|^p_{V^p([u,v];\cB_\alpha)}+|\Delta Y_0|^p_{\alpha+\theta}+\|\Delta Y\|^p_{V^{p,\infty}_{\alpha, \alpha+\theta}([0,u])})\\
&~~+\|f\|^p_{C_b^2}\|\Delta X\|^p_{V^{p}([u,v];\mathbb R^d)}\}\big]^{\frac 1p}\\
&\leq  C_{S, \theta, p, K} \big[ |\Delta Y_0|_{\alpha+\theta}+\|f\|_{C_b^2}\|X\|_{V^p([0,t];\mathbb{R}^d)}(|\Delta Y_0|_{\alpha+\theta}+\|\Delta Y\|_{V^{p,\infty}_{\alpha,\alpha+\theta}([0,t])})\\
&~~+\|f\|_{C_b^2} \|\Delta X\|_{V^p([0,t];\mathbb R^d)}\big].
\end{align*}

It remains to show the estimate in $V^\infty$. Applying \eqref{smooth}, Lemma \ref{YOU01} and Lemma \ref{fff2} again, we have 
\begin{align*}
&\|\Delta Y\|_{V^{\infty}([0,t];\cB_{\alpha+\theta})}=\sup\limits_{0\leq s\leq t}|\Delta Y_{s}-\Delta Y_0|_{\alpha+\theta}\\&=\sup\limits_{0\leq s\leq t}|S_{s,0}Y_0+\int_0^sS_{s,r}f(Y_r)\textup{d}X_r-S_{s,0}\tilde Y_0-\int_0^sS_{s,r}f(\tilde Y_r)\textup{d}\tilde X_r-Y_0+\tilde Y_0|_{\alpha+\theta}\\
&\leq \sup\limits_{0\leq s\leq t}
\big(
|(S_{s,0}-\textup{id})\Delta Y_0|_{\alpha+\theta}
+|\int_0^sS_{s,r}(f(Y_r)-f(\tilde Y_r))\textup{d}X_r|_{\alpha+\theta}
+|\int_0^sS_{s,r}f(\tilde Y_r)\textup{d}(\Delta X)_r|_{\alpha+\theta}\big)
\\
&\leq C_S|\Delta Y_0|_{\alpha+\theta}
+\sup\limits_{0\leq s\leq t}\big(|\int_0^sS_{s,r}(f(Y_r)-f(\tilde Y_r))\textup{d}X_r-S_{s,0}(f(Y_0)-f(\tilde Y_0))X_{s}|_{\alpha+\theta}\\
&~~+|S_{s,0}(f(Y_0)-f(\tilde Y_0))X_{s}|_{\alpha+\theta}
+|\int_0^sS_{s,r}f(\tilde Y_r)\textup{d}(\Delta X)_r-S_{s,0}f(\tilde Y_0)\Delta X_s|_{\alpha+\theta}
+|S_{s,0}f(\tilde Y_0)\Delta X_s|_{\alpha+\theta}\big)\\
&\leq C_S|\Delta Y_0|_{\alpha+\theta}
+C_{S, \theta, p, K}\sup\limits_{0\leq s\leq t}\{
\big[\|X\|_{V^{p}([0,s];\mathbb{R}^d)}\|f\|_{C_b^2}\big(\|\Delta Y\|_{V^{p}([0,s];\cB_{\alpha})}+|\Delta Y_0|_{\alpha+\theta}\\
&~~+(|\Delta Y_0|_{\alpha+\theta}+\|\Delta Y\|_{V^\infty([0,s];\cB_{\alpha+\theta})})s^{\theta}\big)
+\|f\|_{C_b^2}|\Delta Y_0|_{\alpha+\theta}\|X\|_{V^{p}([0,s];\mathbb{R}^d)}\big]
+[\|\Delta X\|_{V^p([0,s];\mathbb{R}^d)}\|f\|_{C^2}
]
\}\\
&\leq C_{S, \theta, p, K}[
|\Delta Y_0|_{\alpha+\theta}+\|f\|_{C_b^2}\|X\|_{V^p([0,t];\mathbb{R}^d)}(\|\Delta Y\|_{V^{p,\infty}_{\alpha,\alpha+\theta}([0,t])}+|\Delta Y_0|_{\alpha+\theta})+\|\Delta X\|_{V^p([0,t];\mathbb{R}^d)}\|f\|_{C_b^2}
]. 
\end{align*}
Thus we have 
\begin{align*}
\|\Delta Y \|_{V^{p,\infty}_{\alpha,\alpha+\theta}([0,t])}&\leq  C_{S, \theta, p, K}[
|\Delta Y_0|_{\alpha+\theta}+\|f\|_{C_b^2}\|X\|_{V^p([0,t];\mathbb{R}^d)}(\|\Delta Y\|_{V^{p,\infty}_{\alpha,\alpha+\theta}([0,t])}+|\Delta Y_0|_{\alpha+\theta})\\
&~~+\|\Delta X\|_{V^p([0,t];\mathbb{R}^d)}\|f\|_{C_b^2}
]. 
\end{align*}
Choosing $t$ sufficiently small, such that $C_{S, \theta, p, K, f} \|X\|_{V^p([0,t];\mathbb{R}^d)}<\frac 12$. Thus, we derive 
\begin{align*}
\|\Delta Y \|_{V^{p,\infty}_{\alpha,\alpha+\theta}([0,t])}\leq  C_{S, \theta, p, K, f}[|\Delta Y_0|_{\alpha+\theta}+\|\Delta X\|_{V^p([0,t];\mathbb{R}^d)}].
\end{align*}
\end{proof}
\begin{theorem}[Global  solution of \eqref{main} driven by  c\`adl\`ag paths]\label{cadlagGLOBAL}
Suppose $X\in V^{p}([0,T]; \mathbb{R}^d)$, $p\in[1,2)$,  is  c\`adl\`ag, $X_{0}=0$ and  let $f(\cdot)\in C_b^2(\cB_{\alpha+i\theta}, \cL(\mathbb{R}^d,\mathcal{B}_{\alpha+(i+1)\theta+\varepsilon}))$, $i=0,1,$ $\alpha \in\mathbb{R}$, $\theta\in[\frac 1p,1]$,  $\varepsilon>0$. Then, for $Y_0\in \mathcal{B}_{\alpha+\theta}$, there exists  a unique solution 
$Y\in V^{p, \infty}_{\alpha, \alpha+\theta}([0,T])$ satisfying 
\begin{align}\label{cady}
Y_{t}=S_{t,0}Y_{0}+\int_{0}^{t}S_{t,r}f(Y_{r})\textup{d}X_{r}, \quad t\in [0,T].
\end{align}
Furthermore, let $X,\tilde X\in V^p([0,T];\mathbb R^d)$ be  c\`adl\`ag paths 
with $X_0=\tilde X_0=0$, and suppose that
$\max \{|Y_0|_{\alpha+\theta}, |\tilde Y_0|_{\alpha+\theta}\}<R$, $\max\{\|X\|_{V^{p}([0,T]);\mathbb{R}^d}, \|\tilde X\|_{V^{p}([0,T]);\mathbb{R}^d}\}<L$.  Let $Y$ and $\tilde Y$   be the corresponding solutions of
\eqref{main} on $[0,T]$,  with initial values $Y_0$ and
$\tilde Y_0$,  driven by $X$, $\tilde X$,  respectively. Then 
\begin{align*}
\|Y-\tilde Y\|_{V^{p,\infty}_{\alpha, \alpha+\theta}([0,T])}\leq M(2C (L\vee 1))^{M+1} [|Y_0-\tilde Y_0|_{\alpha+\theta}+\|X-\tilde X\|_{V^p([0,T];\mathbb{R}^d)}].
\end{align*}
where $M$ is a constant bounded by $C_L^{p}+1$, $C_L$ is a constant depending on $S, \theta, p, f, R, L$. 
\end{theorem}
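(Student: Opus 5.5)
We build the global solution by concatenating local solutions obtained from Lemma \ref{locyoung}, using Remark \ref{localpp} to restart across jumps. Since $X$ is càdlàg, the only obstruction to restarting at a time $\tau$ is a possible jump $\Delta^+_\tau X$ (for càdlàg paths this is zero, but left jumps matter for the next restart point only through the value $Y_\tau$, which is already determined). The key point is that Lemma \ref{locyoung} only requires initial data in $\cB_{\alpha+\theta}$, and a solution in $V^{p,\infty}_{\alpha,\alpha+\theta}$ takes values in $\cB_{\alpha+\theta}$ at every time, including left limits and jump values. Indeed, $Y_\tau = Y_{\tau-}+f(Y_{\tau-})\Delta^-_\tau X$ by Lemma \ref{llrr}, and $f$ maps $\cB_{\alpha}$ into $\cB_{\alpha+\theta+\varepsilon}$. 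So the restart is always possible. This is the contrast with Remark \ref{localpp}.

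\textbf{Step 1: uniform local step.} The local time $\rho$ in Lemma \ref{locyoung} is determined by the smallness condition $C(\|X\|^{p/p'}_{V^p([\tau,\tau+\rho])}\,Osc(X;[\tau,\tau+\rho])^{(p'-p)/p'}+\rho^\theta)<1$ and by the radius $(C_S+2)(1+|Y_\tau|_{\alpha+\theta})$. The radius enters the constants only through $C_K$ in Lemma \ref{fff2}, and since $f\in C^2_b$ the bounds on $f(Y)$ are in fact uniform. Hence the smallness threshold $\delta$ depends only on $S,\theta,p,f$ and not on $|Y_\tau|_{\alpha+\theta}$.

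\textbf{Step 2: partition of $[0,T]$.} Using super-additivity of $\omega_{X,p}$ together with Lemma \ref{small-OSC} (or Lemma \ref{partition}), we choose a partition $0=\tau_0<\dots<\tau_M=T$ such that on each half-open piece $[\tau_k,\tau_{k+1})$ one has $\|X\|_{V^p}<\delta$. Any jump of $X$ at $\tau_{k+1}$ is absorbed in the next step via the restart formula. Counting pieces by super-additivity gives $M\le \|X\|^p_{V^p([0,T])}/\delta^p+1\le C_L^p+1$.

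\textbf{Step 3: gluing and uniqueness.} On each $[\tau_k,\tau_{k+1}]$ we solve with initial value $Y_{\tau_k}$, using the propagator identity \ref{P2} and multiplicativity of the convolution (Theorem \ref{affyoung01}: $\delta^S\mathcal I_{t,s}$ is additive along the flow). The concatenation then satisfies \eqref{cady} on $[0,T]$. Its $V^p$ norm in $\cB_\alpha$ is controlled by summing the local bounds \eqref{loc-bound} over the $M$ pieces. The $V^\infty$ norm in $\cB_{\alpha+\theta}$ grows at most geometrically in $k$, and the number of pieces is finite, so the solution lies in $V^{p,\infty}_{\alpha,\alpha+\theta}([0,T])$. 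Uniqueness follows piecewise from the local uniqueness, by induction on $k$.

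\textbf{Step 4: stability.} We apply Lemma \ref{loc-sta} on each piece of a common partition adapted to both $X$ and $\tilde X$ (taking the union of the two partitions, which keeps $M\lesssim C_L^p+1$). This gives
\[
e_{k+1}\le C(L\vee1)\,(e_k+\|X-\tilde X\|_{V^p}),
\qquad e_k:=\|Y-\tilde Y\|_{V^{p,\infty}_{\alpha,\alpha+\theta}([0,\tau_k])}+|Y_{\tau_k}-\tilde Y_{\tau_k}|_{\alpha+\theta},
\]
where the jump contribution $f(Y_{\tau-})\Delta^-X-f(\tilde Y_{\tau-})\Delta^-\tilde X$ is bounded by $C(L\vee1)(e_k+\|X-\tilde X\|_{V^p})$. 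Iterating $M$ times and summing over the pieces yields the factor $M(2C(L\vee1))^{M+1}$.

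\textbf{Main obstacle.} The hardest step is Step 1 combined with the restart. One must verify that the local constants are genuinely independent of the size of $|Y_{\tau_k}|_{\alpha+\theta}$, so that the smallness condition on the pieces is uniform. One must also check that the jump values $Y_{\tau_k}$ stay in $\cB_{\alpha+\theta}$ with controlled norm. We would first try to exploit the boundedness of $f$ and $Df$ in $C^2_b$, together with the gain of $\theta+\varepsilon$ regularity, to obtain both facts.
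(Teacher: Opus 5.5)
The architecture of your proposal is the same as the paper's: restart on the half-open pieces of a partition from Lemma \ref{partition} with new datum $Y_{\tau-}+f(Y_{\tau-})\Delta^-_\tau X\in\cB_{\alpha+\theta}$, uniqueness by induction, and stability via Lemma \ref{loc-sta} on a common refinement, adding the jump term and iterating.

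\textbf{The gap is in Step 1, and Steps 2--4 rest on it.} The threshold is \emph{not} independent of $|Y_\tau|_{\alpha+\theta}$, and $f\in C^2_b$ does not make it so.
\begin{itemize}
\item The constant $C_{p,K}$ in Lemma \ref{fff2} comes from the second-order Taylor term. Write $f(Y^1)-f(Y^2)=A(Y^1-Y^2)$ with $A_t=\int_0^1 Df(Y^2_t+r(Y^1_t-Y^2_t))\,\textup{d}r$. Then $|A_t-A_s|\le\|D^2f\|_\infty(|Y^1_{t,s}|_\alpha+|Y^2_{t,s}|_\alpha)$.
\item So the $V^{p'}$-Lipschitz constant of $Y\mapsto f(Y)$ is of order $\|f\|_{C^2_b}(1+\|Y^1\|_{V^{p'}([\tau,\tau+\rho];\cB_\alpha)}+\|Y^2\|_{V^{p'}([\tau,\tau+\rho];\cB_\alpha)})$. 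Boundedness of $D^2f$ leaves the factor $\|Y^i\|_{V^{p'}}$ in place.
\item On the ball of Lemma \ref{locyoung} this factor is of size $1+|Y_\tau|_{\alpha+\theta}$. No self-mapping ball removes it, because $S_{\cdot,\tau}Y_\tau$ alone can have $V^{p'}([\tau,\tau+\rho];\cB_\alpha)$-norm as large as a constant times $\rho^\theta|Y_\tau|_{\alpha+\theta}$. The dependence can at best be moved into the time step.
\item Hence the contraction needs $C(1+|Y_\tau|_{\alpha+\theta})(\|X\|_{V^{p'}([\tau,\tau+\rho))}+\rho^\theta)<1$.
\end{itemize}
With this, your count $M\le\|X\|^p_{V^p}/\delta^p+1$ and the geometric growth in Step 3 become circular: the sizes $|Y_{\tau_k}|_{\alpha+\theta}$ dictate the partition, and the partition dictates how far those sizes grow. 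Also, because of the $\rho^\theta$ term, the pieces must be short in time, not only small in $p$-variation.

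\textbf{The missing step is to separate an a priori bound from the contraction.}
\begin{itemize}
\item The self-mapping estimates of Lemma \ref{locyoung} are linear with constants depending only on $S,\theta,p,f$. This is because $\|f(Y)\|_{V^{p'}([a,b);\cB_{\alpha+\theta+\varepsilon})}\le\|f\|_{C^2_b}\|Y\|_{V^{p'}([a,b);\cB_\alpha)}$ and $f$ is bounded.
\item Hence any solution on a half-open piece $[a,b)$ with $C(\|X\|_{V^{p'}([a,b))}+(b-a)^\theta)\le\frac12$ satisfies $\|Y\|_{V^{p',\infty}_{\alpha,\alpha+\theta}([a,b))}\le C'(1+|Y_a|_{\alpha+\theta})$.
\item The jump at $b$ adds at most $\lesssim\|f\|_{C^2_b}|\Delta^-_bX|\lesssim\|f\|_{C^2_b}L$ in $\cB_{\alpha+\theta}$.
\item Over a fixed partition with $N_0=N_0(S,\theta,p,f,L,T)$ such pieces, this gives $\sup_t|Y_t|_{\alpha+\theta}\le K(R,L)$ for every solution on every $[0,t)$.
\end{itemize}
Only then fix the contraction threshold $\delta_1(K)$ and time step $\rho_1(K)$, which are now uniform over all restarts, and build the existence partition from them. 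The same $K$ fixes the constants and the threshold of Lemma \ref{loc-sta} in Step 4. This is exactly why $C_L$ in the statement depends on $R$, a dependence your Step 1 would have erased.

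The paper's proof simply posits a threshold $\delta_1$ without tracking how it depends on the restart data. So you located the delicate point correctly, but the justification you give for it is false.
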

\begin{proof}
According to the local solution established in  Lemma \ref{locyoung},  given initial value $y\in \cB_{\alpha+\theta}$,  there exists a constant $\delta_{1}$, such that if
$\|X\|_{V^{p}([u,v);\mathbb{R}^d)}<\delta_{1},$
then there exists a unique solution $Y\in V^{p,\infty}_{\alpha,\alpha+\theta}([u,v))$ to 
\begin{align}\label{eqy01}
Y_{t}=S_{t,u}y+\int_{u}^{t}S_{t,r}f(Y_{r})\textup{d}X_{r}, \quad t\in [u,v).
\end{align}
In view of Lemma \ref{partition}, there exists a partition $\mathcal{P}:=\{0=t_{0}<t_{1}<\dots<t_{N}=T\}$, such that for any interval $(t_{i},t_{i+1})$, $\omega_{X,p}(t_{i+1}-,t_{i}+)<\delta_{1}^{p}$. The  right-continuity ensures that,  given initial data $y_{i}\in \cB_{\alpha+\theta}$, $i=0,\dots, N-1$, one can solve \eqref{eqy01} on $[t_{i}, t_{i+1})$.  Denote $y_0:=Y_0$ and construct the initial data
\begin{align}\label{gouzaoini}
y_{i+1}:=Y^{y_{i}}_{t_{i+1-}}+f(Y^{y_{i}}_{t_{i+1-}})\Delta_{t_{i+1}}^- X, 
\end{align}
where $Y_{t_{i+1-}}
\in \cB_{\alpha+\theta}$, $f(Y^{y_i}_{t_{i+1-}})\Delta_{t_{i+1}}^- X \in \cB_{\alpha+2\theta+\varepsilon}\subset \cB_{\alpha+\theta}$. Thus $y_{i+1}\in \cB_{\alpha+\theta}$, which allows us to solve the equation on the next interval $[t_{i+1}, t_{i+2})$. Iterating this procedure, we obtain a global solution  defined by  $Y_t:=Y_t^{y_{i}}$, $t\in [t_i,t_{i+1})$,  $i=0,\dots, N-1$, $Y_T:=y_N$. By construction,  $Y$ satisfies \eqref{cady}. The uniqueness follows by construction.

Next, we prove the global stability of the solution. 
Set $\Delta Y:=Y-\tilde Y$, $\Delta X:=X-\tilde X$. 
As shown in Lemma \ref{loc-sta}, there exists a constant $\delta_2$ depending on $S, \theta, p, f, R$, such that $\|X\|_{V^{p}([u,v);\mathbb{R}^d)} \vee  \|\tilde X\|_{V^{p}([u,v);\mathbb{R}^d)}<\delta_2<1$ 
and the local Lipschitz estimate holds
\begin{align}\label{loc-2Y}
\|Y-\tilde Y\|_{V^{p,\infty}_{\alpha,\alpha+\theta}([u,v))}\leq C[|\Delta Y_u|_{\alpha+\theta}+\|\Delta X\|_{V^p([u,v);\mathbb{R}^d)}], 
\end{align}
where $C$ depends on $S, \theta, p, f, R$.
Since $\omega_{X, p}(t,0)$, $\omega_{\tilde X, p}(t, 0)$ are increasing and bounded by $L^p$,  one may 
construct  a partition separately for $X$ and $\tilde X$ with at most $[\frac{L^p}{\delta_2^p}]+2$ endpoints. Then let $\mathcal{Q}:=\{0=s_0<s_1<\cdots<s_M=T\}$ be their common refinement with  $M<2\frac{L
^p}{\delta_2 ^p}+1$, such that 
$\omega_{X,p}(s_{i+1}-, s_i+)\vee \omega_{\tilde X, p}(s_{i+1}-, s_i+)<\delta_2 ^p$. On every time interval $[s_i, s_{i+1})$, similar to the estimate in \eqref{loc-2Y}, we have 
\begin{align}\label{loc-2Y'}
\|Y-\tilde Y\|_{V^{p,\infty}_{\alpha,\alpha+\theta}([s_i,s_{i+1}))}\leq C[|\Delta Y_{s_i}|_{\alpha+\theta}+\|\Delta X\|_{V^p([s_i,s_{i+1});\mathbb{R}^d)}].
\end{align}
Since 
$
\|g(\cdot)\|_{V^p([u,v];
\cB_\alpha)}\leq \|g(\cdot)\|_{V^{p}([u,v);\cB_\alpha)}+|g(v)-g(v-)|_\alpha
$
and 
\begin{align*}
\|g(\cdot)\|_{V^\infty([u,v];\cB_{\alpha+\theta})}&=\max\{\|g(\cdot)\|_{V^\infty([u,v);\cB_{\alpha+\theta})}, |g(v)-g(u)|_{\alpha+\theta}\}\\
&\leq
\|g\|_{V^\infty([u,v);\mathcal B_{\alpha+\theta})}
+
|g(v)-g(v-)|_{\alpha+\theta},
\end{align*}
we have 
\begin{align}\label{buqi}
\|g(\cdot)\|_{V^{p,\infty}_{\alpha, \alpha+\theta}([u,v])}\lesssim \|g(\cdot)\|_{V^{p,\infty}_{\alpha, \alpha+\theta}([u,v))}+|g(v)-g(v-)|_{\alpha+\theta}.
\end{align}
In view of \eqref{gouzaoini}, we have the equality
\begin{align}
&|Y_{s_{i+1}}-Y_{s_{i+1}-}-(\tilde Y_{s_{i+1}}-\tilde Y_{s_{i+1}-})|_{\alpha+\theta}\nonumber \\
&=|f(Y_{s_{i+1}-})\Delta_{s_{i+1}}^{-}X- f(\tilde Y_{s_{i+1}-})\Delta_{s_{i+1}}^-\tilde X|_{\alpha+\theta} \nonumber\\
&\leq \|f\|_{C_b^2} |\Delta Y_{s_{i+1}-}|_\alpha |\Delta_{s_{i+1}}^-X|+\|f(\tilde Y_{s_{i+1}-})\|_{\cL(\mathbb{R}^d,\cB_{\alpha+\theta})}|\Delta_{s_{i+1}}^-(\Delta X)|\nonumber \\
&\leq \|f\|_{C_b^2} (|\Delta Y_{s_{i+1}-}|_\alpha  \|X\|_{V^p([s_i, s_{i+1}];\mathbb{R}^d)}+ \|\Delta X\|_{V^p([s_i, s_{i+1}];\mathbb{R}^d)})\nonumber \\
&\leq C\{[|\Delta Y_{s_i}|_{\alpha+\theta}+\|\Delta Y\|_{V^{p,\infty}_{\alpha,\alpha+\theta}([s_i, s_{i+1}))}]L+\|\Delta X\|_{V^p([s_i, s_{i+1}];\mathbb{R}^d)})\}\nonumber \\
&\leq C(L\vee 1)[|\Delta Y_{s_i}|_{\alpha+\theta}+\|\Delta X\|_{V^p([s_i, s_{i+1}];\mathbb{R}^d)})].\label{buqi0}
\end{align}
Using the estimates  \eqref{buqi} and \eqref{buqi0}, we extend the local estimate  \eqref{loc-2Y'} to the  closed interval $[s_i, s_{i+1}]$, 
\begin{align}\label{local-bound-D}
\|\Delta Y\|_{V^{p,\infty}_{\alpha,\alpha+\theta}([s_i, s_{i+1}])}\leq  C(L\vee 1)[|\Delta Y_{s_i}|_{\alpha+\theta}+\|\Delta X\|_{V^p([s_i, s_{i+1}];\mathbb{R}^d)})],
\end{align}
which implies 
\begin{align*}
|\Delta Y_{s_{i+1}}|_{\alpha+\theta}&\leq |\Delta Y_{s_i}|_{\alpha+\theta}+\|\Delta Y\|_{V^{p,\infty}_{\alpha, \alpha+\theta}([s_i, s_{i+1}])}\\
&\leq C(L\vee 1)[|\Delta Y_{s_i}|_{\alpha+\theta}+\|\Delta X\|_{V^p([s_i, s_{i+1}];\mathbb{R}^d)}]. 
\end{align*}
By iterating, we have 
\begin{align*}
|\Delta Y_{s_{i+1}}|^p_{\alpha+\theta}&\leq (2C(L\vee 1))^p [|\Delta Y_{s_i}|^p_{\alpha+\theta}+\|\Delta X\|^p_{V^p([s_i,s_{i+1}];\mathbb{R}^d)}]\\
&\leq (2C(L\vee 1))^p\big[ (2C(L\vee 1))^p(|\Delta Y_{s_{i-1}}|^p_{\alpha+\theta}+\|\Delta X\|^p_{V^{p}([s_{i-1}, s_i]; \mathbb R^d)})+\|\Delta X\|^p_{V^p([s_i,s_{i+1}];\mathbb{R}^d)}\big]\\
&\leq (2C(L\vee 1))^{pM}[|\Delta Y_0|^p_{\alpha+\theta}+\sum_{i=0}^{M-1}\|\Delta X\|^p_{V^p([s_i,s_{i+1}];\mathbb{R}^d)}]
\end{align*}
which yields
\begin{align}\label{loc-Ys-D}
|\Delta Y_{s_{i+1}}|_{\alpha+\theta}\leq (2C(L\vee 1))^{M}[|\Delta Y_0|_{\alpha+\theta}+\|\Delta X\|_{V^{p}([0,T]; \mathbb{R}^d)}].
\end{align}
Inserting  \eqref{loc-Ys-D} into  \eqref{local-bound-D}, we have 
\begin{align*}
\|\Delta Y\|_{V^{p, \infty}_{\alpha, \alpha+\theta}([s_i, s_{i+1}])}\leq (2C(L\vee 1))^{M+1}[|\Delta Y_0|_{\alpha+\theta}+\|\Delta X\|_{V^p([0,T];\mathbb{R}^d)}].
\end{align*}
Using the facts that 
$
\|\cdot\|_{V^{p}([0,T];\cB_\alpha)}\leq \sum\limits_{i=0}^{M-1}\|\cdot\|_{V^{p}([s_i,s_{i+1}];\cB_{\alpha})}
$ and $\|\cdot\|_{V^\infty([0,T];\cB_{\alpha+\theta})}\leq \sum\limits_{i=0}^{M-1}\|\cdot\|_{V^\infty([s_i,s_{i+1}];\cB_{\alpha+\theta})}$, we have 
$\|\cdot\|_{V^{p,\infty}_{\alpha,\alpha+\theta}([0,T])}\leq \sum\limits_{i=0}^{M-1}\|\cdot\|_{V^{p,\infty}_{\alpha,\alpha+\theta}([s_i, s_{i+1}])}$ and 
the required bound is obtained. 
\end{proof}

\begin{theorem}[Global solution of \eqref{main} driven by jump processes]\label{Glo-young}
Suppose $X\in V^{p}([0,T]; \mathbb{R}^d)$, $p\in[1,2)$, $X_{0}=0$ and   let $f(\cdot)\in C_b^2(\cB_{\alpha+i\theta}, \cL(\mathbb{R}^d,\mathcal{B}_{\alpha+(i+1)\theta+\varepsilon}))$, $i=0,1,$ $\alpha \in\mathbb{R}$, $\theta\in [\frac{1}{p},1]$,   $\varepsilon>0$.  Then, for $Y_0\in \mathcal{B}_{\alpha+\theta}$, there exists a unique solution 
$Y\in V^{p, \infty}_{\alpha, \alpha+\theta}([0,T])$ satisfying 
\begin{align}\label{ppY}
Y_{t}=S_{t,0}Y_{0}+\int_{0}^{t}S_{t,r}f(Y_{r})\textup{d}X_{r}, \quad t\in [0,T].
\end{align}
\end{theorem}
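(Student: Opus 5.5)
The plan is to reduce the general (merely finite $p$-variation, not necessarily c\`adl\`ag) case to Theorem \ref{cadlagGLOBAL} and Lemma \ref{locyoung}, by handling separately the two one-sided jumps $\Delta_t^-X$ and $\Delta_t^+X$ at each point. The construction mimics the proof of Theorem \ref{cadlagGLOBAL}: use Lemma \ref{partition} (with $\omega_{X,p}$) to choose a partition $\mathcal P=\{0=t_0<\dots<t_N=T\}$ such that $\omega_{X,p}(t_{i+1}-,t_i+)<\delta_1^p$, where $\delta_1$ is the smallness threshold from Lemma \ref{locyoung} (uniform for initial data in a bounded set of $\cB_{\alpha+\theta}$; if the threshold depends on $|y|_{\alpha+\theta}$, I would first derive an a priori bound from \eqref{loc-bound} iterated over the finitely many intervals, so that all restart data stay in a fixed ball and a single $\delta_1$ works).

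On each open interval $(t_i,t_{i+1})$ I solve in two steps. First, at the left endpoint, I use Remark \ref{localpp}: replace $X$ on $[t_i,t_{i+1})$ by $\tilde X$ with $\tilde X_{t_i}=X_{t_i+}$, which is right-continuous at $t_i$, and start from the datum $y_i^+:=y_i+f(y_i)\Delta^+_{t_i}X$. This lies in $\cB_{\alpha+\theta}$ because $f$ maps into $\cB_{\alpha+2\theta+\varepsilon}\hookrightarrow\cB_{\alpha+\theta}$. Lemma \ref{locyoung} (started at $t_i$ instead of $0$) then gives a solution on $[t_i,t_i+\rho)$. Since $\omega_{X,p}(t_{i+1}-,t_i+)$ is small, the variation of $\tilde X$ on $[t_i,v)$ is small for all $v<t_{i+1}$. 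I would therefore argue that a single fixed-point step covers the whole half-open interval $[t_i,t_{i+1})$. The smallness condition in Lemma \ref{locyoung} is phrased through $\|X\|_{V^p}$ and $Osc$, both of which are controlled by $\delta_1$; the $\rho^\theta$ term is absorbed by possibly refining $\mathcal P$ further so that each mesh is short. The equivalence in Remark \ref{localpp} shows that this yields a solution of the original equation on $[t_i,t_{i+1})$.

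Second, at the right endpoint, Lemma \ref{llrr}(4) guarantees that the left limit $Y_{t_{i+1}-}$ exists in $\cB_{\alpha+\theta}$. This needs the extra $\varepsilon$ of regularity, which is exactly what the hypothesis $f\in C_b^2(\cdot,\cB_{\alpha+(i+1)\theta+\varepsilon})$ supplies. I then set
\[
y_{i+1}:=Y_{t_{i+1}-}+f(Y_{t_{i+1}-})\Delta^-_{t_{i+1}}X\in\cB_{\alpha+\theta}.
\]
Lemma \ref{conjump} together with Lemma \ref{llrr}(3),(4) shows that defining $Y_{t_{i+1}}:=y_{i+1}$ is consistent with the mild formula \eqref{ppY} at $t_{i+1}$. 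The datum $y_{i+1}$ then receives the right jump $\Delta^+_{t_{i+1}}X$ at the start of the next interval.

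To conclude, I glue the pieces. The identity $\int_0^t=\int_{t_i}^t+S_{t,t_i}\int_0^{t_i}$ follows from multiplicativity of $\varphi$ in Theorem \ref{affyoung01}, and with it I check that the concatenated $Y$ satisfies \eqref{ppY} on $[0,T]$. Membership $Y\in V^{p,\infty}_{\alpha,\alpha+\theta}([0,T])$ follows from \eqref{loc-bound} on each piece, the jump estimate \eqref{buqi}, and subadditivity over finitely many intervals. Uniqueness follows by induction over the intervals: any solution must agree with the constructed one on $[t_i,t_{i+1})$ by the uniqueness in Lemma \ref{locyoung} via Remark \ref{localpp}, and its value at $t_{i+1}$ is forced by the jump formula from Lemmas \ref{conjump} and \ref{llrr}.

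The main obstacle I expect is the first step: making one application of the local result cover the entire interval $[t_i,t_{i+1})$ rather than only a short $[t_i,t_i+\rho)$. If this fails, I would fall back on a continuation argument inside $(t_i,t_{i+1})$, using that $X$ has only small jumps there, so restarts at interior points cost only small increments. I would also need to track carefully that the fixed-point constants do not depend on the position of the restart time.
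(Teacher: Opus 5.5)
Your proposal is correct and is essentially the paper's proof. You choose the partition from Lemma \ref{partition}, absorb the right jump at each $t_i$ through the datum $y_i+f(y_i)\Delta^+_{t_i}X$ and the modified driver $\tilde X$ as in Remark \ref{localpp}, solve on $[t_i,t_{i+1})$ with Lemma \ref{locyoung}, and restart from $Y_{t_{i+1}-}+f(Y_{t_{i+1}-})\Delta^-_{t_{i+1}}X$ via Lemma \ref{llrr}(4); these data are exactly the paper's $y_i$ and $\tilde y_{i+1}$. The points you flag are left implicit in the paper, namely a smallness threshold independent of the data, covering all of $[t_i,t_{i+1})$ in one step, gluing, and uniqueness. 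Your handling of them is sound, with one caveat: the a priori bound you use to fix $\delta_1$ must come from a partition whose threshold does not itself depend on that bound, otherwise the argument is circular. The linear growth of the self-mapping estimates in Lemma \ref{locyoung} makes such a data-independent threshold available.
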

\begin{proof}
Firstly, we prove the local well-posedness.  Define 
$$
Y_{0+}:= Y_0+f(Y_0)\Delta_0^+ X
$$
and 
$$
\tilde X_t:=\left\{
\begin{array}{ll}
X_{0+}, & t=0,  \\
X_t,&  t>0.
\end{array}
\right.
$$ 
One readily checks that $\tilde X$ is right-continuous at $t=0$. Then using  Lemma \ref{locyoung},  we claim that there exists a unique solution to  the following equation driven by $\tilde X$ on  some $[0,t_0)$, 
 \begin{align*}
\tilde Y_t =S_{t,0} Y_{0+}+ \int_0^t S_{t,r} f(\tilde Y_r)\textup{d}\tilde X_r, 
\end{align*}
which defines a solution to \eqref{ppY} by $Y_t=\tilde Y_t$,   $t\in [0,t_0]$, 
in view of the arguments in Remark \ref{localpp}. 

Now, we extend the local solution to a global one  using the approach in Lemma  \ref{cadlagGLOBAL}. By Lemma \ref{partition}, there exists a partition $\mathcal{P}:=\{0=t_{0}<t_{1}<\dots<t_{N}=T\}$, such that for any interval $(t_{i},t_{i+1})$, $i=0,\cdots, N-1$,  $\omega_{X,p}(t_{i+1}-,t_{i}+)$ is controlled by a small constant, which allows us to derive the local solution to 
\begin{align}\label{theY}
\tilde Y_t=  S_{t, t_i} y_i+\int_{t_i}^{t} S_{t, r}f(\tilde Y_r)\textup{d}\tilde X_r,  
\end{align}
with 
$$
\tilde X_t:=\left\{
\begin{array}{ll}
X_{t_{i}+}, & t=t_i, \\
X_t,&  t_i<t<t_{i+1}. 
\end{array}
\right.
$$
It remains to construct the initial values $y_i$  in \eqref{theY}.  
Denote  the solution to \eqref{theY} with initial value $y_i$ by $\tilde Y_t^{y_i}$. From (4) of Lemma \ref{llrr}, define 
\begin{align*} 
\tilde y_{i+1}:= \tilde Y_{t_{i+1}-}^{y_i}+f(\tilde Y_{t_{i+1}-}^{y_i}) \Delta_{t_{i+1}}^- X. 
\end{align*} 
Then define  
\begin{align*}
y_{i+1}:= \tilde y_{i+1}+f( \tilde y_{i+1})\Delta_{t_{i+1}}^+ X
\end{align*}
as the initial value on the next time interval $[t_{i+1}, t_{i+2})$.  Particularly, $y_0=Y_{0+}$. 

Finally, we obtain global well-posedness by defining $Y$ on each  interval $[t_{i}, t_{i+1})$  as
$$
Y_t:=\left\{
\begin{array}{ll}
\tilde y_i, & t=t_i, \\
\tilde Y_t,&  t_i
<t<t_{i+1}. 
\end{array}
\right.
$$
\end{proof}

\subsection{Comparison with H\"older continuous paths}

 The multiplicative sewing lemma   with jumps  in  Theorem \ref{thm:mul01} differs from the continuous case \cite[Theorem 3.4]{GHN21} in several aspects. 
First, the assumptions on the multiplicative defect are different.
 In the continuous setting, the multiplicative defect is usually controlled by a single control $\omega(t,s)^z$ with $z>1$.
In the discontinuous setting, small intervals may still contain jumps. Hence, the  multiplicative defect is instead estimated by 
$
\sum\limits_{j=1}^N \omega_{1,j}^{\alpha_{1,j}}(t,u)\omega_{2,j}^{\alpha_{2,j}}(u,s)$ with 
$\alpha_{1,j}+\alpha_{2,j}>1. 
$ In particular, $N=2$ corresponds to the Young case in Theorem \ref{affyoung01}, while $N=4$ corresponds to the rough case, see Theorem \ref{affrough01} below.

Second, the two results use different notions of convergence. In the continuous setting, continuity of the control implies that a small mesh size gives small local increments. Thus, convergence can be defined directly in the mesh sense. In the discontinuous setting, this obviously fails due to the jumps. Therefore, the limit is first constructed in the refinement sense. Under suitable one-sided continuity conditions \textcolor{black}{of the driven paths}, this can be strengthened to the standard mesh convergence.

Moreover, there are also differences in the affine sewing lemma Theorem \ref{affyoung01} and \cite[Theorem 4.1]{GHN21}. Regarding the sewing estimate, in the continuous setting, the smallness of the time interval can be used to gain additional spatial regularity, which yields estimates in spaces of higher regularity. In the discontinuous setting, such a gain is in general not available, since the control increments may remain non-small on arbitrarily short intervals due to jumps.
This is also one of the reasons why our assumptions on the nonlinearity $f$ differ from those in the continuous case. In our setting, $f$ is required to improve spatial regularity, whereas in the continuous case $f$ is typically allowed to lose regularity.  In addition,  the initial value is also required have higher space regularity.
\section{\texorpdfstring{$p$}-variation rough paths} \label{sec:sec:prough}
The aim of this section is to study $p$-rough paths, $p\in[2,3)$. We fisrt extend the affine sewing lemma in Theorem \ref{affyoung01}  from the Young case  to the rough case, and then construct a well-defined rough integral with jumps. At last, we establish local existence and uniqueness of mild solutions to rough PDEs with jumps.
\begin{theorem}[\textbf{Affine Sewing Lemma with Jumps and Rough}] \label{affrough01}
	Consider $(S_{t,s})_{(t, s)\in \Delta_2}$ as a propagator  in the sense of
Definition~\ref{fullrange}. Fix $\gamma \in \mathbb{R}$, $0\leq \theta_1<\theta_2\leq 1$.    Let  $\xi:=\xi^{1}+\xi^{2}:\Delta_{2}\rightarrow  \mathcal{B}_{\gamma}+\mathcal{B}_{\gamma-\theta_{1}}$,  $\delta\xi: \Delta_{3}\rightarrow  \mathcal{B}_{\gamma-\theta_{2}}$ and assume that there exist controls $\omega_1, \omega_2, \omega_{1,j}, \omega_{2,j}$ and positive constants $\alpha_1, \alpha_2, \alpha_{1,j}, \alpha_{2,j}$ with $\min\limits_{j=1,2} \{\alpha_{1}+\theta_{2},  \alpha_{2}+\theta_{2}-\theta_{1}, \alpha_{1,j}+\alpha_{2,j}\}>1$, such that for every $(t,u,s)\in \Delta_3$, 
    \begin{align}
&|\xi^{1}_{t,s}|_{\gamma}\leq \omega_{1}^{\alpha_{1}}(t,s), \quad |\xi^{2}_{t,s}|_{\gamma-\theta_{1}}\leq \omega_{2}^{\alpha_{2}}(t,s), \label{xixi}\\
&|\delta \xi_{t,u,s}|_{\gamma-\theta_{2}}\leq  \sum_{j=1}^{2} \omega_{1,j}^{\alpha_{1,j}}(t, u) \omega_{2,j}^{\alpha_{2,j}}(u,s), \label{subcon}
    \end{align}
    Then for every $t\in [0,T]$, the limit
	\begin{equation} \label{app02}
		\mathcal{I}_t(\xi):= RRS- \lim_{|\mathcal{P}|\to 0} \sum_{[u,v]\in \mathcal{P}}S_{t,u}\xi_{v,u}
	\end{equation}
exists uniquely in $\mathcal{B}_{\gamma-\theta_{2}}$, over arbitrary partitions $\mathcal{P}$ of $[0,t]$.

Morover, for every $0\leq s\leq t\leq T$, 
\begin{equation}\label{rough-bd}
\begin{split}
		&| \delta^{S}\mathcal{I}_{t,s}(\xi)-S_{t,s}\xi_{t,s}|_{\gamma-\theta_{2}}\\
		\leq&~~ C [\sum\limits_{j=1}^{2}\omega_{1,j}^{\alpha_{1,j}}(t,s+)\omega_{2,j}^{\alpha_{2,j}}(t-,s)+\omega_1^{\alpha_1}(t,s+)|t-s|^{\theta_2}+\omega_2^{\alpha_{2}}(t,s+)|t-s|^{\theta_2-\theta_1}],
        \end{split}
    \end{equation}
      where $C$ depends on $S$, $\min\limits_{j=1,2} \{\alpha_{1}+\theta_{2},  \alpha_{2}+\theta_{2}-\theta_{1}, \alpha_{1,j}+\alpha_{2,j}\}.$

    Furthermore, if $\omega_{1,j}$ are right-continuous in $s$ or $\omega_{2,j}$ are left-continuous in $t$, then the convergence holds in the MRS sense.
\end{theorem}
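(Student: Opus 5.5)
The plan is to copy the proof of Theorem \ref{affyoung01}, with the monoid now taken as $\mathcal{M}=\mathcal{M}(\gamma-\theta_2)$. The goal is to show that $\mu_{t,s}:=(S_{t,s},S_{t,s}\xi_{t,s})$ satisfies the hypotheses of Theorem \ref{thm:mul01} with $N=4$. Step 1, the submultiplicative monoid structure on $\mathcal{M}(\gamma-\theta_2)$ with $|\mu|:=1+d(\mu,(\textup{id},0))$, is word for word the same as before.

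For almost-multiplicativity we again have the identity
\[
\mu_{t,s}-\mu_{t,u}\circ\mu_{u,s}=\bigl(0,\,S_{t,s}\delta\xi_{t,u,s}+S_{t,u}(S_{u,s}-\textup{id})\xi_{t,u}\bigr).
\]
The first term is bounded in $\mathcal{B}_{\gamma-\theta_2}$ directly by \eqref{subcon}, which gives two product terms. For the second term we split $\xi_{t,u}=\xi^1_{t,u}+\xi^2_{t,u}$ and use the smoothing estimate \eqref{smooth} with $\sigma=\theta_2$ on $\xi^1$ (from $\mathcal{B}_\gamma$) and with $\sigma=\theta_2-\theta_1$ on $\xi^2$ (from $\mathcal{B}_{\gamma-\theta_1}$). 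Together with \eqref{xixi} this yields
\[
d(\mu_{t,s},\mu_{t,u}\circ\mu_{u,s})\lesssim\sum_{j=1}^2\omega_{1,j}^{\alpha_{1,j}}(t,u)\omega_{2,j}^{\alpha_{2,j}}(u,s)+\omega_1^{\alpha_1}(t,u)|u-s|^{\theta_2}+\omega_2^{\alpha_2}(t,u)|u-s|^{\theta_2-\theta_1}.
\]
This is a sum of four products of controls; $(t,s)\mapsto|t-s|$ is a control by Remark \ref{reg-control}. The exponent condition $\min\{\alpha_1+\theta_2,\alpha_2+\theta_2-\theta_1,\alpha_{1,j}+\alpha_{2,j}\}>1$ is exactly what Theorem \ref{thm:mul01} requires.

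Step 3 is the growth bound. Take the Riemann sums $\Xi^{\mathcal P}_{t,s}=\sum S_{t,t_i}\xi_{t_{i+1},t_i}$. Removing a point $b$ between consecutive points $a<b<c$ changes the sum by $-S_{t,b}[S_{b,a}\delta\xi_{c,b,a}+(S_{b,a}-\textup{id})\xi_{c,b}]$, which is bounded by the same four-term expression evaluated at $(c,b,a)$. Fix $1<\beta<$ the above minimum and set
\[
\Omega(v,u):=\sum_j\omega_{1,j}(v,u+)^{\alpha_{1,j}/\beta}\omega_{2,j}(v-,u)^{\alpha_{2,j}/\beta}+\omega_1(v,u+)^{\alpha_1/\beta}|v-u|^{\theta_2/\beta}+\omega_2(v,u+)^{\alpha_2/\beta}|v-u|^{(\theta_2-\theta_1)/\beta}.
\]
By Lemma \ref{product-control} and the remark on one-sided limits of controls, $\Omega$ is a control. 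The point-removal argument with the pigeonhole bound $\Omega(c,a)\le\frac{2}{n-1}\Omega(t,s)$ and the zeta-function sum then gives
\[
|\Xi^{\mathcal P}_{t,s}-S_{t,s}\xi_{t,s}|_{\gamma-\theta_2}\lesssim\Omega(t,s)^\beta,
\]
uniformly in $\mathcal P$. Using $(\sum a_i)^\beta\lesssim\sum a_i^\beta$, this is the right-hand side of \eqref{rough-bd}. Adding $|S_{t,s}\xi_{t,s}|_{\gamma-\theta_2}\lesssim\omega_1^{\alpha_1}+\omega_2^{\alpha_2}$, which uses the embeddings $\mathcal{B}_\gamma,\mathcal{B}_{\gamma-\theta_1}\hookrightarrow\mathcal{B}_{\gamma-\theta_2}$, gives a bounded monotone growth function $\sigma$. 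So $\mu\in\mathrm{BG}_2$.

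Theorem \ref{thm:mul01} then produces the multiplicative limit $\varphi_{t,s}=(S_{t,s},I_{t,s})$, and \eqref{est001} gives \eqref{rough-bd} for $I_{t,s}$. Multiplicativity yields $I_{t,s}=\mathcal{I}_t(\xi)-S_{t,s}\mathcal{I}_s(\xi)=\delta^S\mathcal{I}_{t,s}(\xi)$, and uniqueness is inherited from Theorem \ref{thm:mul01}. For the MRS statement, the two $|t-s|$-type terms involve a regular (continuous) control in the second slot. In the mesh argument of Theorem \ref{thm:mul01} these terms can be made small directly as the mesh goes to zero, so only the one-sided continuity of $\omega_{1,j}$ or $\omega_{2,j}$ is needed for the $\delta\xi$ terms.

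I expect the main obstacle to be this MRS step. The statement of Theorem \ref{thm:mul01} asks for one-sided continuity of all the first (or all the second) controls, and the pairs $(\omega_1,|\cdot|)$ and $(\omega_2,|\cdot|)$ need not satisfy this in the first slot. I would handle it by placing the regular control $|t-s|$ in the second slot and running the mesh argument with the modified $\Omega_\kappa$ of \eqref{Omega-kappa}, using $\kappa$ on the regular factor for those two terms. A minor point to check is that $\theta_1$ may be $0$; then $\xi^2\in\mathcal{B}_\gamma$ and nothing changes.
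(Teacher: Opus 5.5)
Your proposal is correct and follows the paper's proof essentially step for step. It uses the same monoid $\mathcal{M}(\gamma-\theta_2)$ and the same defect identity, split via $\xi=\xi^1+\xi^2$ with smoothing exponents $\theta_2$ and $\theta_2-\theta_1$ to get four product terms; the growth bound is carried over from Step 3 of Theorem~\ref{affyoung01} and then Theorem~\ref{thm:mul01} is applied. The one difference is the MRS clause, where the paper simply cites Theorem~\ref{thm:mul01}. You correctly notice that when only the $\omega_{1,j}$ are right-continuous, the first-slot controls $\omega_1,\omega_2$ need not be, and you repair this by placing the $\kappa$-gain on the regular factor $|t-s|$ in the mesh argument; that is a legitimate and slightly more careful justification than the paper gives.
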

\begin{proof}
	Denote the monoid $\mathcal{M}=\mathcal{M}(\gamma-\theta_{2})$. We claim that 
	 $\mathcal{M}$ can be endowed with a submultiplicative monoid structure. (Please refer to  Step 1 in Theorem \ref{affyoung01}). Define $\mu :\Delta_2 \to \mathcal{M}$ as
$
\mu _{t,s}:=(S_{t,s}, S_{t,s}\xi _{t,s})\,.
$
For every $(t,u,s)\in\Delta_3,$
observe that 
\begin{equation*}
	\mu _{t,s}-\mu _{t,u }\circ\mu _{u ,s}
	\equiv
	(0,S_{t,s}\delta \xi _{t,u ,s}+ S_{t,u }(S_{u ,s}-\textup{id})\xi _{t,u}).
\end{equation*}
From \eqref{xixi} and  \eqref{subcon}, we have
\begin{align*}
d(\mu _{t,s},\mu _{t,u }\circ\mu _{u ,s})
	&\lesssim |\delta \xi_{t,u,s} |_{\gamma -\theta_{2}} + |(S_{u ,s}-\textup{id})\xi _{t,u }|_{\gamma -\theta_{2}}\\
    &\lesssim 
\sum_{j=1}^{2}\omega_{1,j}^{\alpha_{1,j}}(t,u) \omega_{2,j}^{\alpha_{2,j}}(u,s)+|\xi^{1}_{t,u}|_{\gamma}|u-s|^{\theta_{2}}+|\xi^{2}_{t,u}|_{\gamma-\theta_{1}}|u-s|^{\theta_{2}-\theta_{1}}\\
&\lesssim \sum_{j=1}^{2}\omega_{1,j}^{\alpha_{1,j}}(t,u) \omega_{2,j}^{\alpha_{2,j}}(u,s)+\omega_{1}^{\alpha_{1}}(t,u)|u-s|^{\theta_{2}}+\omega_{2}^{\alpha_{2}}(t,u)|u-s|^{\theta_{2}-\theta_{1}}\\
&:=\sum_{j=1}^{4}\omega_{1,j}^{\alpha_{1,j}}(t,u) \omega_{2,j}^{\alpha_{2,j}}(u,s),
\end{align*}
which implies  $\mu $ is almost-multiplicative \eqref{almostmul01}.   Similar to the proof of Step 3 in  Theorem \ref{affyoung01}, we can also prove the growth of $\mu$, i.e., 
\begin{align*}
|\mu_{t,s}^{\mathcal{P}}|
&\leq 1+\sum_{i=1}^{2}\omega_{i}^{\alpha_{i}}(t,s)+\sum_{j=1}^{4} \omega_{1,j}^{\alpha_{1,j}}(t,s+) \omega_{2,j}^{\alpha_{2,j}}(t-,s)\\
&:=\sigma(t,s), 
\end{align*}
where $\sigma(t,s)$ is bounded, increasing in $t$ and decreasing in $s$. 
Hence, Theorem \ref{thm:mul01} and Remark \ref{MRSRRS} imply that the Riemann sums converge to a  unique limit, denoted by 
\begin{equation*}
		\mathcal{I}_t(\xi):= RRS- \lim_{|\mathcal{P}|\to 0} \sum_{[u,v]\in \mathcal{P}}S_{t,u}\xi_{v,u}. 
	\end{equation*}
Moreover, the estimate \eqref{rough-bd}  is obtained by similar arguments as Step 4 in Theorem \ref{affyoung01}. 
\end{proof}
\begin{remark} Similar to the estimate \eqref{young-bound} in Lemma \ref{YOU01}, we  can write the estimate \eqref{rough-bd} as follows
\begin{equation}
		| \delta^{S}\mathcal{I}_{t,s}(\xi)-S_{t,s}\xi_{t,s}|_{\gamma-\theta_{2}}
		\leq C \sum_{j=1}^{4} \omega_{1,j}^{\alpha_{1,j}}(t,s+) \omega_{2,j}^{\alpha_{2,j}}(t-,s),
	\end{equation}
        where $\omega_{1,j}$, $\omega_{2,j}$ are controls, $\alpha_{1,j}$, $\alpha_{2,j}$ are positive parameters such that $\min\{\alpha_{1,j}+\alpha_{2,j}\}>1$,  and  $C$ depends on $S, \min\limits_{j}\{\alpha_{1,j}+\alpha_{2,j}\}$.
\end{remark}
In what follows, we present the definitions of rough paths with finite  $p$-variation and the controlled rough paths. To simplify the notation, we use the convention: $\delta X_{t,s}=X_{t,s}:=X_t-X_s$, for all $t\geq s$. 
\begin{definition}[\textbf{Rough path}]
Let $p\in [2,3)$. We define the space of rough paths $\mathbb{V}^{p}([0,T],\mathbb{R}^{d})$ to consist of elements $(X, \mathbb{X}):=\mathbf{X}$ such that $X\in V^{p}([0,T], \mathbb{R}^{d})$, $\mathbb{X}\in V^{\frac{p}{2}}([0,T], \mathbb{R}^{d\times d})$ and that Chen's relation is satisfied
\begin{align}\label{chen}
\delta X_{t,u,s}&:=X_{t,s}-X_{t,u}-X_{u,s}=0,
\\
\delta \mathbb{X}_{t,u,s}^{i,j}&:=\mathbb{X}_{t,s}^{i,j}-\mathbb{X}_{t,u}^{i,j}-\mathbb{X}_{u,s}^{i,j}=X_{t,u}^{i}  X_{u,s}^{j},
\end{align}
for every $(t,u,s)\in \Delta_{3}$, $1\leq i, j\leq d$. The rough paths  space is equipped with norm  $$\|\mathbf{X}\|_{\mathbb{V}^{p}([0,T];\mathbb{R}^d)}:=\|X\|_{V^{p}([0,T];\mathbb{R}^d)}+\|\mathbb{X}\|_{V^{\frac{p}{2}}([0,T];\mathbb{R}^{d\times d})}<\infty.$$ 
The rough path distance is denoted by 
$$
\|\mathbf{X}; \tilde {\mathbf{X}}\|_{\mathbb{V}^{p}([0,T];\mathbb{R}^d)}:=\|X-\tilde X\|_{V^{p}([0,T];\mathbb{R}^d)}+\|\mathbb{X}-\tilde{\mathbb{X}}\|_{V^{\frac{p}{2}}([0,T];\mathbb{R}^{d\times d})}.
$$
\end{definition}
We further introduce the left and right jumps of a rough path 
\begin{align*}
\Delta_t^- \mathbf X:=\mathbf X_{t,t-}:=\lim_{s\uparrow t}\mathbf X_{t,s},\quad 
\Delta_t^+ \mathbf X:=\mathbf X_{t+,t}:=\lim_{s\downarrow t}\mathbf X_{s,t}.
\end{align*}For any fixed $t>0$, we say that $\mathbf X_{t,s}$ is right-continuous in $s\in[0,t)$, if $X_{t,s}=X_{t,s+}$, $\mathbb X_{t,s}=\mathbb X_{t,s+}$, where $X_{t,s+}:=\lim\limits_{u\downarrow s}X_{t,u}$, $\mathbb X_{t,s+}:=\lim\limits_{u\downarrow s}\mathbb X_{t,u}$. By Chen's relation, this is equivalent to
$\Delta_s^+X=0$ and $\Delta_s^+\mathbb X=0$.

For simplicity, we introduce the following notation:
 $$
V^{p}_{\alpha}([0,T]):= V^{p}([0,T]; \mathcal{B}_{\alpha}), \quad  V^{\infty}_{\alpha}([0,T]):= V^{\infty}([0,T]; \mathcal{B}_{\alpha}),
 $$
$$
V^{\frac{p}{2},p}_{\alpha-\frac{2}{p}, \alpha-\frac{1}{p}}([0,T]):=V^{\frac{p}{2}}([0,T]; \mathcal{B}_{\alpha-\frac{2}{p}})\cap V^{p}([0,T];\mathcal{B}_{\alpha-\frac{1}{p}}). 
$$

\begin{definition}[\textbf{Controlled rough path}]\label{crp}
Let $\{\mathcal{B}_{\alpha}\}_{\alpha\in\mathbb{R}}$ be a monotone family of interpolation spaces. Assume $\mathbf{X}=(X,\mathbb{X})\in \mathbb{V}^{p}([0,T]; \mathbb{R}^{d})$
is a rough path with finite $p$-variation for $p\in [2,3)$. We say that a pair $(Y, Y^{\prime})$ is controlled by $\mathbf{X}$ according to $\mathcal{B}_{\alpha}$ if 
 $$
 Y^{i}\in V^{\infty}_{\alpha}([0,T]), \quad (Y^{\prime})^{i}\in V^{p,\infty}_{\alpha-\frac{2}{p}, \alpha-\frac{1}{p}}([0,T])
 $$
 for $i=1,\dots, d.$
And the remainder $R^{Y}$
is defined as 
$$
R^{Y}_{t,s}:=
Y_{t,s}-Y_{s}^{\prime}\cdot X_{t,s}= Y_{t,s}-\sum_{i=1}^{d}(Y_{s}^{\prime})^{i}X_{t,s}^{i},
$$ belongs to $ V^{\frac{p}{2},p}_{\alpha-\frac{2}{p}, \alpha-\frac{1}{p}}([0,T]).$

Denote the space of all such controlled rough paths by $\mathcal{V}_{X}^{\frac{p}{2}}([0,T]; \mathcal{B}_{\alpha})$ or simply denote by $\mathcal{V}^{\frac{p}{2}}_{X, \alpha}$ when $T>0$ is fixed. The space $\mathcal{V}_{X, \alpha}^{\frac{p}{2}}$ is endowed  with the norm 
\begin{align*}
\|Y, Y^{\prime}\|_{\mathcal{V}^{\frac{p}{2}}_{X, \alpha}}=|Y_{0}|_{\alpha}+\|Y\|_{V^{\infty}_{\alpha}}+|Y_{0}^{\prime}|_{\alpha-\frac{1}{p}}+\|Y^{\prime}\|_{V^{p,\infty}_{\alpha-\frac{2}{p}, \alpha-\frac{1}{p}}}+\|R^{Y}\|_{V^{\frac{p}{2}, p}_{\alpha-\frac{2}{p}, \alpha-\frac{1}{p}}}.
\end{align*}
\end{definition}

In view of the above definitions, we observe that  $Y\in V^{p,\infty}_{\alpha-\frac{1}{p}, \alpha}([0,T])$ and 
\begin{align}\label{est-rough-yp}
\|Y\|_{V^p_{\alpha-\frac1p}([0,T])}\leq 
\|R^Y\|_{V^p_{\alpha-\frac 1p}([0,T])}+(|Y_0'|_{\alpha-\frac{1}{p}}+\|Y'\|_{V^{\infty}_{\alpha-\frac 1p}([0,T])})\|X\|_{V^p([0,T];\mathbb{R}^d)}. 
\end{align}
Recall that in the preliminaries, we use $\|X\|_{V^{\infty}([0,T];\mathcal{E})}:=\sup\limits_{t\in [0,T]}|X_t-X_0|_{\mathcal{E}}$. Here, if
we use the convention that $\|Y \|_{\infty, \alpha}:=\sup\limits_{0\leq t\leq T} \|Y_t\|_\alpha$, $\|Y'\|_{\infty, \alpha-\frac{1}{p}}:=\sup\limits_{0\leq t\leq T}\|Y'_t\|_{\alpha-\frac{1}{p}}$ and denote $\|Y'\|_{V^{p,\infty}_{\alpha-\frac{2}{p}, \alpha-\frac{1}{p}}}:=\|Y'\|_{V^{p}_{\alpha-\frac{2}{p}}}+\|Y'\|_{\infty, \alpha-\frac{1}{p}}$, we have 
\begin{align*}
\|Y, Y^{\prime}\|_{\mathcal{V}^{\frac{p}{2}}_{X, \alpha}}=\|Y\|_{\infty, \alpha}+\|Y^{\prime}\|_{V^{p,\infty}_{\alpha-\frac{2}{p}, \alpha-\frac{1}{p}}}+\|R^{Y}\|_{V^{\frac{p}{2},p}_{\alpha-\frac{2}{p}, \alpha-\frac{1}{p}}}.
\end{align*}
\begin{remark} 
For a  Banach space $\mathcal{E}$, $p\geq 1$, we write $C^{p-var}([0,T];\mathcal{E}):=C([0,T];\mathcal{E}) \cap V^p([0,T];\mathcal{E})$ and  $C_2^{p-var}([0,T];\mathcal{E}):=C_2([0,T];\mathcal{E}) \cap V^p([0,T];\mathcal{E})$. 
Let $p\in[2,3)$. A rough path $\mathbf{X}=(X, \mathbb X)\in \mathbb V^p([0,T];\mathbb{R}^d)$ is called  continuous, if  $X\in C^{p-var}([0,T]; \mathbb{R}^d)$ and $\mathbb{X} \in C_2^{\frac{p}{2}-var}([0,T];\mathbb{R}^{d\times d})$. A controlled rough path 
$(Y, Y')\in \mathcal{V}^{\frac p2}_{X,\alpha}$ is called continuous if
$Y\in C([0,T];\cB_\alpha)$, $Y'\in C([0,T];\cB_{\alpha-\frac{1}{p}})\cap C^{p-var}([0,T];\cB_{\alpha-\frac{2}{p}})$ with remainder $R^Y\in C_2^{p-var}([0,T];\cB_{\alpha-\frac{1}{p}}) \cap C_2^{\frac{p}{2}-var}([0,T];\cB_{\alpha-\frac{2}{p}})$. Setting $\gamma=\frac{1}{p}$, the embeddings $C\hookrightarrow V^{\infty}$,$C^{\gamma}\hookrightarrow C^{p-var}$, $C_2^{2\gamma}\hookrightarrow C_2^{\frac p2-var}$ show that  Definition \ref{crp} contains the  continuous $\gamma$-Hölder controlled rough paths framework and also allows for jumps case.
 \end{remark}
\begin{lemma}[\textbf{Integration in the rough case}]\label{int-rough}
Let   $\mathbf{X}=(X, \mathbb{X})\in \mathbb{V}^{p}([0,T]; \mathbb{R}^{d})$ for some $p\in [2,3)$ and let $\alpha\in \mathbb{R}$. 
 Let $(Y, Y^{\prime})\in \mathcal{V}^{\frac{p}{2}}_{X, \alpha}([0,T]).$
Then the integral 
\begin{align}\label{roughint01}
\int_{0}^{t}S_{t,s} Y_{s}\cdot \textup{d}\mathbf{X}_{s}:=RRS-\lim\limits_{|\mathcal{P}|\rightarrow 0}\sum\limits_{(u,v)\in \mathcal{P}} S_{t,u}(Y_{u}\cdot X_{v,u}+Y_{u}^{\prime}:\mathbb{X}_{v,u}),
\end{align}
exists as an element of $\mathcal{B}_{\alpha-\frac{2}{p}}$, where $Y_{u}^{\prime}: \mathbb{X}_{v,u}:=\sum\limits_{1\leq i,j\leq d}(Y^{\prime})^{ij}\mathbb{X}_{v,u}^{ij}.$

Moreover, 
we have the estimate
\begin{equation}\label{rough-bound}
\begin{split}
&|\int_{s}^{t} S_{t,u} Y_{u}\cdot \textup{d}\mathbf{X}_{u}-S_{t,s}(Y_{s}\cdot X_{t,s}+Y_{s}^{\prime}:\mathbb{X}_{t,s})|_{\alpha-\frac{2}{p}} \\
&\leq C [ \|X\|_{V^{p}((s,t]; \mathbb{R}^{d})}\|R^{Y}\|_{V^{\frac{p}{2}}([s,t); \mathcal{B}_{\alpha-\frac{2}{p}})}+
\|\mathbb{X}\|_{V^{\frac{p}{2}}((s,t]; \mathbb{R}^{d\times d})}\|Y^{\prime}\|_{V^{p}([s,t); \mathcal{B}_{\alpha-\frac{2}{p}})}\\
&~~+ \|Y, Y^{\prime}\|_{\mathcal{V}^{\frac{p}{2}}_{X, \alpha}([0,T])}\|X\|_{V^{p}((s,t];  \mathbb{R}^{d})} |t-s|^{\frac{2}{p}}+ \|Y, Y^{\prime}\|_{\mathcal{V}^{\frac{p}{2}}_{X, \alpha}([0,T])}\|\mathbb{X}\|_{V^{\frac{p}{2}}((s,t];  \mathbb{R}^{d\times d})}|t-s|^{\frac{1}{p}}].
\end{split}
\end{equation}
where $C$ depends on $S, p$.  In particular, if $\mathbf{X}$ is c\`adl\`ag, then the convergence in \eqref{roughint01}
 holds in MRS-sense. 
 \begin{remark} 
The estimate in \eqref{rough-bound} can be written in the form of control functions.
\begin{align*}
|\int_{s}^{t} S_{t,u} Y_{u}\cdot \textup{d}\mathbf{X}_{u}-S_{t,s}(Y_{s}\cdot X_{t,s}+Y_{s}^{\prime}:\mathbb{X}_{t,s})|_{\alpha-\frac{2}{p}} \leq 
C\sum\limits_{j=1}^{4}\omega_{1,j}^{\alpha_{1,j}}(t,s+)\omega_{2,j}^{\alpha_{2,j}}(t-,s), 
 \end{align*}
 where 
 \begin{align*}
&\omega_{1,1}(t,s+)=  \|X\|_{V^{p}((s,t]; \mathbb{R}^{d})}^p ,\quad  \omega_{2,1}(t-,s)= \|R^{Y}\|_{V^{\frac{p}{2}}([s,t); \mathcal{B}_{\alpha-\frac{2}{p}})}^{\frac p2} , 
\\
&\omega_{1,2}(t,s+)=\|\mathbb{X}\|^{\frac p2}_{V^{\frac{p}{2}}((s,t]; \mathbb{R}^{d\times d})}  ,\quad  \omega_{2,2}(t-,s)=\|Y^{\prime}\|^p_{V^{p}([s,t); \mathcal{B}_{\alpha-\frac{2}{p}})},\\
&\omega_{1,3}(t,s
+)=\|Y, Y^{\prime}\|^p_{\mathcal{V}^{\frac{p}{2}}_{X, \alpha}([0,T])}\|X\|^p_{V^{p}((s,t];  \mathbb{R}^{d})}  ,\quad  \omega_{2,3}(t,s)= |t-s|,\\
&\omega_{1,4}(t,s+)=\|Y, Y^{\prime}\|^{\frac p2}_{\mathcal{V}^{\frac{p}{2}}_{X, \alpha}([0,T])}\|\mathbb{X}\|^{\frac p2}_{V^{\frac{p}{2}}((s,t];  \mathbb{R}^{d\times d})},\quad  \omega_{2,4}(t,s)=|t-s|,\\
&\alpha_{1,1}=\frac 1p,~ \alpha_{1,2}=\frac 2p, ~\alpha_{2,1}=\frac 2p, ~\alpha_{2,2}=\frac 1p,\\
&\alpha_{1,3}=\frac 1p, 
~\alpha_{2,3}=\frac 2p,
~\alpha_{1,4}=\frac 2p,~ \alpha_{2,4}=\frac 1p.
 \end{align*}
\end{remark}
\end{lemma}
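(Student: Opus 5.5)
The plan is to apply Theorem \ref{affrough01} with $\gamma=\alpha$, $\theta_1=\frac1p$, $\theta_2=\frac2p$, and the germ
\[
\xi_{t,s}=\xi^1_{t,s}+\xi^2_{t,s},\qquad \xi^1_{t,s}:=Y_s\cdot X_{t,s}\in\cB_\alpha,\qquad \xi^2_{t,s}:=Y'_s:\mathbb X_{t,s}\in\cB_{\alpha-\frac1p}.
\]
Then the Riemann sums in \eqref{roughint01} are exactly $\sum S_{t,u}\xi_{v,u}$. Existence of the limit in $\cB_{\alpha-\frac2p}$ then follows directly from Theorem \ref{affrough01}. The bound \eqref{rough-bound} is obtained by reading off \eqref{rough-bd}, with $\delta^S\mathcal I_{t,s}(\xi)=\int_s^tS_{t,u}Y_u\cdot\textup{d}\mathbf X_u$ obtained by multiplicativity as in Step 4 of Theorem \ref{affyoung01}.

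\textbf{Step 1: bounds on $\xi^1,\xi^2$.} Since $|Y_s|_\alpha\le\|Y,Y'\|_{\mathcal V^{p/2}_{X,\alpha}}$ and $|Y'_s|_{\alpha-\frac1p}\le\|Y,Y'\|_{\mathcal V^{p/2}_{X,\alpha}}$, we get
\[
|\xi^1_{t,s}|_\alpha\le \|Y,Y'\|\,\|X\|_{V^p([s,t])},\qquad |\xi^2_{t,s}|_{\alpha-\frac1p}\le \|Y,Y'\|\,\|\mathbb X\|_{V^{p/2}([s,t])}.
\]
So \eqref{xixi} holds with $\omega_1=\|Y,Y'\|^p\omega_{X,p}$, $\alpha_1=\frac1p$, and $\omega_2=\|Y,Y'\|^{p/2}\omega_{\mathbb X,p/2}$, $\alpha_2=\frac2p$. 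The exponent conditions read $\alpha_1+\theta_2=\frac3p>1$ and $\alpha_2+\theta_2-\theta_1=\frac3p>1$, both valid since $p<3$.

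\textbf{Step 2: the defect.} Using Chen's relation \eqref{chen}, a direct computation gives
\[
\delta\xi_{t,u,s}=-R^Y_{u,s}\cdot X_{t,u}-(Y'_u-Y'_s):\mathbb X_{t,u}.
\]
This uses $Y_{u,s}=Y'_s X_{u,s}+R^Y_{u,s}$ and the fact that the cross term $Y'_s X_{t,u}\otimes X_{u,s}$ cancels $Y'_s:\delta\mathbb X_{t,u,s}$; I will check the index convention for $\otimes$ versus $X^i_{t,u}X^j_{u,s}$ carefully, since this cancellation is the only algebraic point. Estimating in $\cB_{\alpha-\frac2p}$ gives
\[
|\delta\xi_{t,u,s}|_{\alpha-\frac2p}\le \|X\|_{V^p([u,t])}\|R^Y\|_{V^{p/2}([s,u];\cB_{\alpha-\frac2p})}+\|\mathbb X\|_{V^{p/2}([u,t])}\|Y'\|_{V^p([s,u];\cB_{\alpha-\frac2p})}.
\]
This is \eqref{subcon} with exponent pairs $(\frac1p,\frac2p)$ and $(\frac2p,\frac1p)$, each summing to $\frac3p>1$.

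\textbf{Step 3: conclusion.} Plugging into \eqref{rough-bd} produces the four terms of \eqref{rough-bound}. The one-sided limits $\omega(t,s+)$ and $\omega(t-,s)$ turn into variation norms over $(s,t]$ and $[s,t)$, which is justified as in the proof of Lemma \ref{llrr}(1) via \eqref{tilde-}. The factors $|t-s|^{2/p}$ and $|t-s|^{1/p}$ come from $|t-s|^{\theta_2}$ and $|t-s|^{\theta_2-\theta_1}$.

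\textbf{Mesh-sense convergence.} The main subtlety is the MRS statement. Theorem \ref{affrough01} requires $\omega_{1,j}$ to be right-continuous in $s$, i.e.\ $\omega_{X,p}(t,s+)=\omega_{X,p}(t,s)$ and likewise for $\mathbb X$. For c\`adl\`ag $\mathbf X$, the control $\omega_{X,p}$ is right-continuous in $s$ by the remark after the definition of controls (cf.\ \cite[Lemma 7.1]{FZ18}), and the same argument applies to $\mathbb X$ using $\Delta^+_s\mathbb X=0$. With this, the MRS conclusion of Theorem \ref{affrough01} applies, since the terms with $|t-s|$ are regular controls.
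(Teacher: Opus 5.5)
Your proposal is correct and follows the paper's proof essentially step for step: you use the same germ $\xi=\xi^1+\xi^2$ and the same bounds on $\xi^1$, $\xi^2$ and $\delta\xi$, and then apply Theorem \ref{affrough01} with $\gamma=\alpha$, $\alpha_1=\theta_1=\frac1p$, $\alpha_2=\theta_2=\frac2p$. Your sign $-(Y'_u-Y'_s):\mathbb X_{t,u}$ in the defect is the correct one (the paper writes $+\mathbb X_{t,u}:Y'_{u,s}$, a harmless typo since only the norm enters), and with the paper's convention $\delta\mathbb X^{i,j}_{t,u,s}=X^i_{t,u}X^j_{u,s}$ the cross term does cancel as you expect.
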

\begin{proof}[Proof of Lemma \ref{int-rough}]
For $(t,s)\in \Delta_{2}$, $(t,u,s)\in \Delta_{3}$, we have 
	$$\xi_{t,s}=Y_{s}\cdot X_{t,s}+Y_{s}^{\prime}:\mathbb{X}_{t,s}:=\xi^{1}_{t,s}+\xi^{2}_{t,s}
    $$ and
$$
\delta \xi_{t,u,s}
=-X_{t,u} \cdot R_{u,s}^{Y}+\mathbb{X}
_{t,u}
:Y^{\prime}_{u,s}.
$$
Then, we get
\begin{align*}
|\xi^{1}_{t,s}|_{\alpha}&=|Y_{s}\cdot  X_{t,s}|_{\alpha}\\
&\leq |Y_{s}|_{\alpha}\|X\|_{V^{p}([s,t]; \mathbb{R}^{d})}\\
&\leq (|Y_0|_{\alpha}+\|Y\|_{V^\infty_\alpha([0,s])})\omega^{\frac{1}{p}}_X(t,s)\\
&\leq  \|Y, Y^{\prime}\|_{\mathcal{V}^{\frac{p}{2}}_{X, \alpha}([0,s])} \omega^{\frac{1}{p}}_X(t,s).
\end{align*}
\begin{align*}
|\xi^{2}_{t,s}|_{\alpha-\frac{1}{p}}&=|Y_{s}^{\prime}:\mathbb{X}_{t,s}|_{\alpha-\frac{1}{p}}\\
&\leq |Y_{s}^{\prime}|_{\alpha-\frac{1}{p}}\|\mathbb{X}\|_{V^{\frac{p}{2}}([s,t]; \mathbb{R}^{d\times d})}\\
&\leq  (|Y'_0|_{\alpha-\frac{1}{p}}+\|Y'\|_{V^\infty_{\alpha-\frac{1}{p}}([0,s])})\omega^{\frac{2}{p}}_{\mathbb X}(t,s)\\
&\leq \|Y, Y^{\prime}\|_{\mathcal{V}^{\frac{p}{2}}_{X, \alpha}([0,s])}\omega^{\frac{2}{p}}_{\mathbb{X}}(t,s).
\end{align*}
\begin{align*}
|\delta \xi_{t,u,s}|_{\alpha-\frac{2}{p}}&=|X_{t,u}\cdot  R_{u,s}^{Y}+\mathbb{X}
_{t,u}: Y_{u,s}^{\prime}|_{\alpha-\frac{2}{p}}\\
&\leq \|X\|_{V^{p}([u,t]; \mathbb{R}^{d})}\|R^{Y}\|_{V^{\frac{p}{2}}([s,u]; \mathcal{B}_{\alpha-\frac{2}{p}})}+\|\mathbb{X}\|_{V^{\frac{p}{2}}([u,t];  \mathbb{R}^{d\times d})}\|Y^{\prime}\|_{V^{p}([s,u]; \mathcal{B}_{\alpha-\frac{2}{p}})}\\
&:=\omega^{\frac{1}{p}}_X(t,u)\omega_{R^Y}^{
\frac{2}{p}
}(u,s)+\omega_{\mathbb X}^{\frac{2}{p}}(t,u)\omega_{Y'}^{\frac{1}{p}}(u,s).
\end{align*}
From \eqref{smooth}, we have
\begin{align*}
|(S_{u,s}-\textup{id})\xi_{t,u}|_{\alpha-\frac{2}{p}}&\leq C (|\xi_{t,u}^1|_\alpha |u-s|^{\frac{2}{p}}+|\xi_{t,u}^2|_{\alpha-\frac{1}
{p}}|u-s|^{\frac{1}{p}})\\
&=C(|Y_{u}\cdot  X_{t,u}|_{\alpha}|u-s|^{\frac{2}{p}}+
|Y_{u}^{\prime}: \mathbb{X}_{t,u}|_{\alpha-\frac{1}{p}}|u-s|^{\frac{1}{p}})\\
&\leq C(|Y_{u}|_{\alpha}\|X\|_{V^{p}([u,t]; \mathbb{R}^{d})}|u-s|^{\frac{2}{p}}+|Y_{u}^{\prime}|_{\alpha-\frac{1}{p}}\|\mathbb{X}\|_{V^{\frac{p}{2}}([u,t]; \mathbb{R}^{d\times d})}|u-s|^{\frac{1}{p}})\\
&\leq C \|Y, Y^{\prime}\|_{\mathcal{V}^{\frac{p}{2}}_{X, \alpha}[0,u]}[\omega^{\frac{1}{p}}_X(t,u)|u-s|^{\frac{2}{p}}+\omega_{\mathbb X}^{\frac{2}{p}}(t,u)|u-s|^{\frac{1}{p}}].
\end{align*}
Hence, 
\begin{align}
&|\delta \xi_{t,u,s}+(S_{u,s}-\textup{id})\xi_{t,u}|_{\alpha-\frac{2}{p}}\\\leq&~~  C  [\omega^{\frac{1}{p}}_X(t,u)\omega_{R^Y}^{
\frac{2}{p}
}(u,s)+\omega_{\mathbb X}^{\frac{2}{p}}(t,u)\omega_{Y'}^{\frac{1}{p}}(u,s) \nonumber \\
&~~+
 \|Y, Y^{\prime}\|_{\mathcal{V}^{\frac{p}{2}}_{X, \alpha}([0,T])} \omega^{\frac{1}{p}}_X(t,u)|u-s|^{\frac{2}{p}}+ \|Y, Y^{\prime}\|_{\mathcal{V}^{\frac{p}{2}}_{X, \alpha}([0,T])}\omega^{\frac{2}{p}}_{\mathbb{X}}(t,u)|u-s|^{\frac{1}{p}}] \nonumber \\
:=&~~C [\sum\limits_{j=1}^{2}\omega_{1,j}^{\alpha_{1,j}}(t,u)\omega_{2,j}^{\alpha_{2,j}}(u,s)+\omega_1^{\alpha_1}(t,u)|u-s|^{\theta_2}+\omega_2^{\alpha_{2}}(t,u)|u-s|^{\theta_2-\theta_1}], \label{rough-N4}
\end{align}
where $\omega_{1,1}(t,s)=\omega_{X}(t,s)$, $\omega_{1,2}(t,s)=\omega_{\mathbb X}(t,s)$, $\omega_{2,1}(t,s)=\omega_{R^Y}(t,s)$, $\omega_{2,2}(t,s)=\omega_{Y'}(t,s)$, $\omega_1(t,s)=\|Y, Y'\|^p_{\mathcal{V}^{\frac p2}_{X, \alpha}([0,T])}\omega_X(t,s)$, $\omega_2(t,s)=\|Y, Y'\|^{\frac{p}{2}}_{\mathcal{V}^{\frac p2}_{X, \alpha}([0,T])}\omega_{\mathbb X}(t,s)$.
Therefore, by using Theorem \ref{affrough01} with the case that  $\gamma=\alpha$, $\alpha_{1}=\theta_{1}=\frac{1}{p}$, $\alpha_{2}=\theta_{2}=\frac{2}{p}$,  we have the required results.  
 Note that \eqref{rough-N4} can be written as a sum of products of some control functions, i.e., 
\begin{align*}
|\delta \xi_{t,u,s}+(S_{u,s}-\textup{id})\xi_{t,u}|_{\alpha-\frac{2}{p}}\leq C  \sum\limits_{j=1}^{4}\omega_{1,j}^{\alpha_{1,j}}(t,u)\omega_{2,j}^{\alpha_{2,j}}(u,s),
\end{align*}
as stated in 
Theorem \ref{thm:mul01} for $N=4$. 
\end{proof}



\begin{remark}
The choice of spatial regularity is consistent with the Young case.
Indeed, setting $\gamma=\alpha+\frac{1}{p}$ in Lemma \ref{int-rough}, 
the estimate \eqref{rough-bound} is obtained in
$\mathcal B_{\alpha-\frac{1}{p}}$  for  $(Y, Y')\in \mathcal{V}^{\frac{p}{2}}_{X, \alpha+\frac{1}{p}}$ and the remainder $R^{Y}\in V^{\frac{p}{2},p}_{\alpha-\frac{1}{p}, \alpha}$. 
On the other hand, applying Lemma \ref{YOU01} with $\alpha$ replaced by
$\alpha-\frac{1}{p}$ and $\theta=\frac{1}{p}$ gives the corresponding
Young estimate in the same space $\mathcal B_{\alpha-\frac{1}{p}}$;   see Remark \ref{exp-young} for more details.

\end{remark}

In order to prove that the solution remains a regulated path even in the highest regularity space, we give the following lemma. Please see \cite[Lemma 2.9]{FZ18} for finite-dimensional result.
\begin{lemma}\label{rough-regular}
Let $p\in[2,3)$, $\alpha\in \mathbb{R}$. Suppose $\mathbf{X}=(X,\mathbb X)\in\mathbb V^p([0,T],\mathbb R^d)$, and $(Y, Y')\in \mathcal{V}^{\frac p2}_{X, \alpha+\frac{2}{p}}$ is a controlled rough path.  Then for $0\leq s< t\leq T$, 

(1) the right-hand limit $\lim\limits_{u\downarrow s}\int_u ^t S_{t,r}Y_r\cdot \textup{d}\mathbf{X}_r:=\int_{s+}^t S_{t,r} Y_r\cdot \textup{d}\mathbf{X}_r$ exists in $\cB_{\alpha}$.
\begin{align*}
\int_{s+}^t S_{t,r}Y_r\cdot \textup{d}\mathbf{X}_r=\int_s^t S_{t,r}Y_r\cdot \textup{d}\mathbf{X}_r-S_{t,s}(Y_s \Delta_s^+ X+Y_s':\Delta_s^+\mathbb{X}).
\end{align*}

(2) The left-hand limit $\lim \limits_{u\uparrow s}\int_u^t S_{t,r}  Y_r \cdot \textup{d}\mathbf{X}_r:=\int_{s-}^t S_{t,s}Y_r\cdot \textup{d}\mathbf{X}_r$ exists in $\cB_{\alpha}$.
\begin{align*}
\int_{s-}^t S_{t,s}Y_r\cdot \textup{d}\mathbf{X}_r=\int_{s}^t S_{t,r}Y_r\cdot \textup{d}\mathbf{X}_r+S_{t,s}(Y_{s-}\Delta_{s}^-X+Y_{s-}': \Delta_s^- \mathbb{X}).
\end{align*}

(3)  The right-hand limit $\lim\limits_{u\downarrow t}\int_s^u S_{u,r}Y_r\cdot \textup{d}\mathbf{X}_r:=\int_{s}^{t+}S_{t,r} Y_r \cdot \textup{d}\mathbf{X}_r$ exists in $\cB_{\alpha}$.
\begin{align*}
\int_s^{t+}S_{t,r} Y_r\cdot \textup{d}\mathbf{X}_r=\int_s^{t}S_{t,r} Y_r \cdot \textup{d}\mathbf X_r+Y_{t} \cdot \Delta_t^
+ X+Y_t':\Delta_t^+ \mathbb{X}.
\end{align*}

(4)
 Furthermore, suppose $(Y,Y')\in
\mathcal V^{\frac p2}_{X,\alpha+\frac{2}{p}
+\varepsilon}([0,T])$, $\varepsilon>0$, the left-hand limit $\int_{s}^{t-}S_{t,r}Y_r\cdot \textup{d}\mathbf{X}_r:=\lim\limits_{u\uparrow t} \int_s^u S_{u,r}Y_r\cdot \textup{d}\mathbf{X}_r$ exists in $\cB_{\alpha}$.
\begin{align*}
\int_{s}^{t-}S_{t,r} Y_r\cdot \textup{d}\mathbf{X}_r=  \int_s^tS_{t,r}Y_{r}\cdot \textup{d}\mathbf{X}_{r}-Y_{t-}\cdot \Delta_t^- X-Y_{t-}' :\Delta_t^- \mathbb{X}.
\end{align*}
\end{lemma}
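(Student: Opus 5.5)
The plan is to mirror the proof of Lemma \ref{llrr}, replacing the Young estimate of Lemma \ref{YOU01} by the rough estimate \eqref{rough-bound} of Lemma \ref{int-rough}. That estimate is applied with $\alpha$ replaced by $\alpha+\frac2p$, so that the local error is measured in $\cB_\alpha$. The basic device is to remove the single jump at the relevant endpoint by modifying the rough path there.

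For (1) I define $\tilde{\mathbf X}^1$ by $\tilde X^1_s:=X_{s+}$, with $\tilde X^1_m:=X_m$ otherwise. The second level is set by $\tilde{\mathbb X}^1_{v,s}:=\mathbb X_{v,s+}$ and $\tilde{\mathbb X}^1:=\mathbb X$ away from $s$. Chen's relation \eqref{chen} is preserved, because $\mathbb X_{v,s+}=\lim_{u\downarrow s}(\mathbb X_{v,s}-\mathbb X_{u,s}-X_{v,u}X_{u,s})$. The modified path is right-continuous at $s$, and as in \eqref{tilde-} its variation norms on $[s,s+h]$ are bounded by those of $\mathbf X$ on $(s,s+h]$.

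The difference $\mathbf X-\tilde{\mathbf X}^1$ is supported at the single point $s$, and I would compute its integral as in Lemma \ref{conjump}. Taking the point $s$ in every partition, only the interval $[s,t_1]$ contributes, and its contribution is
\[
S_{s+h,s}\bigl(Y_s\Delta_s^+X+Y'_s:\Delta_s^+\mathbb X\bigr).
\]
This uses $\mathbb X_{t_1,s}-\mathbb X_{t_1,s+}=\Delta_s^+\mathbb X+X_{t_1,s+}\Delta_s^+X$; the cross term is shown to vanish in the limit by Chen's relation together with $X_{t_1,s+}\to0$.

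The flow property $\delta^S$ (the multiplicativity of $\varphi$) then gives
\[
\int_{s+h}^tS_{t,r}Y_r\cdot \txtd\mathbf X_r-\int_s^t S_{t,r}Y_r\cdot \txtd\mathbf X_r=-S_{t,s+h}\int_s^{s+h}S_{s+h,r}Y_r\cdot \txtd\mathbf X_r .
\]
Splitting $\mathbf X=(\mathbf X-\tilde{\mathbf X}^1)+\tilde{\mathbf X}^1$, the jump part reproduces the claimed correction up to $(S_{t,s+h}S_{s+h,s}-S_{t,s})=0$. The remainder is bounded by \eqref{rough-bound} plus $|S_{s+h,s}(Y_s\tilde X^1_{s+h,s}+Y'_s:\tilde{\mathbb X}^1_{s+h,s})|_\alpha$. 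Every term contains $\|\tilde X^1\|_{V^p([s,s+h])}$ or $\|\tilde{\mathbb X}^1\|_{V^{p/2}([s,s+h])}$, multiplied by the norm $\|Y,Y'\|_{\mathcal V^{p/2}_{X,\alpha+2/p}}$. These variations tend to $0$ by right-continuity (Remark on $\omega_{X,p}(s+,s)=0$). There is one subtle point: linearity of the integral in the driver requires that $(Y,Y')$ is also controlled by $\tilde{\mathbf X}^1$ near $s$. This holds because $Y$ and $Y'$ are unchanged, and the remainder changes by $Y'_s(X-\tilde X^1)$, which stays in the required variation spaces.

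Part (2) is identical, with $\tilde X^2_s:=X_{s-}$ and the bound from \cite[Lemma 2.6]{ALP21}. Part (3) splits as $I_1+I_2$, exactly as in Lemma \ref{llrr}. For $I_1=(S_{t+h,t}-\mathrm{id})\int_s^tS_{t,r}Y_r\cdot \txtd\mathbf X_r$, strong continuity of $S$ on $\cB_\alpha$ (Remark \ref{dir}) gives $I_1\to0$. For $I_2$, I remove the right jump at $t$ as in (1).

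Part (4) is the main obstacle. Here $I_3=(\mathrm{id}-S_{t,t-h})\int_s^{t-h}S_{t-h,r}Y_r\cdot \txtd\mathbf X_r$ involves a family of vectors that varies with $h$, so strong continuity alone does not suffice. I would instead use \eqref{smooth}, which gives $|I_3|_\alpha\lesssim h^{\varepsilon}|\int_s^{t-h}\cdots|_{\alpha+\varepsilon}$. The integral is then bounded uniformly in $h$ in $\cB_{\alpha+\varepsilon}$, by applying \eqref{rough-bound} with $\alpha$ replaced by $\alpha+\frac2p+\varepsilon$; this is why the extra $\varepsilon$ of regularity is assumed. The terms $I_4$ and $I_5$ are handled as in (2), using the left-continuous modification at $t$. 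These give the correction $Y_{t-}\cdot\Delta_t^-X+Y'_{t-}:\Delta_t^-\mathbb X$, where $Y_{t-}$ and $Y'_{t-}$ exist because $Y$ and $Y'$ are regulated, and $S_{t,t-}=\mathrm{id}$ strongly.
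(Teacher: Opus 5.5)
Your proposal is correct and follows the paper's route:
\begin{itemize}
\item remove the endpoint jump with a modified rough path $\tilde{\mathbf X}$, checking Chen's relation and that $(Y,Y')$ stays controlled with remainder shifted by $Y'\Delta X$;
\item evaluate the integral against $\mathbf X-\tilde{\mathbf X}$ through the single boundary Riemann-sum term and Chen's relation;
\item bound the $\tilde{\mathbf X}$-integral by \eqref{rough-bound} at level $\alpha+\frac2p$;
\item in (4), use the smoothing estimate with the extra $\varepsilon$ for the $h$-dependent term.
\end{itemize}
The paper writes out (3) in detail, whereas you write out (1) and refer to it for the rest. In (3) the jump part comes out as $S_{t+h,t}(Y_t\cdot\Delta_t^+X+Y_t':\Delta_t^+\mathbb X)$ rather than the unpropagated correction, so one more use of strong continuity of $S$ is needed there; in your (1) the propagators cancel exactly.
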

\begin{proof}
The idea of this proof is similar to that in Lemma \ref{llrr}. Here, we give a brief proof of the key part (3). 
Construct auxiliary process $\tilde{\mathbf{X}}=(\tilde X, \tilde {\mathbb{X}})$:
\[
\tilde X_{m,u}:=\left\{
\begin{array}{ll}
	X_{m,t+}, & u=t \\
	X_{m,u},& u>t,
\end{array}
\right.
\ \ \
\tilde{\mathbb X}_{m,u}:=\left\{
\begin{array}{ll}
	\mathbb X_{m, t+}, &  u=t, \\
	\mathbb X_{m,u},& u>t,\\
\end{array}
\right.
\]
Then one can  check that for any fixed $m$, $\tilde{\mathbf{X}}_{m,u}$ is right-continuous at $u=t$ and $\tilde{\textbf{X}}$ satisfies Chen's relation. In view of \eqref{tilde-}, for any $u\in (t, T]$,  we have $\|\tilde X\|_{V^p([t,u];\mathbb{R}^d)}\leq \|X\|_{V^{p}((t,u];\mathbb{R}^d)}$, $\|\tilde{\mathbb{X}}\|_{V^{\frac p2}([t,u];\mathbb{R}^{d\times d})}\leq \|\tilde{\mathbb{X}}\|_{V^{\frac p2}((t,u];\mathbb{R}^{d\times d})}$. 
We verify that $(Y,Y')$ is controlled by $\tilde X$, i.e., $ (Y,Y')\in \mathcal V^{\frac p2}_{\tilde X,\alpha+\frac 2p}([t,T])$ with the remainder:
\begin{equation}
\tilde R^Y_{v,u} :=Y_{v,u}-Y'_u\cdot \tilde X_{v,u}=\begin{cases}
R^Y_{v,t}+Y'_t \cdot \Delta_t^+X,&u=t,\\
R^Y_{v,u},&u>t.
\end{cases}
\end{equation}
 Minkowski’s inequality yields
 \begin{align*}
   \|\tilde R^Y\|_{V^{p}([t,T];\mathcal B_{\alpha+\frac 1p})} &\lesssim   \|R^Y\|_{V^{p}([t,T];\mathcal B_{\alpha+\frac 1p})} + |Y'_t \cdot \Delta_t^+X|_{\alpha+\frac 1p} <\infty. \\
  \|\tilde R^Y\|_{V^{\frac p2}([t,T];\mathcal B_\alpha)} &\lesssim \|R^Y\|_{V^{\frac p2}([t,T];\mathcal B_\alpha)} + |Y'_t \cdot \Delta_t^+X|_\alpha <\infty. 
\end{align*}
Thus, $\tilde R^Y\in V^{\frac p2,p}_{\alpha, \alpha+\frac 1p}([t,T])$. Now, Lemma \(\ref{int-rough}\)  ensures that the rough integral against
$ \tilde{\mathbf X}$
is well defined in $\cB_\alpha$. 

For $h>0$, 
\begin{align*}
&|\int_s^{t+h}S_{t+h,r}Y_r\cdot \textup{d}\mathbf{X}_r-\int_s^t S_{t,r}Y_r\cdot \textup{d}\mathbf{X}_r-Y_t \Delta_t^+ X-Y_t': \Delta_t^+ \mathbb{X}|_{\alpha}\\
 &\leq  |(S_{t+h, t}-\textup{id})\int_s^t S_{t,r}Y_r\cdot \textup{d}\mathbf X_r|_{\alpha}
+|\int_t^{t+h}S_{t+h, r}Y_r \cdot \textup{d} \mathbf X_r-Y_t \cdot \Delta_t^+ X-Y_t' : \Delta_t^+ \mathbb{X}|_\alpha\\
& \leq  |(S_{t+h, t}-\textup{id})\int_s^t S_{t,r}Y_r\cdot \textup{d}\mathbf X_r|_{\alpha}\\
&~~+|\int_t^{t+h}S_{t+h, r}Y_r \cdot \textup{d} (\mathbf X-\tilde{\mathbf{X}})_r-Y_t \cdot \Delta_t^+ X-Y_t' : \Delta_t^+ \mathbb{X}|_\alpha\\
 &~~+|\int_t^{t+h}S_{t+h, r}Y_r\cdot \textup{d}\tilde {\mathbf{X}}_r|_\alpha\\
&:= I_1+I_2+I_3.
\end{align*}
Due to the continuity of the propagator $S$ in Remark \ref{smooth}, we have $I_1\rightarrow 0$,  as $h\rightarrow 0$. 

Denote $\mathcal P=\{t=r_0<r_1<\cdots<r_n=t+h\}$. As shown in \eqref{roughint01}, we have 
$$\int_t^{t+h} S_{t+h,r} Y_r\cdot \textup{d}\mathbf X_r:=RRS-\lim_{|\mathcal{P}|\to 0}\sum_{(u,v)\in \mathcal P}S_{t+h,u}(Y_u\cdot X_{v,u}+Y_u':\mathbb X_{v,u})
$$
and 
$$
\int_t^{t+h}S_{t+h, r}Y_r\cdot \textup{d}\tilde{\mathbf X}_r:=RRS-\lim_{|\mathcal{P}|\to 0} \sum_{(u,v)\in \mathcal P}S_{t+h,u}(Y_u\cdot \tilde X_{v,u}+Y_u':\tilde{\mathbb X}_{v,u}.)
$$ For $t<r<r_1$, Chen's relation gives
\begin{align*}
\mathbb X_{r_1,t}
=
\mathbb X_{r_1,r}
+\mathbb X_{r,t}
+X_{r_1,r}\otimes X_{r,t}.
\end{align*}
Keeping $r_1$ fixed and letting $r\downarrow t$, we obtain
\begin{align*}
\mathbb X_{r_1,t}
=
\mathbb X_{r_1,t+}
+\Delta_t^+\mathbb X
+(X_{r_1}-X_{t+})\otimes\Delta_t^+X.
\end{align*}
Since
$\tilde{\mathbb X}_{r_1,t}=\mathbb X_{r_1,t+}$,
it follows that
\begin{align*}
\mathbb X_{r_1,t}-\tilde{\mathbb X}_{r_1,t}
=
\Delta_t^+\mathbb X
+(X_{r_1}-X_{t+})\otimes\Delta_t^+X.
\end{align*}
By comparing the  Riemann sums above,
we have 
\begin{equation}\label{rsum-compare}
\begin{split}
&\sum_{(u,v)\in \mathcal P}S_{t+h, u}[Y_u\cdot (X_{v,u}-\tilde X_{v,u})+Y_u':(\mathbb X_{v,u}-\tilde{\mathbb X}_{v,u})] \\
=&~S_{t+h,t}
[
Y_t\cdot(X_{r_1,t}-\tilde X_{r_1,t})
+Y'_t:(\mathbb X_{r_1,t}-\tilde{\mathbb X}_{r_1,t})
]  \\
=&~S_{t+h,t}
[Y_t\cdot\Delta_t^+X
      +Y'_t:( \Delta_t^+\mathbb X+
(X_{r_1}-X_{t+})\otimes\Delta_t^+X)].
\end{split}
\end{equation}
Since $X_{r_1}\to X_{t+}$ as $r_1\downarrow t$,
for every $\varepsilon>0$, there exists a refinement
$\mathcal P_\varepsilon$ of $\mathcal P$ such that, 
\begin{align*}
\sum_{(u,v)\in \mathcal P_\varepsilon}S_{t+h, u}[Y_u\cdot (X_{v,u}-\tilde X_{v,u})+Y_u':(\mathbb X_{v,u}-\tilde{\mathbb X}_{v,u})] 
=S_{t+h,t}
[Y_t\cdot\Delta_t^+X
      +Y'_t:\Delta_t^+\mathbb X].
\end{align*}
Thus, by \eqref{rsum-compare} and the definition of RRS convergence,
$$
\int_t^{t+h} S_{t+h,r} Y_r \cdot \textup{d}(\mathbf X-\tilde{\mathbf X})_r=S_{t+h,t}( Y_t\cdot\Delta_t^+ X+Y_t' : \Delta_t^+ \mathbb{X}).
$$
Using the smoothing property of $S$,  we obtain 
\begin{align*}
I_2=|(S_{t+h, t}-\textup{id})(Y_t \cdot \Delta_t^+ X+Y_t' : \Delta_t^+ \mathbb{X})|_\alpha\rightarrow 0, \quad \text{as}\quad h\rightarrow 0. 
\end{align*}
According to \eqref{rough-bound} and the right-continuity of $\tilde{\mathbf X}$,  
\begin{align*}
I_3&\lesssim \|\tilde X\|_{V^p((t, t+h];\mathbb{R}^d)}\|\tilde R^Y\|_{V^{\frac p 2}([t, t+h);\cB_\alpha)}+\|\tilde{\mathbb{X}}\|_{V^{\frac p2}((t,t+h];\mathbb{R}^{d\times d})}\|Y'\|_{V^{p}([t,t+h);\cB_\alpha)}\\
&~~+\|Y, Y'\|_{\mathcal{V}^{\frac{p}{2}}_{\tilde X, \alpha+\frac 2p}([t,T])}(\|\tilde X\|_{V^p((t,t+h];\mathbb{R}^d)}h^{\frac 2p}+\|\tilde{\mathbb{X}} \|_{V^{\frac p2}((t,t+h];\mathbb{R}^{d\times d})}h^{\frac 1p})\\
&~~+|S_{t+h, t}(Y_t\cdot \tilde{X}_{t+h, t}+Y_t':\tilde{\mathbb{X}}_{t+h, t})|_\alpha\\
&\lesssim (1+h^{\frac 1p})(\|\tilde X\|_{V^p([t,t+h];\mathbb{R}^d)}+\|\tilde{\mathbb{X}}\|_{V^{\frac p2}([t,t+h];\mathbb{R}^{d\times d})})\rightarrow 0, \quad \text{as}\quad h\rightarrow 0. 
\end{align*}
Therefore, we obtain the required result in (3). 

To prove the existence of the left-hand limit in (4), we use the following inequality, analogous to that for $I_3$ in the proof of Lemma \ref{llrr} (4). We need additional spatial regularity  of $(Y,Y')$ in order to ensure that rough convolution is bounded in $\cB_{\alpha+\varepsilon}$.
\begin{align*}
|(\textup{id}-S_{t,t-h})\int_s^{t-h} S_{t-h, r}Y_r \cdot \textup{d}\mathbf X_r)|_\alpha\lesssim h^{\varepsilon}|\int_s^{t-h} S_{t-h, r}Y_r \cdot \textup{d}\mathbf X_r|_{\alpha+\varepsilon}.
\end{align*} The rest of the proof is similar to (3) and Lemma \ref{llrr}, so we omit it.
\end{proof}
\begin{lemma}\label{control-roughint}
Let $p\in[2,3)$, $\alpha\in \mathbb{R}$, $\varepsilon>0$  and let $\mathbf{X}=(X, \mathbb{X})\in \mathbb{V}^{p}([0,T];\mathbb{R}^d)$.
If $(Y, Y')\in \mathcal{V}^{\frac{p}{2}}_{X, \alpha+\varepsilon}([0,T])$, 
then $$(Z, Z'):=(\int_0^\cdot S_{\cdot, r}Y_r \cdot \textup{d}\mathbf{X}_r, Y)\in \mathcal{V}^{\frac{p}{2}}_{X, \alpha-\frac{2}{p}}([0,T]).$$   Moreover, 
\begin{align}\label{bound-Z}
\|Z, Z'\|_{\mathcal V^{\frac p2}_{X, \alpha-\frac 2p}([0,T])}\leq C (1+\|\mathbf X\|_{\mathbb V^p([0,T];\mathbb R^d)})\|Y,Y'\|_{\mathcal V^{\frac p2}_{X, \alpha}([0,T])},  
\end{align}
where $C$ depends on $T, S, p$. 
\end{lemma}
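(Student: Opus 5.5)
We have to show that $(Z,Z')=(\int_0^\cdot S_{\cdot,r}Y_r\cdot\textup{d}\mathbf X_r, Y)$ is controlled according to $\mathcal B_{\alpha-\frac2p}$. By Definition \ref{crp}, this means checking four things: $Z\in V^\infty_{\alpha-\frac2p}$; $Z'=Y\in V^{p,\infty}_{\alpha-\frac4p,\alpha-\frac3p}$; $R^Z_{t,s}:=Z_{t,s}-Y_s\cdot X_{t,s}$ lies in $V^{\frac p2,p}_{\alpha-\frac4p,\alpha-\frac3p}$; and each of these comes with norms bounded by the right-hand side of \eqref{bound-Z}.

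The derivative part is immediate. Since $(Y,Y')\in\mathcal V^{\frac p2}_{X,\alpha}$, \eqref{est-rough-yp} gives $Y\in V^{p,\infty}_{\alpha-\frac1p,\alpha}$, and the embeddings $\mathcal B_{\alpha-\frac1p}\hookrightarrow\mathcal B_{\alpha-\frac4p}$ and $\mathcal B_\alpha\hookrightarrow\mathcal B_{\alpha-\frac3p}$ finish it. The $V^p$ norm of $Y$ is controlled by $\|Y,Y'\|(1+\|X\|_{V^p})$.

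For the remainder I would use the mild decomposition
\begin{align*}
R^Z_{t,s}&=\Big[\int_s^tS_{t,r}Y_r\cdot\textup{d}\mathbf X_r-S_{t,s}(Y_s\cdot X_{t,s}+Y'_s:\mathbb X_{t,s})\Big]+S_{t,s}Y'_s:\mathbb X_{t,s}\\
&\quad+(S_{t,s}-\textup{id})Y_s\cdot X_{t,s}+(S_{t,s}-\textup{id})Z_s.
\end{align*}
The four terms are handled as follows.
\begin{itemize}
\item The first bracket is bounded in $\mathcal B_{\alpha-\frac2p}$ by \eqref{rough-bound}, which is a sum of products of control powers of total homogeneity $>1$. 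Raising to the $\frac p2$ power and summing over a partition gives finite $\frac p2$-variation in $\mathcal B_{\alpha-\frac2p}$, and a fortiori in $\mathcal B_{\alpha-\frac4p}$ and $\mathcal B_{\alpha-\frac3p}$.
\item The second term is bounded by $|Y'|_{\infty,\alpha-\frac1p}\,\omega_{\mathbb X}^{2/p}(t,s)$. This gives $\frac p2$-variation, and hence $p$-variation.
\item The third term is bounded by \eqref{smooth} in $\mathcal B_{\alpha-\frac4p}$ by $|t-s|^{\frac2p}|Y_s|_{\alpha-\frac2p}|X_{t,s}|$, and in $\mathcal B_{\alpha-\frac3p}$ by $|t-s|^{\frac1p}|Y_s|_{\alpha-\frac2p}|X_{t,s}|$. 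In both cases the exponent of $|t-s|$ plus that of $\omega_X$ is $\geq\frac2p$, so the required variation holds by Hölder's inequality on partitions.
\item The fourth term uses \eqref{smooth} with $\sigma=\frac2p$, giving $|(S_{t,s}-\textup{id})Z_s|_{\alpha-\frac4p}\lesssim|t-s|^{\frac2p}|Z_s|_{\alpha-\frac2p}$. It therefore suffices to bound $\sup_s|Z_s|_{\alpha-\frac2p}$. This follows from \eqref{rough-bound} on $[0,s]$ together with $|S_{s,0}(Y_0\cdot X_{s,0}+Y_0':\mathbb X_{s,0})|_{\alpha-\frac2p}\lesssim\|Y,Y'\|\,\|\mathbf X\|$. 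The same bound also gives $Z\in V^\infty_{\alpha-\frac2p}$.
\end{itemize}

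The main obstacle is where the hypothesis $(Y,Y')\in\mathcal V^{\frac p2}_{X,\alpha+\varepsilon}$ enters. Assembling the estimates above needs only $\mathcal V^{\frac p2}_{X,\alpha}$ norms, consistent with \eqref{bound-Z}. The extra $\varepsilon$ is needed to guarantee that $Z$ is a regulated path in $\mathcal B_{\alpha-\frac2p}$, and in particular that left limits exist. This is supplied by Lemma \ref{rough-regular}(4), applied with $\alpha$ replaced by $\alpha-\frac2p$; its hypothesis is exactly $\mathcal V^{\frac p2}_{X,\alpha+\varepsilon}$. Parts (1)–(3) of the same lemma give the remaining one-sided limits.

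Collecting the bounds with constants depending on $T$ (through $|t-s|^{\theta}\le T^\theta$), $S$ and $p$ yields \eqref{bound-Z}, linear in $\|Y,Y'\|$ and in $1+\|\mathbf X\|$. The only delicate bookkeeping is that every product of controls must have total exponent at least $\frac2p$ (respectively $\frac1p$) to produce $\frac p2$- (respectively $p$-) variation.
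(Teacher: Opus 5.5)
Your proposal is correct and follows essentially the same route as the paper. It uses the identical four-term decomposition of $R^Z$, estimated via \eqref{rough-bound}, via \eqref{smooth} together with a bound on $\sup_s|Z_s|_{\alpha-\frac2p}$, and via boundedness of $S$. It handles $Z'=Y$ through \eqref{est-rough-yp} and the embeddings, and the extra $\varepsilon$ enters only through Lemma \ref{rough-regular} at index $\alpha-\frac2p$ to make $Z$ regulated.

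One small precision: the one-sided limits of $t\mapsto Z_t$ come from parts (3) and (4) of that lemma with $s=0$. Parts (1)--(2) concern the lower endpoint of integration and are not needed.
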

\begin{proof}
The continuous embeddings of the Banach spaces, $\cB_{\alpha+\varepsilon}\hookrightarrow \cB_\alpha$, imply that  $(Y, Y')\in \mathcal{V}^{\frac{p}{2}}_{X, \alpha}$. Thus Lemma $\ref{int-rough}$ ensures that $Z_t$ is well defined in $\cB_{\alpha-\frac{2}{p}}$.   Lemma \ref{rough-regular} with spatial index $\alpha-\frac{2}{p}$, shows that 
 $Z$ is a regulated path in $\cB_{\alpha-\frac{2}{p}}$. Combining \eqref{smooth} and \eqref{rough-bound} gives
 \begin{equation}\label{est-Z}
\begin{split}
\|Z\|_{V^\infty_{\alpha-\frac 2p}}&=
\sup\limits_{t\in[0,T]}|Z_t-Z_0|_{\alpha-\frac{2}{p}}\\
&\lesssim \|X\|_{V^p((0,t];\mathbb R^d)}\|R^Y\|_{V^{\frac p2}([0,t);\cB_{\alpha-\frac 2p})}+\|\mathbb X\|_{V^{\frac p2}(0,t];\mathbb R^{d\times d})}\|Y'\|_{V^p([0,t);\cB_{\alpha-\frac 2p})}\\
&~~+\|Y,Y'\|_{\mathcal V^{\frac p2}_{X, \alpha}}(\|X\|_{V^p((0,t];\mathbb R^d)}t^{\frac 2p}+\|\mathbb X\|_{V^{\frac p2}((0,t];\mathbb R^{d\times d})}t^{\frac 1p})\\
&~~+|S_{t,0}Y_0\cdot X_{t,0}|_{\alpha-\frac 2p}+|S_{t,0}Y_0':\mathbb X_{t,0}|_{\alpha-\frac 2p}\\
&\lesssim \|Y,Y'\|_{\mathcal V^{\frac p2}_{X, \alpha}}[(1+T^{\frac 2p})\|X\|_{V^p([0,t];\mathbb R^d)}+(1+T^{\frac 1p})\|\mathbb X\|_{V^{\frac p2}([0,t];\mathbb R^{d\times d})}].
\end{split}
\end{equation}
Hence, we have $Z\in V^\infty_{\alpha-\frac{2}{p}}([0,T])$.
Next, it is not hard to check that $Y$ has the required regularity as the Gubinelli derivative of
$Z$. In view of \eqref{est-rough-yp} and 
using the continuous embedding of the Banach spaces, $\cB_{\alpha-\frac{1}{p}}\hookrightarrow \cB_{\alpha-\frac{4}{p}}$,  we have  $Y\in V^{p}_{\alpha-\frac{4}{p}}([0,T])$. On the other hand, $V^\infty_{\alpha}\hookrightarrow V^\infty_{\alpha-\frac{3}{p}}$. Thus, 
$Y\in V^{p,\infty}_{\alpha-\frac{4}{p}, \alpha-\frac{3}{p}}([0,T])$ and 
\begin{align}\label{est-Z'}
|Z_0'|_{\alpha-\frac 3p}+\|Z'\|_{V^{p,\infty}_{\alpha-\frac 4p, \alpha-\frac 3p}}\lesssim (1+\|X\|_{V^p([0,T];\mathbb R^d)})\|Y,Y'\|_{\mathcal V^{\frac p2}_{X, \alpha}}. 
\end{align}

It remains to prove  $R^Z\in V^{\frac{p}{2}, p}_{\alpha-\frac{4}{p}, \alpha-\frac{3}{p}}$. We observe that 
 \begin{align*}
R_{t,s}^{Z}&=Z_{t,s}-Y_s\cdot X_{t,s}\\
&=\int_s^t S_{t,r}Y_r \cdot \textup{d}\mathbf{X}_r-Y_s\cdot X_{t,s}+(S_{t,s}-\textup{id})\int_0^s S_{s,r}Y_r\cdot \textup{d}\mathbf{X}_r\\
&=\int_s^t S_{t,r}Y_r \cdot \textup{d} \mathbf{X}_r-S_{t,s}(Y_s \cdot X_{t,s}+Y_s': \mathbb{X}_{t,s})\\
&~~+ (S_{t,s}-\textup{id})Y_s \cdot X_{t,s}\\
&~~
+(S_{t,s}-\textup{id})\int_0^s S_{s,r}Y_r\cdot \textup{d}\mathbf{X}_r\\
&~~+S_{t,s}Y_s': \mathbb{X}_{t,s}\\
&:=I_1(t,s)+I_2(t,s)+I_3(t,s)+I_4(t,s).
\end{align*}
We first prove that each term has finite $\frac{p}{2}$-variation in $\cB_{\alpha-\frac{4}{p}}$.
Using \eqref{rough-bound}, we have 
\begin{align*}
|I_1(t,s)|_{\alpha-\frac{4}{p}}&\lesssim |I_1(t,s)|_{\alpha-\frac{2}{p}}\\
&\lesssim  \|X\|_{V^{p}((s,t]; \mathbb{R}^{d})}\|R^{Y}\|_{V^{\frac{p}{2}}([s,t); \mathcal{B}_{\alpha-\frac{2}{p}})}+
\|\mathbb{X}\|_{V^{\frac{p}{2}}((s,t]; \mathbb{R}^{d\times d})}\|Y^{\prime}\|_{V^{p}([s,t); \mathcal{B}_{\alpha-\frac{2}{p}})}\\
&~~+\|Y, Y^{\prime}\|_{\mathcal{V}^{\frac{p}{2}}_{X, \alpha}} (\|X\|_{V^{p}((s,t];  \mathbb{R}^{d})} |t-s|^{\frac{2}{p}}+\|\mathbb{X}\|_{V^{\frac{p}{2}}((s,t];  \mathbb{R}^{d\times d})}|t-s|^{\frac{1}{p}}). 
\end{align*}
For any partition $\mathcal{P}$ of $[0,T]$, we have 
\begin{align*}
\sum\limits_{(s,t)\in \mathcal P} |I_1(t,s)|_{\alpha-\frac{4}{p}}^{\frac p2}&\lesssim  \sum\limits_{(s,t)\in \mathcal P}[\|X\|_{V^{p}((s,t]; \mathbb{R}^{d})}\|R^{Y}\|_{V^{\frac{p}{2}}([s,t); \mathcal{B}_{\alpha-\frac{2}{p}})}+
\|\mathbb{X}\|_{V^{\frac{p}{2}}((s,t]; \mathbb{R}^{d\times d})}\|Y^{\prime}\|_{V^{p}([s,t); \mathcal{B}_{\alpha-\frac{2}{p}})}\\
&~~+\|Y, Y^{\prime}\|_{\mathcal{V}^{\frac{p}{2}}_{X, \alpha}} (\|X\|_{V^{p}((s,t];  \mathbb{R}^{d})} |t-s|^{\frac{2}{p}}+\|\mathbb{X}\|_{V^{\frac{p}{2}}((s,t];  \mathbb{R}^{d\times d})}|t-s|^{\frac{1}{p}})]^{\frac p2}\\
&\lesssim \|X\|_{V^p([0,T];\mathbb{R}^d)}^{\frac p2}
\|R^Y\|_{V^{\frac p2}_{\alpha-\frac{2}{p}}}^{\frac p2}
+
\|\mathbb X\|_{V^{\frac p2}([0,T];\mathbb{R}^{d\times d})}^{\frac p2}
\|Y'\|_{V^p_{\alpha-\frac{2}{p}}}^{\frac p2}\\
&~~+
\|Y,Y'\|_{\mathcal V^{\frac p2}_{X,\alpha}}^{\frac p2}
\left(
T\|X\|_{V^p([0,T];\mathbb{R}^d)}^{\frac p2}
+
T^{\frac12}\|\mathbb X\|_{V^{\frac p2}([0,T];\mathbb{R}^{d\times d})}^{\frac p2}
\right).
\end{align*}
Taking the supremum over all partitions yields $I_1\in V^{\frac{p}{2}}_{\alpha-\frac{4}{p}}$. Applying \eqref{smooth}, we have  
\begin{align*}
|I_2(t,s)|_{\alpha-\frac{4}{p}}&=|(S_{t,s}-\textup{id})Y_s \cdot X_{t,s}|_{\alpha-\frac{4}{p}}\\
&\lesssim |t-s|^{\frac{2}{p}}|Y_s \cdot X_{t,s}|_{\alpha-\frac{2}{p}}\\
&\lesssim (\|Y\|_{V^{\infty}_\alpha}+|Y_0|_\alpha)|X_{t,s}||t-s|^{\frac{2}{p}}, 
\end{align*}
which allows us to derive
\begin{align*}
\sum\limits_{(s,t)\in \mathcal{P}}|I_2(t,s)|^{\frac{p}{2}}_{\alpha-\frac{4}{p}}&\lesssim T\|X\|^{\frac p2}_{V^p([0,T];\mathbb{R}^d)}(\|Y\|_{V^{\infty}_\alpha}+|Y_0|_\alpha)^{\frac{p}{2}}. 
\end{align*}
Hence, $I_2\in V^{\frac{p}{2}}_{\alpha-\frac{4}{p}}$. Similarly, we have 
\begin{align*}
|I_3(t,s)|_{\alpha-\frac{4}{p}}&=|(S_{t,s}-\textup{id})\int_0^s S_{s,r}Y_r\cdot \textup{d}\mathbf{X}_r|_{\alpha-\frac{4}{p}}\\
&\lesssim |t-s|^{\frac{2}{p}}|Z_s|_{\alpha-\frac{2}{p}}.
\end{align*}
Then we get 
\begin{align*}
\sum_{(s,t)\in \mathcal P}|I_3(t,s)|^{\frac p2}_{\alpha-\frac 4p}\lesssim T(\sup_{s\in [0,T]}|Z_s|_{\alpha-\frac 2p})^{\frac{p}{2}}.
\end{align*}
Since $Z\in V^\infty_{\alpha-\frac{2}{p}}$, we have $I_3\in V^{\frac{p}{2}}_{\alpha-\frac{4}{p}}$. Finally, the boundedness of $S$
 gives 
 \begin{align*}
|I_4(t,s)|_{\alpha-\frac{4}{p}}&=|S_{t,s}Y_s': \mathbb{X}_{t,s}|_{\alpha-\frac{4}{p}}\\
&\lesssim |Y_s'|_{\alpha-\frac{1}{p}}|\mathbb{X}_{t,s}|.
\end{align*}
Since $Y'$ is bounded in $\cB_{\alpha-\frac{1}{p}}$ and $\mathbb{X}$ has finite $\frac{p}{2}$ variation, taking the supremum over partitions implies $I_4\in V^{\frac{p}{2}}_{\alpha-\frac{4}{p}}$. Combining $I_1, I_2, I_3, I_4$, we have $R^Z\in V^{\frac{p}{2}}_{\alpha-\frac{4}{p}}$
Repeating the above argument in $\cB_{\alpha-\frac{3}{p}}$  yields
$R^Z\in V^p_{\alpha-\frac{3}{p}}$. We have the estimate for the remainder 
\begin{equation}\label{est-RZ}
\begin{split}
\|R^Z\|_{V^{\frac p2, p}_{\alpha-\frac 4p, \alpha-\frac 3p}}&\lesssim  \|Y,Y'\|_{\mathcal V^{\frac p2}_{X, \alpha}}[(1+T^{\frac 1p}+T^{\frac 2p})\|X\|_{V^p([0,T];\mathbb R^d)}+(1+T^{\frac 1p})\|\mathbb X\|_{V^{\frac p2}([0,T];\mathbb R^{d\times d})}]\\
&~~+(T^{\frac 1p}
+T^{\frac 2p})\|Z\|_{V^\infty_{\alpha-\frac 2p}}\\
&\leq C  \|Y,Y'\|_{\mathcal V^{\frac p2}_{X, \alpha}} \|\mathbf X\|_{\mathbb V^p([0,T]; R^d)},
\end{split}
\end{equation}
where $C$ depends on $T, S, p$. 
Therefore, we proved $(Z, Z')\in \mathcal V^{\frac p2}_{X, \alpha-\frac{2}{p}}([0,T])$. The required estimate \eqref{bound-Z}  follows from \eqref{est-Z}, \eqref{est-Z'} and \eqref{est-RZ}. 
\end{proof}
\begin{lemma}\label{composition-f}
Let $p\in[2,3)$, $\alpha\in \mathbb{R}$, $\varepsilon>0$ and let $\mathbf X\in \mathbb{V}^p([0,T];\mathbb{R}^d)$.
Suppose $(Y, Y')\in \mathcal{V}^{\frac p2}_{X, \alpha}([0,T])$, $f\in C^2_b(\cB_{\alpha-\frac{i}{p}}, \mathcal{L}(\mathbb{R}^d, \cB_{\alpha+\frac{2-i}{p}+\varepsilon}))$, $i=0,1,2$.  Then 

(1) $(f(Y), f(Y)'):=(f(Y), Df(Y)Y')\in \mathcal{V}^{\frac p2}_{X, \alpha+\frac{2}{p}+\varepsilon}([0,T])$. 
\begin{align}
\|f(Y), Df(Y)Y'\|_{\mathcal{V}^{\frac p2}_{X, \alpha+\frac{2}{p}+\varepsilon}}\lesssim  \|f\|_{C_b^2}[1+\|Y, Y'\|_{\mathcal{V}^{\frac{p}{2}}_{X, \alpha}}+(1+\|X\|_{V^p([0,T];\mathbb{R}^d)})^2\|Y, Y'\|_{\mathcal{V}^{\frac{p}{2}}_{X, \alpha}}^2]. \label{comp-f5}
\end{align}

(2) $
(\int_0^{\cdot} S_{\cdot, r}f(Y_r)\cdot \textup{d}\mathbf X_r, f(Y))\in  \mathcal{V}^{\frac p2}_{X, \alpha}([0,T]).$

(3) Furthermore, if we assume that $f\in C_b^3(\cB_{\alpha-\frac{i}{p}}, \mathcal{L}(\mathbb{R}^d, \cB_{\alpha+\frac{2-i}{p}+\varepsilon}))$, $i=0,1,2$, and suppose $(f(\tilde Y), Df(\tilde Y)\tilde Y')$ for  another $(\tilde Y, \tilde Y')\in \mathcal{V}^{\frac p2}_{X, \alpha}([0,T])$, then 
\begin{equation}\label{diff-comp}
\begin{split}
&\|f(Y)-f(\tilde Y), Df(Y)Y'-Df(\tilde Y)\tilde Y'\|_{\mathcal V^{\frac p2}_{X, \alpha+\frac 2p+\varepsilon}}\\
\lesssim &~~ \|f\|_{C_b^3}(1+\|X\|_{V^p([0,T];\mathbb R^d)})^2(1+\|Y,  Y'\|_{\mathcal V^{\frac p2}_{X, \alpha}}+\|\tilde Y, \tilde Y'\|_{\mathcal V^{\frac p2}_{X, \alpha}})^2\|Y-\tilde Y, Y'-\tilde Y'\|_{\mathcal V^{\frac p2}_{X, \alpha}}.
\end{split}
\end{equation}
\end{lemma}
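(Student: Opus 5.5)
We first prove (1) by writing out the controlled structure of $f(Y)$ explicitly. Throughout we use the shifted level $\beta:=\alpha+\frac2p+\varepsilon$, so that the three target spaces are $\cB_\beta$, $\cB_{\beta-\frac1p}=\cB_{\alpha+\frac1p+\varepsilon}$ and $\cB_{\beta-\frac2p}=\cB_{\alpha+\varepsilon}$. These match the ranges of $f$ for $i=0,1,2$, with $f$ defined on $\cB_\alpha$, $\cB_{\alpha-\frac1p}$, $\cB_{\alpha-\frac2p}$.

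\emph{Supremum bounds.} The bound $\|f(Y)\|_{\infty,\beta}\le \|f\|_{C_b^2}$ is immediate. For the derivative,
\[
|Df(Y_t)Y'_t|_{\beta-\frac1p}\le \|Df\|_\infty |Y'_t|_{\alpha-\frac1p},
\]
where $Df$ is read as the derivative of $f$ on $\cB_{\alpha-\frac1p}$ (case $i=1$).

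\emph{Variation of the Gubinelli derivative.} To bound the $V^p$-seminorm of $Df(Y)Y'$ in $\cB_{\beta-\frac2p}$, we split
\[
Df(Y_t)Y'_t-Df(Y_s)Y'_s=Df(Y_t)Y'_{t,s}+\bigl(Df(Y_t)-Df(Y_s)\bigr)Y'_s .
\]
Both terms use the case $i=2$ (maps $\cB_{\alpha-\frac2p}\to\cB_{\alpha+\varepsilon}$). The first is controlled by $\|Y'\|_{V^p_{\alpha-\frac2p}}$. The second is controlled by $\|D^2f\|_\infty|Y_{t,s}|_{\alpha-\frac2p}|Y'_s|_{\alpha-\frac2p}$. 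Together with \eqref{est-rough-yp} (and the embedding $\cB_{\alpha-\frac1p}\hookrightarrow\cB_{\alpha-\frac2p}$), this gives a bound of the form $(1+\|X\|_{V^p})\|Y,Y'\|^2$.

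\emph{Remainder.} We use the Taylor expansion
\[
R^{f(Y)}_{t,s}=f(Y_t)-f(Y_s)-Df(Y_s)Y'_sX_{t,s}=Df(Y_s)R^Y_{t,s}+\int_0^1(1-r)D^2f(Y_s+rY_{t,s})[Y_{t,s},Y_{t,s}]\,\txtd r .
\]
The $V^{p/2}$-bound in $\cB_{\beta-\frac2p}$ uses $f$ at level $i=2$. The term $Df(Y_s)R^Y_{t,s}$ is controlled by $\|R^Y\|_{V^{p/2}_{\alpha-\frac2p}}$. The quadratic term is bounded by $|Y_{t,s}|_{\alpha-\frac2p}^2$, and a square of a $p$-variation increment has finite $\frac p2$-variation; by \eqref{est-rough-yp} this produces the $(1+\|X\|_{V^p})^2\|Y,Y'\|^2$ term in \eqref{comp-f5}.

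The $V^p$-bound in $\cB_{\beta-\frac1p}$ uses level $i=1$ and needs only $|Y_{t,s}|_{\alpha-\frac1p}$ and $R^Y\in V^p_{\alpha-\frac1p}$. Here it is simpler to write $R^{f(Y)}_{t,s}=f(Y)_{t,s}-Df(Y_s)Y'_sX_{t,s}$ and bound the two pieces separately. Collecting these estimates yields \eqref{comp-f5}.

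\emph{Part (2).} This follows by applying Lemma \ref{control-roughint} to the controlled path $(f(Y),Df(Y)Y')\in\mathcal V^{p/2}_{X,(\alpha+\frac2p)+\varepsilon}$, with the lemma's spatial index equal to $\alpha+\frac2p$. The lemma gives
\[
\Bigl(\int_0^\cdot S_{\cdot,r}f(Y_r)\cdot\txtd\mathbf X_r,\; f(Y)\Bigr)\in\mathcal V^{p/2}_{X,\alpha}.
\]
This is exactly why the $\varepsilon$ and the $\frac2p$ gain are built into the assumption on $f$.

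\emph{Part (3), and the main obstacle.} We repeat the computation of (1) for differences, writing every difference via $f(a)-f(b)=\int_0^1Df(b+r(a-b))\,\txtd r\,(a-b)$ and its analogues for $Df$ and $D^2f$. The main obstacle is the remainder difference, and in particular the quadratic Taylor term. There we must expand
\[
D^2f(Y_s+rY_{t,s})[Y_{t,s},Y_{t,s}]-D^2f(\tilde Y_s+r\tilde Y_{t,s})[\tilde Y_{t,s},\tilde Y_{t,s}]
\]
as
\[
\bigl(D^2f(\cdot)-D^2f(\cdot)\bigr)[Y_{t,s},Y_{t,s}]+D^2f(\cdot)[Y_{t,s}-\tilde Y_{t,s},Y_{t,s}]+D^2f(\cdot)[\tilde Y_{t,s},Y_{t,s}-\tilde Y_{t,s}].
\]
The first piece needs $D^3f$, which is the reason for the $C_b^3$ assumption, and is bounded by $|Y_s-\tilde Y_s|+|Y_{t,s}-\tilde Y_{t,s}|$ times $|Y_{t,s}|^2$. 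Controlling $|(Y-\tilde Y)_{t,s}|$ via \eqref{est-rough-yp} applied to the difference, together with Hölder's inequality for the $\frac p2$-variation, then produces the factor
\[
(1+\|X\|_{V^p})^2(1+\|Y,Y'\|+\|\tilde Y,\tilde Y'\|)^2\,\|Y-\tilde Y,Y'-\tilde Y'\|,
\]
which gives \eqref{diff-comp}. The other terms are bounded similarly and are easier.
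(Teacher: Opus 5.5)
Your proposal is correct and follows essentially the paper's proof. It uses the same splitting of $Df(Y_t)Y'_t-Df(Y_s)Y'_s$ and the same Taylor expansion of $R^{f(Y)}$ combined with \eqref{est-rough-yp}. It obtains part (2) from Lemma \ref{control-roughint} at spatial index $\alpha+\frac2p$, and part (3) from the same difference expansions. As in the paper, $D^3f$ is needed in part (3) not only for the quadratic remainder term but also for the mixed difference $Df(Y_t)-Df(\tilde Y_t)-Df(Y_s)+Df(\tilde Y_s)$ in the $V^p$-bound of the Gubinelli derivative.

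The only deviation is in the $V^p$-bound of $R^{f(Y)}$ in $\cB_{\alpha+\frac1p+\varepsilon}$. There you estimate $f(Y)_{t,s}$ and $Df(Y_s)Y'_sX_{t,s}$ separately rather than via Taylor, and this works. The resulting cross term $\|X\|_{V^p([0,T];\mathbb{R}^d)}\|Y,Y'\|_{\mathcal V^{\frac p2}_{X,\alpha}}$ is absorbed into \eqref{comp-f5} by $2ab\le 1+a^2b^2$.
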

\begin{proof}
Denote $\|f\|_{C_b^a}:=\max\limits_{i=0,1,2}\|f\|_{C_b^a(\cB_{\alpha-\frac{i}{p}},\mathcal{L}(\mathbb{R}^d, \cB_{\alpha+\frac{2-i}{p}+\varepsilon}))}$, $a=2,3$. Since $f$ is a smooth function. It is not hard to check that $f(Y)$, $Df(Y)Y'$ are regulated paths in $\cB_{\alpha+\frac{2}{p}+\varepsilon}$, $\cB_{\alpha+\frac{1}{p}+\varepsilon}$, respectively.
Similar to Lemma \ref{fff2}, using Taylor's formula, we have 
\begin{equation}\label{comp-f1}
\begin{split}
\|f(Y)\|_{V^\infty_{\alpha+\frac{2}{p}+\varepsilon}}&=\sup\limits_{t\in[0,T]}
\|f(Y_t)-f(Y_0)\|_{\alpha+\frac{2}{p}+\varepsilon}\\
&\leq \|f\|_{C_b^2}
\sup_{t\in[0,T]}|Y_t-Y_0|_\alpha\\
&=
\|f\|_{C_b^2}\|Y\|_{V^\infty_\alpha}\\
&\leq \|f\|_{C_b^2}\|Y, Y'\|_{\mathcal{V}^{\frac{p}{2}}_{X, \alpha}}
\end{split}
\end{equation}
and 
\begin{equation}\label{comp-f2}
\begin{split}
\|Df(Y)Y'\|_{V^\infty_{\alpha+\frac{1}{p}+\varepsilon}}&=\sup_{t\in[0,T]}
\|Df(Y_t)Y'_t-Df(Y_0)Y'_0\|_{\alpha+\frac{1}{p}+\varepsilon}\\
&\leq
\|f\|_{C_b^2}
(
\|Y'\|_{V^\infty_{\alpha-\frac{1}{p}}}
+
|Y'_0|_{\alpha-\frac{1}{p}}
\|Y\|_{V^\infty_{\alpha-\frac{1}{p}}}
)\\
&\lesssim 
\|f\|_{C_b^2}
(
\|Y'\|_{V^\infty_{\alpha-\frac{1}{p}}}
+
|Y'_0|_{\alpha-\frac{1}{p}}
\|Y\|_{V^\infty_\alpha}
)\\
&\lesssim \|f\|_{C_b^2}( \|Y, Y'\|_{\mathcal{V}^{\frac{p}{2}}_{X, \alpha}}+\|Y, Y'\|_{\mathcal{V}^{\frac{p}{2}}_{X, \alpha}}^2)
.
\end{split}
\end{equation}
Since $Df\in C_b^1(\cB_{\alpha-\frac{i}{p}}, \mathcal{L}(\cB_{\alpha-\frac{i}{p}},\mathcal{L}(\mathbb{R}^d,\cB_{\alpha+\frac{2-i}{p}+\varepsilon})))$, $Y\in V^p_{\alpha-\frac 1p}$, $Y'\in V^{p,\infty}_{\alpha-\frac{2}{p},\alpha-\frac{1}{p}}$, we have the following inequality with $i=2$:
\begin{align*}
\|Df(Y_t)Y_t'-Df(Y_s)Y_s'\|_{\alpha+\varepsilon}&\leq \|Df(Y_t)Y_t'-Df(Y_t)Y_s'\|_{\alpha+\varepsilon}+\|Df(Y_t)Y_s'-Df(Y_s)Y_s'\|_{\alpha+\varepsilon}\\
&\leq \|Df(Y_t)\|_{\mathcal{L}(\cB_{\alpha-\frac{2}{p}}, \mathcal{L}(\mathbb{R}^d, \cB_{\alpha+\varepsilon}))}|Y_t'-Y_s'|_{\alpha-\frac{2}{p}}\\
&~~+\|Df(Y_t)-Df(Y_s)\|_{\mathcal{L}(\cB_{\alpha-\frac{2}{p}}, \mathcal{L}(\mathbb{R}^d, \cB_{\alpha+\varepsilon}))}|Y_s'|_{\alpha-\frac{2}{p}}\\
&\leq \|f\|_{C_b^2}(|Y_t'-Y_s'|_{\alpha-\frac 2p}+|Y_t-Y_s|_{\alpha-\frac 2p}|Y_s'|_{\alpha-\frac 2p}).
\end{align*}
Taking the $p$-variation  and using \eqref{est-rough-yp}, we have  
\begin{equation}\label{comp-f3}
\begin{split}
\|Df(Y)Y'\|_{V^p_{\alpha+\varepsilon}}&\lesssim \|f\|_{C_b^2}(\|Y'\|_{V^p_{\alpha-\frac 2p}}+\sup_{0\leq s\leq T}|Y_s'|_{\alpha-\frac 2p}\|Y\|_{V^p_{\alpha-\frac 1p}})\\
&\lesssim \|f\|_{C_b^2}[\|Y, Y'\|_{\mathcal{V}^{\frac{p}{2}}_{X, \alpha}}+(1+\|X\|_{V^p([0,T];\mathbb{R}^d)})\|Y, Y'\|_{\mathcal{V}^{\frac{p}{2}}_{X, \alpha}}^2].
\end{split}
\end{equation}
Recall that $R^{f(Y)}_{t,s}:=f(Y_t)-f(Y_s)-Df(Y_s)Y_s'\cdot X_{t,s}$. Taylor's formula and $Y_{t,s}=Y_s'\cdot X_{t,s}+R^{Y}_{t,s}$ give
\begin{align*}
R_{t,s}^{f(Y)}=Df(Y_s)R^Y_{t,s}+\int_0^1(1-r)D^2f(Y_s+rY_{t,s})(Y_{t,s}, Y_{t,s})\textup{d}r.
\end{align*}
For $i=1,2$, we have 
\begin{align*}
|R^{f(Y)}_{t,s}|_{\alpha+\frac{2-i}{p}+\varepsilon}\leq \|f\|_{C_b^2}(|R^Y_{t,s}|_{\alpha-\frac{i}{p}}+\frac{1}{2}|Y_{t,s}|^2_{\alpha-\frac{i}{p}}),
\end{align*}
which implies $R^{f(Y)}\in V^{\frac{p}
{2},p}_{\alpha+\varepsilon, \alpha+\frac{1}{p}+\varepsilon}$. Similarly, we have the following estimate
\begin{equation}\label{comp-f4}
\begin{split}
\|R^{f(Y)}\|_{V^{\frac{p}
{2},p}_{\alpha+\varepsilon, \alpha+\frac{1}{p}+\varepsilon}}\lesssim \|f\|_{C_b^2}[\|Y, Y'\|_{\mathcal{V}^{\frac{p}{2}}_{X, \alpha}}+(1+\|X\|_{V^p([0,T];\mathbb{R}^d)})^2\|Y, Y'\|_{\mathcal{V}^{\frac{p}{2}}_{X, \alpha}}^2].
\end{split}
\end{equation}
Combining \eqref{comp-f1}-\eqref{comp-f4}, we have  $(f(Y), f(Y)')\in \mathcal{V}^{\frac p2}_{X, \alpha+\frac{2}{p}+\varepsilon}$, 
\begin{align*}
\|f(Y), Df(Y)Y'\|_{\mathcal{V}^{\frac p2}_{X, \alpha+\frac{2}{p}+\varepsilon}}\lesssim  \|f\|_{C_b^2}[1+\|Y, Y'\|_{\mathcal{V}^{\frac{p}{2}}_{X, \alpha}}+(1+\|X\|_{V^p([0,T];\mathbb{R}^d)})^2\|Y, Y'\|_{\mathcal{V}^{\frac{p}{2}}_{X, \alpha}}^2].
\end{align*}

The second result follows from the arguments in  Lemma \ref{control-roughint}.   

For (3), we write 
\begin{align*}
Y_{t,s}-\tilde Y_{t,s}=(Y_s'-\tilde Y_s')X_{t,s}+(R^Y-R^{\tilde Y})_{t,s}.
\end{align*}
Using Minkowski inequality, we have 
\begin{equation}\label{diff0}
\begin{split}
\|Y-\tilde Y\|_{V^p_{\alpha-\frac1p}}
&\leq (\sum_{(s,t)\in \mathcal P}|(Y_s'-\tilde Y_s' )X_{t,s}|^p_{\alpha-\frac 1p})^{\frac 1p}+ (\sum_{(s,t)\in \mathcal P}|(R^Y-R^{\tilde Y})_{t,s}|^p_{\alpha-\frac 1p})^{\frac 1p}\\
&\leq \sup_{r\in [0,T]}|Y_s'-\tilde Y_s'|_{\alpha-\frac 1p}(\sum_{(s,t)\in \mathcal P}|X_{t,s}|^p)^{\frac 1p}+\|(R^Y-R^{\tilde Y})\|_{V^{p}_{\alpha-\frac 1p}([0,T])}\\
&\leq (|Y_0'-\tilde Y_0'|_{\alpha-\frac 1p}+\|Y'-\tilde Y'\|_{V^{\infty}_{\alpha-\frac 1p}})\|X\|_{V^{p}([0,T];\mathbb R^d)}+\|(R^Y-R^{\tilde Y})\|_{V^{p}_{\alpha-\frac 1p}([0,T])}\\
&\leq (1+\|X\|_{V^p([0,T];\mathbb R^d)})
\|Y-\tilde Y,Y'-\tilde Y'\|_
{\mathcal V^{\frac p2}_{X,\alpha}}.
\end{split}
\end{equation}
According to the Lipschitz property of $f$, we have 
\begin{equation}\label{diff1}
\begin{split}
&|f(Y_0)-f(\tilde Y_0)|_{\alpha+\frac 2p+\varepsilon}+\|f(Y)-f(\tilde Y)\|_{V^\infty_{\alpha+\frac 2p+\varepsilon}}\\\lesssim &~~\|f\|_{C_b^3}\sup_{0\leq t\leq T}(|Y_0-\tilde Y_0|_\alpha+\|Y-\tilde Y\|_{V^\infty_\alpha})\\
\lesssim &~~ \|f\|_{C_b^3} \|Y-\tilde Y, Y'-\tilde Y'\|_{\mathcal V^{\frac p2}_{X, \alpha}}.
\end{split}
\end{equation}
Using the boundedness and Lipschitz property of $Df$, we have 
\begin{align*}
|Df(Y_t)Y'_t-Df(\tilde Y_t)\tilde Y'_t|
_{\alpha+\frac1p+\varepsilon}&\leq |Df(Y_t)(Y_t'-\tilde Y_t')+(Df(Y_t)-Df(\tilde Y_t))\tilde Y_t'|_{\alpha+\frac 1p+\varepsilon}\\
&\leq
\|f\|_{C_b^3}
(
|Y'_t-\tilde Y'_t|_{\alpha-\frac1p}
+
|Y_t-\tilde Y_t|_{\alpha-\frac1p}
|\tilde Y'_t|_{\alpha-\frac1p}
).
\end{align*}
and 
\begin{align*}
|Df(Y_0)Y'_0-Df(\tilde Y_0)\tilde Y'_0|
_{\alpha+\frac1p+\varepsilon}
\leq
\|f\|_{C_b^3}
(
|Y'_0-\tilde Y'_0|_{\alpha-\frac1p}
+
|Y_0-\tilde Y_0|_{\alpha-\frac1p}
|\tilde Y'_0|_{\alpha-\frac1p}
),
\end{align*}
which yields
\begin{equation}\label{diff2}
\begin{split}
&|Df(Y_0)Y'_0-Df(\tilde Y_0)\tilde Y'_0|
_{\alpha+\frac1p+\varepsilon}
+
\|Df(Y)Y'-Df(\tilde Y)\tilde Y'\|
_{V^\infty_{\alpha+\frac1p+\varepsilon}}\\
\lesssim&~~
\|f\|_{C_b^3}
[
|Y'_0-\tilde Y'_0|_{\alpha-\frac1p}
+
\|Y'-\tilde Y'\|_{V^\infty_{\alpha-\frac1p}}
+
(
|Y_0-\tilde Y_0|_\alpha
+\|Y-\tilde Y\|_{V^\infty_\alpha}
)(
|\tilde Y'_0|_{\alpha-\frac1p}
+\|\tilde Y'\|_{V^\infty_{\alpha-\frac1p}}
)
]
\\
\lesssim&~~
\|f\|_{C_b^3}
(
1+\|\tilde Y,\tilde Y'\|_
{\mathcal V^{\frac p2}_{X,\alpha}}
)
\|Y-\tilde Y,Y'-\tilde Y'\|_
{\mathcal V^{\frac p2}_{X,\alpha}}.
\end{split}
\end{equation}
For  $0\leq s<t\leq T$, 
\begin{align*}
&Df(Y_t)Y_t'-Df(\tilde Y_t)\tilde Y_t'-Df(Y_s)Y_s'+Df(\tilde Y_s)\tilde Y_s'\\
=&~~Df(Y_t)(Y_{t,s}'-\tilde Y_{t,s}')+[Df(Y_t)-Df(Y_s)](Y_s'-\tilde Y_s')
+[Df(Y_t)-Df(\tilde Y_t)]\tilde Y_{t,s}'\\
&~~+[Df(Y_t)-Df(\tilde Y_t)-Df(Y_s)+Df(\tilde Y_s)]\tilde Y_s', 
\end{align*}
where we have 
\begin{align*}
&Df(Y_t)-Df(\tilde Y_t)-Df(Y_s)+Df(\tilde Y_s)
=
\int_0^1
D^2f\bigl(\tilde Y_t+r(Y_t-\tilde Y_t)\bigr)([Y_{t,s}-\tilde Y_{t,s})\textup dr\\
&+\int_0^1
[
D^2f\bigl(\tilde Y_t+r(Y_t-\tilde Y_t)\bigr)
-D^2f\bigl(\tilde Y_s+r(Y_s-\tilde Y_s)\bigr)
]
(Y_s-\tilde Y_s)\textup dr.
\end{align*}
Following from the boundedness and the Lipschitz property of $D^2f$, we have 
\begin{align*}
&\|Df(Y_t)-Df(\tilde Y_t)-Df(Y_s)+Df(\tilde Y_s)\|_{\mathcal L(\cB_{\alpha-\frac 2p}, \mathcal L(\mathbb R^d, \cB_{\alpha+\varepsilon}))}\\
\lesssim &~~\|f\|_{C_b^3}[|Y_{t,s}-\tilde Y_{t,s}|_{\alpha-\frac 2p}+|Y_s-\tilde Y_s|_{\alpha-\frac 2p}(|Y_{t,s}|_{\alpha-\frac 2p}+|\tilde Y_{t,s}|_{\alpha-\frac 2p}).
\end{align*}
Then we have 
\begin{equation}\label{diff3}
\begin{split}
&\|Df(Y)Y'-Df(\tilde Y)\tilde Y'\|_{V^p_{\alpha+\varepsilon}}\\
\lesssim &~~\|f\|_{C_b^3}(1+\|X\|_{V^p([0,T];\mathbb R^d)})
(
1+\|Y,Y'\|_{\mathcal V^{\frac p2}_{X,\alpha}}
+\|\tilde Y,\tilde Y'\|_{\mathcal V^{\frac p2}_{X,\alpha}}
)^2
\|Y-\tilde Y,Y'-\tilde Y'\|_{\mathcal V^{\frac p2}_{X,\alpha}}.
\end{split}
\end{equation}
Taylor's formula gives 
\begin{align*}
R^{f(Y)}_{t,s}-R_{t,s}^{f(\tilde Y)}&=Df(Y_s)(R^{Y}-R^{\tilde Y})_{t,s}+[Df(Y_s)-Df(\tilde Y_s)]R^{\tilde Y}_{t,s}\\
&~~+\int_0^1 (1-r)(D^2 f(Y_s+rY_{t,s})(Y_{t,s}, Y_{t,s})-D^2f(\tilde Y_s+r\tilde Y_{t,s})(\tilde Y_{t,s}, \tilde Y_{t,s}))\textup{d}r.
\end{align*}
For $i=1,2$, 
\begin{align*}
|R^{f(Y)}_{t,s}-R^{f(\tilde Y)}_{t,s}|_{\alpha+\frac{2-i}{p}+\varepsilon}&\lesssim \|f\|_{C_b^3}[|(R^Y-R^{\tilde Y})_{t,s}|_{\alpha-\frac ip}
+|Y_s-\tilde Y_s|_{\alpha-\frac ip}|R^{\tilde Y}_{t,s}|_{\alpha-\frac ip}\\
&~~+|(Y-\tilde Y)_{t,s}|_{\alpha-\frac ip}(|Y_{t,s}|_{\alpha-\frac ip}+|\tilde Y_{t,s}|_{\alpha-\frac ip})+\sup_{s\leq r\leq t}|Y_r-\tilde Y_r|_{\alpha-\frac ip}|Y_{t,s}|_{\alpha-\frac ip}^2
]
\end{align*}
By Hölder inequality, we have the estimate 
\begin{equation}\label{diff4}
\begin{split}
\|R^{f(Y)}-R^{f(\tilde Y)}\|_{V^{\frac p2, p}_{\alpha+\varepsilon, \alpha+\frac 1p+\varepsilon}}&\lesssim \|f\|_{C_b^3}[\|R^Y-R^{\tilde Y}\|_
{V^{\frac p2,p}_{\alpha-\frac2p,\alpha-\frac1p}}+
\sup_{0\leq r\leq T}|Y_r-\tilde Y_r|_\alpha
\|R^{\tilde Y}\|_
{V^{\frac p2,p}_{\alpha-\frac2p,\alpha-\frac1p}}
\\
&~~+
\|Y-\tilde Y\|_{V^p_{\alpha-\frac1p}}
(
\|Y\|_{V^p_{\alpha-\frac1p}}
+\|\tilde Y\|_{V^p_{\alpha-\frac1p}}
)+
\sup_{0\leq r\leq T}|Y_r-\tilde Y_r|_\alpha
\|Y\|_{V^p_{\alpha-\frac1p}}^2
]
\end{split}
\end{equation}
Combining with \eqref{diff0}-\eqref{diff4}, we have
\begin{align*}
&\|f(Y)-f(\tilde Y), Df(Y)Y'-Df(\tilde Y)\tilde Y'\|_{\mathcal V^{\frac p2}_{X, \alpha+\frac 2p+\varepsilon}}\\
\lesssim &~~ \|f\|_{C_b^3}(1+\|X\|_{V^p([0,T];\mathbb R^d)})^2(1+\|Y,  Y'\|_{\mathcal V^{\frac p2}_{X, \alpha}}+\|\tilde Y, \tilde Y'\|_{\mathcal V^{\frac p2}_{X, \alpha}})^2\|Y-\tilde Y, Y'-\tilde Y'\|_{\mathcal V^{\frac p2}_{X, \alpha}}.
\end{align*}
\end{proof}
\begin{lemma}
\label{local-rough-pde}
Let $p\in[2,3)$, $\alpha\in \mathbb{R}$, $\varepsilon>0$ and let $\mathbf X\in \mathbb{V}^p([0,T];\mathbb{R}^d)$ be right-continuous at $t=0$. Suppose that  $f\in C_b^3(\cB_{\alpha-\frac{i}{p}}, \mathcal{L}(\mathbb{R}^d, \cB_{\alpha+\frac{2-i}{p}+\varepsilon}))$, $i=0,1,2$.  Then for every $y_0\in \cB_\alpha$,  there exists a time horizon $\rho\in (0,T)$ and a unique local solution $(Y, Y')\in \mathcal{V}^{\frac p2}_{X, \alpha}([0,\rho])$ such that $Y'=f(Y)$ and 
\begin{align*}
Y_t=S_{t,0}y_0+\int_0^{t}S_{t,r}f(Y_r)\cdot \textup{d}\mathbf{X}_r, \quad t\in[0,\rho].
\end{align*}
\end{lemma}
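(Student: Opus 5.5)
We set up a Banach fixed-point argument on controlled rough paths, in the same way as Lemma \ref{locyoung}. Define on $\mathcal V^{\frac p2}_{X,\alpha}([0,\rho])$ the map
\[
\mathcal J(Y,Y')_t:=\Big(S_{t,0}y_0+\int_0^t S_{t,r}f(Y_r)\cdot\textup{d}\mathbf X_r,\; f(Y_t)\Big).
\]
Lemma \ref{composition-f}(1) gives $(f(Y),Df(Y)Y')\in\mathcal V^{\frac p2}_{X,\alpha+\frac2p+\varepsilon}$. Lemma \ref{control-roughint}, applied with spatial index $\alpha+\frac2p$ (the $\varepsilon$-gain is what it requires), then shows that the convolution together with the Gubinelli derivative $f(Y)$ lies in $\mathcal V^{\frac p2}_{X,\alpha}$; this is Lemma \ref{composition-f}(2).

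For the free part, the Gubinelli derivative is zero. Its remainder is $(S_{t,s}-\textup{id})S_{s,0}y_0$, and we must place it in $V^{\frac p2,p}_{\alpha-\frac2p,\alpha-\frac1p}$. By \eqref{smooth} it is bounded by $|t-s|^{2/p}|y_0|_\alpha$ in $\cB_{\alpha-\frac2p}$ and by $|t-s|^{1/p}|y_0|_\alpha$ in $\cB_{\alpha-\frac1p}$. The $\frac p2$- and $p$-variations are therefore at most $\rho^{2/p}|y_0|_\alpha$ and $\rho^{1/p}|y_0|_\alpha$. The $V^\infty_\alpha$ part is only bounded by $(C_S+1)|y_0|_\alpha$, so, as in Lemma \ref{locyoung}, we work on the ball
\[
\mathscr B_\rho:=\{(Y,Y')\colon Y_0=y_0,\ Y'_0=f(y_0),\ \|Y,Y'\|_{\mathcal V^{\frac p2}_{X,\alpha}([0,\rho])}\le (C_S+2)(1+|y_0|_\alpha)\}.
\]
One could also first pass to slightly larger exponents $p'>p$ to gain oscillation smallness, as in Lemma \ref{local-2p}.

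\textbf{Self-map.} The estimates \eqref{rough-bound}, \eqref{est-Z} and \eqref{est-RZ} bound the convolution part by a constant times
\[
\big(\|X\|_{V^p([0,\rho])}+\|\mathbb X\|_{V^{\frac p2}([0,\rho])}+\rho^{1/p}\big)\,\|f(Y),Df(Y)Y'\|_{\mathcal V^{\frac p2}_{X,\alpha+\frac2p}}.
\]
By \eqref{comp-f5}, the last factor is bounded by a polynomial in the radius. The $V^\infty$ norm of $f(Y)$ in $\cB_{\alpha-\frac1p}$ is $|f(Y_t)-f(Y_0)|\lesssim\|f\|_{C^2_b}\sup_t|Y_t-y_0|_\alpha$, and this has to be small as well. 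It is handled because $Y_t-y_0$ is controlled by the radius through the convolution estimates, not by smallness of $\rho$ alone. \textbf{This is the delicate point:} I would include the derivative's sup part in the ball radius and check that it closes.

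\textbf{Smallness.} Right-continuity of $\mathbf X$ at $0$ gives $\omega_X(\rho,0)\to0$ and $\omega_{\mathbb X}(\rho,0)\to0$ as $\rho\downarrow0$, since $\Delta_0^+X=0$ and $\Delta_0^+\mathbb X=0$; the same limits appear in \eqref{rough-bound} with a left endpoint $0+$. Choosing $\rho$ small therefore makes all the coefficients above small, so $\mathcal J$ maps $\mathscr B_\rho$ into itself.

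\textbf{Contraction.} Take two elements with the same initial data. The difference of the convolutions is a rough convolution of $f(Y)-f(\tilde Y)$, which is linear in the integrand. Apply \eqref{rough-bound}/Lemma \ref{control-roughint} to this difference, and use \eqref{diff-comp} (this is where $f\in C_b^3$ is needed). This gives a Lipschitz constant of order
\[
C\big(\|X\|_{V^p([0,\rho])}+\|\mathbb X\|_{V^{\frac p2}([0,\rho])}+\rho^{1/p}\big)(1+R)^2(1+\|\mathbf X\|)^2,
\]
where $R$ is the radius. Shrinking $\rho$ makes this less than $1$.

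The main obstacle is making every term of the norm small. Terms carrying a factor $\|X\|$, $\|\mathbb X\|$ or $\rho$ are fine, but the Gubinelli-derivative component $f(Y)-f(\tilde Y)$ in $V^{p,\infty}_{\alpha-\frac2p,\alpha-\frac1p}$ enters without such a factor. I would handle it by the identity $(Y-\tilde Y)_t=Z_t-\tilde Z_t$, with $Y-\tilde Y$ vanishing at $0$. This expresses $\sup_t|Y_t-\tilde Y_t|_\alpha$ and its $p$-variation again through the small-coefficient convolution bounds. If that does not close, I would use a weighted norm or a two-step iteration $\mathcal J^2$. Banach's fixed-point theorem then gives existence and uniqueness in $\mathscr B_\rho$. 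Uniqueness in the whole space follows by re-choosing $\rho$ according to the norm of any given solution.
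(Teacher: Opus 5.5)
Your overall architecture matches the paper: the Picard map $\mathcal J$ on controlled paths, Lemma \ref{composition-f} combined with Lemma \ref{control-roughint} at index $\alpha+\frac2p$, and smallness from right-continuity of $\mathbf X$ at $0$. However, the step you flag as delicate is a genuine gap, and the ball $\mathscr B_\rho$ you propose does not close.

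\textbf{Self-map.} On $\mathscr B_\rho$ the only constraint on $R^Y$ is that it is bounded by the radius $R=(C_S+2)(1+|y_0|_\alpha)$. The derivative component of the image is $Z'=f(Y)$, and it carries no small factor. Indeed, $\|f(Y)\|_{V^p_{\alpha-\frac2p}}\lesssim\|f\|_{C^2_b}\|Y\|_{V^p_{\alpha-\frac1p}}\le\|f\|_{C^2_b}\big(\|R^Y\|_{V^p_{\alpha-\frac1p}}+(|f(y_0)|_{\alpha-\frac1p}+\|Y'\|_{V^\infty_{\alpha-\frac1p}})\|X\|_{V^p([0,\rho])}\big)$. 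Similarly, since $Y_t-y_0=f(y_0)\cdot X_{t,0}+R^Y_{t,0}$, you get $\sup_t|f(Y_t)-f(y_0)|_{\alpha-\frac1p}\lesssim\|f\|_{C^2_b}\big(|f(y_0)|_{\alpha-\frac1p}\|X\|_{V^p([0,\rho])}+\|R^Y\|_{V^p_{\alpha-\frac1p}}\big)$.

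Elements of your ball may have $\|R^Y\|$ of order $R$. Both quantities are then of order $\|f\|_{C^2_b}R$ however small $\rho$ is, so $\mathcal J(\mathscr B_\rho)\subset\mathscr B_\rho$ fails unless $\|f\|$ is small. Passing to $p'>p$ does not help, because a generic ball element's remainder need not have small oscillation. Moreover, your radius ignores $|Y'_0|_{\alpha-\frac1p}=|f(y_0)|_{\alpha-\frac1p}$, which is part of the norm, so the ball can even be empty. Your remark that $Y_t-y_0$ is ``controlled through the convolution estimates'' applies to images $\mathcal J(Y)$, not to arbitrary elements of the ball.

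The missing idea is to constrain the remainder on a separate, $f$-dependent small scale. The paper centres the ball at $\xi_t=S_{t,0}y_0+\int_0^tS_{t,r}f(y_0)\cdot\textup d\mathbf X_r$, $\xi'=f(y_0)$. It imposes unit bounds on $Y-\xi$ and $Y'-\xi'$, but $\|R^Y-R^\xi\|_{V^{\frac p2,p}_{\alpha-\frac2p,\alpha-\frac1p}}\le\frac{1}{4(1+C_{S,p}\|f\|_{C^3_b})}$. Since $Y_t-y_0=(R^Y-R^\xi)_{t,0}+(\xi_t-y_0)$, this makes $\sup_t|Y_t-y_0|_{\alpha-\frac1p}$ and $\|Y\|_{V^p_{\alpha-\frac1p}}$ small, which closes the derivative component.

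\textbf{Contraction.} The same issue appears here. The identity $(Y-\tilde Y)_t=Z_t-\tilde Z_t$ holds only at fixed points, so it cannot show that $\mathcal J$ contracts. What is needed is the following. With equal initial data, $Y_t-\tilde Y_t=(R^Y-R^{\tilde Y})_{t,0}$ and $(Y-\tilde Y)_{t,s}=(Y'_s-\tilde Y'_s)\cdot X_{t,s}+(R^Y-R^{\tilde Y})_{t,s}$. Hence $\|f(Y)-f(\tilde Y)\|_{V^{p,\infty}_{\alpha-\frac2p,\alpha-\frac1p}}\le C\big(\|X\|_{V^p([0,\rho])}\|Y-\tilde Y,Y'-\tilde Y'\|+\|R^Y-R^{\tilde Y}\|_{V^p_{\alpha-\frac1p}}\big)$.

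Now use the equivalent norm in which the remainder is multiplied by $\lambda>1$. The derivative part then contributes $C(\lambda^{-1}+\|X\|_{V^p([0,\rho])})$. The convolution and remainder parts contribute $C\lambda\|\mathbf X\|_{\mathbb V^p([0,\rho];\mathbb R^d)}$. Choose $\lambda$ large first, then $\rho$ small. You list ``a weighted norm'' as a fallback, but without saying that the weight sits on the remainder and that the derivative difference is controlled by the remainder difference, the contraction is not established.
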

\begin{proof}
Denote $\|f\|_{C_b^3}:=\max\limits_{i=0,1,2}\|f\|_{C_b^3(\cB_{\alpha-\frac{i}{p}},\mathcal{L}(\mathbb{R}^d, \cB_{\alpha+\frac{2-i}{p}+\varepsilon}))}$. Let $0<\rho<1$. 
Define the map
\begin{align*}
\mathcal{J}
(Y, Y')_t:=(Z_t, Z_t'):=(S_{t,0}y_0+\int_0^tS_{t,r}f(Y_r)\cdot \textup{d}\mathbf X_r, f(Y_t)), \quad t\in[0,\rho]
\end{align*}
and 
\begin{align*}
\xi_t:=S_{t,0}y_0+\int_0^tS_{t,r}f(y_0)\cdot \textup{d}\mathbf{X}_r,\quad \xi_t':=f(y_0).
\end{align*}
Using the smoothing property of  $S$, we have $(S_{\cdot,0}y_0,0)\in \mathcal{V}^{\frac p2}_{X, \alpha}([0,\rho])$. It is clear that  $(f(y_0), 0)\in \mathcal{V}^{\frac p2}_{X, \alpha+\frac{2}{p}+\varepsilon}([0,\rho])$. Thus,  $(\xi, \xi')=(S_{\cdot, 0}y_0,0)+(\int_0^{\cdot}S_{\cdot, r}f(y_0)\textup{d}\mathbf X_r,f(y_0))\in \mathcal{V}^{\frac{p}{2}}_{X, \alpha}([0,\rho])$. 
Similar to \eqref{est-RZ}, we derive
\begin{align}\label{Rxi}
\|R^\xi\|_{V^{\frac p2, p}_{\alpha-\frac 2p, \alpha-\frac 1p}([0,\rho])}
\leq  C_{S,p} \rho^{\frac 1p}(|y_0|_\alpha+\|f\|_{C_b^3}\|\mathbf X\|_{\mathbb V^{p}([0,\rho];\mathbb R^d)}).
\end{align} 
And we have 
\begin{align}\label{xixi'}
\|\xi ,\xi'\|_{\mathcal V^{\frac p2}_{X, \alpha}([0,\rho])}\leq C_{S, p}[|y_0|_\alpha+\|f\|_{C_b^3}(1+\|\mathbf X\|_{\mathbb V^p([0,\rho];\mathbb R^d)})]. 
\end{align}
From \eqref{smooth} and Lemma  \ref{int-rough}, we have 
\begin{equation}\label{xi-y_0}
\begin{split}
\sup_{0\leq t\leq \rho}|\xi_t-y_0|_{\alpha-\frac1p}&=\sup_{0\leq t\leq \rho}|(S_{t,0}-\textup{id})y_0+\int_0^{t}S_{t,r}f(y_0)\cdot \textup{d}\mathbf X_r|_{\alpha-\frac 1p}\\
&\leq C_{S, p}\big[\rho^{\frac 1p}|y_0|_\alpha+\|f\|_{C_b^3}[(1+\rho ^{\frac 2p})\|X\|_{V^p([0,\rho];\mathbb R^d)}+\rho^{\frac 1p}\|\mathbb X\|_{V^{\frac p2}([0,\rho];\mathbb R^{d\times d})}]\big]\\
&\leq C_{S, p}(\rho^{\frac 1p }|y_0|_\alpha+\|f\|_{C_b^3}\|\mathbf X\|_{\mathbb V^p([0,\rho];\mathbb R^d)}). 
\end{split}
\end{equation}

 Since $\mathbf X_{t,0}$ is right continuous at $t=0$, we have $\|\mathbf X\|_{\mathbb{V}^p([0,\rho];\mathbb{R}^d)}<1$ by choosing sufficiently small $\rho$. 
 Consider a closed set
\begin{align*}
\mathscr{B}_\rho(y_0):=\{&(Y, Y')\in \mathcal{V}^{\frac{p}{2}}_{X, \alpha}([0,\rho]) \mid~  (Y_0, Y_0')=(y_0, f(y_0)),\quad  \|Y-\xi\|_{V^\infty_\alpha([0,\rho])}\leq 1, \\
&\|Y'-\xi'\|_{V^{p,\infty}_{\alpha-\frac 2p, \alpha-\frac 1p}([0,\rho])}\leq 1, \quad 
\|R^Y-R^\xi\|_{V^{\frac p2, p}_{\alpha-\frac{2}{p}, \alpha-\frac{1}{p}}([0,\rho])}\leq \frac{1}{4(1+C_{S,p}\|f\|_{C_b^3})}\}.
\end{align*}
Observe that 
\begin{align*}
Z_t-\xi_t=\int_0^t S_{t,r}[f(Y_r)-f(y_0)]\cdot \textup{d}\mathbf{X}_r, \quad Z_t'-\xi_t'=f(Y_t)-f(y_0). 
\end{align*}
Since $Y_0=\xi_0=y_0$, $Y_0'=\xi_0'=f(y_0)$, we have 
$
Y_t-\xi_t=(R^Y-R^\xi)_{t,0}.
$
Using triangle inequality and \eqref{xi-y_0}, we have 
\begin{equation}\label{Y-y}
\begin{split}
\sup_{0\leq t\leq \rho}|Y_t-y_0|_{\alpha-\frac{1}{p}}&\leq  \sup_{0\leq t\leq \rho}|Y_t-\xi_t|_{\alpha-\frac 1p}+\sup_{0\leq t\leq \rho}|\xi_t-y_0|_{\alpha-\frac 1p}\\
&\leq \|R^Y-R^\xi\|_{V^{\frac p2, p}_{\alpha-\frac 2p, \alpha-\frac 1p}([0,\rho])}+C_{S, p}(\rho^{\frac 1p }|y_0|_\alpha+\|f\|_{C_b^3}\|\mathbf X\|_{\mathbb V^p([0,\rho];\mathbb R^d)}).
\end{split}
\end{equation}
First, we prove $\mathcal J$ maps $\mathscr B_\rho$ into itself.

According to Lemma \ref{composition-f},  we have $(f(Y)-f(y_0), Df(Y)Y')\in \mathcal{V}^{\frac p2}_{X, \alpha+\frac{2}{p}+\varepsilon}([0,\rho])$ and 
\begin{equation}\label{fY-}
\begin{split}
\|f(Y)-f(y_0), Df(Y)Y'\|_{\mathcal{V}^{\frac{p}{2}}_{X,\alpha+\frac{2}{p}}([0,\rho])}&\leq C_{S,p} \|f\|_{C_b^3}[1+\|Y-y_0, Y'\|_{\mathcal V^{\frac p2}_{X, \alpha}([0,\rho])}\\
&~~+(1+\|X\|_{V^p([0,\rho];\mathbb R^d)})^2\|Y-y_0, Y'\|_{\mathcal V^\frac{p}{2}_{X,\alpha}([0,\rho])}^2],
\end{split}
\end{equation}
which allows us to apply Lemma \ref{control-roughint} to obtain $(Z-\xi, Z'-\xi')\in \mathcal V^{\frac{p}{2}}_{X, \alpha}([0,\rho])$. 

In view of the definition of $\mathscr{B}_\rho$ and  \eqref{xixi'}, we have 
\begin{align*}
\|Y-y_0, Y'\|_{\mathcal V^{\frac p2}_{X, \alpha}([0,\rho])} &\leq \|Y-\xi, Y'-\xi'\|_{\mathcal V^{\frac p2}_{X, \alpha}([0,\rho])} +\|\xi-y_0, \xi'\|_{\mathcal V^{\frac p2}_{X, \alpha}([0,\rho])}\\
&\leq 2+\frac{1}{4(1+C_{S, p}\|f\|_{C_b^3})}+C_{S, p}[|y_0|_\alpha+\|f\|_{C_b^3}(1+\|\mathbf X\|_{\mathbb V^p([0,\rho];\mathbb R^d)})]\\
&\leq M,
\end{align*}
where $M$ depends on $S, p, \|f\|_{C_b^3}$ and $y_0$. 
In view of \eqref{rough-bound} and \eqref{fY-},  we have 
\begin{equation}\label{int0tX}
\begin{split}
&\|Z-\xi\|_{V_\alpha^\infty([0,\rho])}=\sup_{0\leq t\leq \rho}|Z_t-\xi_t|_\alpha\\
&=\sup_{0\leq t\leq \rho}|\int_0^tS_{t,r}[f(Y_r)-f(y_0)]\cdot \textup{d}\mathbf X_r|_\alpha\\
&\leq C_{S, p}[\sup_{0\leq t\leq \rho} \{\|X\|_{V^p([0,t];\mathbb{R}^d)}\|R^{f(Y)}\|_{V^{\frac p2}([0,t);\cB_{\alpha})}+\|\mathbb{X}\|_{V^{\frac{p}{2}}([0,t];\mathbb{R}^{d\times d})}\|Df(Y)Y'\|_{V^{p}([0,t);\cB_{\alpha})}\\
&~~+\|f(Y)-f(y_0), Df(Y)Y'\|_{\mathcal{V}^{\frac{p}{2}}_{X,\alpha+\frac{2}{p}}([0,t])}(\|X\|_{V^{p}([0,t];\mathbb{R}^d)}t^{\frac{2}{p}}+\|\mathbb X\|_{V^{\frac{p}{2}}([0,t];\mathbb{R}^{d\times d})}t^{\frac{1}{p}})\\
&~~+|S_{t,0}Df(y_0)f(y_0):\mathbb{X}_{t,0}|_\alpha\}]\\
&\leq C_{S,p, f, y_0} \|\mathbf X\|_{\mathbb V^p([0,\rho];\mathbb{R}^d)}.
\end{split}
\end{equation}
In view of \eqref{Y-y} and $\cB_{\alpha+\frac 1p}\hookrightarrow \cB_{\alpha-\frac 1p}$, 
 \begin{align*}
\|Z'-\xi '\|_{V^\infty_{\alpha-\frac 1p}([0,\rho])}&:=\sup_{0\leq t\leq \rho}|f(Y_t)-f(y_0)|_{\alpha-\frac{1}{p}}
\\
&\lesssim  \sup_{0\leq t\leq \rho}|f(Y_t)-f(y_0)|_{\alpha+\frac{1}{p}}\\
&\lesssim  \|f\|_{C_b^3}\sup_{0\leq t\leq \rho}|Y_t-y_0|_{\alpha-\frac 1p}\\
&\leq C_{S,p}\|f\|_{C_b^3}(\|R^Y-R^\xi\|_{V^{\frac p2, p}_{\alpha-\frac 2p, \alpha-\frac 1p}([0,\rho])}+\rho^{\frac1p}|y_0|_\alpha+\|f\|_{C_b^3}\|\mathbf X\|_{\mathbb V^p([0,\rho];\mathbb{R}^d)}). 
\end{align*}
From  \eqref{est-rough-yp} and \eqref{Rxi}, 
\begin{equation}\label{Y_ts}
\begin{split}
\|Y\|_{V^p_{\alpha-\frac1p}([0,\rho])}&\leq 
\|R^Y\|_{V^p_{\alpha-\frac 1p}([0,\rho])}+(|Y_0'|_{\alpha-\frac{1}{p}}+\|Y'\|_{V^{\infty}_{\alpha-\frac 1p}([0,\rho])})\|X\|_{V^p([0,\rho];\mathbb{R}^d)}\\
&\leq \|R^Y-R^\xi\|_{V^p_{\alpha-\frac 1p}([0,\rho])}+\|R^\xi\|_{V^p_{\alpha-\frac 1p}}+(1+|f(y_0)|_{\alpha-\frac 1p})\|X\|_{V^p([0,\rho];\mathbb{R}^d)}\\
&\leq  \|R^Y-R^\xi\|_{V^p_{\alpha-\frac 1p}([0,\rho])}+C_{S,p}(\rho^{\frac 1p}|y_0|_\alpha+\|f\|_{C_b^3}\|\mathbf X\|_{\mathbb V^p([0,\rho];\mathbb R^d)})\\
&~~+(1+\|f\|_{C_b^3})\|X\|_{V^p([0,\rho];\mathbb R^d)}\\
&\leq   \|R^Y-R^\xi\|_{V^p_{\alpha-\frac 1p}([0,\rho])}+C_{S,p}\rho^{\frac 1p}|y_0|_\alpha+C_{S, p}(1+\|f\|_{C_b^3})\|\mathbf X\|_{\mathbb V^p([0,\rho];\mathbb R^d)}. 
\end{split}
\end{equation}
We therefore impose  $\|R^Y-R^\xi\|_{V^{\frac p2, p}_{\alpha-\frac{2}{p}, \alpha-\frac{1}{p}}}\leq \frac{1}{4(1+C_{S, p}\|f\|_{C_b^3})}$ in the definition of $\mathscr B_\rho$, in order to obtain the  smallness condition required for self-mapping.

Using \eqref{Rxi}, \eqref{Y_ts}, we have 
\begin{align*}
\|Z'-\xi'\|_{V^p_{\alpha-\frac 2p}([0,\rho])}&=
(\sup\limits_{\mathcal P}\sum_{(s,t)\in \mathcal P}|Z_t'-\xi_t'-(Z_s'-\xi_s')|^p_{\alpha-\frac{2}{p}})^{\frac{1}{p}}\\
&=(\sup\limits_{\mathcal P}\sum_{(s,t)\in \mathcal P}|f(Y_t)-f(Y_s)|^p_{\alpha-\frac{2}{p}})^{\frac 1p}\\
&\leq  C_{S, p} \|f\|_{C_b^3}\|Y\|_{V^p_{\alpha-\frac 1p}([0,\rho])}\\
&\leq   C_{S, p}\|f\|_{C_b^3} 
[  \|R^Y-R^\xi\|_{V^p_{\alpha-\frac 1p}([0,\rho])}+\rho^{\frac 1p}|y_0|_\alpha+(1+\|f\|_{C_b^3})\|\mathbf X\|_{\mathbb V^p([0,\rho];\mathbb R^d)}]. 
\end{align*}
Thus, we have 
\begin{align*}
\|Z'-\xi'\|_{V^{p,\infty}_{\alpha-\frac 2p, \alpha-\frac 1p}([0,\rho])}
&\leq \|Z'-\xi'\|_{V_{\alpha-\frac 1p}^\infty([0,\rho])}+\|Z'-\xi'\|_{V^p_{\alpha-\frac 2p}([0,\rho])}\\
&\leq C_{S, p}\|f\|_{C_b^3}(\|R^Y-R^\xi\|_{V^{\frac p2, p}_{\alpha-\frac 2p, \alpha-\frac 1p}([0,\rho])}+\rho^\frac 1p |y_0|_\alpha+(1+\|f\|_{C_b^3})\|\mathbf X\|_{\mathbb V^p([0,\rho];\mathbb R^d)})\\
&\leq \frac 14+  C_{S, p}\|f\|_{C_b^3}[\rho^\frac 1p |y_0|_\alpha+(1+\|f\|_{C_b^3})\|\mathbf X\|_{\mathbb V^p([0,\rho];\mathbb R^d)}].
\end{align*}
According to the definition of the remainder, we have 
\begin{align*}
(R^Z-R^\xi)_{t,s}&=(Z-\xi)_{t,s}-(f(Y_s)-f(y_0))X_{t,s}\\
&=\int_s^t S_{t,r}(f(Y_r)-f(y_0))\cdot \textup{d}\mathbf X_r-S_{t,s}[(f(Y_s)-f(y_0))\cdot X_{t,s}+Df(Y_s)Y_s':\mathbb X_{t,s}]
\\
&~~+(S_{t,s}-\textup{id})(f(Y_s)-f(y_0))X_{t,s}\\
&~~+(S_{t,s}-\textup{id})(Z_s-\xi_s)\\
&~~+S_{t,s}Df(Y_s)Y_s':\mathbb X_{t,s}\\
&:=I_1(t,s)+I_2(t,s)+I_3(t,s)+I_4(t,s).
\end{align*}
Similar to \eqref{int0tX},  we deduce that $\|I_1\|_{V^{\frac p2}([0,\rho]; \cB_\alpha)}\leq C_{S, p, f, y_0}\|\mathbf X\|_{\mathbb V^p([0,\rho];\mathbb{R}^d)}.$ Since $\cB_{\alpha}\hookrightarrow \cB_{\alpha-\frac{i}{p}}$, $i=1,2$, and $V^{\frac p2}\subset V^p$, we claim that $$\|I_1\|_{V^{\frac p2, p}_{\alpha-\frac 2p, \alpha-\frac 1p}([0,\rho])}\leq C_{S, p, f, y_0}\|\mathbf X\|_{\mathbb V^p([0,\rho];\mathbb{R}^d)}.$$
As for terms $I_2, I_3, I_4$, using the smoothing properties of $S$ \eqref{smooth}, we have 
\begin{align*}
|I_2(t,s)|_{\alpha-\frac ip}&\leq C_S \|f\|_{C_b^3}|t-s|^{\frac ip}|X_{t,s}|,\\
|I_3(t,s)|_{\alpha-\frac ip}&\leq C_S |t-s|^{\frac ip}\|Z-\xi\|_{V^\infty_\alpha([0,s])}\\
&\leq C_{S, p, f, y_0} \|\mathbf X\|_{\mathbb V^p([0,\rho];\mathbb R^d)} |t-s|^{\frac ip},\\
|I_4(t,s)|_{\alpha-\frac ip}&\leq C_S \|f\|_{C_b^3}(1+\|f\|_{C_b^3})|\mathbb X_{t,s}|, 
\end{align*}
which implies 
\begin{align*}
\|I_2\|_{V^{\frac p2, p}_{\alpha-\frac 2p, \alpha- \frac 1p}([0, \rho])}&\leq C_{S,p} \rho^\frac 1p \|f\|_{C_b^3}\|X\|_{V^p([0,\rho];\mathbb R^d)},\\
\|I_3\|_{V^{\frac p2, p}_{\alpha-\frac 2p, \alpha- \frac 1p}([0, \rho])}&\leq C_{S, p, f, y_0} \|\mathbf X\|_{\mathbb V^p([0,\rho];\mathbb R^d)} \rho^{\frac 1p}, \\
\|I_4\|_{V^{\frac p2, p}_{\alpha-\frac 2p, \alpha- \frac 1p}([0, \rho])}&\leq C_{S, p}\|f\|_{C_b^3}(1+\|f\|_{C_b^3})\|\mathbb X\|_{V^{\frac p2}([0,\rho];\mathbb R^{d\times d})}. 
\end{align*}
Therefore, we have 
\begin{align*}
\|R^Z-R^\xi\|_{V^{\frac p2, p}_{\alpha-\frac 2p, \alpha-\frac 1p}([0,\rho])}
\leq  C_{S, p, f, y_0}\|\mathbf X\|_{\mathbb V^p([0,\rho];\mathbb{R}^d)}.
\end{align*}
Choosing sufficiently small $\rho$ such that  $$C_{S,p, f, y_0} \|\mathbf X\|_{\mathbb V^p([0,\rho];\mathbb{R}^d)}\leq \frac{1}{4(1+C_{S, p}\|f\|_{C_b^3})}, \quad C_{S,p}\|f\|_{C_b^3}(\rho^\frac 1p |y_0|_\alpha+(1+\|f\|_{C_b^3})\|\mathbf X\|_{V^p([0,\rho];\mathbb R^d)})<\frac 34, $$ which yields  
$\|Z-\xi\|_{V^\infty_\alpha([0,\rho])}\leq 1$, $\|Z'-\xi'\|_{V^{p,\infty}_{\alpha-\frac 2p, \alpha-\frac 1p}([0,\rho])} \leq 1$,  $\|R^Z-R^\xi\|_{V^{\frac p2, p}_{\alpha-\frac 2p, \alpha-\frac 1p}([0,\rho])}\leq \frac{1}{4(1+C_{S, p}\|f\|_{C_b^3})}$. 
Therefore,  $\mathcal J$ is a self-mapping.

Next, we will prove $\mathcal J$ is a contraction mapping. For $\lambda> 1$, we introduce an equivalent norm for the controlled rough path space
\begin{align*}
\|Y, Y'\|^{(\lambda)}_{\mathcal{V}^{\frac p2}_{X, \alpha}}:=|Y_0|_\alpha+\|Y\|_{V^\infty_{\alpha}}+|Y_0'|_{\alpha-\frac{1}{p}}+\|Y'\|_{V^{p,\infty}_{\alpha-\frac{2}{p}, \alpha-\frac{1}{p}}}+\lambda \|R^Y\|_{V^{\frac{p}{2}, p}_{\alpha-\frac{2}{p}, \alpha-\frac{1}{p}}}.
\end{align*}
For any $(Y, Y'), (\tilde Y, \tilde Y')\in \mathscr B_\rho$,  note that 
\begin{align*}
Z_t-\tilde Z_t
=\int_0^t S_{t,r}
\bigl(f(Y_r)-f(\tilde Y_r)\bigr)
\cdot\textup d\mathbf X_r, \quad Z_t'-\tilde{Z}_t'=f(Y_t)-f(\tilde{Y}_t). 
\end{align*}
  The composition difference estimate \eqref{diff-comp}, together with the uniform bounds on $\mathscr B_\rho$, gives
$$
\|f(Y)-f(\tilde Y),Df(Y)Y'-Df(\tilde Y)\tilde Y'\|_
{\mathcal V^{\frac p2}_{X,\alpha+\frac2p+\varepsilon}([0,\rho])}
\leq C_{S,p,f,y_0}
\|Y-\tilde Y,Y'-\tilde Y'\|_{\mathcal V^{\frac p2}_{X,\alpha}([0,\rho])}.
$$
  Applying \eqref{est-Z} and \eqref{est-RZ} to $(f(Y)-f(\tilde Y), Df(Y)Y'-Df(\tilde Y)\tilde Y')$ with spatial index $\alpha+\frac 2p$,  we obtain 
  \begin{align}\label{ZR}
\|Z-\tilde Z\|_{V^\infty_\alpha([0,\rho])}
+\|R^Z-R^{\tilde Z}\|_{V^{\frac p2,p}_{\alpha-\frac2p,\alpha-\frac1p}([0,\rho])}\leq C_{S,p,f,y_0}
\|\mathbf X\|_{\mathbb V^p([0,\rho];\mathbb R^d)}
\|Y-\tilde Y,Y'-\tilde Y'\|_{\mathcal V^{\frac p2}_{X,\alpha}([0,\rho])}.
\end{align}
For any $0\leq s<t\leq \rho$, we have 
\begin{align*}
Y_{t,s}-\tilde{Y}_{t,s}=R^Y_{t,s}-R^{\tilde{Y}}_{t,s}+(Y_s'-\tilde{Y}_s')X_{t,s}, 
\end{align*}
which yields
\begin{align*}
\|Y- \tilde Y\|_{V^{p}_{\alpha-\frac 1p}([0,\rho])}\leq (\|R^Y-R^{\tilde Y}\|_{V^p_{\alpha-\frac 1p}([0,\rho])}+\|Y'-\tilde Y'\|_{V^\infty_{\alpha-\frac{1}{p}}([0,\rho])}\|X\|_{V^{p}([0,\rho];\mathbb{R}^d)}).
\end{align*}
Since $Y_0=\tilde Y_0$, $Y_0'=\tilde Y_0'$,  we have $Y_t-\tilde Y_t=(R^Y-R^{\tilde Y})_{t,0}$. Thus,  
\begin{align*}
\|Y-\tilde Y\|_{V^\infty_{\alpha-\frac 1p}}\leq \|R^Y-R^{\tilde Y}\|_{V^p_{\alpha-\frac 1p}}. 
\end{align*}
Using Taylor's formula, we have 
\begin{align*}
&|f(Y_t)-f(\tilde Y_t)-f(Y_s)+f(\tilde Y_s)|_{\alpha-\frac 2p}\\
\lesssim &~~\|f\|_{C_b^3}[|Y_{t,s}-\tilde Y_{t,s}|_{\alpha-\frac 1p}+|Y_s-\tilde Y_s|_{\alpha-\frac 1p}(|Y_{t,s}|_{\alpha-\frac 1p}+|\tilde Y_{t,s}|_{\alpha-\frac 1p})],
\end{align*}
which implies 
\begin{align*}
\|Z'-\tilde{Z}'\|_{V^p_{\alpha-\frac 2p}([0,\rho])}
&\lesssim \|f\|_{C_b^3}[\|Y-\tilde Y\|_{V^p_{\alpha-\frac 1p}}+\sup_{0\leq t\leq \rho}|Y_t-\tilde Y_t|_{\alpha-\frac 1p}(\|Y\|_{V^p_{\alpha-\frac 1p}}+\|\tilde Y\|_{V^p_{\alpha-\frac 1p}})]\\
&\lesssim \|f\|_{C_b^3}[\|X\|_{V^p([0,\rho];\mathbb R^d)}\sup_{0\leq t\leq \rho}|Y_t'-\tilde Y_t'|_{\alpha-\frac 1p}\\
&~~+(1+\|Y\|_{V^p_{\alpha-\frac 1p}}+\|\tilde Y\|_{V^p_{\alpha-\frac 1p}})\|R^Y-R^{\tilde Y}\|_{V^p_{\alpha-\frac 1p}}]\\
&\leq C_{S, p, y_0}\|f\|_{C_b^3} (\|X\|_{V^p([0,\rho];\mathbb R^d)}\|Y-\tilde Y, Y'-\tilde Y'\|^{(\lambda)}_{\mathcal V^{\frac p2}_{X, \alpha}}+\|R^Y-R^{\tilde Y}\|_{V^p_{\alpha-\frac 1p}}).
\end{align*}
It is not hard to check that 
\begin{align*}
\|Z'-\tilde Z'\|_{V^\infty_{\alpha-\frac 1p}([0,\rho])}&=\sup_{0\leq t\leq \rho}|f(Y_t)-f(\tilde Y_t)|_{\alpha-\frac 1p}\\
&\leq C_{S, p}\|f\|_{C_b^3} \|R^Y-R^{\tilde Y}\|_{V^p_{\alpha-\frac 1p}([0,\rho])}. 
\end{align*}
Hence, we have 
\begin{equation}\label{Z'-tildeZ'}
\begin{split}
\|Z'-\tilde Z'\|_
{V^{p,\infty}_{\alpha-\frac2p,\alpha-\frac1p}}  &\leq C_{S, p, f, y_0}(\|X\|_{V^p([0,\rho];\mathbb R^d)}\|Y-\tilde Y, Y'-\tilde Y'\|^{(\lambda)}_{\mathcal V^{\frac p2}_{X, \alpha}}+\|R^Y-R^{\tilde Y}\|_{V^p_{\alpha-\frac 1p}})\\
&\leq C_{S, p, f, y_0} (\lambda^{-1}+\|X\|_{V^p([0,\rho];\mathbb R^d)})\|Y-\tilde Y, Y'-\tilde Y'\|^{(\lambda)}_{\mathcal V^{\frac p2}_{X, \alpha}}. 
\end{split}
\end{equation}
Combining \eqref{ZR} and \eqref{Z'-tildeZ'} we have 
\begin{align*}
\|Z-\tilde Z, Z'-\tilde Z'\|^{(\lambda)}_{\mathcal V^{\frac p2}_{X, \alpha}}&=
\|Z-\tilde Z\|_{V_\alpha^\infty}+\|Z'-\tilde Z'\|_{V^{p,\infty}_{\alpha-\frac 2p, \alpha-\frac 1p}}+\lambda\|R^Z-R^{\tilde Z}\|_{V^{\frac p2, p}_{\alpha-\frac 2p, \alpha-\frac 1p}}\\
&\leq
\lambda (
\|Z-\tilde Z\|_{V^\infty_\alpha}
+\|R^Z-R^{\tilde Z}\|_{V^{\frac p2,p}_{\alpha-\frac2p,\alpha-\frac1p}}
)+\|Z'-\tilde Z'\|_{V^{p,\infty}_{\alpha-\frac2p,\alpha-\frac1p}}\\
&\leq C_{S,p,f,y_0}
[
\lambda\|\mathbf X\|_{\mathbb V^p([0,\rho]; \mathbb R^d)}
+\lambda^{-1}+\|X\|_{V^p([0,\rho];\mathbb R^d)}
]
\|Y-\tilde Y,Y'-\tilde Y'\|^{(\lambda)}_{\mathcal V^{\frac p2}_{X,\alpha}}\\
&\leq C_{S,p,f,y_0}
(
\lambda^{-1}
+\lambda\|\mathbf X\|_{\mathbb V^p([0,\rho];\mathbb R^d)}
)
\|Y-\tilde Y,Y'-\tilde Y'\|^{(\lambda)}_{\mathcal V^{\frac p2}_{X,\alpha}}.
\end{align*}
Choose $\lambda$ large enough, such that  $C_{S, p, f, y_0}\lambda^{-1}\leq 1/4$ and take  sufficiently small $\rho$, such that 
$$C_{S, p, f, y_0}\lambda
\|\mathbf X\|_{\mathbb V^p([0,\rho];\mathbb R^d)}
\leq\frac14.$$
Thus, $\mathcal J$ is contractive on $\mathscr B_\rho(y_0)$.
Finally, there exists a unique solution $(Y,Y')\in \mathcal{V}^{\frac{p}{2}}_{X,\alpha}([0,\rho])$ by applying Banach's fixed point theorem. 
\end{proof}
 \begin{theorem}[Local existence and uniqueness of the mild solution to \eqref{main}]\label{local-rough-jump}
Let $p\in[2,3)$, $\alpha\in \mathbb{R}$, $\varepsilon>0$ and let $\mathbf X\in \mathbb{V}^p([0,T];\mathbb{R}^d)$. Suppose that  $f\in C_b^3(\cB_{\alpha-\frac{i}{p}}, \mathcal{L}(\mathbb{R}^d, \cB_{\alpha+\frac{2-i}{p}+\varepsilon}))$, $i=0,1,2$.  Then for every $y_0\in \cB_\alpha$,  there exists a time horizon $t_1$ and a unique local solution $(Y, Y')\in \mathcal{V}^{\frac p2}_{X, \alpha}([0,t_1])$ such that $Y'=f(Y)$ and 
\begin{align}\label{rpde1}
Y_t=S_{t,0}y_0+\int_0^{t}S_{t,r}f(Y_r)\cdot \textup{d}\mathbf{X}_r, \quad t\in[0,t_1].
\end{align}
\end{theorem}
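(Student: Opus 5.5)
The key difference from Lemma \ref{local-rough-pde} is that $\mathbf X$ need not be right-continuous at $t=0$. The plan is therefore to remove the initial jump, exactly as in Remark \ref{localpp} and the first step of Theorem \ref{Glo-young}, and then invoke Lemma \ref{local-rough-pde} for the modified driver.

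First, define the jump-removed rough path $\tilde{\mathbf X}=(\tilde X,\tilde{\mathbb X})$ by $\tilde X_{m,0}:=X_{m,0+}$ and $\tilde{\mathbb X}_{m,0}:=\mathbb X_{m,0+}$, leaving all other increments unchanged. As in the proof of Lemma \ref{rough-regular} (3), $\tilde{\mathbf X}$ satisfies Chen's relation, is right-continuous at $0$, and obeys $\|\tilde{\mathbf X}\|_{\mathbb V^p([0,T])}\le\|\mathbf X\|_{\mathbb V^p((0,T])}<\infty$. Next, set the new initial value
\[
y_{0+}:=y_0+f(y_0)\cdot\Delta_0^+X+Df(y_0)f(y_0):\Delta_0^+\mathbb X .
\]
This lies in $\cB_\alpha$, since $f(y_0)\in\cB_{\alpha+\frac2p+\varepsilon}$ and $Df(y_0)f(y_0)\in\cB_{\alpha+\frac1p+\varepsilon}$ by the mapping properties of $f$ used in Lemma \ref{composition-f}. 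Lemma \ref{local-rough-pde}, applied with $\tilde{\mathbf X}$ and initial datum $y_{0+}$, then yields $\rho>0$ and a unique $(\tilde Y,\tilde Y')=(\tilde Y,f(\tilde Y))\in\mathcal V^{\frac p2}_{\tilde X,\alpha}([0,\rho])$ solving $\tilde Y_t=S_{t,0}y_{0+}+\int_0^tS_{t,r}f(\tilde Y_r)\cdot\textup d\tilde{\mathbf X}_r$. Take $t_1:=\rho$ and define $Y_0:=y_0$ and $Y_t:=\tilde Y_t$ for $t>0$.

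To verify \eqref{rpde1}, I would compare Riemann sums as in \eqref{rsum-compare}. For $t>0$, in $\sum S_{t,u}(f(Y_u)\cdot X_{v,u}+Df(Y_u)f(Y_u):\mathbb X_{v,u})$ only the first interval $[0,r_1]$ differs from the $\tilde{\mathbf X}$-sum with integrand $\tilde Y$. The difference is
\[
S_{t,0}\big[f(y_0)\Delta_0^+X+Df(y_0)f(y_0):(\Delta_0^+\mathbb X+(X_{r_1}-X_{0+})\otimes\Delta_0^+X)\big]
\]
plus the terms $S_{t,0}\big(f(y_0)-f(y_{0+})\big)\tilde X_{r_1,0}$ and the analogous second-order term. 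Each of these vanishes along refinements as $r_1\downarrow0$, by the right-continuity of $\tilde{\mathbf X}$ at $0$ and boundedness of $f$ and $Df\,f$. The RRS limit then gives $\int_0^tS_{t,r}f(Y_r)\cdot\textup d\mathbf X_r=S_{t,0}(y_{0+}-y_0)+\int_0^tS_{t,r}f(\tilde Y_r)\cdot\textup d\tilde{\mathbf X}_r$, which is exactly \eqref{rpde1}. One must also check $(Y,f(Y))\in\mathcal V^{\frac p2}_{X,\alpha}([0,t_1])$. On $(0,t_1]$ this is inherited from $\tilde Y$. At $s=0$, the remainder $R^Y_{t,0}=\tilde R^{\tilde Y}_{t,0}+(y_{0+}-y_0)-f(y_0)X_{t,0}+f(y_{0+})\tilde X_{t,0}$ changes only by a single bounded increment. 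This adds finitely to the $V^{\frac p2}$ and $V^p$ norms, by Minkowski as in the proof of Lemma \ref{rough-regular}; the same applies to the single jump of $Y'$ at $0$.

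Uniqueness runs the converse direction, as in Remark \ref{localpp}. Given any solution $(Y,f(Y))$ of \eqref{rpde1}, the same identity shows that $\tilde Y:=Y$ on $(0,t_1]$ with $\tilde Y_0:=y_{0+}$ solves the $\tilde{\mathbf X}$-equation, because by Lemma \ref{rough-regular} (1) the jump of the convolution at $0$ forces $Y_{0+}=y_{0+}$. Lemma \ref{local-rough-pde} then gives uniqueness of $\tilde Y$, and hence of $Y$. I expect the main obstacle to be the bookkeeping of the second-order jump term: the cross term $(X_{r_1}-X_{0+})\otimes\Delta_0^+X$ from Chen's relation, and checking that the resulting controlled path has $Y'_0=f(y_0)$ while $Y'_{0+}=f(y_{0+})$, with a remainder of finite variation at the jump.
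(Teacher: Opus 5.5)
Your proposal is correct and follows the same route as the paper's proof. You remove the jump at $0$ via $\tilde{\mathbf X}$, shift the datum to $y_{0+}=y_0+f(y_0)\cdot\Delta_0^+X+Df(y_0)f(y_0):\Delta_0^+\mathbb X$, apply Lemma \ref{local-rough-pde}, and glue back as in Remark \ref{localpp}. You in fact supply more detail than the paper does: the Chen cross term in the first Riemann-sum interval, the controlled-path check at $s=0$, and the converse direction for uniqueness. The appeal to $Y_{0+}=y_{0+}$ in the uniqueness step is harmless but unnecessary, since setting $\tilde Y_0:=y_{0+}$ already makes the Riemann-sum identity go through.
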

\begin{proof}
Set $Y_{0+}:=Y_0+f(Y_0)\cdot \Delta_0^+ X+Df(Y_0)f(Y_0):\Delta_0^+ \mathbb X \in \cB_\alpha$ and   define 
\[
\tilde X_t:=\left\{
\begin{array}{ll}
	X_{0+}, & t=0 \\
	X_t,& t>0,
\end{array}
\right.
\ \ \
\tilde{\mathbb X}_{t,s}:=\left\{
\begin{array}{ll}
	\mathbb X_{t, 0+}, &  s=0, \\
	\mathbb X_{t,s},& s>0, 
\end{array}
\right.
\]
where $\mathbf X_{t,0}$ is right-continuous at $t=0$ and satisfies Chen's relation. According to  Lemma \ref{local-rough-pde}, there exists a unique solution to the following equation
\begin{align}\label{rpde2}
\tilde Y_t=S_{t,0}Y_{0+}+\int_0^tS_{t,r}f(\tilde Y_r)\cdot \textup{d}\tilde{\mathbf X}_r.
\end{align}
Following the similar arguments  in Remark \ref{localpp}, we claim that 
\[
Y_t:=\left\{
\begin{array}{ll}
	Y_0, & t=0 \\
	\tilde Y_t,& t>0
\end{array}
\right.
\]
solves  \eqref{rpde1}.
\end{proof}
\section{Examples}\label{sec:example}
\begin{example}\label{ex1}
Let $k\in \mathbb N$, $\alpha\in \mathbb R$, $\varepsilon >0$, $q\in (1,\infty)$ and let  $\mathbb T^d$  be $d$-dimensional torus. 
Consider the Banach scale $\cB_\gamma=H^{k+2\gamma, q}(\mathbb T^d)$, $\gamma \in \mathbb R$, we have 
$(\textup{id}-\Delta)^{-\beta}\in \mathcal L(\cB_\gamma, \cB_{\gamma +\beta})$, for any $\beta>0$. 

Let $p\in [2,3)$,  $g\in C^\infty(\mathbb T^d)$, $\mathbf X\in \mathbb V^p([0,T]; \mathbb R)$. Consider the rough PDE: 
\begin{equation}\label{example1}
\begin{cases}
&\textup{d}u_t(x)=\Delta u_t(x) \textup{d}t+ g(x)(\textup{id}-\Delta)^{-(\frac 2p+\varepsilon)}u_t(x) \textup{d}\mathbf X_t,\\
&u_0(x)\in \cB_\alpha.
\end{cases}
\end{equation}
Notice that  $(\textup{id}-\Delta)^{-(\frac 2p+\varepsilon)}:\cB_{\alpha-\frac ip} \to \cB_{\alpha+\frac{(2-i)}{p}+\varepsilon}$, $i=0,1,2$ and the multiplication with a
smooth  function  $g$ is a smooth function from $\cB_{\alpha+\frac 2p+\varepsilon}$ to itself.  
Theorem  \ref{local-rough-jump} therefore gives the existence and uniqueness of the local mild solution  to \eqref{example1}. 

As for Young case,  we can take $p\in [1,2)$, $\theta\in [\frac 1p, 1]$,  $X\in V^{p}([0,T];\mathbb R)$. 
Hence, by Theorem \ref{cadlagGLOBAL}, we obtain the well-posedness of
\begin{equation}\label{example11}
\begin{cases}
&\textup{d}u_t(x)=\Delta u_t(x)\textup{d}t+g(x)(\textup{id}-\Delta)^{-(2\theta+\varepsilon)}u_t(x) \textup{d} X_t,\\
&u_0(x)\in \cB_{\alpha+\theta}.
\end{cases}
\end{equation}
\end{example}
\begin{example}
Let $\mathcal O\subset\mathbb R^n$ be a bounded domain with smooth boundary, and let $A=\Delta_D$ denote the Dirichlet Laplacian on $L^2(\mathcal O)$. For $\gamma\geq0$, consider 
$
\mathcal B_\gamma=D((-A)^{\gamma})$, 
$|u|_\gamma=\|(-A)^{\gamma}u\|_{L^2(\mathcal O)}. 
$
Denote the boundary trace operator by $\gamma_0$. For integer $N\geq 1$, define $$
\cB_N=\{u\in H^{2N}(\mathcal O), \gamma_0(\Delta^k u)=0, k=0, \cdots, N-1\},  
$$
 and $|\cdot|_N$ is equivalent to the standard $H^{2N}(\mathcal O)$ norm on $\cB_N$,   see \cite[Definition 2.1]{Gru16} for more details.  In particular, 
$|u|_{\cB_N}=\|(-A)^Nu\|_{L^2(\mathcal O)}\leq C_{n, N} \sum\limits_{|k|\leq 2N}\|\partial_x^k u\|_{L^2(\mathcal O)}$.

Let $p\in[2,3)$, $\alpha\in \mathbb R$, $\varepsilon>0$, such that   $2\alpha-\frac4p>\frac n2$. Set $W:= \cB_{\alpha-\frac 2p}$. 
 By the Sobolev embedding theorem  \cite[Proposition 3.3]{Ta23}, we have $W \hookrightarrow L^{\infty}(\mathcal O) \cap  L^2(\mathcal O).$  Choose an integer
$
N\geq\alpha+\frac2p+\varepsilon.
$
For \(j=1,\ldots,d\), let  $g_j\in C^\infty (\bar{\mathcal O}\times \mathbb R)$ satisfy  $$\sup\limits_{(x,y)\in \bar{\mathcal O}\times \mathbb R}|\partial^k_x \partial^m_y g_j(x,y)|<\infty, \quad  |k|\leq 2N, \quad m=0,1,2,3$$ and $$\partial_x^k g_j(x, y)=0, \quad x\in \partial \mathcal O, \quad y\in \mathbb R, \quad |k|\leq 2N-2.$$  Similar to \cite[Example 7.1]{HBN19}, 
 we consider the following integral operator: 
$$
G(\phi)(\psi)[x]:= \sum_{j=1}^{d}\psi_j\int_{\mathcal O} g_j(x, \phi(\tilde x))\textup{d}\tilde x, \quad \phi\in W, \quad \psi\in \mathbb R^d. 
$$
We show that  
$G\in C_b^3( W,  \mathcal L(\mathbb R^d, \cB_{\alpha+\frac 2p+\varepsilon}))$.  First, the boundary assumptions on $g_j$ allows us to verify that   $G(\phi)(\psi)\in \cB_N$. Indeed, 
\begin{align*}
|G(\phi)(\psi)|_{\cB_N}^2&\leq C_{n,N}
\sum_{|k|\leq 2N}\int_{\mathcal O}|\sum_j^d \psi_j\int_{\mathcal O}\partial^k_xg_j(x, \phi(\tilde x))\textup{d}\tilde x|^2\textup{d}x\\
&\leq C_{n,N}
|\mathcal O|^3|\psi|_{\mathbb R^d}^2\sum_{
|k|\leq 2N}\sum_{j=1}^d \|\partial_x^kg_j\|^2_{\infty}.
\end{align*}
Hence, for every $\phi\in \cB_{\alpha-\frac 2p}$,  $G(\phi)\in \mathcal L(\mathbb R^d, \cB_{\alpha+\frac 2p+\varepsilon})$.

 According to
 \cite[Chapter XVII.3]{KA82} and the embedding $W\hookrightarrow L^\infty(\mathcal O)$,  $G$ is three times  continuously Fréchet-differentiable and we have 
\begin{align*}
[DG(\phi)[h_1](\psi)](x)
&=
\sum_{j=1}^d\psi_j
\int_{\mathcal O}
\partial_y g_j(x,\phi(\tilde x))
h_1(\tilde x)\,\textup d\tilde x,
\\
[D^2G(\phi)[h_1,h_2](\psi)](x)
&=
\sum_{j=1}^d\psi_j
\int_{\mathcal O}
\partial_y^2g_j(x,\phi(\tilde x))
h_1(\tilde x)h_2(\tilde x)\,\textup  d\tilde x,
\\
[D^3G(\phi)[h_1,h_2,h_3](\psi)](x)
&=
\sum_{j=1}^d\psi_j
\int_{\mathcal O}
\partial_y^3g_j(x,\phi(\tilde x))
h_1(\tilde x)h_2(\tilde x)h_3(\tilde x)\,\textup d\tilde x,
\end{align*}
for $h_1, h_2, h_3\in W.$
Let $\phi_1, \phi_2\in W$, we have 
\begin{align*}
\|G(\phi_1)-G(\phi_2)\|_
{\mathcal L(\mathbb R^d,\mathcal B_{\alpha+\frac2p+\varepsilon})}
&=\sup_{|\psi|_{\mathbb R^d}=1}
|G(\phi_1)(\psi)-G(\phi_2)(\psi)|_{\alpha+\frac2p+\varepsilon}\\
&\leq  C \sup_{|\psi|_{\mathbb R^d}=1}|G(\phi_1)(\psi)-G(\phi_2)(\psi)|_{\cB_N}\\
&\leq C \sup_{|\psi|_{\mathbb R^d}=1}(\sum_{|k|\leq 2N}\int_{\mathcal O}|\sum_{j=1}^d \psi_j\int_{\mathcal O} [\partial_x^k g_j(x, \phi_1(\tilde x))-\partial_x^kg_j(x, \phi_2(\tilde x))]\textup{d}\tilde x |^2\textup{d}x)^{\frac 12}\\
&\leq C( \sum_{j=1}^d \sum_{|k|\leq 2N} \int_{\mathcal O}|\int_{\mathcal O}[\partial_x^k g_j(x, \phi_1(\tilde x))-\partial_x^kg_j(x, \phi_2(\tilde x))]\textup{d}\tilde x|^2\textup{d}x)^{\frac 12}\\
&\leq C |\mathcal O|^{\frac 12} (\sum_{j=1}^d \sum_{|k|\leq 2N} \|\partial y\partial_x^k  g_j\|^2_{\infty})^{\frac 12}\int_{\mathcal O}|\phi_1(\tilde x)-\phi_2(\tilde x)|\textup{d}\tilde x\\
&\leq C |\mathcal O|^{\frac 12} (\sum_{j=1}^d \sum_{|k|\leq 2N} \|\partial y\partial_x^k   g_j\|^2_{\infty})^{\frac 12}\|\phi_1-\phi_2\|_{L^2}\\
&\leq 
C|\mathcal O|
(
\sum_{|k|\le2N}\sum_{j=1}^d
\|\partial y\partial_x^k g_j\|_\infty^2
)^{1/2}
|\phi_1-\phi_2|_{\alpha-\frac2p}, 
\end{align*}
where the last line we use the embedding $W\hookrightarrow L^2(\mathcal O)$.  Hence $G$ is Lipschitz. It remains to show the boundedness of $D^m G$, $m=1,2,3.$ 
\begin{align*}
&\left|D^mG(\phi)[h_1,\ldots,h_m](\psi)\right|_
{\alpha+\frac2p+\varepsilon}^{2}
\\
&\leq C|\psi|_{\mathbb R^d}^{2}
\sum_{|k|\leq2N}\sum_{j=1}^{d}
\int_{\mathcal O}
(
\int_{\mathcal O}
\left|\partial_x^k\partial_y^m g_j(x,\phi(\tilde x))\right|
\prod_{i=1}^{m}|h_i(\tilde x)|\,
\textup d\tilde x
)^{2}\textup dx
\\
&\leq C|\psi|_{\mathbb R^d}^{2}
\sum_{|k|\leq2N}\sum_{j=1}^{d}
\|\partial_x^k\partial_y^m g_j\|_\infty^{2}
\int_{\mathcal O}
(
|\mathcal O|
\prod_{i=1}^{m}\|h_i\|_{L^\infty(\mathcal O)}
)^{2}\textup dx
\\
&=
C|\mathcal O|^{3}|\psi|_{\mathbb R^d}^{2}
(
\sum_{|k|\leq2N}\sum_{j=1}^{d}
\|\partial_x^k\partial_y^m g_j\|_\infty^{2}
)
\prod_{i=1}^{m}\|h_i\|_{L^\infty(\mathcal O)}^{2}
\\
&\leq
C|\mathcal O|^{3}|\psi|_{\mathbb R^d}^{2}
(
\sum_{|k|\leq2N}\sum_{j=1}^{d}
\|\partial_x^k\partial_y^m g_j\|_\infty^{2}
)
\prod_{i=1}^{m}|h_i|_W^{2}.
\end{align*}
Denote  the Banach space of bounded $m$-linear maps from $W_1^m$ to $W_2$ by $\mathcal L^m(W_1;W_2)$, equipped with the norm
$
\|T\|_{\mathcal L^m(W_1;W_2)}
:=
\sup\limits_{\substack{\|h_i\|_{W_1}=1\\ i=1,\cdots,m}}
\|T[h_1,\ldots,h_m]\|_{W_2}.
$
Now, we have 
\begin{align*}
\|D^mG(\phi)\|_
{\mathcal L^m(W;\mathcal L(\mathbb R^d,\mathcal B_{\alpha+\frac2p+\varepsilon}))}&= \sup\limits_{_{\substack{\|h_i\|_{W}=1\\ i=1,\cdots,m}}}\sup\limits_{|\psi|_{\mathbb R^d}=1} |D^m G(\phi)[h_1, \cdots, h_m](\psi)|_{\alpha+\frac 2p+\varepsilon} \\
&\leq 
C|\mathcal O|^{3/2}
(
\sum_{|k|\leq2N}\sum_{j=1}^{d}
\|\partial_x^k\partial_y^m g_j\|_\infty^{2}
)^{1/2}.
\end{align*}
Therefore, $G\in C_b^3(\cB_{\alpha-\frac 2p}, \mathcal L(\mathbb R^d, \cB_{\alpha+\frac 2p+\varepsilon}))$, which implies $G\in C_b^3(\cB_{\alpha-\frac ip}, \mathcal L(\mathbb R^d, \cB_{\alpha+\frac {2-i}{p}+\varepsilon}))$, $i=0,1,2$.  
Let $\mathbf X\in \mathbb V^p([0,T];\mathbb R^d)$. According to  Theorem  \ref{local-rough-jump}, we have the local existence and uniqueness of the  mild solution to the following rough PDE: 
\begin{equation}
\begin{cases}
&\textup du_t(x)
=
\Delta u_t(x)\,\textup dt
+\sum\limits_{j=1}^d \int_{\mathcal{O}} g_j (x,u_t(z)) ~\textup d z~ \textup d\mathbf X_t^j,\\
&\gamma_0 u_t=0,\\
&u_0\in\mathcal B_\alpha.
\end{cases}
\end{equation}
\end{example}

{\bf AI Disclosure Statement.} The authors used AI solely to assist with text formulation, phrasing, and grammatical refinement to improve the readability of the manuscript. All  mathematical proofs, scientific content, and conceptual ideas were entirely self-developed by the authors. The authors have reviewed, edited, and take full responsibility for the final version of this work.
\\

{\bf Acknowledgements.}~A. Blessing acknowledges funding by the DFG CRC/TRR 388 "Rough Analysis, Stochastic Dynamics and Related Fields" - Project ID 516748464.~H. Gao acknowledges NSFC Grant No. 12571164  and the Jiangsu Provincial Scientific Research Center of Applied Mathematics under Grant No. BK20233002. H. Li acknowledges the China Scholarship Council.
\end{document}